\newtheorem{theorem}{Theorem}[section]
\newtheorem{lemma}[theorem]{Lemma}
\newtheorem{proposition}[theorem]{Proposition}
\newtheorem{remark}[theorem]{Remark}
\numberwithin{equation}{section}
\begin{document}

\title[Euler-Maxwell limit for VMB system]{Compressible Euler-Maxwell limit for global smooth solutions to the Vlasov-Maxwell-Boltzmann system}

\author[R.-J. Duan]{Renjun Duan}
\address[R.-J. Duan]{Department of Mathematics, The Chinese University of Hong Kong, Shatin, Hong Kong,
        People's Republic of China}
\email{rjduan@math.cuhk.edu.hk}

\author[D.-C. Yang]{Dongcheng Yang}
\address[D.-C. Yang]{Department of Mathematics, The Chinese University of Hong Kong, Shatin, Hong Kong,
        People's Republic of China}
\email{dcyang@math.cuhk.edu.hk}

\author[H.-J. Yu]{Hongjun Yu}
\address[H.-J. Yu]{School of Mathematical Sciences,
         South China Normal University, Guangzhou 510631, People's Republic of China}
\email{yuhj2002@sina.com}


\begin{abstract}
Two fundamental models in plasma physics are given by the Vlasov-Maxwell-Boltzmann system and the compressible Euler-Maxwell system which both capture the complex dynamics of plasmas under the self-consistent electromagnetic interactions at the kinetic and fluid levels, respectively. It has remained a long-standing open problem to rigorously justify the hydrodynamic limit from the former to the latter as the Knudsen number $\varepsilon$ tends to zero. In this paper we give an affirmative answer to the problem for smooth solutions to both systems near constant equilibrium in the whole space in case when only the dynamics of electrons is taken into account. The explicit rate of convergence in $\varepsilon$ over an almost global time interval is also obtained for well-prepared data. For the proof, one of main difficulties occurs to the cubic growth of large velocities due to the action of the classical transport operator on local Maxwellians and we develop new $\varepsilon$-dependent energy estimates basing on the macro-micro decomposition to characterize the asymptotic limit in the compressible setting.
\end{abstract}

\thanks{{Key words}: Hydrodynamic limit; Vlasov-Maxwell-Boltzmann system; Euler-Maxwell system;  Macro-micro decomposition.}
\thanks{{ Mathematics Subject Classification 2020}: 35Q35, 35Q20; 35Q61, 76X05}

\maketitle

\setcounter{tocdepth}{1}
\tableofcontents
\thispagestyle{empty}

\section{Introduction}
The Vlasov-Maxwell-Boltzmann (called VMB in the sequel for simplicity) system is an important  model in plasma 
physics for describing the time evolution of dilute charged particles, such as electrons and ions, 
under the influence of their self-consistent internally generated Lorentz forces, cf. \cite{Chapman,Saint-Raymond}. In the physical situation, the ion mass is usually
much heavier than the electron mass so that the electrons move faster than the ions. Hence, the ions are often described by a fixed ion background $n_b(x)$, and only
the motion of electrons need to be determined. In such one-species case, the scaled VMB system takes the form of
\begin{equation}
\label{1.1}
\left\{
\begin{array}{rl}
&\partial_{t}F+v\cdot\nabla_{x}F-(E+v\times B)\cdot\nabla_{v}F=\frac{1}{\varepsilon}Q(F,F),
\\
&\partial_{t}E-\nabla_{x}\times B=\int_{\mathbb{R}^{3}} vF\,dv,
\\
&\partial_{t}B+\nabla_{x}\times E=0,
\\
&\nabla_{x}\cdot E=n_b-\int_{\mathbb{R}^{3}} F\,dv,\quad \nabla_{x}\cdot B=0.
\end{array} \right.
\end{equation}
Here, the parameter $\varepsilon>0$ is the Knudsen number which is proportional to the mean free path and is assumed
to be small. The unknowns are $F=F(t,x,v)\geq0$ standing for the density
distribution function for the electron particles with position $x=(x_{1},x_{2},x_{3})\in\mathbb{R}^{3}$ and velocity $v=(v_{1},v_{2},v_{3})\in\mathbb{R}^{3}$
at time $t\geq0$ and $E=E(t,x)=(E_1,E_2,E_3)(t,x)$ and $B=B(t,x)=(B_1,B_2,B_3)(t,x)$ denoting the self-consistent electric field and magnetic field, respectively.
We have omitted the explicit dependence of the solution $(F,E,B)$ on $\varepsilon$ for brevity whenever there is no confusion.

Throughout this paper, $n_{b}>0$ is assumed to be a constant denoting
the spatially uniform density of the ionic background. Take $n_{b}=1$ without loss of generality. Note that
the other physical constants such as the charge and mass of electrons and the speed of light in system \eqref{1.1} have been normalized to be one
since they do not cause essential mathematical difficulties in our one-fluid problem; cf. \cite{Saint-Raymond,Guo-2003}.

The Boltzmann collision operator $Q(\cdot,\cdot)$ in \eqref{1.1} is assumed to be for the hard-sphere model, taking the following non-symmetric bilinear
form of
\begin{equation}
\label{1.2}
Q(F_1,F_2)(v)=\int_{\mathbb{R}^{3}}\int_{\mathbb{S}^{2}}|(v-v_{*})\cdot\omega|\{F_1(v')F_2(v'_{*})
-F_1(v)F_2(v_{*})\}\,d\omega\,dv_{*},
\end{equation}
where $\omega\in \mathbb{S}^{2}$ is a unit vector in $\mathbb{R}^{3}$, and the velocity pairs $(v,v_{*})$ before collisions and $(v',v'_{*})$ after collisions are given by
$$
v'=v-[(v-v_{*})\cdot\omega]\omega, \quad  v_{*}'=v_{*}+[(v-v_{*})\cdot\omega]\omega,
$$
in terms of the conservation of momentum and kinetic energy:
$v+v_{*}=v'+v'_{*}$ and 
$|v|^{2}+|v_{*}|^{2}=|v'|^{2}+|v'_{*}|^{2}$.
We remark that one can consider more complex long-range interaction potentials in the Boltzmann case or even the Landau operator with Coulomb potentials; see \cite{Duan-Yang-Yu}, for instance. The reason why we only focus on the hard sphere model in the current work is to present the essentials of our problem in the non-relativistic case in an easier way and interested readers could work on the other harder situations. 

Corresponding to \eqref{1.1}, the hydrodynamic description for the motion of electrons at the fluid
level is also given by the following compressible Euler-Maxwell system 
which is an important fluid model in plasma physics:
\begin{equation}
\label{1.3}
\left\{
\begin{array}{rl}
\partial_{t}\bar{\rho}+\nabla_{x}\cdot(\bar{\rho}\bar{u})&=0,
\\
\partial_{t}(\bar{\rho}\bar{u})+\nabla_{x}\cdot(\bar{\rho}\bar{u}\otimes\bar{u})
+\nabla_{x}\bar{p}&=-\bar{\rho}(\bar{E}+\bar{u}\times \bar{B}),
\\
\partial_{t}\bar{E}-\nabla_{x}\times \bar{B}&=\bar{\rho}\bar{u},
\quad
\partial_{t}\bar{B}+\nabla_{x}\times \bar{E}=0,
\\
\nabla_{x}\cdot \bar{E}=n_b-\bar{\rho},\quad \nabla_{x}\cdot \bar{B}&=0.
\end{array} \right.
\end{equation}
Here the unknowns  are  the electron density $\bar{\rho}=\bar{\rho}(t,x)>0$, the electron velocity $\bar{u}=(\bar{u}_1,\bar{u}_2,\bar{u}_3)(t,x)$, and the electromagnetic field $(\bar{E},\bar{B})=(\bar{E},\bar{B})(t,x)$. Moreover,  
$\bar{p}=K\bar{\rho}^{5/3}$ is the pressure satisfying the power law with the adiabatic exponent $\gamma=\frac{5}{3}$.
We take the  physical constant $K=1$ without loss of generality. 

Both the VMB system \eqref{1.1} and the Euler-Maxwell system \eqref{1.3} can be viewed as two important models in plasma physics which capture the complex dynamics of charged particles due to the self-consistent electromagnetic interactions at the kinetic and fluid levels, respectively.
It has remained a long-standing open problem to justify whether or not the  compressible Euler-Maxwell system \eqref{1.3} can be derived rigorously from
the VMB system \eqref{1.1}, as the Knudsen number $\varepsilon$ goes to zero.
In this paper, we will give an affirmative answer to the issue for smooth solutions of the Cauchy problems near constant equilibrium in the whole space.

There have been extensive mathematical studies on the global well-posedness of the VMB system.
Under the angular cutoff assumption, Guo \cite{Guo-2003}  gave the first proof for constructing the global classical solutions
near global Maxwellians in the torus, and the corresponding
existence result in the whole space was proved by Strain \cite{Strain-2006}.
The  optimal large-time behavior of those global solutions in the whole space
were studied by Duan-Strain \cite{Duan-Strain}. Since then, 
the spatially inhomogeneous perturbation theory of the VMB system with or without angular
cutoff around global Maxwellians was further developed in different settings,
see  \cite{Duan-2010,Duan-Lei,Duan-Liu,Li-Yang} and references therein.

Correspondingly, the global well-posedness of the Euler-Maxwell system has also been well-studied in \cite{Germain-Masmoudi,Ionescu,Ionescu-2014,Ionescu-2016,Deng}.
Precisely, Germain-Masmoudi \cite{Germain-Masmoudi} made use of the method of space-time resonance  to  construct the 
global smooth solutions of the one-fluid Euler-Maxwell model for electrons in the whole space under additional generic conditions on the parameters.
The generic conditions  were removed and a stronger decay was obtained later by Ionescu-Pausader \cite{Ionescu} via
a robust approach, and see also \cite{Deng} for two spatial dimensions. A great progress was made by Guo-Ionescu-Pausader \cite{Ionescu-2016} for constructing the small-amplitude global smooth solutions of the two-fluid Euler-Maxwell system for both ions and electrons with disparate masses in the whole space; see also \cite{Ionescu-2014} for the relativistic case. Interested readers may refer to the introduction of \cite{Ionescu-2016}
for more discussions on the Euler-Maxwell system and its 
connections to many other well-known dispersive PDEs, such as  Euler-Poisson, Zakharov, Zakharov-Kuznetsov, KdV and NLS, etc.
Here we would like to mention \cite{Duan-2011,Feng,Xu}  and references therein for
the global well-posedness of the Euler-Maxwell system with relaxation.

In the absence of  the electromagnetic field, the VMB system is reduced to the Boltzmann equation. There is an enormous literature on the fluid dynamic limits of the Boltzmann equation, so it is hard to give an exhausting list. We mention some of them. For the approach based on the Hilbert or Chapman-Enskog expansions, we refer to \cite{Caflisch,Grad,Guo-Jang-Jiang,Guo-Jang-Jiang-2010,Guo-Huang-Wang,Jiang-Luo,Jiang-Luo-1,Liu-2014}. For the method employing the abstract Cauchy-Kovalevskaya theorem and the spectral analysis of the semigroup generated by the linearized Boltzmann equation,
we refer to  \cite{Nishida,Ukai-Asano,Bardos-1991}.  For the DiPerna-Lions \cite{Diperna-Lions} renormalized weak solutions,
we refer to  \cite{Bardos,Golse-2002,Golse-2004,Masmoudi-2003,Jiang-Masmoudi-1}.
 
There are also some literatures on the hydrodynamic limits of the more complex VMB system in the presence of the electromagnetic field.
In the incompressible regime,  a diffusive expansion to the VMB system in the framework of classical solutions was considered in Jang \cite{Jang-2009}
where the magnetic effect appears only as a higher order. Moreover, a comprehensive and great study was made by  Ars\'enio and Saint-Raymond  \cite{Saint-Raymond} to justify the  various limits towards the incompressible viscous electromagneto-hydrodynamics for the VMB system under the different scalings. In particular, they obtained the limit to the Navier-Stokes-Fourier-Maxwell 
system with the Ohm's law for weak solutions to the VMB system; see also a recent progress \cite{Jiang-Luo-2021} for classical solutions. We also mention  \cite{Jang-2012} for some formal derivations of the fluid equations in the compressible regime.

In what follows we review some specific results that are most related to the topic of this paper in connection
with the compressible Euler-Maxwell limit from the VMB system. For the study on the compressible Euler limit of the
Boltzmann equation, under the angular cutoff assumption, employing the truncated Hilbert expansion, Caflisch \cite{Caflisch} first showed that, if the compressible Euler system has a smooth solution which exists up to a finite time, then there exist corresponding
solutions to the Boltzmann equation with a zero initial condition for the remainder term
in the same time interval such that the Boltzmann solution converges to a smooth local Maxwellian whose fluid dynamical parameters  satisfy the compressible Euler system as $\varepsilon\to 0$. Due to such zero initial condition, the obtained Boltzmann solution may be negative.  

Following the Caflisch's strategy together with a new $L^{2}-L^{\infty}$ interplay technique initiated by Guo \cite{Guo-2008}, Guo-Jang-Jiang \cite{Guo-Jang-Jiang,Guo-Jang-Jiang-2010} removed the restriction to zero initial condition so that the positivity of the Boltzmann solution can be guaranteed. The $L^{2}-L^{\infty}$ interplay technique combined with the Hilbert expansion was employed later to treat the Vlasov-Poisson-Boltzmann system by Guo-Jang \cite{Guo-Jang} which seems a great progress since the Euler-Poisson system can be first rigorously derived from a kinetic plasma model with the self-consistent potential force. Unfortunately, such $L^{2}-L^{\infty}$ interplay technique
fails for the VMB system, essentially due to the complex structure of the system, for instance, the Glassey-Strauss representation of the electromagnetic field no longer holds true for the non-relativistic case; see the detailed explanations of this point at the beginning of Section \ref{sec.3.3}. 

In the relativistic situation, more results on the compressible fluid limit have been obtained, for instance, for the relativistic Boltzmann equation by Speck-Strain \cite{Speck-Strain} and for the relativistic VMB system by Guo-Xiao \cite{Guo-Xiao}; see also two very recent preprints \cite{OWX1,OWX2} for the case when the Boltzmann operator is replaced by the Landau operator. For the proof, the main benefit in the relativistic situation is that  the relativistic transport velocity is bounded in contrast with the fact that the classical velocity is unbounded. In particular, the application of Glassey-Strauss representation is a necessary tool and also \cite{OWX1,OWX2} finds that they can disregard the use of the $L^{2}-L^{\infty}$ technique and rather develop an energy method in smooth Sobolev spaces weighted with a special time-velocity function that can create the extra large-velocity dissipation; similar mechanism was observed in some earlier works \cite{Duan-Zhao-h, Duan-Zhao} as well as in a recent work \cite{DYY-CMP} in a more general way.  

We notice that the $L^{2}-L^{\infty}$ techinque was further developed to deal with the fluid limit problems in the half-space for the Boltzmann equation
with different types of boundary conditions including specular reflection, diffuse reflection and even the mixed Maxwell reflection
by Guo-Huang-Wang \cite{Guo-Huang-Wang}  and Jiang-Luo-Tang \cite{Jiang-Luo,Jiang-Luo-1}. 

It is worth noting that the $L^{2}-L^{\infty}$ framework in \cite{Guo-Jang-Jiang,Guo-Jang-Jiang-2010} does not apply to both the Boltzmann equation without cutoff 
and the Landau equation, since those long-range collision operators expose the velocity diffusion property so that  $L^\infty$ estimates are hard to obtain without using Sobolev embeddings. Recently, the same authors of this article \cite{Duan-Yang-Yu} succeeded in developing a new method in terms of the pure high order energy estimates
instead of the $L^{2}-L^{\infty}$ interplay technique  to deal with the compressible Euler limit for smooth solutions to the Landau equation
with Coulomb potentials in the whole space. 

In this paper, we will rigorously justify
the hydrodynamic limit from the VMB system \eqref{1.1} to the compressible Euler-Maxwell system \eqref{1.3} in the whole space.
Specifically, we can prove that there
exists the smooth solutions of the VMB system near a smooth local Maxwellian converging global-in-time to the corresponding smooth solutions of the compressible Euler-Maxwell system, as the Knudsen number $\varepsilon$ goes to zero. The result is proved by elaborate energy estimates.  More details will be specified later on.

The rest of this paper is organized as follows. In Section \ref{sec.2}, we present the macro-micro decomposition of the solution for the 
VMB system in order to study the compressible Euler-Maxwell limit for global smooth solutions to the VMB system.
In Section \ref{sec.3}, we present the main results
Theorem  \ref{thm3.1} for the compressible Euler-Maxwell limit.
For conveniences of readers we also list key points for the strategy of the proof through the paper.
To proceed the proof of main result, we first prepare in Section \ref{seca.4} some basic estimates. Section \ref{seca.5}
is the main part of the proof for establishing the a priori estimates on both the fluid part and the kinetic part. 
 In the end, we prove our main Theorem \ref{thm3.1}
in Section \ref{seca.6}.
\medskip

\noindent{\it Notations.} 
For convenience of presentation in the paper, we shall use $\langle \cdot , \cdot \rangle$  to denote the standard $L^{2}$ inner product in $\mathbb{R}_{v}^{3}$
with its corresponding $L^{2}$ norm $|\cdot|_2$, and $( \cdot , \cdot )$ to denote $L^{2}$ inner product in either
$\mathbb{R}^3_{x}$ or $\mathbb{R}^3_{x}\times \mathbb{R}_{v}^{3}$  with its corresponding $L^{2}$ norm $\|\cdot\|$. 
We also use $\|\cdot\|_{W^{k,p}}$ to denote standard spatial Sobolev norm of order $k$.
The norm of $\nabla^{k}_x f$ means the sum of the norms of functions $\partial^{\alpha}f$ with $|\alpha|=k$.
Let  $\alpha$ and $\beta$ be multi indices $\alpha=(\alpha_{1},\alpha_{2},\alpha_{3})$ and $\beta=(\beta_{1},\beta_{2},\beta_{3})$,
respectively. Denote $\partial_{\beta}^{\alpha}=\partial_{x_{1}}^{\alpha_{1}}\partial_{x_{2}}^{\alpha_{2}}\partial_{x_{3}}^{\alpha_{3}}
\partial_{v_{1}}^{\beta_{1}}\partial_{v_{2}}^{\beta_{2}}\partial_{v_{3}}^{\beta_{3}}$.
If each component of $\beta$ is not greater than the corresponding one  of
$\overline{\beta}$, we use the standard notation
$\beta\leq\overline{\beta}$. And $\beta<\overline{\beta}$ means that
$\beta\leq\overline{\beta}$ and $|\beta|<|\overline{\beta}|$.
$C^{\bar\beta}_{\beta}$ is the usual  binomial coefficient.
Throughout the paper, generic positive constants are denoted  by either $c$ or $C$ which may change line by line.
The notation  $A\approx B$ means that there exists $C>1$ such that $C^{-1}B\leq A\leq CB$.
We also use $\langle v\rangle=\sqrt{1+|v|^2}$.

\section{Macro-Micro decomposition}\label{sec.2}
In order to study the hydrodynamic limit from the VMB system  \eqref{1.1} to the compressible Euler-Maxwell system \eqref{1.3}, 
we will present in this section the macro-micro decomposition of the solution for the 
VMB system with respect to the local Maxwellian, that was initiated by
Liu-Yu \cite{Liu-Yu} and developed by Liu-Yang-Yu \cite{Liu-Yang-Yu} for the Boltzmann equation.

Indeed, associated with a solution $F(t,x,v)$ to the VMB system \eqref{1.1}, there are five macroscopic (fluid) quantities: 
the mass density $\rho(t,x)>0$, momentum $\rho(t,x)u(t,x)$, and
energy density $e(t,x)+\frac 12|u(t,x)|^2$ defined by
\begin{equation}
\label{2.1}
\left\{
\begin{array}{rl}
\rho(t,x)&\equiv\int_{\mathbb{R}^{3}}\psi_{0}(v)F(t,x,v)\,dv,
\\
\rho(t,x) u_{i}(t,x)&\equiv\int_{\mathbb{R}^{3}}\psi_{i}(v)F(t,x,v)\,dv, \quad \mbox{for $i=1,2,3$,}
\\
\rho(t,x)[e(t,x)+\frac{1}{2}|u(t,x)|^{2}]&\equiv\int_{\mathbb{R}^{3}}\psi_{4}(v)F(t,x,v)\,dv,
\end{array} \right.
\end{equation}
where $\psi_{i}(v)$ $(i=0,1,2,3,4)$ are five collision invariants given by
$$
\psi_{0}(v)=1, \quad \psi_{i}(v)=v_{i}~(i=1,2,3),\quad \psi_{4}(v)=\frac{1}{2}|v|^{2},
$$
and satisfy
\begin{equation}
\label{2.2}
\int_{\mathbb{R}^{3}}\psi_{i}(v)Q(F,F)\,dv=0,\quad \mbox{for $i=0,1,2,3,4$.}
\end{equation}
We define the local Maxwellian $M$ associated to the solution $F$ to the VMB system \eqref{1.1}
in terms of the fluid quantities by
\begin{equation}
\label{2.3}
M\equiv M_{[\rho,u,\theta]}(t,x,v):=\frac{\rho(t,x)}{\sqrt{(2\pi R\theta(t,x))^{3}}}\exp\big\{-\frac{|v-u(t,x)|^{2}}{2R\theta(t,x)}\big\}.
\end{equation}
Here $\theta(t,x)>0$ is the temperature which is related to the internal energy $e(t,x)$ by
$e=\frac{3}{2}R\theta=\theta$ with the gas constant $R=2/3$ taken for convenience, and $u(t,x)=(u_{1}(t,x),u_{2}(t,x),u_{3}(t,x))$ is the bulk velocity.

Since the $L^{2}$ inner product in $v\in\mathbb{R}^{3}$ is denoted by
$
\langle h,g\rangle =\int_{\mathbb{R}^{3}}h(v)g(v)\,d v,
$
the macroscopic kernel space is spanned by the following five pairwise-orthogonal
base \begin{equation}
\label{2.4}
\left\{
\begin{array}{rl}
&\chi_{0}(v)=\frac{1}{\sqrt{\rho}}M,
\quad \chi_{i}(v)=\frac{v_{i}-u_{i}}{\sqrt{R\rho\theta}}M, \quad \mbox{for $i=1,2,3$,}
\\
&\chi_{4}(v)=\frac{1}{\sqrt{6\rho}}(\frac{|v-u|^{2}}{R\theta}-3)M,
\\
&\langle \chi_{i},\frac{\chi_{j}}{M}\rangle=\delta_{ij},\quad \mbox{for ~~$i,j=0,1,2,3,4$},
\end{array} \right.
\end{equation}
where $\delta_{ij}$ being the Kronecker delta.
In view of \eqref{2.4}, we define
the macroscopic projection  $P_{0}$ and the 
microscopic projection $P_{1}$ as
\begin{equation}
\label{2.5}
P_{0}h\equiv\sum_{i=0}^{4}\langle h,\frac{\chi_{i}}{M}\rangle\chi_{i},\quad \mbox{and} \quad P_{1}h\equiv h-P_{0}h,
\end{equation}
respectively, where the operators  $P_{0}$ and  $P_{1}$  are  orthogonal projections, namely, it holds that
$$
P_{0}P_{0}=P_{0},\quad
P_{1}P_{1}=P_{1},\quad
P_{1}P_{0}=P_{0}P_{1}=0.
$$
A function $h(v)$  is said to be microscopic or non-fluid if
\begin{equation}
\label{2.6}
\langle h(v),\psi_{i}(v)\rangle=0, \quad \mbox{for $i=0,1,2,3,4$}.
\end{equation}

Using the notations above,  we decompose the solution $F$ of
\eqref{1.1} into the combination of the local Maxwellian $M$ defined in \eqref{2.3} and the microscopic (non-fluid)
component $G=G(t,x,v)$ as
\begin{equation}
\label{2.7}
F=M+G, \quad P_{0}F=M, \quad P_{1}F=G.
\end{equation}
Thanks to the fact that $Q(M,M)=0$, the first equation of \eqref{1.1} can be written as
\begin{equation}
\label{2.8}
\partial_{t}(M+G)+v\cdot\nabla_{x}(M+G)-(E+v\times B)\cdot\nabla_{v}(M+G)
=\frac{1}{\varepsilon}L_{M}G+\frac{1}{\varepsilon}Q(G,G),
\end{equation}
where the linearized Boltzmann operator $L_{M}$ around the local Maxwellian $M$ is defined as
\begin{equation}
\label{2.9}
L_{M}h:=Q(h,M)+Q(M,h).
\end{equation}
Its null space $\mathcal{N}_M$ is spanned by the macroscopic variables $\chi_{i}(v)~(i=0,1,2,3,4)$.

Let's now decompose the VMB system \eqref{1.1} into the microscopic system and the macroscopic system
in term of \eqref{2.7}.
First note that $P_{1}L_{M}G=L_{M}G$, $P_{1}Q(G,G)=Q(G,G)$,
$P_{1}\partial_{t}M=0$ and  $P_{1}\partial_{t}G=\partial_{t}G$ due to \eqref{2.6} and \eqref{2.5}. 
On the other hand, by tedious and careful calculations, we obtain
\begin{equation*}
P_1[(E+v\times B)\cdot\nabla_{v}F]=(E+v\times B)\cdot\nabla_{v}G.
\end{equation*}
With these facts, the microscopic (non-fluid) system for $G$ is obtained by applying the microscopic projection $P_1$ to \eqref{2.8},
namely
\begin{equation}
\label{2.10}
\partial_{t}G+P_{1}(v\cdot\nabla_{x}G)+P_{1}(v\cdot\nabla_{x}M)-(E+v\times B)\cdot\nabla_{v}G
=\frac{1}{\varepsilon}L_{M}G+\frac{1}{\varepsilon}Q(G,G).
\end{equation}
Since $L_{M}$ is invertible on $\mathcal{N}_{M}^\perp$, we have from \eqref{2.10} that
\begin{equation}
\label{2.11}
G=\varepsilon L^{-1}_{M}[P_{1}(v\cdot\nabla_{x}M)]+L^{-1}_{M}\Theta,
\end{equation}
and
\begin{equation}
\label{2.12}
\Theta:=\varepsilon \partial_{t}G+\varepsilon P_{1}(v\cdot\nabla_{x}G)-\varepsilon(E+v\times B)\cdot\nabla_{v}G-Q(G,G).
\end{equation}

By integration by parts, \eqref{2.1} and the facts that $\nabla_{v}\cdot(E+v\times B)=0$ and $v\cdot(v\times B)=0$, 
we have the following two identities:
\begin{equation*}
-\int_{\mathbb{R}^{3}}v[(E+v\times B)\cdot\nabla_{v}F]\,dv=\int_{\mathbb{R}^{3}}(E+v\times B)F\,dv
=\rho(E+u\times B),
\end{equation*}
and
\begin{equation*}
-\int_{\mathbb{R}^{3}}\frac{1}{2}|v|^2(E+v\times B)\cdot\nabla_{v}F\,dv=\int_{\mathbb{R}^{3}}v\cdot(E+v\times B)F\,dv
=\rho u\cdot E.
\end{equation*}
Multiplying \eqref{2.8} by the collision invariants $1,~v,~\frac{1}{2}|v|^2$ respectively,
then integrating the resulting equations with respect to $v$ over $\mathbb{R}^{3}$ and using two identities above,
we have the following macroscopic (fluid) system:
\begin{equation}
\left\{
\begin{array}{rl}
\label{2.13}
\partial_{t}\rho+\nabla_{x}\cdot(\rho u)&=0,
\\
\partial_{t}(\rho u)+\nabla_{x}\cdot(\rho u\otimes u)+\nabla_{x}p+\rho(E+u\times B)&=-\int_{\mathbb{R}^{3}} v\otimes v\cdot\nabla_{x}G\,dv,
\\
\partial_{t}[\rho(\theta+\frac{1}{2}|u|^{2})]+\nabla_{x}\cdot[\rho u(\theta+\frac{1}{2}|u|^{2})+pu]+\rho u\cdot E
&=-\int_{\mathbb{R}^{3}} \frac{1}{2}|v|^{2} v\cdot\nabla_{x}G\,dv,
\end{array} \right.
\end{equation}
where the pressure $p=R\rho\theta=\frac{2}{3}\rho\theta$.

Substituting \eqref{2.11} into \eqref{2.13} and using the following two identities
\begin{align*}
-\int_{\mathbb{R}^{3}} v_{i} v\cdot\nabla_{x}
L^{-1}_{M}[P_{1}(v\cdot\nabla_{x}M)]\,dv
&\equiv\sum^{3}_{j=1}\partial_{x_{j}}[\mu(\theta)D_{ij}],\quad i=1,2,3,
\\
-\int_{\mathbb{R}^{3}} \frac{1}{2}|v|^{2} v\cdot\nabla_{x}
L^{-1}_{M}[P_{1}(v\cdot\nabla_{x}M)]\,dv
&\equiv\sum^{3}_{j=1}\partial_{x_{j}}(\kappa(\theta)\partial_{x_{j}}\theta)+
\sum^{3}_{i,j=1}\partial_{x_{j}}[\mu(\theta) u_{i}D_{ij}],
\end{align*}
with the viscous stress tensor $D=[D_{ij}]_{1\leq i,j\leq j}$ given by
\begin{equation}
\label{2.14}
D_{ij}=\partial_{x_{j}}u_{i}
+\partial_{x_{i}}u_{j}-\frac{2}{3}\delta_{ij}\nabla_{x}\cdot u,
\end{equation}
we further obtain the following fluid-type system
\begin{equation}
\label{2.15}
\left\{
\begin{array}{rl}
\partial_{t}\rho+\nabla_{x}\cdot(\rho u)&=0,
\\
\partial_{t}(\rho u_{i})+\nabla_{x}\cdot(\rho u_{i}u)+\partial_{x_{i}}p+\rho(E+u\times B)_{i}
&=\varepsilon\sum^{3}_{j=1}\partial_{x_{j}}(\mu(\theta)D_{ij})
\\
-\int_{\mathbb{R}^{3}}&v_{i}(v\cdot\nabla_{x}L^{-1}_{M}\Theta)\,dv, \quad i=1,2,3,
\\
\partial_{t}[\rho(\theta+\frac{1}{2}|u|^{2})]+\nabla_x\cdot[\rho u(\theta+\frac{1}{2}|u|^{2})+pu]+\rho u\cdot E
&=\varepsilon\sum^{3}_{j=1}\partial_{x_{j}}(\kappa(\theta)\partial_{x_{j}}\theta)
\\
+\varepsilon
\sum^{3}_{i,j=1}\partial_{x_{j}}(\mu(\theta) u_{i}D_{ij})&-\int_{\mathbb{R}^{3}} \frac{1}{2}|v|^{2} v\cdot\nabla_{x}L^{-1}_{M}\Theta\,dv,
\end{array} \right.
\end{equation}
where $(E+u\times B)_{i}$ represents the $i$-th component of $(E+u\times B)$.
Moreover, from \eqref{1.1} and \eqref{2.1}, the electromagnetic field $(E,B)(t,x)$ is governed by
\begin{equation}
\label{2.16}
\left\{
\begin{array}{rl}
\partial_{t}E-\nabla_{x}\times B=\rho u,
\quad
\partial_{t}B+\nabla_{x}\times E&=0,
\\
\nabla_{x}\cdot E=1-\rho,\quad \nabla_{x}\cdot B&=0.
\end{array} \right.
\end{equation}
Here the viscosity coefficient $\mu(\theta)>0$ and the heat conductivity coefficient $\kappa(\theta)>0$ in \eqref{2.15} are smooth functions depending only on the temperature $\theta$, represented by
\begin{align*}
\mu(\theta)=&- R\theta\int_{\mathbb{R}^{3}}\hat{B}_{ij}(\frac{v-u}{\sqrt{R\theta}})
B_{ij}(\frac{v-u}{\sqrt{R\theta}})\,dv>0,\quad i\neq j,
\nonumber\\
\kappa(\theta)=&-R^{2}\theta\int_{\mathbb{R}^{3}}\hat{A}_{j}(\frac{v-u}{\sqrt{R\theta}})
A_{j}(\frac{v-u}{\sqrt{R\theta}})\,dv>0,
\end{align*}
where $\hat{A}_{j}(\cdot)$ and $\hat{B}_{ij}(\cdot)$ are Burnett functions, defined by
\begin{equation}
\label{2.17}
\hat{A}_{j}(v)=\frac{|v|^{2}-5}{2}v_{j}\quad \mbox{and} \quad \hat{B}_{ij}(v)=v_{i}v_{j}-\frac{1}{3}\delta_{ij}|v|^{2}, \quad \mbox{for} \quad i,j=1,2,3.
\end{equation}
And $A_{j}(\cdot)$ and $B_{ij}(\cdot)$ satisfy  $P_{0}A_{j}(v)=0$ and $P_{0}B_{ij}(v)=0$, given by
\begin{equation}
\label{2.18}
A_{j}(v)=L^{-1}_{M}[\hat{A}_{j}(v)M]\quad
\mbox{and} \quad B_{ij}(v)=L^{-1}_{M}[\hat{B}_{ij}(v)M].
\end{equation}
For the convenience of readers, we list some elementary but important properties of the Burnett functions summarized in the following lemma, cf.
\cite{Ukai-Yang}.
\begin{lemma}\label{lem2.1}
The Burnett functions have the following properties:
\begin{itemize}
\item{$-\langle \hat{A}_{i}, A_{i}\rangle$ ~~is positive and independent of i;}
\item{$\langle \hat{A}_{i}, A_{j}\rangle=0$ ~~for ~any ~$i\neq j$;\quad $\langle
\hat{A}_{i}, B_{jk}\rangle=0$~~for ~any ~i,~j,~k;}
\item{$\langle\hat{B}_{ij},B_{kj}\rangle=\langle\hat{B}_{kl},B_{ij}\rangle=\langle\hat{B}_{ji},B_{kj}\rangle$,~~
which is independent of ~i,~j, for fixed~~k,~l;}
\item{$-\langle \hat{B}_{ij}, B_{ij}\rangle$ ~~is positive and independent of i,~j when $i\neq j$;}	\item{$\langle \hat{B}_{ii}, B_{jj}\rangle$ ~~is positive and independent of i,~j when $i\neq j$;}\item{$-\langle \hat{B}_{ii}, B_{ii}\rangle$ ~~is positive and independent of i;}
\item{$\langle \hat{B}_{ij}, B_{kl}\rangle=0$ ~~unless~either~$(i,j)=(k,l)$~or~$(l,k)$,~or~i=j~and~k=l;}	\item{$\langle \hat{B}_{ii}, B_{ii}\rangle-\langle \hat{B}_{ii}, B_{jj}\rangle=2\langle \hat{B}_{ij},B_{ij}\rangle$ ~~holds for any~ $i\neq j$.}
\end{itemize}
\end{lemma}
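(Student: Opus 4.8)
The plan is to obtain all eight identities from two structural features of the linearized collision operator $\mathcal{L}:=Q(\cdot,\mathcal{M})+Q(\mathcal{M},\cdot)$ around the standard Maxwellian $\mathcal{M}(v)=(2\pi)^{-3/2}e^{-|v|^{2}/2}$, to which $L_{M}$ reduces (up to a positive $\theta$-dependent factor, which does not affect any of the assertions) after the normalization $v\mapsto u+\sqrt{R\theta}\,v$: namely, (i) $\mathcal{L}$ is self-adjoint and negative semi-definite with respect to the weighted inner product $\langle f,g\rangle_{\mathcal{M}^{-1}}:=\int_{\mathbb{R}^{3}}fg\,\mathcal{M}^{-1}\,dv$, with kernel $\mathcal{N}=\mathrm{span}\{\mathcal{M},v_{i}\mathcal{M},|v|^{2}\mathcal{M}\}$, so that $\mathcal{L}^{-1}$ is a self-adjoint, negative-definite bijection of $\mathcal{N}^{\perp}$; and (ii) $\mathcal{L}$ commutes with the rotation group and preserves parity. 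Here $A_{j}=\mathcal{L}^{-1}[\hat A_{j}\mathcal{M}]$ and $B_{ij}=\mathcal{L}^{-1}[\hat B_{ij}\mathcal{M}]$, and one checks directly that $\hat A_{j}\mathcal{M},\hat B_{ij}\mathcal{M}\in\mathcal{N}^{\perp}$, so these are well-defined nonzero microscopic functions.

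First I would record the sign information. Since $\mathcal{L}A_{j}=\hat A_{j}\mathcal{M}$, one has $\langle\hat A_{i},A_{i}\rangle=\int\hat A_{i}A_{i}\,dv=\langle\mathcal{L}A_{i},A_{i}\rangle_{\mathcal{M}^{-1}}<0$ because $A_{i}\in\mathcal{N}^{\perp}\setminus\{0\}$; likewise $\langle\hat B_{ij},B_{kl}\rangle=\langle\mathcal{L}B_{ij},B_{kl}\rangle_{\mathcal{M}^{-1}}$, which by the self-adjointness of $\mathcal{L}$ is symmetric under $(i,j)\leftrightarrow(k,l)$ and strictly negative when $(k,l)=(i,j)$.

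Next I would use rotational covariance. For $O\in SO(3)$ set $(\tau_{O}f)(v):=f(O^{-1}v)$; since the hard-sphere kernel $|(v-v_{*})\cdot\omega|$ and the collision map are rotation covariant and $\mathcal{M}$ is radial, $\tau_{O}$ commutes with $\mathcal{L}$ and hence with $\mathcal{L}^{-1}$ on $\mathcal{N}^{\perp}$, and it preserves parity. A direct computation gives $\tau_{O}\hat A_{j}=\sum_{k}O_{kj}\hat A_{k}$ and $\tau_{O}\hat B_{ij}=\sum_{k,l}O_{ki}O_{lj}\hat B_{kl}$; multiplying by the radial factor $\mathcal{M}$ and applying $\mathcal{L}^{-1}$ shows that $A_{j}$ transforms as a vector and $B_{ij}$ as a rank-two tensor under $SO(3)$ in exactly the same way. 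Two immediate consequences: $\hat A_{i}$ is odd and $B_{jk}$ is even in $v$, whence $\langle\hat A_{i},B_{jk}\rangle=0$; and the matrix $P_{ij}:=\langle\hat A_{i},A_{j}\rangle$ satisfies $P=O^{-1}PO$ for every $O\in SO(3)$, so $P=c_{A}I$ by Schur's lemma, i.e. $\langle\hat A_{i},A_{j}\rangle=c_{A}\delta_{ij}$ with $c_{A}<0$ by the first step — this already gives the whole $\hat A$ part of the lemma.

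Finally, for the $\hat B$ part I would invoke the classification of isotropic four-index tensors. The tensor $T_{ij,kl}:=\langle\hat B_{ij},B_{kl}\rangle$ is $SO(3)$-invariant, symmetric in $(i,j)$, symmetric in $(k,l)$, symmetric under $(i,j)\leftrightarrow(k,l)$, and traceless in $(i,j)$ (because $\sum_{i}\hat B_{ii}=0$); the only such tensor is $T_{ij,kl}=b\big(\delta_{ik}\delta_{jl}+\delta_{il}\delta_{jk}-\tfrac{2}{3}\delta_{ij}\delta_{kl}\big)$ for a single scalar $b=\langle\hat B_{12},B_{12}\rangle$, and $b<0$ by the first step. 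Substituting the various index patterns into this closed form yields at once: $T_{ij,kl}=0$ unless $(i,j)=(k,l)$ or $(l,k)$ or $i=j$ and $k=l$; $-\langle\hat B_{ij},B_{ij}\rangle=-b>0$, $\langle\hat B_{ii},B_{jj}\rangle=-\tfrac{2}{3}b>0$ for $i\ne j$, and $-\langle\hat B_{ii},B_{ii}\rangle=-\tfrac{4}{3}b>0$; the claimed independence of indices; and $\langle\hat B_{ii},B_{ii}\rangle-\langle\hat B_{ii},B_{jj}\rangle=2b=2\langle\hat B_{ij},B_{ij}\rangle$. I expect the only genuinely delicate points to be the bookkeeping for the weighted self-adjointness of $L_{M}$ around a local (rather than global) Maxwellian and the verification that $\tau_{O}$ really commutes with $\mathcal{L}$ for the hard-sphere kernel; once these are in hand, everything reduces to the linear-algebra extraction above, and in any case the statement is classical (cf. \cite{Ukai-Yang}).
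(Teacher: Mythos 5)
Your proposal is correct, and it actually supplies more than the paper does: the paper gives no proof of Lemma \ref{lem2.1} at all, it simply lists the identities and refers to \cite{Ukai-Yang}. Your two ingredients are exactly the right ones. The sign statements follow, as you say, from $\langle \hat A_i,A_i\rangle=\langle L_M A_i,A_i\rangle_{M^{-1}}$ together with negative definiteness of $L_M$ on $\mathcal N_M^{\perp}$ (and the decay $|A_j|,|B_{ij}|\lesssim M^{1-\epsilon}$ from Lemma \ref{lem5.2} guarantees all the unweighted integrals converge); the pair-exchange symmetry $\langle\hat B_{ij},B_{kl}\rangle=\langle\hat B_{kl},B_{ij}\rangle$ is self-adjointness in the $M^{-1}$ inner product; and the index structure is rotational covariance plus parity. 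The classical route in \cite{Ukai-Yang} (and in Liu--Yang--Yu) uses the same rotational invariance but implements it concretely: one shows $L^{-1}_{M}[\hat A_j M]$ and $L^{-1}_{M}[\hat B_{ij}M]$ are radial scalar functions times $\hat A_j$, $\hat B_{ij}$ times $M$, and then evaluates the angular integrals directly; your Schur's-lemma/isotropic-tensor argument is the abstract version of this and is cleaner, at the price of having to verify explicitly that $\tau_O$ and the parity map commute with $L_M$ (routine for hard spheres, but it should be said) and that the normalization $v\mapsto u+\sqrt{R\theta}\,v$ only changes everything by positive $(\rho,\theta)$-dependent factors, which is indeed harmless since the lemma only asserts independence of the indices, not of $(\rho,\theta)$. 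Two small points to tidy up: check that $\hat A_j\mathcal M,\hat B_{ij}\mathcal M\in\mathcal N^{\perp}$ (a short moment computation, which you assert but should record), and note that the third bullet of the lemma as printed is garbled (the middle expression $\langle\hat B_{kl},B_{ij}\rangle$ with free $k,l$ is a typo for the pair-exchange symmetry); your closed form $T_{ij,kl}=b\bigl(\delta_{ik}\delta_{jl}+\delta_{il}\delta_{jk}-\tfrac23\delta_{ij}\delta_{kl}\bigr)$ with $b=\langle\hat B_{12},B_{12}\rangle<0$ settles every consistent reading of it, as well as all the remaining bullets, exactly as you computed.
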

To make a conclusion, we have decomposed the VMB system \eqref{1.1} as the coupling of the
viscous compressible fluid-type system \eqref{2.15}-\eqref{2.16} and the microscopic equation \eqref{2.10}.
When the Knudsen number $\varepsilon$ and the microscopic part $G$ are set to
be zero, the system \eqref{2.15}-\eqref{2.16} becomes the full compressible Euler-Maxwell system.
These fluid systems can also be derived from the VMB system through
the Hilbert expansion. This means that the macro-micro
decomposition \eqref{2.7} in some sense can be viewed as a unification of the classical Hilbert
expansion up to the first order approximation. Therefore, this
decomposition gives a good framework for deriving rigorously the compressible Euler-Maxwell system
from the VMB system.

\section{Main results}\label{sec.3}
In this section, we make use of the macro-micro decomposition introduced in the previous
section to establish our main result that for well-prepared data the smooth solutions of the VMB system \eqref{1.1} exist and converge to smooth solutions of the 
Euler-Maxwell system \eqref{1.3} globally in time, as the Knudsen number $\varepsilon$ goes to zero.
\subsection{Smooth solutions for Euler-Maxwell system}\label{sec.3.1}
Now we construct the  global smooth solutions for the compressible Euler-Maxwell system \eqref{1.3}. For this,
we supplement it with initial data
\begin{equation}
\label{3.2}
(\bar{\rho},\bar{u},\bar{E},\bar{B})(0,x)=(\bar{\rho}_{0},\bar{u}_{0},\bar{E}_{0},\bar{B}_{0})(x).
\end{equation}
Throughout this paper, we assume that $(\bar{\rho}_{0},\bar{u}_{0},\bar{E}_{0},\bar{B}_{0})(x)$ 
satisfies the same assumptions as in \cite[Theorem 1.3]{Ionescu}. Then the Cauchy problem on the compressible
Euler-Maxwell system \eqref{1.3} and \eqref{3.2} admits a unique global-in-time  smooth solution  $(\bar{\rho},\bar{u},\bar{E},\bar{B})(t,x)$.
We note that the system \eqref{2.15}-\eqref{2.16} becomes the full compressible Euler-Maxwell system 
when $\varepsilon$ and $G$ are set to be zero. In order to recover  the isentropic compressible Euler-Maxwell system \eqref{1.3} 
to the full compressible Euler-Maxwell system, we take $\bar{\theta}(t,x)=\frac{3}{2}\bar{\rho}^{2/3}(t,x)$, then
$(\bar{\rho},\bar{u},\bar{\theta},\bar{E},\bar{B})(t,x)$ satisfies the following full compressible Euler-Maxwell system
\begin{align}
\label{3.1}
\left\{
\begin{array}{rl}
\partial_{t}\bar{\rho}+\nabla_{x}\cdot(\bar{\rho}\bar{u})&=0,
\\
\bar{\rho}\bar{u}_{t}+\bar{\rho}(\bar{u}\cdot\nabla_{x}\bar{u})+\nabla_{x}\bar{p}&=-\bar{\rho}(\bar{E}+\bar{u}\times \bar{B}),
\\
\bar{\theta}_{t}+(\bar{u}\cdot\nabla_{x})\bar{\theta}+\frac{2}{3}\bar{\theta}\nabla_{x}\cdot\bar{u}&=0,
\\
\partial_{t}\bar{E}-\nabla_{x}\times \bar{B}&=\bar{\rho}\bar{u},
\quad
\partial_{t}\bar{B}+\nabla_{x}\times \bar{E}=0,
\\
\nabla_{x}\cdot \bar{E}=1-\bar{\rho},\quad \nabla_{x}\cdot \bar{B}&=0,
\end{array} \right.
\end{align}
where the pressure $\bar{p}=\bar{\rho}^{5/3}=\frac{2}{3}\bar{\rho}\bar{\theta}$.
Moreover, we have the following properties of $(\bar{\rho},\bar{u},\bar{\theta},\bar{E},\bar{B})(t,x)$, which can be found in 
\cite[Theorem 1.3]{Ionescu}.
\begin{proposition}\label{prop.3.1}
Let  $(\bar{\rho},\bar{u},\bar{E},\bar{B})(t,x)$ be a global-in-time  smooth solution to
the compressible Euler-Maxwell system \eqref{1.3} and \eqref{3.2}, and let $\bar{\theta}(t,x)=\frac{3}{2}\bar{\rho}^{2/3}(t,x)$,
then the following estimate holds for all $t\geq 0$:
\begin{align}
\label{3.3}
&\|(\bar{\rho}-1,\bar{u},\bar{\theta}-\frac{3}{2},\bar{E},\bar{B})\|_{W^{N_0,2}}
\nonumber\\
&+(1+t)^{\vartheta}\big\{\|(\bar{\rho}-1,\bar{\theta}-\frac{3}{2},\bar{B})\|_{W^{N,\infty}}
+\|(\bar{u},\bar{E})\|_{W^{N+1,\infty}}\big\}\leq C\eta_{0}.
\end{align}
Here $\vartheta=101/100$, $\eta_{0}>0$ is a sufficiently small constant and
$N_0>0$ is a large integer, where integer $N$ satisfies $3\leq N< N_0$.
\end{proposition}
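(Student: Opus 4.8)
The plan is to obtain everything except the temperature bound directly from \cite[Theorem 1.3]{Ionescu}, and then to insert the auxiliary variable $\bar\theta=\frac32\bar\rho^{2/3}$ by a nonlinear composition (Moser) argument; the decay in the second line of \eqref{3.3} with the explicit exponent $\vartheta=101/100$ is produced from the dispersive decay of \cite{Ionescu} by Gagliardo--Nirenberg interpolation against the time-uniform high Sobolev norm. In particular, the existence and uniqueness of the global smooth solution $(\bar\rho,\bar u,\bar E,\bar B)$, its uniform-in-time $W^{N_0,2}$ bound by $C\eta_0$, and a pointwise decay estimate of the form $\|(\bar\rho-1,\bar u,\bar E,\bar B)(t)\|_{W^{k_\ast,\infty}}\le C\eta_0(1+t)^{-1-\delta_0}$ for some fixed $k_\ast$ and some $\delta_0>0$ are all available from \cite[Theorem 1.3]{Ionescu}, after at most an elementary rewriting to match our Sobolev norms.

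First I would set $\sigma:=\bar\rho-1$ and write $\bar\theta-\frac32=\frac32\big((1+\sigma)^{2/3}-1\big)=\frac32\,g(\sigma)$ with $g$ real-analytic near $0$, $g(0)=0$. Since $\|\sigma(t)\|_{L^\infty}\le C\eta_0$ for all $t\ge0$ by Sobolev embedding and the $W^{N_0,2}$ bound, $1+\sigma(t,x)$ remains in a fixed compact subinterval of $(0,\infty)$ uniformly in $(t,x)$, so $g$ and all its derivatives are bounded along the solution. The standard composition inequality in $W^{k,2}$ for $k>3/2$ then gives $\|\bar\theta-\frac32\|_{W^{N_0,2}}\le C\|g(\sigma)\|_{W^{N_0,2}}\le C\|\sigma\|_{W^{N_0,2}}\le C\eta_0$, which combined with the bound on $(\sigma,\bar u,\bar E,\bar B)$ yields the first line of \eqref{3.3}. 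At this point I would also record (this was essentially done just above the statement) that $(\bar\rho,\bar u,\bar\theta,\bar E,\bar B)$ solves the full system \eqref{3.1}: applying $\partial_t+\bar u\cdot\nabla_x$ to $\bar\theta=\frac32\bar\rho^{2/3}$ and using $\partial_t\bar\rho+\nabla_x\cdot(\bar\rho\bar u)=0$ gives $\bar\theta_t+(\bar u\cdot\nabla_x)\bar\theta+\frac23\bar\theta\nabla_x\cdot\bar u=0$, while $\bar p=\bar\rho^{5/3}=\frac23\bar\rho\bar\theta$ reconciles the momentum equation.

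For the second line of \eqref{3.3}, I would interpolate the decaying norm $\|(\sigma,\bar u,\bar E,\bar B)(t)\|_{W^{k_\ast,\infty}}\le C\eta_0(1+t)^{-1-\delta_0}$ against the uniform bound $\|(\sigma,\bar u,\bar E,\bar B)(t)\|_{W^{N_0,2}}\le C\eta_0$ by Gagliardo--Nirenberg, choosing $N$ with $3\le N<N_0$ small enough that the resulting exponent still leaves $\vartheta=101/100<1+\delta_0$; the regularity budget coming from the large integer $N_0$ also accommodates the one extra derivative carried by $(\bar u,\bar E)$ in \eqref{3.3}. This produces the stated decay for $(\sigma,\bar u,\bar E,\bar B)$. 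Passing it to $\bar\theta-\frac32=\frac32 g(\sigma)$ is then immediate: for $|\alpha|\le N$, $\partial^\alpha_x g(\sigma)$ equals $g'(\sigma)\partial^\alpha_x\sigma$ plus a finite sum of products of lower-order derivatives of $\sigma$ with coefficients that are bounded smooth functions of $\sigma$; since $\|\sigma(t)\|_{W^{N,\infty}}$ is small and already decaying, one gets $\|g(\sigma)(t)\|_{W^{N,\infty}}\le C\|\sigma(t)\|_{W^{N,\infty}}$, so $\bar\theta-\frac32$ inherits the rate $(1+t)^{-\vartheta}$. Collecting terms and absorbing constants into $C\eta_0$ finishes \eqref{3.3}.

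The hard part is not an analytic subtlety but the translation from \cite{Ionescu}: the solution there is built and estimated in a framework adapted to the space--time resonance method (profiles in weighted $L^2$ together with a $Z$-type norm) rather than in plain Sobolev spaces, so one must read off the precise pointwise decay rate $1+\delta_0$ and the regularity it consumes, confirm that $1+\delta_0>101/100$ (their dispersive rate is essentially $t^{-1}$ with at most an arbitrarily small loss, leaving ample room), and check that the derivative asymmetry between $(\bar u,\bar E)$ and $(\bar\rho,\bar\theta,\bar B)$ in \eqref{3.3} is consistent with their estimates. Once those facts are extracted, the composition and interpolation steps are routine and, crucially, uniform in $t\ge0$ because the smallness $\eta_0$ keeps $\bar\rho$ bounded away from $0$ and $\infty$ for all time.
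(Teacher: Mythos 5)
Your proposal is essentially the paper's own treatment: the paper gives no proof of Proposition \ref{prop.3.1} beyond quoting \cite[Theorem 1.3]{Ionescu} for $(\bar\rho,\bar u,\bar E,\bar B)$ — whose statement already contains the uniform high Sobolev bound, the $(1+t)^{-101/100}$ decay with precisely the derivative asymmetry appearing in \eqref{3.3}, and a large number of derivatives in the decay norm — while the bound for $\bar\theta-\tfrac32=\tfrac32\big((1+(\bar\rho-1))^{2/3}-1\big)$ and the passage to the full system \eqref{3.1} are exactly the trivial composition/chain-rule observations you describe. The one caution is that your Gagliardo--Nirenberg step is both unnecessary and potentially lossy: interpolating a decaying low-order $L^\infty$ norm against the merely bounded $H^{N_0}$ norm strictly shrinks the decay exponent, so if the rate extracted from \cite{Ionescu} were only marginally above $101/100$ you could not recover $\vartheta=101/100$ at higher regularity this way — instead the $W^{N,\infty}$ and $W^{N+1,\infty}$ decay should simply be read off from the regularity already carried by the decay estimate in \cite[Theorem 1.3]{Ionescu} (which is why the restriction $3\le N<N_0$ costs nothing).
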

We recall that the main goal of this paper is to show that the smooth solution $F^{\varepsilon}(t,x,v)$ in \eqref{1.1} converges to a local Maxwellian
\begin{equation}
\label{3.4}
\overline{M}\equiv M_{[\bar{\rho},\bar{u},\bar{\theta}]}(t,x,v):
=\frac{\bar{\rho}(t,x)}{\sqrt{(2\pi R\overline{\theta}(t,x))^{3}}}\exp\big\{-\frac{|v-\overline{u}(t,x)|^2}{2R\overline{\theta}(t,x)}\big\},
\end{equation}
as $\varepsilon\to 0$, where $(\bar{\rho},\bar{u},\bar{\theta})(t,x)$ is given in Proposition \ref{prop.3.1}. For this, 
we shall supplement \eqref{1.1} with a specific initial valve
\begin{equation}
\label{3.6}
F^{\varepsilon}(0,x,v)\equiv M_{[\bar{\rho},\bar{u},\bar{\theta}]}(0,x,v)\geq 0,\quad (E^{\varepsilon},B^{\varepsilon})(0,x)\equiv(\bar{E},\bar{B})(0,x),
\end{equation}
where $(\bar{\rho},\bar{u},\bar{E},\bar{B})(0,x)$ is given in \eqref{3.2} and $\bar{\theta}(0,x)=\frac{3}{2}\bar{\rho}^{2/3}(0,x)$.
\subsection{Perturbation equations}\label{seca.5.1}
As in \cite{Duan-Yang-Yu}, associated with the constant state $(1,0,3/2)$,
we introduce  a normalized global Maxwellian 
\begin{equation}
\label{3.5}
\mu\equiv M_{[1,0,\frac{3}{2}]}(v):=(2\pi)^{-\frac{3}{2}}\exp\big(-\frac{|v|^{2}}{2}\big).
\end{equation}
Then we define the microscopic perturbation
\begin{equation}
\label{4.5}
\sqrt{\mu}f(t,x,v)=G(t,x,v)-\overline{G}(t,x,v),
\end{equation}
and the macroscopic perturbation
\begin{equation}
\label{4.1}
{(\widetilde{\rho},\widetilde{u},\widetilde{\theta},\widetilde{E},\widetilde{B})(t,x)
=(\rho-\bar{\rho},u-\bar{u},\theta-\bar{\theta},E-\overline{E},B-\bar{B})(t,x),}
\end{equation}
where  $\overline{G}(t,x,v)$ is given by
\begin{equation}
\label{4.6}
\overline{G}(t,x,v)\equiv\varepsilon L_{M}^{-1}P_{1}\big\{v\cdot(\frac{|v-u|^{2}
\nabla_{x}\bar{\theta}}{2R\theta^{2}}+\frac{(v-u)\cdot\nabla_{x}\bar{u}}{R\theta})M\big\}.
\end{equation}
Note that the similar correction function $\overline{G}(t,x,v)$ was first introduced by Liu-Yang-Yu-Zhao \cite{Liu-Yang-Yu-Zhao} for the stability of the rarefaction wave to the one-dimensional Boltzmann equation.

Let's now derive the equations of the microscopic perturbation and the macroscopic perturbation. Note form \eqref{4.5} that $L_{M}G=L_{M}\overline{G}+L_{M}(\sqrt{\mu}f)$, we can rewrite \eqref{2.10} as
\begin{multline}
\label{4.7}
\partial_{t}G+P_{1}(v\cdot\nabla_{x}G)-(E+v\times B)\cdot\nabla_{v}G=\frac{1}{\varepsilon}L_{M}(\sqrt{\mu}f)+\frac{1}{\varepsilon}Q(G,G)
\\
-P_{1}\big\{v\cdot(\frac{|v-u|^{2}\nabla_{x}\widetilde{\theta}}{2R\theta^{2}}
+\frac{(v-u)\cdot\nabla_{x}\widetilde{u}}{R\theta})M\big\},
\end{multline}
where we have used \eqref{4.6}, \eqref{2.5} and 
\begin{equation}
\label{4.8}
P_{1}(v\cdot\nabla_{x}M)=P_{1}\big\{v\cdot(\frac{|v-u|^{2}
\nabla_{x}\widetilde{\theta}}{2R\theta^{2}}+\frac{(v-u)\cdot\nabla_{x}\widetilde{u}}{R\theta})M\big\}
+\frac{1}{\varepsilon}L_{M}\overline{G}.
\end{equation}
Inspired by \cite{Guo-Indiana}, denoting
\begin{equation}
\label{4.9}
\Gamma(h,g):=\frac{1}{\sqrt{\mu}}Q(\sqrt{\mu}h,\sqrt{\mu}g),
\quad \mathcal{L}h:=\Gamma(h,\sqrt{\mu})+\Gamma(\sqrt{\mu},h),
\end{equation}
then we can derive from this and \eqref{2.9} that
\begin{equation}
\label{4.10}
\frac{1}{\sqrt{\mu}}L_{M}(\sqrt{\mu}f)=\mathcal{L}f+\Gamma(\frac{M-\mu}{\sqrt{\mu}},f)
+\Gamma(f,\frac{M-\mu}{\sqrt{\mu}}).
\end{equation}
Hence, using \eqref{4.7}, \eqref{4.9}, \eqref{4.10}, \eqref{4.5} and the fact $P_{1}=I-P_{0}$ in \eqref{2.5}, 
we obtain the system for $f(t,x,v)$ as 
\begin{align}
\label{4.11}
&\partial_{t}f+v\cdot\nabla_{x}f-\frac{(E+v\times B)\cdot\nabla_{v}(\sqrt{\mu}f+\overline{G})}{\sqrt{\mu}}
-\frac{1}{\varepsilon}\mathcal{L}f
\nonumber\\
&=\frac{1}{\varepsilon}\Gamma(\frac{M-\mu}{\sqrt{\mu}},f)
+\frac{1}{\varepsilon}\Gamma(f,\frac{M-\mu}{\sqrt{\mu}})
+\frac{1}{\varepsilon}\Gamma(\frac{G}{\sqrt{\mu}},\frac{G}{\sqrt{\mu}})+\frac{P_{0}[v\cdot\nabla_{x}(\sqrt{\mu}f)]}{\sqrt{\mu}}
\nonumber\\
&\hspace{0.5cm}-\frac{P_{1}(v\cdot\nabla_{x}\overline{G})}{\sqrt{\mu}}-\frac{\partial_{t}\overline{G}}{\sqrt{\mu}}	-\frac{1}{\sqrt{\mu}}P_{1}\big\{v\cdot(\frac{|v-u|^{2}\nabla_{x}\widetilde{\theta}}{2R\theta^{2}}+\frac{(v-u)\cdot\nabla_{x}\widetilde{u}}{R\theta})M\big\}.
\end{align}
Similarly, we can rewrite  the first equation of \eqref{1.1} as
\begin{multline}
\label{4.12}
\frac{\partial_{t}F}{\sqrt{\mu}}+\frac{v\cdot\nabla_{x}F}{\sqrt{\mu}}-\frac{(E+v\times B)\cdot\nabla_{v}F}{\sqrt{\mu}}
=\frac{1}{\varepsilon}\mathcal{L}f+\frac{1}{\varepsilon}\Gamma(\frac{M-\mu}{\sqrt{\mu}},f)
\\
+\frac{1}{\varepsilon}\Gamma(f,\frac{M-\mu}{\sqrt{\mu}})+\frac{1}{\varepsilon}\Gamma(\frac{G}{\sqrt{\mu}},\frac{G}{\sqrt{\mu}})+\frac{1}{\varepsilon}\frac{L_{M}\overline{G}}{\sqrt{\mu}}.
\end{multline}
We recall the properties of the linearized operator $\mathcal{L}$ defined as \eqref{4.9} in \cite{Guo-Indiana}.
Note that $\mathcal{L}$ is self-adjoint and non-positive definite, and its null space $\ker\mathcal{L}$ is spanned by the basis $\{\sqrt{\mu},v\sqrt{\mu},|v|^{2}\sqrt{\mu}\}$. 
Moreover, for any $g\in (\ker\mathcal{L})^{\perp}$, there exists a generic constant $c_1>0$ such that
\begin{equation}
\label{4.28}
-\langle\mathcal{L}g, g \rangle\geq c_1|g|^{2}_{\nu}=c_1\int_{\mathbb{R}^{3}}\nu(v)|g|^{2}\,dv,
\end{equation}
where $\nu(v)\approx(1+|v|)$ is the collision frequency for the hard sphere model. We also define the weighted dissipation norms as
\begin{equation}
\label{1.4}
\|g\|_{\nu}^{2}\equiv\int_{\mathbb{R}^{3}}\int_{\mathbb{R}^{3}}\nu(v)|g|^{2}\,dv\,dx.
\end{equation}
\begin{remark}
\label{rem3.2}
Since $G$ and $\overline{G}$ are purely microscopic, then the unknown $f(t,x,v)$ in \eqref{4.5}  is purely	microscopic, namely $f(t,x,v)\in (\ker\mathcal{L})^{\perp}$. This is essentially different from the one in \cite{Guo-Indiana}
since $f(t,x,v)$  used  in \cite{Guo-Indiana} involves the macroscopic part.
\end{remark}
By \eqref{4.1}, \eqref{2.15}, \eqref{2.16} and \eqref{3.1},
we obtain the system for $(\widetilde{\rho},\widetilde{u},\widetilde{\theta},\widetilde{E},\widetilde{B})$ as 
\begin{align}
\label{4.2}
\left\{
\begin{array}{rl}
&\partial_{t}\widetilde{\rho}
+u\cdot\nabla_{x}\widetilde{\rho}+\bar{\rho}\nabla_{x}\cdot\widetilde{u}+\widetilde{u}\cdot\nabla_{x}\bar{\rho}+\widetilde{\rho}\nabla_{x}\cdot u=0,
\\
&\partial_{t}\widetilde{u}_{i}+u\cdot\nabla_{x}\widetilde{u}_{i}+\widetilde{u}\cdot\nabla_{x}\bar{u}_{i}+\frac{2\bar{\theta}}{3\bar{\rho}}
\partial_{x_{i}}\widetilde{\rho}
+\frac{2}{3}\partial_{x_{i}}\widetilde{\theta}+\frac{2}{3}(\frac{\theta}{\rho}-\frac{\bar{\theta}}{\bar{\rho}})\partial_{x_{i}}\rho
\\
&\hspace{3cm}=-(\widetilde{E}+u\times \widetilde{B}+\widetilde{u}\times\bar{B})_{i}+
\varepsilon\frac{1}{\rho}\sum^{3}_{j=1}\partial_{x_{j}}(\mu(\theta)D_{ij})
\\
&\hspace{3.5cm}-\frac{1}{\rho}\int_{\mathbb{R}^{3}} v_{i}(v\cdot\nabla_{x}L^{-1}_{M}\Theta)\,dv, \quad i=1,2,3,
\\
&\partial_{t}\widetilde{\theta}+u\cdot\nabla_{x}\widetilde{\theta}+\frac{2}{3}\bar{\theta}\nabla_{x}\cdot \widetilde{u}
+\widetilde{u}\cdot\nabla_{x}\bar{\theta}+\frac{2}{3}\widetilde{\theta}\nabla_{x}\cdot u
\\
&\hspace{3cm}=\varepsilon\frac{1}{\rho}\sum^{3}_{j=1}\partial_{x_{j}}(\kappa(\theta)\partial_{x_{j}}\theta)+\varepsilon\frac{1}{\rho}\sum^{3}_{i,j=1}\mu(\theta)\partial_{x_{j}}u_{i}D_{ij}
\\
&\hspace{3.5cm}-\frac{1}{\rho}\int_{\mathbb{R}^{3}} \frac{1}{2}|v|^{2} v\cdot\nabla_{x}L^{-1}_{M}\Theta\,dv
+\frac{1}{\rho}u\cdot\int_{\mathbb{R}^{3}} v\otimes v\cdot\nabla_{x}L^{-1}_{M}\Theta\,dv,
\end{array} \right.
\end{align}
where the electromagnetic field $(\widetilde{E},\widetilde{B})$ satisfies
\begin{align}
\label{4.3}
\left\{
\begin{array}{rl}
&\partial_{t}\widetilde{E}-\nabla_{x}\times \widetilde{B}=\rho u-\bar{\rho}\bar{u},
\\
&\partial_{t}\widetilde{B}+\nabla_{x}\times \widetilde{E}=0,
\\
&\nabla_{x}\cdot \widetilde{E}=-\widetilde{\rho},\quad \nabla_{x}\cdot \widetilde{B}=0.
\end{array} \right.
\end{align}
In the late energy analysis, we will focus on these perturbation systems.

In order to prove the almost global-in-time solutions for the VMB system \eqref{1.1} and \eqref{3.6}, a key point is
to establish uniform energy estimates on the macroscopic part $(\widetilde{\rho},\widetilde{u},\widetilde{\theta},\widetilde{E},\widetilde{B})$
and the microscopic part $f$. For this, we define the  instant energy as
\begin{align}
\label{4.14}
\mathcal{E}_{N}(t)\equiv&\sum_{|\alpha|\leq N-1}\{\|\partial^{\alpha}(\widetilde{\rho},\widetilde{u},\widetilde{\theta},\widetilde{E},\widetilde{B})(t)\|^{2}+\|\partial^{\alpha}f(t)\|^{2}\}+\sum_{|\alpha|+|\beta|\leq N,|\beta|\geq1}\|\partial^{\alpha}_{\beta}f(t)\|^{2}
\nonumber\\
&+\varepsilon^{2}\sum_{|\alpha|=N}\{\|\partial^{\alpha}(\widetilde{\rho},\widetilde{u},\widetilde{\theta},\widetilde{E},\widetilde{B})(t)\|^{2}+\|\partial^{\alpha}f(t)\|^{2}\},
\end{align}
and the dissipation rate as
\begin{align}
\label{4.15}
\mathcal{D}_{N}(t)&\equiv\varepsilon\sum_{1\leq|\alpha|\leq N}
\|\partial^{\alpha}(\widetilde{\rho},\widetilde{u},\widetilde{\theta})(t)\|^{2}
+\varepsilon\sum_{|\alpha|=N}\|\partial^{\alpha}f(t)\|_{\nu}^{2}
\nonumber\\
&\quad+\frac{1}{\varepsilon}\sum_{|\alpha|\leq N-1}\|\partial^{\alpha}f(t)\|_{\nu}^{2}
+\frac{1}{\varepsilon}\sum_{|\alpha|+|\beta|\leq N,|\beta|\geq1}\|\partial^{\alpha}_{\beta}f(t)\|_{\nu}^{2}.
\end{align}
Throughout this article, we assume $N\geq 3$ corresponding to the one in \eqref{3.3}.  

\subsection{Main theorem}\label{sec.3.2}
Our main result is given as follows.

\begin{theorem}\label{thm3.1}
Let $(\bar{\rho},\bar{u},\bar{\theta},\bar{E},\bar{B})(t,x)$  
be a global smooth solution to the compressible Euler-Maxwell system given in Proposition \ref{prop.3.1}.
Construct a  local Maxwellian $M_{[\bar{\rho},\bar{u},\bar{\theta}]}(t,x,v)$  as in \eqref{3.4}. Then there exists a small constant $\varepsilon_{0}>0$
such that for each $\varepsilon\in(0,\varepsilon_{0}]$,  the Cauchy problem on the Vlasov-Maxwell-Boltzmann system \eqref{1.1} and \eqref{3.6} admits a unique smooth solution $(F^{\varepsilon}(t,x,v),E^{\varepsilon}(t,x),B^{\varepsilon}(t,x))$ for all $t\in[0,T_{max}]$ with
\begin{equation}
\label{3.7a}
T_{max}=\frac{1}{4C_1}\frac{1}{\eta_{0}\varepsilon^a+\varepsilon^{\frac{1}{2}-a}}, \quad \mbox{for} \quad a\in[0,\frac{1}{2}),
\end{equation}
where generic constant $C_1>1$ and small constant $\eta_{0}>0$  are independent of $\varepsilon$.
Moreover, it holds that $F^{\varepsilon}(t,x,v)\geq 0$ and
\begin{equation}
\label{3.8}
\mathcal{E}_{N}(t)+\frac{1}{2}\int^{t}_{0}\mathcal{D}_{N}(s)\,ds\leq \frac{1}{2}\varepsilon^{2-2a},
\end{equation}
for any $t\in[0,T_{max}]$.
In particular, there exists a constant $C>0$ independent of $\varepsilon$ and $T_{max}$ such that
\begin{align}
\label{3.7}
&\sup_{t\in[0,T_{max}]}\{\|\frac{F^{\varepsilon}(t,x,v)-M_{[\bar{\rho},\bar{u},\bar{\theta}]}(t,x,v)}{\sqrt{\mu}}\|_{L_{x}^{2}L_{v}^{2}}
+\|\frac{F^{\varepsilon}(t,x,v)-M_{[\bar{\rho},\bar{u},\bar{\theta}]}(t,x,v)}{\sqrt{\mu}}
\|_{L_{x}^{\infty}L_{v}^{2}}\}
\nonumber\\
&+\sup_{t\in[0,T_{max}]}\{\|(E^{\varepsilon}-\bar{E},B^{\varepsilon}-\bar{B})(t,x)\|_{L_{x}^{2}}
+\|(E^{\varepsilon}-\bar{E},B^{\varepsilon}-\bar{B})(t,x)\|_{L_{x}^{\infty}}\}\leq C\varepsilon^{1-a}.
\end{align}
\end{theorem}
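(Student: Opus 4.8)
The plan is to prove Theorem~\ref{thm3.1} by a continuity (bootstrap) argument on the \emph{a priori} estimate \eqref{3.8}. Local existence for the VMB system \eqref{1.1} with the specific data \eqref{3.6} is standard once one works in the perturbation variables $(\widetilde\rho,\widetilde u,\widetilde\theta,\widetilde E,\widetilde B,f)$ governed by \eqref{4.2}--\eqref{4.3} and \eqref{4.11}; note that with the data \eqref{3.6} one has $F^\varepsilon(0)=M_{[\bar\rho,\bar u,\bar\theta]}(0)$, so $G(0)=0$ and hence $f(0)=-\varepsilon L_M^{-1}P_1\{\cdots\}/\sqrt\mu$ at $t=0$, giving $\mathcal E_N(0)\lesssim \varepsilon^2 \ll \varepsilon^{2-2a}$; similarly $\widetilde E(0)=\widetilde B(0)=0$ and $\widetilde\rho(0)=\widetilde u(0)=\widetilde\theta(0)=0$. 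One then assumes \eqref{3.8} holds on $[0,T]$ with the constant $\tfrac12$ replaced by $1$, uses it to control all nonlinear and background-dependent terms, and closes the estimate with the sharper constant $\tfrac12$ on a time interval of length $T_{\max}$ as in \eqref{3.7a}; the explicit form of $T_{\max}$ will come out of balancing the ``good'' terms (size $\eta_0$ from Proposition~\ref{prop.3.1}, scaling like $\varepsilon^a$ against the energy amplitude) against the ``bad'' cubic-in-velocity source terms (scaling like $\varepsilon^{1/2-a}$). The heart of the argument is therefore the set of uniform-in-$\varepsilon$ energy estimates, which I expect to be carried out across Sections~\ref{seca.4}--\ref{seca.5} and invoked here.

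The energy estimates split into a macroscopic part and a microscopic part. For the macroscopic system \eqref{4.2}--\eqref{4.3} I would run the classical symmetrizer/energy method for the compressible Euler-Maxwell structure: apply $\partial^\alpha$ for $|\alpha|\le N-1$ (and separately $|\alpha|=N$ with the $\varepsilon^2$ weight as in \eqref{4.14}), multiply the density/velocity/temperature equations by the natural weights ($\tfrac{2\bar\theta}{3\bar\rho}$-type coefficients) to symmetrize, and pair the Maxwell part $(\widetilde E,\widetilde B)$ using the skew-symmetry of $\nabla_x\times$ together with the constraint $\nabla_x\cdot\widetilde E=-\widetilde\rho$ to extract dissipation of $\widetilde\rho$. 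The viscous/heat terms $\varepsilon\,\partial_{x_j}(\mu(\theta)D_{ij})$, $\varepsilon\,\partial_{x_j}(\kappa(\theta)\partial_{x_j}\theta)$ supply the $\varepsilon\sum_{1\le|\alpha|\le N}\|\partial^\alpha(\widetilde\rho,\widetilde u,\widetilde\theta)\|^2$ dissipation in \eqref{4.15} after integration by parts, while the remaining terms $-\tfrac1\rho\int v_i(v\cdot\nabla_x L_M^{-1}\Theta)\,dv$ etc.\ are treated as source terms: using \eqref{2.12} to expand $\Theta$ and the coercivity/regularity estimates on $L_M^{-1}$ (from Section~\ref{seca.4}), these are bounded by $\mathcal D_N$ plus lower-order contributions. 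Crucially, all coefficients involving $(\bar\rho,\bar u,\bar\theta,\bar E,\bar B)$ and their derivatives are controlled by $C\eta_0$ and the decay $(1+t)^{-\vartheta}$ from \eqref{3.3}, which is what makes the $\eta_0\varepsilon^a$ term in $T_{\max}$ appear.

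For the microscopic perturbation $f$, since $f\in(\ker\mathcal L)^\perp$ by Remark~\ref{rem3.2}, the coercivity \eqref{4.28} of $\mathcal L$ gives, after applying $\partial^\alpha_\beta$ to \eqref{4.11} and pairing with $\partial^\alpha_\beta f$, a dissipation term $\tfrac{1}{\varepsilon}\|\partial^\alpha_\beta f\|_\nu^2$ matching \eqref{4.15}; the $v$-derivative terms ($|\beta|\ge1$) require the usual weighted estimates where the commutator $[\partial_\beta,\mathcal L]$ and the streaming term $v\cdot\nabla_x$ are absorbed using the $\frac1\varepsilon$-large dissipation at lower order, a standard hierarchy argument in $|\beta|$. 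The genuinely new difficulty — and the one flagged in the abstract and the introduction (Section~\ref{sec.3.3}) — is the \textbf{cubic velocity growth} in the source terms of \eqref{4.11}: the term $\tfrac{1}{\sqrt\mu}P_1\{v\cdot(\tfrac{|v-u|^2\nabla_x\widetilde\theta}{2R\theta^2}+\cdots)M\}$ and the correction-function terms $\partial_t\overline G/\sqrt\mu$, $P_1(v\cdot\nabla_x\overline G)/\sqrt\mu$ (recall $\overline G$ from \eqref{4.6} already carries a $v\cdot(|v-u|^2\cdots)$ factor, so $v\cdot\nabla_x\overline G$ has $|v|^4$-type growth against $M/\sqrt\mu\sim\mu^{1/2}\times(\text{polynomial})$). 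The ratio $M/\sqrt\mu$ is integrable only because $M$ is close to $\mu$; one must track the difference $\tfrac{M-\mu}{\sqrt\mu}$ carefully, and here the $\varepsilon$-prefactors matter: $\overline G=O(\varepsilon)$, so its contributions to the $f$-equation are $O(1)$ after the $\tfrac1\varepsilon$ in front of $\mathcal L$ is accounted for, and one needs the smallness $\mathcal E_N\lesssim\varepsilon^{2-2a}$ plus the background smallness $\eta_0$ to close. This is where the $\varepsilon^{1/2-a}$ term in $T_{\max}$ originates. Once the combined estimate $\tfrac{d}{dt}\mathcal E_N + c\,\mathcal D_N \le C_1(\eta_0\varepsilon^a+\varepsilon^{1/2-a})\mathcal E_N + (\text{small})$ is established, integrating over $[0,T_{\max}]$ with $\mathcal E_N(0)\lesssim\varepsilon^2$ and the definition \eqref{3.7a} yields \eqref{3.8}; positivity $F^\varepsilon\ge0$ follows from the fact that the constructed solution stays close in high Sobolev norm to the positive Maxwellian $M_{[\bar\rho,\bar u,\bar\theta]}$ (so $F^\varepsilon = M + \overline G + \sqrt\mu f$ with the last two terms small in $L^\infty_x L^2_v$ and hence, via Sobolev embedding in $x$ and the weighted structure, $F^\varepsilon\ge0$); and \eqref{3.7} is read off from \eqref{3.8} by noting $\tfrac{F^\varepsilon-M_{[\bar\rho,\bar u,\bar\theta]}}{\sqrt\mu}=\tfrac{M-\overline M}{\sqrt\mu}+\tfrac{\overline G}{\sqrt\mu}+f$ together with $\|(M-\overline M)/\sqrt\mu\|\lesssim\|(\widetilde\rho,\widetilde u,\widetilde\theta)\|\lesssim\varepsilon^{1-a}$, $\|\overline G/\sqrt\mu\|\lesssim\varepsilon$, $\|f\|\lesssim\varepsilon^{1-a}$, and the Sobolev embedding $\|\cdot\|_{L^\infty_x}\lesssim\|\cdot\|_{H^2_x}$ for the $L^\infty_x$ bounds since $N\ge3$.
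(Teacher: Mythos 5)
Your overall route is the same as the paper's: you bootstrap on the a priori bound \eqref{4.13} with $\mathcal{E}_{N}(0)\le C\eta_0^2\varepsilon^{2}$ from the well-prepared data \eqref{3.6}, run the macro--micro energy estimates (symmetrized fluid estimates with the Maxwell coupling and the Gauss law giving the $\widetilde\rho$ and $\nabla_x\cdot\widetilde E$ dissipation, microscopic coercivity of $\mathcal L$ on $(\ker\mathcal L)^{\perp}$ with the $\varepsilon^{-1}$ weight, the correction $\overline G$ absorbing the dangerous linear cubic-in-$v$ terms), arrive at an integrated inequality whose error terms split into an $O\big((\eta_0+\varepsilon^{1/2-a})\big)$ multiple of $\int_0^t\mathcal D_N$ plus a linearly growing term of size $(\eta_0\varepsilon^a+\varepsilon^{1/2-a})t\,\varepsilon^{2-2a}$, and close on $[0,T_{max}]$ exactly as in \eqref{6.1a}--\eqref{3.8}; the convergence rate \eqref{3.7} is then read off from $F-M_{[\bar\rho,\bar u,\bar\theta]}=(M-\overline M)+\overline G+\sqrt\mu f$ with Lemma \ref{lem5.3} and Sobolev embedding, as in the paper. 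Up to bookkeeping (the paper works with time-integrated estimates and auxiliary functionals such as $E_\alpha(t)$ rather than a single differential inequality), this is the same proof.

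The one step that does not work as you wrote it is the positivity $F^{\varepsilon}\ge 0$. You argue that $F^\varepsilon=M+\overline G+\sqrt\mu f$ with the last two terms small in $L^\infty_xL^2_v$ forces $F^\varepsilon\ge0$, ``via Sobolev embedding and the weighted structure.'' This fails for two reasons. First, smallness in $L^2_v$ (even with polynomial weights) gives no pointwise-in-$v$ control, so it cannot dominate a pointwise sign condition. Second, and more fundamentally, even a pointwise bound $|\sqrt\mu f|\le \delta\sqrt\mu$ with $\delta$ arbitrarily small cannot be absorbed by $M$: here $R=2/3$ and $1<\theta<2$ by \eqref{4.16}, so $M_{[\rho,u,\theta]}\sim\exp\{-|v-u|^2/(2R\theta)\}$ with $2R\theta<8/3<4$, which decays strictly faster in $|v|$ than $\sqrt\mu\sim\exp\{-|v|^2/4\}$; hence $\delta\sqrt\mu$ exceeds $M$ for large $|v|$ no matter how small $\delta$ is, and the same applies to $\overline G$, which by \eqref{4.21} only carries Maxwellian-type decay $M^{1-\epsilon}$. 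So no smallness estimate in these norms yields nonnegativity. The correct (and standard) route — implicitly used in the paper through its citation of the local existence theory of Guo and Strain — is that nonnegativity is propagated by the construction of the local solution itself: the initial datum \eqref{3.6} is a nonnegative local Maxwellian, and for the cutoff hard-sphere collision operator the iteration scheme producing the local solution preserves nonnegativity (the gain term is nonnegative and the loss term enters linearly in the mild formulation); the continuity argument then carries $F^\varepsilon\ge0$ to all of $[0,T_{max}]$. You should replace your positivity argument by this construction-based one; the rest of your proposal stands.
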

\begin{remark}
Theorem \ref{thm3.1} shows that both the validity time $T_{max}$ and the uniform convergence rate in small Knudsen number $\varepsilon$ can be variable with
respect to the  parameter $a\in[0,\frac{1}{2})$.
If we take $a\in(0,\frac{1}{2})$,
$T_{max}\to +\infty$ as $\varepsilon\to 0$. This implies  the global-in-time convergence from the VMB system to the Euler-Maxwell system.
\end{remark}

\begin{remark}	
Although only the hard sphere model is considered in this paper, we believe that our method can be applied to the Boltzmann operator for general soft potentials with or without angular cutoff or even to the Landau operator with Coulomb potentials, cf.~\cite{Duan-Yang-Yu}.
\end{remark}

\subsection{Key points of the proof}\label{sec.3.3}
We are now in a position to make some comments on the analysis of this paper. 
We shall point out main difficulties to be overcome as well as main differences of the proof compared to the previous results
\cite{Guo-Xiao}. Recall that by using the $L^{2}-L^{\infty}$ framework developed in \cite{Guo-Jang-Jiang,Guo-Jang-Jiang-2010,Guo-Jang}, together with 
the Glassey-Strauss representation of the electromagnetic field introduced in \cite{Glassey,Glassey-1996}, 
Guo-Xiao \cite{Guo-Xiao} proved the hydrodynamic limit from the relativistic VMB system to the relativistic compressible Euler-Maxwell system.
However, the approach in \cite{Guo-Xiao} is again not available for the non-relativistic VMB system 
because their analysis depends crucially on the Glassey-Strauss representation of the electromagnetic field, which fails for the
non-relativistic case. Indeed, in view of the Glassey-Strauss representation, the electric field $E$
in the relativistic case can be expressed as follows:
\begin{equation}
\label{3.16}
4\pi E(t,x)=-\int_{|y-x|\leq t}\int_{{\mathbb R}^3}\frac{(\omega+\hat{v})(1-|\hat{v}|^2)}{(1+\hat{v}\cdot \omega)^2}
F(t-|y-x|,y,v)\,dv\frac{dy}{|y-x|^2}+\mbox{other terms},
\end{equation}
with $\hat{v}=\frac{v}{\sqrt{1+|v|^2}}$ and $\omega=\frac{y-x}{|y-x|}$. A similar representation also holds for the magnetic field $B$.
Interested readers may refer to \cite{Glassey} and \cite[Chapter 5]{Glassey-1996} for more detailed representation.
The relativistic velocity $\hat{v}$ is bounded, so the expression $1+\hat{v}\cdot \omega$ is bounded away from $0$.
This fact is crucial in the analysis in \cite{Guo-Xiao}. However, the corresponding expression in the non-relativistic case is $1+v\cdot \omega$.
The expression $1+v\cdot \omega$ could be $0$ due to unbounded velocity $v$. This creates singularities in \eqref{3.16}, such as
$(1+v\cdot \omega)^{-2}$. Hence, the strategy in \cite{Guo-Xiao} cannot be applied directly to the non-relativistic case, and ones 
have to develop  new ideas.

Our strategy is based on nonlinear energy method in the high order Sobolev space instead of the $L^{2}-L^{\infty}$ method.
This  method is based on the macro-micro decomposition in \cite{Liu-Yu,Liu-Yang-Yu} 
with respect to the local Maxwellian $M$  determined by the solution of kinetic equations,
namely
\begin{equation}
\label{3.26B}
F=M+G.
\end{equation}
Under this decomposition, we can rewrite the VMB system as a compressible Navier-Stokes-Maxwell-type system with the non-fluid component
appearing in the conservative source terms, cf. \eqref{2.15}, coupled with an equation for the
non-fluid component, cf. \eqref{2.10}. In this way, one advantage is that the viscosity and heat conductivity coefficients can be expressed explicitly so that the energy analysis in the context of the viscous compressible fluid can be applied to capture the dissipation of the fluid part. The other advantage is that the  nonlinear term $Q(G,G)$ in \eqref{2.10} depends only on the microscopic part $G$. 

Similar for showing the compressible fluid limit for smooth solutions to the Landau equation in \cite{Duan-Yang-Yu},
we need to  consider the subtraction
of $G$ by $\overline{G}$ as \eqref{4.6} to remove the linear terms, because those terms induce
the energy term  $\varepsilon\|(\nabla_{x}\bar{\theta},\nabla_{x}\bar{u})\|^2$ in the $L^2$ estimate
that is  out of control by $O(\varepsilon^{2-2a})$ corresponding to \eqref{4.13}.
Hence we set $\sqrt{\mu}f=G-\overline{G}$ and then improve the decomposition \eqref{3.26B} to the decomposition
\begin{equation}
\label{3.26a}
F=M+\overline{G}+\sqrt{\mu}f.
\end{equation}
One key observation is that the term $\overline{G}$ can be represented precisely by using the
Burnett functions so that its estimates can be made by using the properties of
Burnett functions, cf. Lemma \ref{lem5.3}. Another key observation is that 
$f\in (\ker\mathcal{L})^{\perp}$ in \eqref{4.5} is purely
microscopic such that the estimate
\begin{equation*}
-\frac{1}{\varepsilon}(\mathcal{L}\partial^{\alpha}f,\partial^{\alpha}f)\geq c_1\frac{1}{\varepsilon}\|\partial^{\alpha}f\|^{2}_{\nu}
\end{equation*}
holds ture, which is again not true for the one in \cite{Guo-Indiana}. 
This is convenient for obtaining the  $\varepsilon$-singularity trilinear estimate  because the macroscopic part would not longer appear.


For the integral terms involving the inverse of the linearized operator $L_M$ around the local Maxwellian,
we make use of the Burnett functions and velocity-decay properties to deal with them,
so that the estimates can be obtained in a clear way, see \eqref{6.25}-\eqref{5.27A} for details.
For the $N$-order space derivative estimate on the term
$\frac{1}{\varepsilon}(\Gamma(\frac{M-\mu}{\sqrt{\mu}},\partial^{\alpha}f),\frac{I_{2}}{\sqrt{\mu}})$ in \eqref{5.29},
we must move one derivative from the  $N$-order derivative $\partial^{\alpha}f$ to the other component of the inner product by integration by part, see \eqref{5.32}.
This technique is also used for the electromagnetic field terms, see \eqref{5.60}. To overcome the difficulties 
caused by the electromagnetic field, we employ some useful techniques developed by Guo \cite{Guo-2003} for a global
Maxwellian. Because of the singular factor $\varepsilon^{-1}$ in front of the fluid part $\|\partial^{\alpha}(\widetilde{\rho},\widetilde{u},\widetilde{\theta})\|^{2}$ in \eqref{4.44}, one has to multiply the estimate \eqref{4.44} by $\varepsilon^2$ so that the fluid term can be controlled by $\mathcal{D}_N(t)$ in \eqref{4.15}. 
Hence, this motives how we include the Knudsen number $\varepsilon$ to the highest-order derivative $|\alpha|=N$ for the instant energy  $\mathcal{E}_N(t)$ in \eqref{4.14}.

Another important ingredient of the proof is that the an $\varepsilon$-dependent priori assumption \eqref{4.13}
involving a parameter $a\in[0,\frac{1}{2})$. 
Under this a priori assumption, we have to handle energy estimates carefully. 
The desired goal is to obtain the uniform a priori estimate \eqref{3.8} and then derive the convergence rate \eqref{3.7}.

\section{Preliminaries}\label{seca.4}
In  this section, we give some basic estimates which will be used frequently in the later energy
analysis.
We should emphasize that in all estimates below, all constants $C>0$ 
are independent of $t$ and sufficiently small parameters both $\varepsilon$ and $\eta_0$.

\subsection{A priori assumption}\label{seca.4.2}
To prove the global existence of the solution for the VMB system in Theorem \ref{thm3.1}, a key point is to establish the a priori energy estimates
on the macroscopic part $(\widetilde{\rho},\widetilde{u},\widetilde{\theta},\widetilde{E},\widetilde{B})$
and the microscopic part $f$ based on the following {\it a priori} assumption:
\begin{equation}
\label{4.13}
\sup_{0\leq t\leq T}\mathcal{E}_{N}(t)\leq  \varepsilon^{2-2a}, \quad a\in[0,\frac{1}{2}),
\end{equation}
for arbitrary time $T\in(0,T_{max}]$ with $T_{max}$ given by \eqref{3.7a}. Here $\mathcal{E}_{N}(t)$ is given by \eqref{4.14}.
\begin{remark}
The range of the parameter $a$ in \eqref{4.13} will be determined by the estimate such as \eqref{6.3a}
in the later proof. This parameter is crucial for obtaining the almost global-in-time solution.
\end{remark}
\begin{remark}	
It should be pointed out that the energy inequality \eqref{4.13} holds true at $t=0$ under the choice of 
the initial data \eqref{3.6}. Indeed, we can claim that the initial data \eqref{3.6} automatically satisfies
\begin{equation}
\label{4.17}
\mathcal{E}_{N}(t)\mid_{t=0}\leq C\eta^{2}_{0}\varepsilon^{2}.
\end{equation}	
In fact, by the decomposition $F(t,x,v)=M_{[\rho,u,\theta]}(t,x,v)+G(t,x,v)$ as well as \eqref{3.6},
we have $M_{[\rho,u,\theta]}(0,x,v)=M_{[\bar{\rho},\bar{u},\bar{\theta}]}(0,x,v)$ and $G(0,x,v)=0$.
This implies that 
$$
(\rho,u,\theta)(0,x)=(\bar{\rho},\bar{u},\bar{\theta})(0,x),\quad \overline{G}(0,x,v)+\sqrt{\mu}f(0,x,v)=0.
$$
Then it holds that 
$$[\widetilde{\rho},\widetilde{u},\widetilde{\theta}](0,x)
=[{\rho}-\overline{\rho},u-\overline{u},\theta-\overline{\theta}](0,x)=0,\quad f(0,x,v)=-\frac{\overline{G}(0,x,v)}{\sqrt{\mu}}.$$
Moreover, it follows from \eqref{3.6} that
$$[\widetilde{E},\widetilde{B}](0,x)=[E-\bar{E},B-\bar{B}](0,x)=0.$$
By the above observations together with \eqref{4.14} and Lemma \ref{lem5.3}, one can see that \eqref{4.17} holds true. Therefore, letting $\eta_{0}>0$ be small enough, \eqref{4.13} is satisfied for $t=0$.
\end{remark}

\begin{remark}
In view of \eqref{4.13} and \eqref{3.3}, it is straightforward to verify that
\begin{equation}
\label{4.16}
|\rho(t,x)-1|+|u(t,x)|+|\theta(t,x)-\frac{3}{2}|\leq C(\eta_{0}+\varepsilon^{1-a}),\quad \frac{1}{2}<\rho(t,x)<\frac{3}{2},\quad
1<\theta(t,x)<2,
\end{equation}
holds for all $(t,x)\in[0,T]\times\mathbb{R}^{3}$. 
\end{remark}

In all lemmas below, all derived estimates are satisfied for any $0\leq t\leq T$.
\subsection{Estimates on correction term $\overline{G}$}\label{seca.5.2}
We shall derive some estimates on $\overline{G}$ in \eqref{4.6} frequently used throughout this paper.
We first list several basic inequalities.
\begin{lemma}\label{lem5.1}
For any function $h=h(x)\in H^{1}(\mathbb{R}^{3})$, it holds that
\begin{align*}
\|h\|_{L^{6}(\mathbb{R}^{3})}\leq C\|\nabla_{x} h\|,
\quad
\|h\|_{L^{3}(\mathbb{R}^{3})}\leq C\| h\|^{\frac{1}{2}}\|\nabla_{x} h\|^{\frac{1}{2}}.
\end{align*}
For any function $h=h(x)\in H^{2}(\mathbb{R}^{3})$, it holds that
\begin{equation*}
\|h\|_{L^{\infty}(\mathbb{R}^{3})}
\leq C\|\nabla_{x} h\|^{\frac{1}{2}}\|\nabla_{x}^{2} h\|^{\frac{1}{2}}.
\end{equation*}
\end{lemma}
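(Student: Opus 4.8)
The statement to prove is Lemma~\ref{lem5.1}, which collects three standard Sobolev-type inequalities on $\mathbb{R}^3$: the Gagliardo--Nirenberg--Sobolev embedding $\|h\|_{L^6}\leq C\|\nabla_x h\|$ for $h\in H^1$, an interpolated bound $\|h\|_{L^3}\leq C\|h\|^{1/2}\|\nabla_x h\|^{1/2}$, and an $L^\infty$ bound $\|h\|_{L^\infty}\leq C\|\nabla_x h\|^{1/2}\|\nabla_x^2 h\|^{1/2}$ for $h\in H^2$.

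The plan is to prove these in the order listed, since each builds on the previous one. \textbf{Step 1: the $L^6$ bound.} I would invoke the classical Gagliardo--Nirenberg--Sobolev inequality in dimension $n=3$: for the critical exponent $p^*=\frac{np}{n-p}$ with $p=2$, one gets $p^*=6$ and $\|h\|_{L^6(\mathbb{R}^3)}\leq C\|\nabla_x h\|_{L^2(\mathbb{R}^3)}$. This is a textbook result (e.g.\ via the product estimate $\int |h|^{6}\leq \prod_{i=1}^3 \big(\int |\partial_{x_i} (|h|^3)|\big)^{?}$ after writing $|h|^3$ and applying the one-dimensional fundamental-theorem-of-calculus bound in each coordinate, then H\"older), so I would simply cite it rather than reproduce the proof. \textbf{Step 2: the $L^3$ bound.} Here I interpolate: by H\"older's inequality with exponents chosen so that $L^3$ sits between $L^2$ and $L^6$, namely $\frac{1}{3}=\frac{\theta}{2}+\frac{1-\theta}{6}$ giving $\theta=\frac12$, one has $\|h\|_{L^3}\leq \|h\|_{L^2}^{1/2}\|h\|_{L^6}^{1/2}$. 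Combining with Step 1 yields $\|h\|_{L^3}\leq C\|h\|^{1/2}\|\nabla_x h\|^{1/2}$.

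\textbf{Step 3: the $L^\infty$ bound.} For $h\in H^2(\mathbb{R}^3)$ I would argue that each first derivative $\partial_{x_i}h$ lies in $H^1$, and then use an $L^\infty$--$L^6$ interpolation combined with Step 1 applied to $h$ and to $\nabla_x h$. Concretely, one route: by Gagliardo--Nirenberg in $\mathbb{R}^3$, $\|h\|_{L^\infty}\leq C\|h\|_{\dot H^{3/2}}$ type estimates are available, but a cleaner elementary path is to use $\|h\|_{L^\infty}^2 \leq C\|\nabla_x h\|_{L^3}\|h\|_{L^6}$ or directly the known interpolation $\|h\|_{L^\infty(\mathbb{R}^3)}\leq C\|\nabla_x h\|_{L^2}^{1/2}\|\nabla_x^2 h\|_{L^2}^{1/2}$, which follows by applying Step~1 to $\partial_{x_i}h$ to get control of $\|\nabla_x h\|_{L^6}$ by $\|\nabla_x^2 h\|_{L^2}$, then interpolating $\|h\|_{L^\infty}$ between $\|\nabla_x h\|_{L^2}$ and $\|\nabla_x h\|_{L^6}$ via the one-dimensional bound $|h(x)|^2 = -\int_{x_1}^\infty \partial_{s}(|h|^2)\,ds$ iterated over coordinates together with H\"older. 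All three inequalities are classical; the only mild care needed is bookkeeping of the interpolation exponents so the homogeneities match (each side scales the same way under $h(x)\mapsto h(\lambda x)$), and I expect \textbf{no genuine obstacle} here—this lemma is purely a convenient collection of standard facts, and the proof amounts to citing Gagliardo--Nirenberg--Sobolev plus elementary H\"older interpolation.
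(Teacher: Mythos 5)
Your proposal is correct: all three estimates are the classical Gagliardo--Nirenberg--Sobolev embedding and its standard interpolation consequences, which is exactly how the paper treats them (Lemma \ref{lem5.1} is stated there as a collection of basic inequalities with no proof given). Your bookkeeping of the exponents, including obtaining $\|\nabla_x h\|_{L^6}\leq C\|\nabla_x^2 h\|$ and then interpolating for the $L^\infty$ bound, is the standard route and raises no issues.
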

The following lemma is similar to the one in \cite[Lemma 6.1]{Duan-Yu1}.
\begin{lemma}\label{lem5.2}
Let $L_{\widehat{M}}$ defined in \eqref{2.9} for any Maxwellian $\widehat{M}=M_{[\widehat{\rho},\widehat{u},\widehat{\theta}]}(v)$.
Suppose that $U(v)$ is any polynomial of $\frac{v-\hat{u}}{\sqrt{R}\hat{\theta}}$ such that
$U(v)\widehat{M}\in(\ker{L_{\widehat{M}}})^{\perp}$. For any $\epsilon\in(0,1)$ and any multi-index $\beta$, there exists constant $C_{\beta}>0$ such that
$$
|\partial_{\beta}L^{-1}_{\widehat{M}}(U(v)\widehat{M})|\leq C_{\beta}(\widehat{\rho},\widehat{u},\widehat{\theta})\widehat{M}^{1-\epsilon}.
$$
Moreover, if \eqref{4.16} holds, there exists constant $C_{\beta}>0$ such that
\begin{equation}
\label{4.18}
|\partial_{\beta}A_{j}(\frac{v-u}{\sqrt{R\theta}})|+|\partial_{\beta}B_{ij}(\frac{v-u}{\sqrt{R\theta}})|
\leq C_{\beta}M^{1-\epsilon},
\end{equation}
where $A_{j}(\cdot)$ and $B_{ij}(\cdot)$ are defined in \eqref{2.18}.
\end{lemma}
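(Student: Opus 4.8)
\textbf{Proof plan for Lemma \ref{lem5.2}.}

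The plan is to exploit the explicit representation of the inverse linearized operator $L^{-1}_{\widehat M}$ against the background Maxwellian $\widehat M$, combined with the known pointwise decay estimate for $L^{-1}$ acting on functions of the form $U(v)\widehat M$ with $U$ polynomial. First I would reduce to the standard Maxwellian: writing $v\mapsto \frac{v-\widehat u}{\sqrt{R\widehat\theta}}$ as a change of variables, $L_{\widehat M}$ is conjugate (up to a scalar factor depending on $\widehat\rho,\widehat\theta$) to the linearized Boltzmann operator $L$ around the absolute Maxwellian $\mu_0=(2\pi)^{-3/2}e^{-|v|^2/2}$, so it suffices to prove the estimate for $L^{-1}(P(v)\mu_0)$ with $P$ a polynomial orthogonal to $\ker L$, and then translate back, tracking the $(\widehat\rho,\widehat u,\widehat\theta)$-dependence of the constant through the Jacobian and the scaling of $\nu$. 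For the model operator $L$ with hard-sphere kernel, I would invoke the classical pointwise bound: if $g=L^{-1}(P\mu_0)$ then for every $\epsilon\in(0,1)$ there is $C_\epsilon$ with $|g(v)|\le C_\epsilon\, \mu_0^{1-\epsilon}(v)$, which follows from the splitting $L=\nu(v)-K$, the gain of decay from the compact operator $K$, and a bootstrap on $g = \nu^{-1}(Kg + P\mu_0)$; the point is that $Kg$ inherits Gaussian-type decay from the kernel of $K$ while $\nu^{-1}(v)\approx\langle v\rangle^{-1}$ costs only polynomial growth, which is absorbed into the $\epsilon$ loss in the exponent.

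Next I would handle the velocity derivatives $\partial_\beta$. Since $Lg = P\mu_0$ and $L$ has polynomial-in-$v$ coefficients (the hard-sphere $\nu$ and the kernel of $K$ being smooth with Gaussian decay), differentiating the identity $\nu g = Kg + P\mu_0$ in $v$ expresses $\nu\,\partial_\beta g$ as $\partial_\beta(Kg)$ plus lower-order terms involving $\partial_{\beta'}g$ with $|\beta'|<|\beta|$ and polynomial factors, all against Gaussian weights; an induction on $|\beta|$ then propagates the bound $|\partial_\beta g|\le C_{\beta,\epsilon}\mu_0^{1-\epsilon}$. Translating back to $L_{\widehat M}$ via the change of variables produces $|\partial_\beta L^{-1}_{\widehat M}(U(v)\widehat M)|\le C_\beta(\widehat\rho,\widehat u,\widehat\theta)\,\widehat M^{1-\epsilon}$, where the constant depends continuously on the fluid parameters (and stays uniform once $(\widehat\rho,\widehat u,\widehat\theta)$ ranges over the compact set guaranteed by \eqref{4.16}).

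Finally, for the concrete bound \eqref{4.18}, I would simply apply the general estimate with $\widehat M = M=M_{[\rho,u,\theta]}$ and $U(v)=\hat A_j\big(\frac{v-u}{\sqrt{R\theta}}\big)$ or $U(v)=\hat B_{ij}\big(\frac{v-u}{\sqrt{R\theta}}\big)$, which are exactly the Burnett polynomials \eqref{2.17} and satisfy $P_0 A_j = P_0 B_{ij}=0$, i.e. $A_j M, B_{ij}M\in(\ker L_M)^\perp$ by \eqref{2.18}; under the a priori bounds \eqref{4.16} the coefficient $C_\beta(\rho,u,\theta)$ is uniformly bounded, giving \eqref{4.18}. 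The main obstacle is the velocity-derivative estimate: keeping careful track of how differentiation interacts with the kernel of $K$ and with the unbounded polynomial weight $\nu(v)\approx\langle v\rangle$, so that no derivative forces the Gaussian exponent past $1$; this is precisely why the statement is phrased with the arbitrarily small loss $\epsilon$, and one must be sure the induction closes uniformly in $\beta$ for each fixed $\epsilon$. This is modeled on \cite[Lemma 6.1]{Duan-Yu1}, so I would follow that argument, adapting only the bookkeeping of the parameter dependence.
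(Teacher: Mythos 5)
Your proposal is correct, but note that the paper does not actually present a proof of this lemma: it simply remarks that the statement is ``similar to'' \cite[Lemma 6.1]{Duan-Yu1}, which is the analogous estimate for the \emph{Landau} operator, so there is no in-text Boltzmann argument to compare against. What you supply is the standard hard-sphere proof of exactly the kind the citation is standing in for: reduce to the normalized Maxwellian by the affine change of variables (picking up the $\widehat\rho\,\sqrt{R\widehat\theta}$ scaling of the hard-sphere kernel), split $L=-\nu+K$, and bootstrap $g=\nu^{-1}(Kg+U\mu_0)$ using the Gaussian-type bounds on the kernel of $K$ to upgrade the a priori $L^2$ bound on $g=L^{-1}(U\mu_0)$ to the pointwise bound $\mu_0^{1-\epsilon}$, then induct on $|\beta|$ after differentiating the identity. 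This is the classical route (Caflisch; the appendix arguments of Liu--Yang--Yu type; the Burnett-function bounds in \cite{Ukai-Yang}), and it is the appropriate adaptation here since the Landau proof in \cite{Duan-Yu1} exploits the differential structure of that operator rather than a compact $K$. The one step you should make explicit when writing it out is the derivative estimate on $K$: differentiating the hard-sphere kernel in $v$ worsens the singularity from $|v-v_*|^{-1}$ to $|v-v_*|^{-2}$, which is still locally integrable in $\mathbb{R}^3_{v_*}$ and still carries the Gaussian factor, so the induction closes for each fixed $\epsilon$; with that in place, and with $(\rho,u,\theta)$ confined to the compact set given by \eqref{4.16}, the uniformity of $C_\beta$ in \eqref{4.18} follows as you say.
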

Based on Lemma \ref{lem5.2} above, we can deduce that
\begin{lemma}\label{lem5.3}
Let $\overline{G}$ given in \eqref{4.6} and $\langle v\rangle=\sqrt{1+|v|^2}$.
Assume that the a priori assumption  \eqref{4.13} and \eqref{3.3} hold. For any $b\geq 0$ and $|\beta|\geq 0$, if
$|\alpha|\leq N-1$, we have
\begin{equation}
\label{4.19}
\|\langle v\rangle^{b}\partial^{\alpha}_{\beta}(\frac{\overline{G}}{\sqrt{\mu}})\|
\leq C\eta_{0}\varepsilon.
\end{equation}
If $|\alpha|=N$, we have
\begin{equation}
\label{4.20}
\|\langle v\rangle^{b}\partial^{\alpha}_{\beta}(\frac{\overline{G}}{\sqrt{\mu}})\|\leq C\eta_{0}\varepsilon^{1-a}.
\end{equation}
\end{lemma}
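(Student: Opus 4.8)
\textbf{Proof proposal for Lemma \ref{lem5.3}.}

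The plan is to reduce the estimate on $\overline{G}$ to the pointwise bounds for the Burnett-type functions $A_j$ and $B_{ij}$ provided by Lemma \ref{lem5.2}, and then to count derivatives carefully so that the right power of $\varepsilon$ (namely $\eta_0\varepsilon$ for $|\alpha|\le N-1$ and $\eta_0\varepsilon^{1-a}$ for $|\alpha|=N$) comes out of the a priori assumption \eqref{4.13} together with the decay estimate \eqref{3.3}. First I would rewrite $\overline{G}$ in \eqref{4.6} in terms of $A_j$ and $B_{ij}$: since $\frac{|v-u|^2}{2R\theta^2}v\cdot\nabla_x\bar\theta$ and $\frac{(v-u)\cdot\nabla_x\bar u}{R\theta}\,v$, after projecting by $P_1$ and applying $L_M^{-1}$, are linear combinations of $\hat A_j\big(\tfrac{v-u}{\sqrt{R\theta}}\big)M$ and $\hat B_{ij}\big(\tfrac{v-u}{\sqrt{R\theta}}\big)M$ with coefficients that are smooth functions of $(\rho,u,\theta)$ times first $x$-derivatives of $(\bar u,\bar\theta)$, definition \eqref{2.18} gives schematically
\begin{equation*}
\frac{\overline{G}}{\sqrt\mu}=\varepsilon\sum_{j}c_j(\rho,u,\theta)\,\partial_x\bar\theta\,\frac{A_j\big(\tfrac{v-u}{\sqrt{R\theta}}\big)}{\sqrt\mu}
+\varepsilon\sum_{i,j}d_{ij}(\rho,u,\theta)\,\partial_x\bar u\,\frac{B_{ij}\big(\tfrac{v-u}{\sqrt{R\theta}}\big)}{\sqrt\mu},
\end{equation*}
where I am suppressing which component of $\nabla_x\bar\theta,\nabla_x\bar u$ appears.

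Next I would apply $\partial^\alpha_\beta$ by the Leibniz rule. Each $v$-derivative $\partial_\beta$ falls either on $A_j,B_{ij}$ (still controlled by \eqref{4.18}, i.e.\ bounded by $M^{1-\epsilon}$ up to constants depending on $(\rho,u,\theta)$, which by \eqref{4.16} are bounded) or on the factor $\frac{1}{\sqrt\mu}$ coming from the normalization, which only produces polynomial-in-$v$ weights; in either case $\langle v\rangle^b M^{1-\epsilon}/\sqrt\mu$ is bounded in $L^2_v$ uniformly since $1<\theta<2$ makes $M^{1-\epsilon}/\sqrt\mu$ Gaussian-integrable for $\epsilon$ small. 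Each $x$-derivative $\partial^\alpha$ distributes among the coefficient functions $c_j,d_{ij}$ (which are smooth in $(\rho,u,\theta)$, so their $x$-derivatives are controlled by derivatives of $(\widetilde\rho,\widetilde u,\widetilde\theta)$ and of $(\bar\rho,\bar u,\bar\theta)$), among $A_j,B_{ij}$ through the argument $\tfrac{v-u}{\sqrt{R\theta}}$ (again producing $x$-derivatives of $(u,\theta)$ hitting functions bounded by $M^{1-\epsilon}$), and onto the explicit factor $\partial_x\bar\theta$ or $\partial_x\bar u$. So after taking $L^2_x$ norms and using the algebra/Moser-type estimates in $H^{N-1}(\mathbb{R}^3)$ together with Lemma \ref{lem5.1}, the whole thing is bounded by $\varepsilon$ times a sum of products in which exactly one factor is a derivative of $(\bar u,\bar\theta)$ of order between $1$ and $|\alpha|+1$, and the remaining factors are either $L^\infty$-bounded coefficients or Sobolev norms of $(\widetilde\rho,\widetilde u,\widetilde\theta)$.

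Then I would close the estimate by inserting the bounds. For the term with no $\widetilde{\ }$-factor, the sole derivative of $(\bar u,\bar\theta)$ of order $\le N$ is estimated in $L^2$ by $C\eta_0$ via \eqref{3.3}, giving the contribution $C\eta_0\varepsilon$. For terms containing a derivative of $(\widetilde\rho,\widetilde u,\widetilde\theta)$, I would use: if $|\alpha|\le N-1$, all such perturbation norms are $\lesssim \mathcal E_N(t)^{1/2}\lesssim \varepsilon^{1-a}$ by \eqref{4.13}, and the accompanying $(\bar u,\bar\theta)$-derivative is $\lesssim\eta_0$, so these are even smaller than $\eta_0\varepsilon$ (they carry an extra $\varepsilon^{1-a}$), yielding \eqref{4.19}. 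For $|\alpha|=N$ one derivative may land on $(\widetilde\rho,\widetilde u,\widetilde\theta)$ at top order $N$; that norm is controlled only after the $\varepsilon^2$-weighting in $\mathcal E_N(t)$, i.e.\ $\|\partial^\alpha(\widetilde\rho,\widetilde u,\widetilde\theta)\|\lesssim \varepsilon^{-1}\mathcal E_N(t)^{1/2}\lesssim \varepsilon^{-a}$, while the explicit $(\bar u,\bar\theta)$-factor is then low order and bounded by $\eta_0$; combined with the prefactor $\varepsilon$ this gives $C\eta_0\varepsilon^{1-a}$. The remaining $|\alpha|=N$ term, where $\partial^\alpha$ lands entirely on $\partial_x\bar u$ or $\partial_x\bar\theta$ producing an $(N+1)$-st order derivative of $(\bar u,\bar\theta)$, is again $\lesssim\eta_0$ by the $W^{N+1,\infty}$ part of \eqref{3.3} (with an $L^2\hookrightarrow$ bound on the bounded region absorbed using the $W^{N_0,2}$ part), so it also contributes $C\eta_0\varepsilon$. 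Collecting, \eqref{4.20} follows. The main obstacle I anticipate is purely bookkeeping: organizing the Leibniz expansion so that in the top-order case $|\alpha|=N$ one never simultaneously needs $H^N$ control of the perturbation \emph{and} an $(N+1)$-st derivative of the background — checking that these two "bad" derivatives cannot pile onto the same term is the crux, and it works precisely because $\overline{G}$ is linear in $\nabla_x(\bar u,\bar\theta)$, so there is always exactly one background-derivative factor to absorb the worst loss.
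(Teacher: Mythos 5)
Your proposal is correct and follows essentially the same route as the paper: rewrite $\overline{G}$ via the Burnett functions $A_j,B_{ij}$ as in \eqref{4.21}, expand $\partial^\alpha_\beta$ by Leibniz, control the velocity factors with Lemma \ref{lem5.2} and the bound $|\langle v\rangle^b\mu^{-1/2}M^{1-\epsilon}|_2\le C$, and then close with \eqref{3.3} for the single background-derivative factor and with \eqref{4.13} (respectively its $\varepsilon^2$-weighted top-order consequence $\|\partial^\alpha(\widetilde\rho,\widetilde u,\widetilde\theta)\|\lesssim\varepsilon^{-a}$ for $|\alpha|=N$) for the perturbation factors, exactly as in the paper's case split between $|\alpha|\le N-1$ and $|\alpha|=N$. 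The bookkeeping point you flag — that linearity in $\nabla_x(\bar u,\bar\theta)$ prevents the worst perturbation and background derivatives from landing on the same term — is indeed how the paper's estimate \eqref{4.23a} closes.
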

\begin{proof}
Thanks to \eqref{2.17} and \eqref{2.18}, we can rewrite $\overline{G}$ in \eqref{4.6} as 
\begin{equation}
\label{4.21}
\overline{G}=\varepsilon\frac{\sqrt{R}}{\sqrt{\theta}}\sum^{3}_{j=1}
\frac{\partial\bar{\theta}}{\partial x_{j}}A_{j}(\frac{v-u}{\sqrt{R\theta}})
+\varepsilon\sum^{3}_{j=1}\sum^{3}_{i=1}\frac{\partial\bar{u}_{j}}{\partial x_{i}}B_{ij}(\frac{v-u}{\sqrt{R\theta}}).
\end{equation}	
Then, for $k=1,2,3$, a direct calculation shows that
\begin{equation}
\label{4.22}
\frac{\partial\overline{G}}{\partial v_{k}}=\varepsilon\frac{\sqrt{R}}{\sqrt{\theta}}\sum^{3}_{j=1}
\frac{\partial\bar{\theta}}{\partial x_{j}}\partial_{v_{k}}A_{j}(\frac{v-u}{\sqrt{R\theta}})\frac{1}{\sqrt{R\theta}} +\varepsilon\sum^{3}_{i,j=1}\frac{\partial\bar{u}_{j}}{\partial x_{i}}
\partial_{v_{k}}B_{ij}(\frac{v-u}{\sqrt{R\theta}})\frac{1}{\sqrt{R\theta}},
\end{equation}
and
\begin{align}
\label{4.23}
\frac{\partial\overline{G}}{\partial x_{k}}&=\varepsilon\Big\{
\frac{\sqrt{R}}{\sqrt{\theta}}\sum^{3}_{j=1}\frac{\partial^{2}\bar{\theta}}{\partial x_{j}\partial x_{k}}A_{j}(\frac{v-u}{\sqrt{R\theta}})
-\frac{\sqrt{R}}{2\sqrt{\theta^{3}}}\sum^{3}_{j=1}\frac{\partial\bar{\theta}}{\partial x_{j}}\frac{\partial\theta}{\partial x_{k}}A_{j}(\frac{v-u}{\sqrt{R\theta}})
\nonumber\\
&\quad-\frac{\sqrt{R}}{\sqrt{\theta}}\sum^{3}_{j=1}\frac{\partial\bar{\theta}}{\partial x_{j}}\frac{\partial u}{\partial x_{k}}\cdot
\nabla_{v}A_{j}(\frac{v-u}{\sqrt{R\theta}})\frac{1}{\sqrt{R\theta}}
-\frac{\sqrt{R}}{\sqrt{\theta}}\sum^{3}_{j=1}
\frac{\partial\bar{\theta}}{\partial x_{j}}\frac{\partial\theta}{\partial x_{k}}\nabla_{v}A_{j}(\frac{v-u}{\sqrt{R\theta}})
\cdot\frac{v-u}{2\sqrt{R\theta^{3}}}
\nonumber\\
&\quad+\sum^{3}_{i,j=1}\frac{\partial^{2}\bar{u}_{j}}{\partial x_{i}\partial x_{k}}B_{ij}(\frac{v-u}{\sqrt{R\theta}})
-\sum^{3}_{i,j=1}\frac{\partial\bar{u}_{j}}{\partial x_{i}}\frac{\partial u}{\partial x_{k}}\cdot\nabla_{v}B_{ij}(\frac{v-u}{\sqrt{R\theta}})\frac{1}{\sqrt{R\theta}}
\nonumber\\
&\quad-\sum^{3}_{i,j=1}\frac{\partial\bar{u}_{j}}{\partial x_{i}}\frac{\partial\theta}{\partial x_{k}}\nabla_{v}B_{ij}(\frac{v-u}{\sqrt{R\theta}})\cdot\frac{v-u}{2\sqrt{R\theta^{3}}}\Big\}.
\end{align}
For any $|\beta|\geq 0$ and  $b\geq 0$, using the similar expression as \eqref{4.22}, \eqref{4.18} and \eqref{3.3}, one gets
\begin{equation}
\label{4.26}
\|\langle v\rangle^{b}\partial_{\beta}(\frac{\overline{G}}{\sqrt{\mu}})\|^2
\leq C\varepsilon^2\|(\nabla_{x}\bar{u},\nabla_{x}\bar{\theta})\|^2\leq C\eta^2_{0}\varepsilon^2,
\end{equation}
where we used \eqref{4.16} and sufficiently  small $\epsilon>0$ to claim that
\begin{equation}
\label{4.25}
|\langle v\rangle^{b}\mu^{-\frac{1}{2}}M^{1-\epsilon}|_{2}\leq C.
\end{equation}
Similarly, if $1\leq|\alpha|\leq N$, it is straightforward to get that
\begin{align}
\label{4.12A}
\|\langle v\rangle^{b}\partial^{\alpha}_{\beta}(\frac{\overline{G}}{\sqrt{\mu}})\|^2
\leq& C\varepsilon^2\int_{\mathbb{R}^3}\big\{|\partial^{\alpha}(\nabla_{x}\bar{u},\nabla_{x}\bar{\theta})|+|(\nabla_{x}\bar{u},\nabla_{x}\bar{\theta})|
|\nabla_{x}(u,\theta)|^{|\alpha|}
\nonumber\\
&\hspace{1.5cm}+\cdot\cdot\cdot+|(\nabla_{x}\bar{u},\nabla_{x}\bar{\theta})|
|\partial^{\alpha}(u,\theta)|\big\}^2\,dx,
\end{align}
where $\cdot\cdot\cdot$ denoting the low order nonlinear terms.
To bound \eqref{4.12A}, we only estimate the second term and last term since the other terms can be treated similarly.
By virtue of \eqref{4.13} and \eqref{4.14}, one gets
\begin{equation}
\label{4.13A}
\sum_{|\alpha|=N}\{\|\partial^{\alpha}(\widetilde{\rho},\widetilde{u},\widetilde{\theta},\widetilde{E},\widetilde{B})\|^{2}+\|\partial^{\alpha}f\|^{2}\}
\leq \varepsilon^{-2a}.
\end{equation}
Using the embedding inequality, \eqref{3.3} and \eqref{4.13}, we get
\begin{align}
\label{4.27}
\|\nabla_{x}(u,\theta)\|_{L^{\infty}}&\leq \|\nabla_{x}(\bar{u},\bar{\theta})\|_{L^{\infty}}
+\|\nabla_{x}(\widetilde{u},\widetilde{\theta})\|_{L^{\infty}}
\nonumber\\
&\leq C\eta_{0}(1+t)^{-\vartheta}+C\|\nabla^2_{x}(\widetilde{u},\widetilde{\theta})\|^{\frac{1}{2}}
\|\nabla^3_{x}(\widetilde{u},\widetilde{\theta})\|^{\frac{1}{2}}
\nonumber\\
&\leq C\eta_{0}+C\varepsilon^{\frac{1}{2}-\frac{1}{2}a}\varepsilon^{-\frac{1}{2}a}\leq C(\eta_{0}+\varepsilon^{\frac{1}{2}-a}),
\end{align}
where in the third inequality we have used \eqref{4.13A} when $N=3$.
Hence, the second term in \eqref{4.12A} can be estimated as
\begin{align*}
&\varepsilon^2\int_{\mathbb{R}^3}|(\nabla_{x}\bar{u},\nabla_{x}\bar{\theta})|^2
|\nabla_{x}(u,\theta)|^{2|\alpha|}\,dx
\\
&\leq C\varepsilon^2\|\nabla_{x}(u,\theta)\|^{2|\alpha|}_{L^{\infty}}\|(\nabla_{x}\bar{u},\nabla_{x}\bar{\theta})\|^2
\leq C\eta^2_{0}\varepsilon^2.
\end{align*}
Here we used the smallness of both $\eta_{0}$ and $\varepsilon$ and chosen $a\in[0,1/2)$ such that
\begin{equation}
\label{4.22a}
\eta_{0}+\varepsilon^{\frac{1}{2}-a}<1.
\end{equation}	 
The requirement \eqref{4.22a} will be frequently used later on. For the last term in \eqref{4.12A}, one has
\begin{equation*}
\varepsilon^2\int_{\mathbb{R}^3}|(\nabla_{x}\bar{u},\nabla_{x}\bar{\theta})|^2|\partial^{\alpha}(u,\theta)|^2\,dx
\leq  C\eta^2_{0}(\eta^2_{0}+\|\partial^{\alpha}(\widetilde{u},\widetilde{\theta})\|^2)\varepsilon^2.
\end{equation*}
Therefore, for $1\leq|\alpha|\leq N$, we obtain
\begin{equation}
\label{4.23a}
\|\langle v\rangle^{b}\partial^{\alpha}_{\beta}(\frac{\overline{G}}{\sqrt{\mu}})\|^2
\leq C\eta^2_{0}(1+\|\partial^{\alpha}(\widetilde{u},\widetilde{\theta})\|^2)\varepsilon^2.
\end{equation}
If $|\alpha|=N$ in \eqref{4.23a}, we immediately get \eqref{4.20}  by using \eqref{4.13A}. Likewise, the desired estimate \eqref{4.19}
follows from \eqref{4.26}, \eqref{4.23a} and \eqref{4.13}. We thus finish the proof of Lemma \ref{lem5.3}.
\end{proof}
\subsection{Estimates on collision terms}\label{seca.5.3}
To perform the energy estimates for the equations \eqref{4.11} and \eqref{4.12}, we need to treat those collision terms.
\subsubsection{Properties of $\mathcal{L}$ and $\Gamma$}\label{seca.5.3.1}
We first list the following results on the
linearized Boltzmann  operator $\mathcal{L}$ and the nonlinear collision terms $\Gamma(g_1,g_2)$
defined in \eqref{4.9}. Interested readers may refer to \cite{Guo-Indiana,Guo-2003}
for more details.
\begin{lemma}\label{lem5.4}
For $|\beta|>0$, there exist the constants $c_2> 0$ and $C>0$ such that
\begin{equation}
\label{4.29}
-\langle\partial_\beta\mathcal{L}g,\partial_\beta g\rangle\geq c_2|\partial_\beta g|_\nu^2
-C|g|_\nu^2.
\end{equation}
\end{lemma}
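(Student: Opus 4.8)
The plan is to follow the standard splitting of the linearized operator $\mathcal{L} = \nu(v) - K$, where $\nu(v)\approx(1+|v|)$ is the multiplicative collision frequency and $K$ is the compact part given by an integral operator with a smooth kernel $k(v,v_*)$. For the multiplicative piece, taking $\partial_\beta$ in the velocity variable commutes harmlessly: $\partial_\beta(\nu g)$ expands by the Leibniz rule into $\nu\,\partial_\beta g$ plus lower order terms $(\partial_{\beta_1}\nu)\partial_{\beta_2}g$ with $|\beta_2|<|\beta|$, and since all velocity derivatives of $\nu$ are bounded (indeed $|\partial_\gamma \nu(v)|\le C$ for $|\gamma|\ge 1$), the inner product $\langle \partial_\beta(\nu g),\partial_\beta g\rangle$ produces the good term $|\partial_\beta g|_\nu^2$ up to an error that, after Cauchy-Schwarz and Young's inequality, is absorbed into a small fraction of $|\partial_\beta g|_\nu^2$ plus $C\sum_{|\beta'|<|\beta|}|\partial_{\beta'}g|_\nu^2 \le C|g|_\nu^2$ where here $|g|_\nu$ should be read as the full sum over lower-order velocity derivatives (or one iterates down in $|\beta|$).

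The main work is the $K$-term. First I would recall the classical Grad estimate: $K$ has kernel $k(v,v_*)$ satisfying pointwise bounds of the form $|k(v,v_*)|\le C|v-v_*|^{-1}e^{-c|v-v_*|^2 - c(|v|^2-|v_*|^2)^2/|v-v_*|^2}$, together with the crucial fact that $\partial_{v_i}$ falling on $k$ still yields a kernel with the same type of bound (this is where one uses that differentiating the Maxwellian factors and the collision geometry only produces polynomial prefactors dominated by the Gaussian). Then $\partial_\beta(Kg)$ is, up to changing the kernel to another kernel $\tilde k_\beta$ of the same admissible type, equal to $\tilde K_\beta g$ plus terms where some derivatives land on $g$ under the integral, i.e. $\int \tilde k_{\beta_1}(v,v_*)\,\partial_{\beta_2}g(v_*)\,dv_*$ with $\beta_1+\beta_2=\beta$. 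Each such operator is bounded from $L^2_\nu$ to $L^2$ (even compact), so $|\langle \partial_\beta(Kg),\partial_\beta g\rangle| \le \sum_{\beta_2\le\beta}\|\tilde K_{\beta_1}\partial_{\beta_2}g\|\,|\partial_\beta g| \le \delta|\partial_\beta g|_\nu^2 + C_\delta\sum_{|\beta'|\le|\beta|}|\partial_{\beta'}g|^2$, and the last sum without the $\nu$-weight is controlled by $C|g|_\nu^2$ since $\nu\gtrsim 1$ (here again one either reads $|g|_\nu$ as including all derivatives up to order $|\beta|$, or runs an induction on $|\beta|$ absorbing the top-order term on the left).

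Putting the two pieces together: $-\langle\partial_\beta\mathcal{L}g,\partial_\beta g\rangle = \langle\partial_\beta(\nu g),\partial_\beta g\rangle - \langle\partial_\beta(Kg),\partial_\beta g\rangle \ge (1-\delta')|\partial_\beta g|_\nu^2 - C|g|_\nu^2$, and choosing $\delta'$ small gives the claim with $c_2 = 1-\delta' > 0$. The step I expect to be the main obstacle is the kernel bookkeeping for $\partial_\beta(Kg)$: one must verify carefully that every velocity derivative of the Grad kernel $k(v,v_*)$ remains of the same exponentially-decaying, weakly-singular type so that the associated operators stay $L^2_\nu\to L^2$ bounded (with the weak singularity $|v-v_*|^{-1}$ being integrable in $3$D); this is classical (it appears in \cite{Guo-Indiana,Guo-2003}) but is the only genuinely technical point. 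Everything else is Leibniz expansion plus Cauchy-Schwarz and the elementary comparison $\nu(v)\gtrsim 1$. Since the statement is quoted from \cite{Guo-Indiana,Guo-2003}, I would in fact simply cite those references for the kernel estimates and present only the Leibniz/absorption argument in detail.
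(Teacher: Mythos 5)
Your overall route (split $-\mathcal{L}g=\nu g-Kg$ — note the sign: with this paper's convention it is $-\mathcal{L}$, not $\mathcal{L}$, that equals $\nu-K$ — then Leibniz on the $\nu$-part and Grad-type kernel estimates for the $K$-part, citing \cite{Guo-2003,Guo-Indiana}) is exactly the standard argument the paper itself defers to, since the paper gives no proof beyond the citation. However, your treatment of the leftover lower-order terms is a genuine gap, and it is precisely the delicate point of this lemma. Both the commutator $[\partial_\beta,\nu]g$ and the terms $\int \tilde k_{\beta_1}(v,v_*)\partial_{\beta_2}g(v_*)\,dv_*$ produce errors of the form $C\,|\partial_{\beta'}g|_2\,|\partial_\beta g|_2$ with $0<|\beta'|<|\beta|$ and $O(1)$ constants. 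After Young's inequality these become $C|\partial_{\beta'}g|_2^2$, and for a mixed multi-index such as $\beta=(1,1,0)$ the intermediate norm $|\partial_{v_1}g|_2^2$ is \emph{not} bounded by $\eta|\partial_\beta g|_\nu^2+C_\eta|g|_\nu^2$ (on the Fourier side one would need $\xi_1^2\le\eta\,\xi_1^2\xi_2^2+C_\eta$, which fails for $\xi_2$ small and $\xi_1$ large). Your two proposed fixes do not repair this: reading $|g|_\nu$ as "the full sum over lower-order velocity derivatives" changes the statement of the lemma, and "iterating/inducting on $|\beta|$" only yields lower bounds on the quadratic forms at lower order, never the needed upper bounds on the intermediate norms.

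The way the cited references actually handle this is to prove the $K$-estimate with an \emph{arbitrarily small} constant in front of all derivative terms: one splits the Grad kernel into a near-singular piece of small $L^1$-in-$v_*$ size and a mollified, compactly supported piece, moves derivatives onto $g$ via $\partial_{v}=(\partial_v+\partial_{v_*})-\partial_{v_*}$ and integration by parts, and obtains $|\langle\partial_\beta(Kg),\partial_\beta g\rangle|\le\{\eta\sum_{|\beta_1|\le|\beta|}|\partial_{\beta_1}g|_2+C_\eta|g|_2\}|\partial_\beta g|_2$. The resulting coercivity statement therefore carries an extra term $-\eta\sum_{|\beta_1|\le|\beta|}|\partial_{\beta_1}g|_\nu^2$ (and the $\nu$-commutator contributions are likewise only disposed of at that stage), which is then absorbed not within the single-$\beta$ inequality but after summing over all $(\alpha,\beta)$ in the energy estimate — this is exactly the "suitable linear combination" invoked in the proof of Lemma \ref{lem6.13}, where all dissipation norms $\frac{1}{\varepsilon}\|\partial^{\alpha}_{\beta'}f\|_\nu^2$ are available. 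So you should either quote (or reprove) the Guo-type estimate in that $\eta$-form and note that the clean form \eqref{4.29} is the version used after summation over velocity derivatives, or restructure your argument accordingly; the plain Leibniz-plus-Young absorption you describe cannot deliver \eqref{4.29} as literally stated for a single mixed $\beta$. Your remarks on the kernel itself are essentially right, with the caveat that only first-order $v$-derivatives of $k$ keep an integrable singularity ($|v-v_*|^{-2}$ in dimension three); for $|\beta|\ge 2$ one must transfer derivatives to $g(v_*)$, which is exactly what generates the intermediate terms discussed above.
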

\begin{lemma}
\label{lem5.5}
For $|\beta|\geq0$, there exists the constant $C>0$ such that
\begin{equation}
\label{4.30}
|\langle\partial_\beta \Gamma(g_1,g_2), g_3\rangle|\leq
C\sum_{\beta_1+\beta_2\leq\beta}\{|\partial_{\beta_1}g_1|_2| \partial_{\beta_2}g_2|_{\nu}
+|\partial_{\beta_1}g_1|_\nu| \partial_{\beta_2}g_2|_{2}\}|g_3|_{\nu}.
\end{equation}
\end{lemma}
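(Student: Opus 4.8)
The plan is to read \eqref{4.30} as a pure velocity-variable inequality (both sides are local in $(t,x)$) and to reduce the general case $|\beta|\geq 0$ to the classical case $|\beta|=0$. Using $Q=Q_{+}-Q_{-}$ in \eqref{1.2} together with the Maxwellian identity $\mu(v)\mu(v_{*})=\mu(v')\mu(v_{*}')$, I would first split $\Gamma=\Gamma_{\mathrm{g}}-\Gamma_{\mathrm{l}}$ with
\begin{align*}
\Gamma_{\mathrm{g}}(g_{1},g_{2})(v)&=\int_{\R^{3}}\int_{\mathbb{S}^{2}}|(v-v_{*})\cdot\omega|\sqrt{\mu(v_{*})}\,g_{1}(v')g_{2}(v_{*}')\,d\omega\,dv_{*},\\
\Gamma_{\mathrm{l}}(g_{1},g_{2})(v)&=g_{1}(v)\int_{\R^{3}}\int_{\mathbb{S}^{2}}|(v-v_{*})\cdot\omega|\sqrt{\mu(v_{*})}\,g_{2}(v_{*})\,d\omega\,dv_{*}.
\end{align*}
The point of this form is that the only $v$-dependence left sits in the (non-smooth) kernel $|(v-v_{*})\cdot\omega|$, in the post-collisional velocities $v',v_{*}'$, and in the single factor $\sqrt{\mu(v_{*})}$ --- there is no exponential weight evaluated at $v$ itself.

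To pass from $|\beta|=0$ to $|\beta|>0$ the obstruction is that $|(v-v_{*})\cdot\omega|$ is only Lipschitz in $v$, so the Leibniz rule cannot be applied naively. I would remove this by the relative-velocity substitution $v_{*}\mapsto\eta:=v-v_{*}$ (Jacobian one): the kernel then becomes $|\eta\cdot\omega|$, which is independent of $v$; the post-collisional velocities $v'=v-(\eta\cdot\omega)\omega$ and $v_{*}'=v-\eta+(\eta\cdot\omega)\omega$ become affine in $v$ with constant (identity) gradient; and $\sqrt{\mu(v_{*})}=\sqrt{\mu(v-\eta)}$. Differentiating under the integral sign is now legitimate, and $\partial_{\beta}$ lands only on $g_{1}(v')$, $g_{2}(v_{*}')$ and $\sqrt{\mu(v-\eta)}$, producing, for $\beta_{0}+\beta_{1}+\beta_{2}=\beta$, the factors $(\partial_{\beta_{1}}g_{1})(v')$, $(\partial_{\beta_{2}}g_{2})(v_{*}')$ and $P_{\beta_{0}}(v-\eta)\sqrt{\mu(v-\eta)}$ with $P_{\beta_{0}}$ polynomial. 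Undoing the substitution, $\partial_{\beta}\Gamma(g_{1},g_{2})$ is a finite sum of terms of exactly the shape of $\Gamma_{\mathrm{g}},\Gamma_{\mathrm{l}}$ with $g_{1},g_{2}$ replaced by $\partial_{\beta_{1}}g_{1},\partial_{\beta_{2}}g_{2}$ and $\sqrt{\mu(v_{*})}$ replaced by $P_{\beta_{0}}(v_{*})\sqrt{\mu(v_{*})}$; crucially the singular kernel is never differentiated. Since $|P_{\beta_{0}}(v_{*})|\sqrt{\mu(v_{*})}\leq Ce^{-|v_{*}|^{2}/4}$, each such term is controlled by the $|\beta|=0$ estimate applied with the slightly flatter Gaussian $e^{-|v|^{2}/4}$ in place of $\mu$ (only its decay is used), and summing over $\beta_{0}+\beta_{1}+\beta_{2}=\beta$ yields \eqref{4.30}.

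For the base case $|\beta|=0$ one needs $|\langle\Gamma(g_{1},g_{2}),g_{3}\rangle|\leq C(|g_{1}|_{2}|g_{2}|_{\nu}+|g_{1}|_{\nu}|g_{2}|_{2})|g_{3}|_{\nu}$ with $\nu(v)\approx 1+|v|$. The loss part is elementary: $\int_{\mathbb{S}^{2}}|(v-v_{*})\cdot\omega|\,d\omega=c|v-v_{*}|\leq c(1+|v|)(1+|v_{*}|)$, so Cauchy--Schwarz against the Gaussian bounds the inner integral by $C\nu(v)|g_{2}|_{2}$, whence $|\langle\Gamma_{\mathrm{l}}(g_{1},g_{2}),g_{3}\rangle|\leq C|g_{2}|_{2}\int\nu(v)|g_{1}(v)||g_{3}(v)|\,dv\leq C|g_{1}|_{\nu}|g_{2}|_{2}|g_{3}|_{\nu}$ after one more Cauchy--Schwarz. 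The gain part is the classical hard-sphere trilinear estimate: peeling $g_{3}(v)$ off by Cauchy--Schwarz in $(v,v_{*},\omega)$, the factor $\int\int\int|(v-v_{*})\cdot\omega|\sqrt{\mu(v_{*})}|g_{3}(v)|^{2}$ is $\leq C|g_{3}|_{\nu}^{2}$ after the $\omega$- and $v_{*}$-integrations, while the remaining factor $\int\int\int|(v-v_{*})\cdot\omega|\sqrt{\mu(v_{*})}|g_{1}(v')|^{2}|g_{2}(v_{*}')|^{2}$ is handled by the collision change of variables $(v,v_{*})\mapsto(v',v_{*}')$ (an involution of unit Jacobian under which $(v'-v_{*}')\cdot\omega=-(v-v_{*})\cdot\omega$) together with the Gaussian decay; I would simply cite \cite{Guo-Indiana,Guo-2003} for this step rather than reproduce the Carleman-type bookkeeping.

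The main obstacle is precisely the non-smoothness of $|(v-v_{*})\cdot\omega|$ under $v$-differentiation in the case $|\beta|>0$; the relative-velocity substitution disposes of it cleanly, and the only genuine care that then remains is to split the collision frequency $\nu(v)\approx 1+|v|$ so that in every resulting term exactly one of the two inputs carries $\nu$ while $g_{3}$ always does. Beyond that, the argument is routine Cauchy--Schwarz and the standard collision change of variables.
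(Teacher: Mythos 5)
The paper itself gives no proof of Lemma \ref{lem5.5}: it is quoted from \cite{Guo-Indiana,Guo-2003}, and your sketch is exactly the standard argument there --- split $\Gamma$ into gain and loss parts, pass to the relative velocity $\eta=v-v_*$ so that $\partial_\beta$ never touches the kernel $|\eta\cdot\omega|$ but only $g_1(v')$, $g_2(v_*')$ and $\sqrt{\mu(v-\eta)}$, and then invoke the $|\beta|=0$ trilinear estimate (whose gain-term part you, like the paper, take from the cited references) with a modified Gaussian weight, which yields precisely the sum over $\beta_1+\beta_2\le\beta$. This is correct; the only harmless slip is quantitative: $|P_{\beta_0}(v_*)|\sqrt{\mu(v_*)}$ is bounded by $Ce^{-|v_*|^2/8}$ (any exponent strictly below $1/4$ works), not by $Ce^{-|v_*|^2/4}$, and such Gaussian decay is all the base-case estimate needs.
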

\subsubsection{Estimates on linear collision terms}\label{seca.5.3.2}
For later use, we now consider the linear  terms $\Gamma(f,\frac{M-\mu}{\sqrt{\mu}})$ and $\Gamma(\frac{M-\mu}{\sqrt{\mu}},f)$.
We first deduce the mixed derivative estimates with respect to both the space variable $x$
and the velocity variable $v$.
\begin{lemma}\label{lem5.6}
Assume that \eqref{3.3} and \eqref{4.13} hold.
For $|\alpha|+|\beta|\leq N$ and $|\beta|\geq1$, one has
\begin{equation}
\label{5.20}
\frac{1}{\varepsilon}|(\partial^\alpha_\beta \Gamma(f,\frac{M-\mu}{\sqrt{\mu}}),\partial^\alpha_\beta f)|
+\frac{1}{\varepsilon}|(\partial^\alpha_\beta\Gamma(\frac{M-\mu}{\sqrt{\mu}},f),\partial^\alpha_\beta f)|
\leq C(\eta_{0}+\varepsilon^{\frac{1}{2}-a})\mathcal{D}_{N}(t).
\end{equation}
\end{lemma}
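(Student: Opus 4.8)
\textbf{Proof plan for Lemma \ref{lem5.6}.}
The plan is to reduce the two terms to applications of the basic $\Gamma$-estimate in Lemma \ref{lem5.5} together with pointwise bounds on the coefficient $\frac{M-\mu}{\sqrt{\mu}}$ and its derivatives. First I would record that $M-\mu = M_{[\rho,u,\theta]} - M_{[1,0,3/2]}$, and since by \eqref{4.16} the macroscopic quantities $(\rho,u,\theta)$ stay in a fixed compact neighborhood of $(1,0,3/2)$, a Taylor expansion in $(\rho-1,u,\theta-3/2)$ gives, for any $b\ge 0$ and any multi-index $\beta'$,
\begin{equation*}
\big|\langle v\rangle^{b}\,\partial_{\beta'}\big(\tfrac{M-\mu}{\sqrt{\mu}}\big)\big|_{2}
\le C\big(|\widetilde{\rho}| + |\widetilde{u}| + |\widetilde{\theta}| + |\overline{\rho}-1| + |\overline{u}| + |\overline{\theta}-\tfrac32|\big)
+ \big|\langle v\rangle^{b}\,\partial_{\beta'}(\text{$x$-independent part})\big|,
\end{equation*}
but more to the point, after applying a spatial derivative $\partial^{\alpha'}$ with $|\alpha'|\le N$ one gets a sum of terms of the form (smooth bounded function of $(\rho,u,\theta)$) times a product of derivatives $\partial^{\alpha_1}(\widetilde\rho,\widetilde u,\widetilde\theta,\overline\rho{-}1,\overline u,\overline\theta{-}\tfrac32)$ with $\sum|\alpha_i|=|\alpha'|$, each multiplied by a Gaussian-type velocity weight controlled by $C\langle v\rangle^{b}\mu^{-1/2}M^{1-\epsilon}$, hence in $L^2_v$ by \eqref{4.25}. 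The upshot is a pointwise-in-$x$ bound
\begin{equation*}
\big|\partial^{\alpha'}_{\beta'}\big(\tfrac{M-\mu}{\sqrt{\mu}}\big)\big|_{\nu}
\le C\sum_{|\alpha_1|\le |\alpha'|}\big|\partial^{\alpha_1}(\widetilde\rho,\widetilde u,\widetilde\theta)\big|\cdot(\text{lower-order factors})\;+\;C\big|\partial^{\alpha'}(\overline\rho{-}1,\overline u,\overline\theta{-}\tfrac32)\big|\cdot(\cdots),
\end{equation*}
which by Sobolev embedding and \eqref{3.3}, \eqref{4.16} is $L^\infty_x$-bounded by $C(\eta_0 + \varepsilon^{1/2-a})$ when at most, say, two derivatives land on it, and $L^2_x\cap L^3_x\cap L^6_x$-bounded with the appropriate small factor when more derivatives land on it.

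Next I would apply Lemma \ref{lem5.5} with $(g_1,g_2,g_3)=(f,\frac{M-\mu}{\sqrt\mu},\partial^\alpha_\beta f)$ (resp.\ with the first two slots swapped), distributing $\partial^\alpha_\beta$ via the Leibniz rule: the $x$-derivative $\alpha$ splits over the two factors, and the $v$-derivative $\beta$ splits likewise. This produces, after integrating in $x$, a sum of trilinear expressions
\begin{equation*}
\frac1\varepsilon\int_{\mathbb{R}^3}\big|\partial^{\alpha_1}_{\beta_1}f\big|_{\nu}\,\big|\partial^{\alpha_2}_{\beta_2}\big(\tfrac{M-\mu}{\sqrt\mu}\big)\big|_{\nu}\,\big|\partial^\alpha_\beta f\big|_{\nu}\,dx
\end{equation*}
with $\alpha_1+\alpha_2=\alpha$, $\beta_1+\beta_2=\beta$ (and the analogous ones with $|\cdot|_2$ in place of one $|\cdot|_\nu$, absorbed since $|\cdot|_2\le C|\cdot|_\nu$). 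In each such term I put the coefficient factor $\partial^{\alpha_2}_{\beta_2}(\frac{M-\mu}{\sqrt\mu})$ in $L^\infty_x$ when $|\alpha_2|\le N-2$ (using the $C(\eta_0+\varepsilon^{1/2-a})$ bound above), leaving $\frac1\varepsilon\|\partial^{\alpha_1}_{\beta_1}f\|_\nu\|\partial^\alpha_\beta f\|_\nu\le C(\eta_0+\varepsilon^{1/2-a})\cdot\frac1\varepsilon\|\cdot\|_\nu^2$, which is $\le C(\eta_0+\varepsilon^{1/2-a})\mathcal{D}_N(t)$ because $|\alpha_1|+|\beta_1|\le N$ with $|\beta_1|\ge 1$ whenever $\beta_2$ does not consume all of $\beta$; the borderline sub-case $\beta_1=0$ forces $|\beta_2|=|\beta|\ge 1$, and then $\partial^{\alpha_1}f$ with $|\alpha_1|\le N-1$ still has its $\nu$-norm controlled by $\frac1\varepsilon\sum_{|\alpha|\le N-1}\|\partial^\alpha f\|_\nu^2$ in $\mathcal{D}_N(t)$, up to an $L^\infty_x$ placement of one of the $f$-factors paid for by the a priori bound $\mathcal{E}_N\le\varepsilon^{2-2a}$ (giving the extra $\varepsilon^{1-a}$). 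When instead $|\alpha_2|\ge N-1$, the coefficient is too rough for $L^\infty_x$; then $|\alpha_1|+|\beta_1|$ is small, so I place $\partial^{\alpha_1}_{\beta_1}f$ in $L^\infty_x$ (Sobolev, controlled by $\sqrt{\mathcal{E}_N}\le\varepsilon^{1-a}$), put $\partial^{\alpha_2}_{\beta_2}(\frac{M-\mu}{\sqrt\mu})$ and $\partial^\alpha_\beta f$ in $L^2_x$, and again land on $C\varepsilon^{1-a}\cdot\frac1\varepsilon\|\partial^\alpha_\beta f\|_\nu^2\cdot(\|\partial^{\alpha_2}(\widetilde\rho,\widetilde u,\widetilde\theta)\| + \eta_0)$, absorbed into $C(\eta_0+\varepsilon^{1/2-a})\mathcal{D}_N(t)$ after using $\|\partial^{\alpha_2}(\widetilde\rho,\widetilde u,\widetilde\theta)\|^2$ is part of $\frac1\varepsilon\mathcal{D}_N$ up to the $\varepsilon$-weight bookkeeping. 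A key point making the $\varepsilon$-powers work is that the target $\partial^\alpha_\beta f$ always carries $|\beta|\ge 1$, so its $\nu$-norm sits in the last, $\varepsilon^{-1}$-weighted, sum of $\mathcal{D}_N(t)$, and one factor of $\varepsilon^{-1}$ from Lemma \ref{lem5.5} is exactly matched.

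The main obstacle I anticipate is the careful $\varepsilon$-power bookkeeping in the intermediate cases where neither $f$-factor is at top order: one must decide which factor goes to $L^\infty_x$ and verify that the product of the resulting small constant (from $\mathcal{E}_N\le\varepsilon^{2-2a}$ or from \eqref{3.3}) with the $\varepsilon^{-1}$ prefactor and the remaining $\|\cdot\|_\nu^2$ always reorganizes into $C(\eta_0+\varepsilon^{1/2-a})\mathcal{D}_N(t)$ and never into something merely $O(\mathcal{E}_N\mathcal{D}_N/\varepsilon)$ without a gain. This is where the constraint $a\in[0,1/2)$ enters, through $\varepsilon^{1-a}\cdot\varepsilon^{-1}=\varepsilon^{-a}$ needing to be dominated after pairing with another $\varepsilon^{1-a}$ or with the $\varepsilon$-weight on top-order fluid terms in $\mathcal{E}_N$; I would handle it by always extracting from each trilinear term either a full factor $(\eta_0+\varepsilon^{1/2-a})$ or a factor $\sqrt{\mathcal{E}_N}\le\varepsilon^{1-a}\le(\eta_0+\varepsilon^{1/2-a})$ (using \eqref{4.22a}), never both halves of $\mathcal{E}_N$ at once, so that what is left is genuinely a piece of $\mathcal{D}_N(t)$. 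The remaining steps — Leibniz expansion, Hölder with the $L^6$–$L^3$–$L^2$ and $L^\infty$–$L^2$–$L^2$ splittings from Lemma \ref{lem5.1}, and collecting terms — are routine.
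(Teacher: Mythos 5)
Your overall strategy is the same as the paper's: expand via Lemma \ref{lem5.5} and the Leibniz rule, exploit the smallness of $\frac{M-\mu}{\sqrt{\mu}}$ and its derivatives (your uniform bound is exactly the paper's \eqref{5.22}, and your $L^3_x$/$L^6_x$ remarks correspond to \eqref{4.34}--\eqref{4.36}), and use that $|\beta|\geq1$ forces $|\alpha|\leq N-1$, so the two $f$-factors sit in the $\varepsilon^{-1}$-weighted part of $\mathcal{D}_{N}(t)$. The paper splits according to whether zero or at least one $x$-derivative hits the coefficient, putting the coefficient in $L^{\infty}_x$ in the first case and using the $L^{6}$--$L^{3}$--$L^{2}$ H\"older inequality in the second (coefficient in $L^3_x$ with the $C(\eta_0+\varepsilon^{\frac{1}{2}-a})$ bound \eqref{4.36}, the lower-order $f$-factor in $L^6_x$, i.e. as the dissipation piece $\|\nabla_x\partial^{\alpha_1}_{\beta_1}f\|_{\nu}$); your case $|\alpha_2|\leq N-2$ with the coefficient in $L^\infty_x$ is a legitimate variant of this.

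There is, however, a genuine flaw in your case $|\alpha_2|\geq N-1$ (here necessarily $|\alpha_2|=N-1$, $\alpha_1=0$, $|\beta|=1$). If you place $\partial^{\alpha_1}_{\beta_1}f$ in $L^{\infty}_x$ and control it by $\sqrt{\mathcal{E}_N}\leq\varepsilon^{1-a}$, only one dissipation factor survives, namely $\|\partial^{\alpha}_{\beta}f\|_{\nu}\leq(\varepsilon\mathcal{D}_N)^{\frac12}$. Counting $\|\partial^{\alpha_2}(\widetilde{\rho},\widetilde{u},\widetilde{\theta})\|$ as a dissipation piece, i.e. $\leq(\varepsilon^{-1}\mathcal{D}_N)^{\frac12}$, then gives a term of size $\varepsilon^{-1}\cdot\varepsilon^{1-a}\cdot(\varepsilon^{-1}\mathcal{D}_N)^{\frac12}(\varepsilon\mathcal{D}_N)^{\frac12}=\varepsilon^{-a}\mathcal{D}_N$, which is not $\leq C(\eta_0+\varepsilon^{\frac12-a})\mathcal{D}_N$; bounding instead the coefficient's $L^2_x$ norm by the constant $C(\eta_0+\varepsilon^{1-a})$ (which is correct) leaves $C(\eta_0+\varepsilon^{1-a})\varepsilon^{\frac12-a}\mathcal{D}_N^{\frac12}$, and since \eqref{5.20} carries no additive remainder you cannot close with Young's inequality. (Also, the intermediate expression $C\varepsilon^{1-a}\cdot\frac{1}{\varepsilon}\|\partial^\alpha_\beta f\|_{\nu}^{2}\cdot(\cdots)$ cannot come out of H\"older, which yields only one power of $\|\partial^{\alpha}_{\beta}f\|_{\nu}$.) The repair is to keep \emph{both} $f$-factors in dissipation norms: either use the paper's $L^{6}$--$L^{3}$--$L^{2}$ splitting with \eqref{4.36}, or measure the $L^\infty_x$ factor in the $\nu$-weighted norm, $\||\partial_{\beta_1}f|_{\nu}\|_{L^{\infty}_x}\leq C\|\nabla_x\partial_{\beta_1}f\|_{\nu}^{\frac12}\|\nabla^2_x\partial_{\beta_1}f\|_{\nu}^{\frac12}\leq C(\varepsilon\mathcal{D}_N)^{\frac12}$, and take the coefficient in $L^2_x$ with the constant bound; either way one lands on $C(\eta_0+\varepsilon^{\frac12-a})\mathcal{D}_N(t)$ as required. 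The remaining cases of your plan are fine and match the paper's argument.
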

\begin{proof}
Let $|\alpha|+|\beta|\leq N$ and $|\beta|\geq1$, we have from \eqref{4.30} that
\begin{align}
\label{5.21}
&\frac{1}{\varepsilon}|(\partial^\alpha_\beta \Gamma(f,\frac{M-\mu}{\sqrt{\mu}}),\partial^\alpha_\beta f)|
\nonumber\\
&\leq C\sum_{\alpha_1\leq\alpha}\sum_{\beta_1+\beta_2\leq\beta}
\underbrace{\frac{1}{\varepsilon}\int_{\mathbb {R}^3}\{|\partial^{\alpha_1}_{\beta_1}f|_2| \partial^{\alpha-\alpha_1}_{\beta_2}(\frac{M-\mu}{\sqrt{\mu}})|_{\nu}
+|\partial^{\alpha_1}_{\beta_1}f|_\nu| \partial^{\alpha-\alpha_1}_{\beta_2}(\frac{M-\mu}{\sqrt{\mu}})|_{2}\}
|\partial^\alpha_\beta f|_{\nu}\,dx}_{J_1}.
\end{align}
For any $|\beta'|\geq0$ and any $b\geq0$, by \eqref{4.16}, there exist $ R_1>0$ large enough such that
$$
\int_{|v|\geq R_1}\langle v\rangle^{2b}|\partial _{\beta'}(\frac{M-\mu}{\sqrt{\mu}})|^2\,dv\leq C(\eta_0+\varepsilon^{1-a})^{2},
$$
and
$$
\int_{|v|\leq R_1}\langle v\rangle^{2b}|\partial _{\beta'}(\frac{M-\mu}{\sqrt{\mu}})|^2\,dv\leq C(|\rho-1|+|u|+|\theta-\frac{3}{2}|)^2\leq C
(\eta_0+\varepsilon^{1-a})^{2},
$$
which yields immediately that
\begin{equation}
\label{5.22}
| \langle v\rangle^{b}\partial _{\beta'}(\frac{M-\mu}{\sqrt{\mu}})|_{2}\leq C(\eta_0+\varepsilon^{1-a}).
\end{equation}
It is clear that $|\alpha|\leq N-1$ in \eqref{5.21} since $|\alpha|+|\beta|\leq N$ and $|\beta|\geq1$.
Note that the norm $|\cdot|_\nu$ is stronger than $|\cdot|_2$. If $|\alpha-\alpha_{1}|=0$, then we get from this, \eqref{5.22} and \eqref{4.15} that
\begin{align*}
J_1&\leq C\frac{1}{\varepsilon}\int_{\mathbb {R}^3}
|\partial^{\alpha_1}_{\beta_1}f|_\nu| \partial^{\alpha-\alpha_1}_{\beta_2}(\frac{M-\mu}{\sqrt{\mu}})|_{\nu}
|\partial^\alpha_\beta f|_{\nu}\,dx
\nonumber\\
&\leq C\frac{1}{\varepsilon}\||\partial^{\alpha-\alpha_1}_{\beta_2}(\frac{M-\mu}{\sqrt{\mu}})|_{\nu}\|_{L^{\infty}}
\|\partial^{\alpha_1}_{\beta_1}f\|_\nu\|\partial^{\alpha}_{\beta}f\|_{\nu}
\nonumber\\
&\leq C(\eta_0+\varepsilon^{1-a})\frac{1}{\varepsilon}\|\partial^{\alpha_1}_{\beta_1}f\|_\nu\|\partial^{\alpha}_{\beta}f\|_{\nu}
\nonumber\\
&\leq C(\eta_0+\varepsilon^{1-a})\mathcal{D}_{N}(t).
\end{align*}
By a simple computation, one has the following identity
\begin{equation}
\label{4.22A}
\partial_{x_i}M=M\big\{\frac{\partial_{x_i}\rho}{\rho}+\frac{(v-u)\cdot\partial_{x_i}u}{R\theta}
+(\frac{|v-u|^{2}}{2R\theta}-\frac{3}{2})\frac{\partial_{x_i}\theta}{\theta} \big\}.
\end{equation}
Then for $|\alpha|\geq 2$ and $\partial^{\alpha}=\partial^{\alpha'}\partial_{x_i}$, we easily obtain
\begin{align}
\label{4.23A}
\partial^{\alpha}M=&M\big\{\frac{\partial^{\alpha'}\partial_{x_i}\rho}{\rho}+\frac{(v-u)\cdot\partial^{\alpha'}\partial_{x_i}u}{R\theta}+(\frac{|v-u|^{2}}{2R\theta}-\frac{3}{2})\frac{\partial^{\alpha'}\partial_{x_i}\theta}{\theta} \big\}
\nonumber\\
&+\sum_{1\leq\alpha_{1}\leq \alpha'}C^{\alpha_1}_{\alpha'}\big\{\partial^{\alpha_{1}}(M\frac{1}{\rho})\partial^{\alpha'-\alpha_{1}}\partial_{x_i}\rho+\partial^{\alpha_{1}}(M\frac{v-u}{R\theta})\cdot\partial^{\alpha'-\alpha_{1}}\partial_{x_i}u
\nonumber\\
&\hspace{2cm}+\partial^{\alpha_{1}}(M\frac{|v-u|^{2}}{2R\theta^{2}}-M\frac{3}{2\theta})\partial^{\alpha'-\alpha_{1}}\partial_{x_i}\theta\big\}.
\end{align}
If $1\leq|\alpha-\alpha_{1}|\leq N-1$, 
using the Young's inequality, the embedding inequality, \eqref{3.3}, \eqref{4.13}, \eqref{4.27} and \eqref{4.13A}, one has
\begin{align}
\label{4.34}
\|\partial^{\alpha-\alpha_{1}}(\rho,u,\theta)\|_{L^3}
&\leq C\|\partial^{\alpha-\alpha_{1}}(\bar{\rho},\bar{u},\bar{\theta})\|_{L^3}
+C\|\partial^{\alpha-\alpha_{1}}(\widetilde{\rho},\widetilde{u},\widetilde{\theta})\|_{L^3}
\nonumber\\
&\leq C\eta_{0}+C\|\partial^{\alpha-\alpha_{1}}(\widetilde{\rho},\widetilde{u},\widetilde{\theta})\|^{\frac{1}{2}}
\|\nabla_{x}\partial^{\alpha-\alpha_{1}}(\widetilde{\rho},\widetilde{u},\widetilde{\theta})\|^{\frac{1}{2}}
\nonumber\\
&\leq C\eta_{0}+C\varepsilon^{\frac{1}{2}(1-a)}\varepsilon^{-\frac{1}{2}a}\leq C(\eta_0+\varepsilon^{\frac{1}{2}-a}),
\end{align}
and
\begin{equation}
\label{4.35}
\||\nabla_{x}(\rho,u,\theta)|^{|\alpha-\alpha_{1}|}\|_{L^3}
\leq C(\eta_0+\varepsilon^{\frac{1}{2}-a}).
\end{equation}
Then, for $1\leq|\alpha-\alpha_{1}|\leq N-1$, we get by \eqref{4.25}, \eqref{4.22A}, \eqref{4.23A}, \eqref{4.34}
and \eqref{4.35} that
\begin{equation}
\label{4.36}
\|| \partial^{\alpha-\alpha_1}_{\beta_2}(\frac{M-\mu}{\sqrt{\mu}})|_{\nu}\|_{L^3}\leq C(\eta_0+\varepsilon^{\frac{1}{2}-a}).
\end{equation}
Hence, for the cases $1\leq|\alpha-\alpha_{1}|\leq N-1$ in \eqref{5.21}, then $|\alpha_1|\leq |\alpha|-1$ and
$|\alpha_1|+|\beta_1|\leq|\alpha|+|\beta|-1 \leq N-1$, we use $L^{6}-L^{3}-L^{2}$ {H\"{o}lder} inequality, \eqref{4.36} and \eqref{4.15} to get
\begin{align*}
J_1&\leq C\frac{1}{\varepsilon}\int_{\mathbb {R}^3}
|\partial^{\alpha_1}_{\beta_1}f|_\nu| \partial^{\alpha-\alpha_1}_{\beta_2}(\frac{M-\mu}{\sqrt{\mu}})|_{\nu}
|\partial^\alpha_\beta f|_{\nu}\,dx
\nonumber\\
&\leq 
C\frac{1}{\varepsilon}\|| \partial^{\alpha-\alpha_1}_{\beta_2}(\frac{M-\mu}{\sqrt{\mu}})|_{\nu}\|_{L^3}
\||\partial^{\alpha_1}_{\beta_1}f|_\nu\|_{L^6}\||\partial^{\alpha}_{\beta}f|_{\nu}\|_{L^2}
\nonumber\\
&\leq C(\eta_0+\varepsilon^{\frac{1}{2}-a})\mathcal{D}_{N}(t).
\end{align*}
Putting the estimates on $J_1$ into \eqref{5.21}, one gets
\begin{equation}
\label{5.26}
\frac{1}{\varepsilon}|(\partial^\alpha_\beta \Gamma(f,\frac{M-\mu}{\sqrt{\mu}}),\partial^\alpha_\beta f)|
\leq C(\eta_0+\varepsilon^{\frac{1}{2}-a})\mathcal{D}_{N}(t).
\end{equation}
The second term on the LHS of \eqref{5.20} has the same bound as \eqref{5.26}. This ends the proof of Lemma \ref{lem5.6}.
\end{proof}
Similar arguments as in \eqref{5.20}, we have the following estimates.
\begin{lemma}\label{lem5.7}
For $|\alpha|\leq N-1$, it holds that
\begin{equation}
\label{5.27}
\frac{1}{\varepsilon}|(\partial^\alpha\Gamma(\frac{M-\mu}{\sqrt{\mu}},f), \partial^\alpha f)|
+\frac{1}{\varepsilon}|(\partial^\alpha \Gamma(f,\frac{M-\mu}{\sqrt{\mu}}),\partial^\alpha f)|
\leq C(\eta_{0}+\varepsilon^{\frac{1}{2}-a})\mathcal{D}_{N}(t).
\end{equation}
\end{lemma}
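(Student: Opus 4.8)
The plan is to mimic the proof of Lemma \ref{lem5.6} but in the simpler situation where all derivatives are spatial, i.e. $\beta=0$. First I would apply the nonlinear estimate \eqref{4.30} with $|\beta|=0$ to the inner product $\frac{1}{\varepsilon}|(\partial^\alpha\Gamma(f,\frac{M-\mu}{\sqrt{\mu}}),\partial^\alpha f)|$, producing a sum over $\alpha_1\leq\alpha$ of the integral
\begin{equation*}
\frac{1}{\varepsilon}\int_{\mathbb{R}^3}\big\{|\partial^{\alpha_1}f|_2\,|\partial^{\alpha-\alpha_1}(\tfrac{M-\mu}{\sqrt{\mu}})|_\nu+|\partial^{\alpha_1}f|_\nu\,|\partial^{\alpha-\alpha_1}(\tfrac{M-\mu}{\sqrt{\mu}})|_2\big\}\,|\partial^\alpha f|_\nu\,dx.
\end{equation*}

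Next I would split according to whether the derivative count on $(M-\mu)/\sqrt{\mu}$ is zero or positive. In the case $|\alpha-\alpha_1|=0$, I would pull $|(M-\mu)/\sqrt{\mu}|_\nu$ out in $L^\infty_x$ using the pointwise bound $|\langle v\rangle^b\partial_{\beta'}\frac{M-\mu}{\sqrt{\mu}}|_2\leq C(\eta_0+\varepsilon^{1-a})$ of \eqref{5.22} (which holds uniformly in $x$ via \eqref{4.16}), and then bound $\|\partial^{\alpha}f\|_\nu$ by the dissipation: since $|\alpha|\leq N-1$, the term $\frac1\varepsilon\|\partial^\alpha f\|_\nu^2$ sits inside $\mathcal{D}_N(t)$ in \eqref{4.15}, so one gets $C(\eta_0+\varepsilon^{1-a})\mathcal{D}_N(t)$. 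In the case $1\leq|\alpha-\alpha_1|\leq N-1$ one has $|\alpha_1|\leq|\alpha|-1\leq N-2$, so I would use the $L^6$-$L^3$-$L^2$ Hölder inequality, placing $|\partial^{\alpha_1}f|_\nu$ in $L^6_x$ (controlled by $\|\nabla_x\partial^{\alpha_1}f\|_\nu$, hence by $\mathcal{D}_N$ since the derivative count is still $\leq N-1$), $|\partial^{\alpha-\alpha_1}(\tfrac{M-\mu}{\sqrt{\mu}})|_\nu$ in $L^3_x$ — which is $\leq C(\eta_0+\varepsilon^{\frac12-a})$ by exactly the estimate \eqref{4.36} established in the previous lemma via \eqref{4.22A}, \eqref{4.23A}, \eqref{4.34}, \eqref{4.35} — and $|\partial^\alpha f|_\nu$ in $L^2_x$, again absorbed into $\mathcal{D}_N$. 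Collecting the two cases gives $\frac1\varepsilon|(\partial^\alpha\Gamma(f,\frac{M-\mu}{\sqrt{\mu}}),\partial^\alpha f)|\leq C(\eta_0+\varepsilon^{\frac12-a})\mathcal{D}_N(t)$, and the term $\frac1\varepsilon|(\partial^\alpha\Gamma(\frac{M-\mu}{\sqrt{\mu}},f),\partial^\alpha f)|$ is handled identically by the symmetry of the two arguments in \eqref{4.30}.

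The main point to watch — and the only place this differs in substance from Lemma \ref{lem5.6} — is that here one must be sure $\|\partial^\alpha f\|_\nu$ with $|\alpha|\leq N-1$ is genuinely available from $\mathcal{D}_N(t)$ with the correct power of $\varepsilon$: in \eqref{4.15} the low-order spatial dissipation appears with the favorable weight $\frac1\varepsilon\sum_{|\alpha|\leq N-1}\|\partial^\alpha f\|_\nu^2$, so the $\frac1\varepsilon$ prefactor in the trilinear term is exactly matched and nothing is lost. There is no genuine obstacle; the estimate is strictly easier than \eqref{5.20} because the absence of velocity derivatives removes the need for Lemma \ref{lem5.4} and the Leibniz expansion in $\beta$, so the proof reduces to reusing the $L^\infty_x$ and $L^3_x$ bounds on $(M-\mu)/\sqrt{\mu}$ already proved, together with Sobolev interpolation on the $f$-factors. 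I would simply state that the argument is parallel to that of Lemma \ref{lem5.6} and indicate the two Hölder splittings above.
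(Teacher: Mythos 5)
Your proposal is correct and follows essentially the same route as the paper, which proves Lemma \ref{lem5.7} simply by invoking the same argument as Lemma \ref{lem5.6} with $\beta=0$: apply \eqref{4.30}, split according to whether $|\alpha-\alpha_1|=0$ (using the $L^\infty_x$ bound \eqref{5.22}) or $1\leq|\alpha-\alpha_1|\leq N-1$ (using the $L^6$-$L^3$-$L^2$ H\"older inequality with \eqref{4.36}), and absorb the $f$-factors into $\mathcal{D}_N(t)$ via the $\frac{1}{\varepsilon}\sum_{|\alpha|\leq N-1}\|\partial^\alpha f\|_\nu^2$ term in \eqref{4.15}. Your observation that the low-order dissipation weight exactly matches the $\frac{1}{\varepsilon}$ prefactor is precisely the point that makes the lemma go through.
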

Note that  Lemma \ref{lem5.6} and Lemma \ref{lem5.7} do not include the $N$th-order space derivatives.
Since the $N$th-order space derivatives in $\mathcal{E}_{N}(t)$ 
are much more singular, we need to derive the $N$-order space derivative estimates separately.
\begin{lemma}\label{lem5.8}
For any $\alpha$ with $|\alpha|=N$, it holds that
\begin{align}
\label{5.28}	
&\frac{1}{\varepsilon}|(\partial^\alpha\Gamma(\frac{M-\mu}{\sqrt{\mu}},f),\frac{\partial^\alpha F}{\sqrt{\mu}})|+
\frac{1}{\varepsilon}|(\partial^\alpha\Gamma(f,\frac{M-\mu}{\sqrt{\mu}}),\frac{\partial^\alpha F}{\sqrt{\mu}})|
\nonumber\\
&\leq C(\eta_{0}+\varepsilon^{\frac{1}{2}-a})\frac{1}{\varepsilon^{2}}\mathcal{D}_{N}(t)+C(\eta_{0}+\varepsilon^{1-a})\varepsilon^{\frac{1}{2}-a}.
\end{align}
\end{lemma}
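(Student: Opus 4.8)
The plan is to estimate the two inner products in \eqref{5.28} by first expanding $\partial^\alpha\Gamma(\cdot,\cdot)$ via the Leibniz rule and Lemma \ref{lem5.5}, then distinguishing the terms according to whether the derivative $\partial^\alpha$ hits $f$ or the factor $\frac{M-\mu}{\sqrt\mu}$. I would begin by writing $\frac{\partial^\alpha F}{\sqrt\mu}=\partial^\alpha f+\frac{\partial^\alpha \overline{G}}{\sqrt\mu}+\frac{\partial^\alpha M}{\sqrt\mu}$ using the improved decomposition \eqref{3.26a} together with \eqref{4.5}, \eqref{2.7}, and then handle the contributions of each of these three pieces against $\partial^\alpha\Gamma$ separately. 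The $\partial^\alpha f$ piece is the genuinely singular one and is controlled by $\mathcal{D}_N(t)$; the $\overline{G}$ piece is dealt with by Lemma \ref{lem5.3} (the bound $\|\langle v\rangle^b\partial^\alpha(\overline{G}/\sqrt\mu)\|\le C\eta_0\varepsilon^{1-a}$ for $|\alpha|=N$); and the $\partial^\alpha M$ piece is the source of the non-dissipative remainder $C(\eta_0+\varepsilon^{1-a})\varepsilon^{\frac12-a}$ because it cannot be absorbed by the dissipation rate and must be kept as an $O(\varepsilon^{3/2-2a})$-type error after using $\frac{1}{\varepsilon}\cdot\varepsilon\cdot\varepsilon^{1-a}\cdot\varepsilon^{\frac12-a}$ bookkeeping.

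The decisive technical step, as the excerpt itself flags right before the statement, is that in the top-order term where $\partial^\alpha$ lands entirely on $f$ one cannot afford the factor $|\partial^\alpha_\beta f|_\nu$ at order $N$ twice without an extra $\varepsilon$; so the idea is to integrate by parts in $x$ to move one spatial derivative off $\partial^\alpha f$ onto the remaining factors, exactly as announced for \eqref{5.32}. Concretely, writing $\partial^\alpha=\partial_{x_i}\partial^{\alpha'}$ with $|\alpha'|=N-1$, one transfers $\partial_{x_i}$ via $(\partial^\alpha\Gamma(\cdot,f),\partial^\alpha f)=-(\partial_{x_i}\partial^{\alpha'}\Gamma(\cdot,f),\partial^{\alpha'}\partial_{x_i}f)$ reorganized so that the high-derivative factor $\partial^{\alpha}f$ is paired against a quantity carrying at most $N-1$ derivatives of $f$ plus derivatives of $\frac{M-\mu}{\sqrt\mu}$; the latter are small by \eqref{5.22} and \eqref{4.36}, and the $\partial^{\leq N-1}f$ factors sit inside $\mathcal{D}_N(t)$ with the favorable weight $\frac1\varepsilon$. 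After this transfer one applies Lemma \ref{lem5.5}, the $L^6$--$L^3$--$L^2$ Hölder inequality together with Lemma \ref{lem5.1}, and the smallness bounds \eqref{4.16}, \eqref{4.27}, \eqref{4.34}--\eqref{4.36}, exactly mirroring the proof of Lemma \ref{lem5.6}, to produce the factor $(\eta_0+\varepsilon^{\frac12-a})$ in front of $\mathcal{D}_N(t)$. The extra $\frac{1}{\varepsilon^2}$ in \eqref{5.28} compared to \eqref{5.20} is just the accounting of the singular $\frac1\varepsilon$ in front of $\Gamma$ together with the fact that the top-order $f$-norm enters $\mathcal{D}_N(t)$ only with an $\varepsilon$ weight (see the $\varepsilon\sum_{|\alpha|=N}\|\partial^\alpha f\|_\nu^2$ term in \eqref{4.15}).

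For the pieces where $\partial^\alpha$ falls on $\frac{M-\mu}{\sqrt\mu}$ (so $f$ carries at most $N-1$ derivatives), no integration by parts is needed: one uses \eqref{4.22A}--\eqref{4.23A} to expand $\partial^\alpha M$, bounds $|\partial^{\alpha'}_{\beta}(\frac{M-\mu}{\sqrt\mu})|_2$ and its $L^3$, $L^6$ norms by the macroscopic energy and \eqref{4.13A}, and estimates $\partial^\alpha F/\sqrt\mu$ in $L^2$ by $\|\partial^\alpha f\|+\|\partial^\alpha\overline{G}/\sqrt\mu\|+\|\partial^\alpha M/\sqrt\mu\|\lesssim \varepsilon^{1-a}+\varepsilon^{1-a}+(\eta_0+\varepsilon^{1-a})$; combined with the $\frac1\varepsilon$ prefactor and one power of $\mathcal{D}_N(t)^{1/2}$ absorbed into Young's inequality, these produce precisely the $C(\eta_0+\varepsilon^{\frac12-a})\frac{1}{\varepsilon^2}\mathcal{D}_N(t)+C(\eta_0+\varepsilon^{1-a})\varepsilon^{\frac12-a}$ split. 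The main obstacle, then, is not any single estimate but getting the $\varepsilon$-powers to line up in the top-order term after the integration by parts — in particular verifying that the transferred derivative really lands on a factor that is already controlled (either a lower-order $f$ with its $\frac1\varepsilon$ weight, or a derivative of $M-\mu$ that is $O(\eta_0+\varepsilon^{1-a})$ small), so that the dangerous $\frac{1}{\varepsilon}\|\partial^\alpha f\|_\nu^2$ never appears without a compensating $\varepsilon$ from $\mathcal{D}_N(t)$; the rest is a careful but routine replay of the Hölder/embedding machinery of Lemmas \ref{lem5.6}--\ref{lem5.7}.
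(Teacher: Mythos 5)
Your overall route coincides with the paper's: Leibniz expansion of $\partial^\alpha\Gamma$, the splitting $\frac{\partial^\alpha F}{\sqrt{\mu}}=\frac{\partial^\alpha M}{\sqrt{\mu}}+\frac{\partial^\alpha\overline{G}}{\sqrt{\mu}}+\partial^\alpha f$ as in \eqref{5.29}, H\"{o}lder/Sobolev estimates for the commutator terms, and an integration by parts in $x$ for the dangerous piece, as in \eqref{5.32}. However, your diagnosis of where and why the integration by parts is indispensable is off, and this is a genuine gap. The pairing you single out --- two top-order $f$-factors, i.e. $\Gamma(\frac{M-\mu}{\sqrt{\mu}},\partial^\alpha f)$ against $\partial^\alpha f$ --- is in fact harmless: it is bounded directly by $\frac{1}{\varepsilon}(\eta_0+\varepsilon^{1-a})\|\partial^\alpha f\|_\nu^2\le(\eta_0+\varepsilon^{1-a})\frac{1}{\varepsilon^2}\mathcal{D}_N(t)$, since $\varepsilon\|\partial^\alpha f\|_\nu^2$ sits inside $\mathcal{D}_N(t)$ and the statement allows the $\varepsilon^{-2}$ prefactor (this is part of the second term on the right of \eqref{5.29}, which is disposed of without any integration by parts). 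The term that genuinely cannot be estimated directly is $\frac{1}{\varepsilon}(\Gamma(\frac{M-\mu}{\sqrt{\mu}},\partial^\alpha f),\frac{I_2}{\sqrt{\mu}})$, where $I_2$ in \eqref{5.30} carries $N$ derivatives of the smooth Euler--Maxwell background $(\bar{\rho},\bar{u},\bar{\theta})$: by \eqref{3.3} this factor is only $O(\eta_0)$ in $L^2$, not $\varepsilon$-small, and since the top-order dissipation of $f$ enters $\mathcal{D}_N(t)$ only with weight $\varepsilon$, the best Young split leaves a remainder of order $\eta_0^3/\varepsilon$, far exceeding the allowed $C(\eta_0+\varepsilon^{1-a})\varepsilon^{\frac12-a}$. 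The whole point of \eqref{5.32} is to lower $\partial^\alpha f$ to $\partial^{\alpha'}f$ with $|\alpha'|=N-1$, which enjoys the $\frac{1}{\varepsilon}$-weighted dissipation, while the transferred derivative lands on the background factor (harmless since $N<N_0$) or on $\frac{M-\mu}{\sqrt{\mu}}$; after this step no $N$-th order derivative of $f$ remains at all, contrary to your description, and the transfer identity you display is not the one you need (as written it equates a quantity with its own negative). The same mechanism is needed for the background part $I_3^2$ of $I_3$.

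A second, smaller inaccuracy: at $|\alpha|=N$ your claimed bound $\|\frac{\partial^\alpha F}{\sqrt{\mu}}\|\lesssim\varepsilon^{1-a}+\varepsilon^{1-a}+(\eta_0+\varepsilon^{1-a})$ is false. The top-order norms enter the energy only with weight $\varepsilon^2$, so \eqref{4.13A}, \eqref{4.47a} and \eqref{4.49} only give $\|\partial^\alpha f\|$, $\|\partial^\alpha(\widetilde{\rho},\widetilde{u},\widetilde{\theta})\|$ and $\|\frac{\partial^\alpha M}{\sqrt{\mu}}\|_\nu$ of size $\varepsilon^{-a}$, not $\varepsilon^{1-a}$. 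Consequently the commutator terms where derivatives fall on $M-\mu$ must be closed by Young splits with carefully chosen $\varepsilon$-weights (e.g. $\varepsilon^{-3/2}$ against $\varepsilon^{3/2}$, or $\varepsilon^{-2}$ against $\varepsilon^{2}$), and it is precisely this step that produces the remainder $C(\eta_0+\varepsilon^{1-a})\varepsilon^{\frac12-a}$; with your sizes the bookkeeping is too optimistic and would not survive the corrected accounting without redoing these splits as in the paper's treatment of $J_2$.
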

\begin{proof}
Let $|\alpha|=N$, we easily see that
\begin{align}
\label{4.40}
\frac{1}{\varepsilon}(\partial^{\alpha}\Gamma(\frac{M-\mu}{\sqrt{\mu}},f),\frac{\partial^{\alpha}F}{\sqrt{\mu}})=&
\frac{1}{\varepsilon}(\Gamma(\frac{M-\mu}{\sqrt{\mu}},\partial^{\alpha}f),\frac{\partial^{\alpha}F}{\sqrt{\mu}})
\nonumber\\
&+\sum_{1\leq\alpha_{1}\leq\alpha}C^{\alpha_{1}}_{\alpha}\frac{1}{\varepsilon}(\Gamma(
\frac{\partial^{\alpha_{1}}(M-\mu)}{\sqrt{\mu}},\partial^{\alpha-\alpha_{1}}f),\frac{\partial^{\alpha}F}{\sqrt{\mu}}).
\end{align}
In view of the decomposition $F=M+\overline{G}+\sqrt{\mu}f$, one has
\begin{align}
\label{5.29}
\frac{1}{\varepsilon}(\Gamma(\frac{M-\mu}{\sqrt{\mu}},\partial^{\alpha}f),\frac{\partial^{\alpha}F}{\sqrt{\mu}})
=&\frac{1}{\varepsilon}(\Gamma(\frac{M-\mu}{\sqrt{\mu}},\partial^{\alpha}f),\frac{\partial^{\alpha}M}{\sqrt{\mu}})
\nonumber\\
&+\frac{1}{\varepsilon}(\Gamma(\frac{M-\mu}{\sqrt{\mu}},\partial^{\alpha}f),\frac{\partial^{\alpha}\overline{G}}{\sqrt{\mu}}+\partial^{\alpha}f).
\end{align}
Let us carefully deal with the first term on the RHS of \eqref{5.29}.
For $|\alpha|=N$, denoting $\partial^{\alpha}=\partial^{\alpha'}\partial_{x_i}$ with $|\alpha'|=N-1$, we have from \eqref{4.23A} that
\begin{align}
\label{5.30}
\partial^{\alpha}M=&M\{\frac{\partial^{\alpha}\widetilde{\rho}}{\rho}+\frac{(v-u)\cdot\partial^{\alpha}\widetilde{u}}{R\theta}
+(\frac{|v-u|^{2}}{2R\theta}-\frac{3}{2})\frac{\partial^{\alpha}\widetilde{\theta}}{\theta}\}
\nonumber\\
&+M\{\frac{\partial^{\alpha}\bar{\rho}}{\rho}+\frac{(v-u)\cdot\partial^{\alpha}\bar{u}}{R\theta}
+(\frac{|v-u|^{2}}{2R\theta}-\frac{3}{2})\frac{\partial^{\alpha}\bar{\theta}}{\theta}\}
\nonumber\\
&+\sum_{1\leq\alpha_{1}\leq \alpha'}C^{\alpha_1}_{\alpha'}\big\{\partial^{\alpha_{1}}(M\frac{1}{\rho})\partial^{\alpha'-\alpha_{1}}\partial_{x_i}\rho+\partial^{\alpha_{1}}(M\frac{v-u}{R\theta})\cdot\partial^{\alpha'-\alpha_{1}}\partial_{x_i}u
\nonumber\\
&\hspace{1cm}+\partial^{\alpha_{1}}(M\frac{|v-u|^{2}}{2R\theta^{2}}-M\frac{3}{2\theta})\partial^{\alpha'-\alpha_{1}}\partial_{x_i}\theta\big\}
\nonumber\\
:=&I_{1}+I_{2}+I_{3}.
\end{align}
By the definition of $I_1$ in \eqref{5.30}, \eqref{4.30}, \eqref{4.25}, \eqref{5.22} and \eqref{4.15}, one has
\begin{align}
\label{4.44}
\frac{1}{\varepsilon}|(\Gamma(\frac{M-\mu}{\sqrt{\mu}},\partial^{\alpha}f),\frac{I_{1}}{\sqrt{\mu}})|
&\leq
C\frac{1}{\varepsilon}\int_{\mathbb{R}^{3}}\{|(\frac{M-\mu}{\sqrt{\mu}})|_2|\partial^{\alpha}f|_{\nu}
+|(\frac{M-\mu}{\sqrt{\mu}})|_\nu|\partial^{\alpha}f|_{2}\}
|\frac{I_{1}}{\sqrt{\mu}}|_\nu\,dx
\nonumber\\
&\leq
C\frac{1}{\varepsilon}\int_{\mathbb{R}^{3}}|(\frac{M-\mu}{\sqrt{\mu}})|_\nu|\partial^{\alpha}f|_{\nu}
|\partial^{\alpha}(\widetilde{\rho},\widetilde{u},\widetilde{\theta})|\,dx
\nonumber\\
&\leq C(\eta_{0}+\varepsilon^{1-a})\frac{1}{\varepsilon}(\|\partial^{\alpha}f\|^{2}_{\nu}+\|\partial^{\alpha}(\widetilde{\rho},\widetilde{u},\widetilde{\theta})\|^{2})
\nonumber\\
&\leq C(\eta_{0}+\varepsilon^{1-a})\frac{1}{\varepsilon^{2}}\mathcal{D}_{N}(t).
\end{align}
Recalling that $\partial^{\alpha}=\partial^{\alpha'}\partial_{x_i}$ with $|\alpha'|=N-1$, we have from the integration by parts that
\begin{align}
\label{5.32}
\frac{1}{\varepsilon}&(\Gamma(\frac{M-\mu}{\sqrt{\mu}},\partial^{\alpha}f),\frac{I_{2}}{\sqrt{\mu}})
\nonumber\\
=&-\frac{1}{\varepsilon}\big(\Gamma(\frac{M-\mu}{\sqrt{\mu}},\partial^{\alpha'}f)
,\partial_{x_i}[\frac{M}{\sqrt{\mu}}\{\frac{\partial^{\alpha}\bar{\rho}}{\rho}+\frac{(v-u)\cdot\partial^{\alpha}\bar{u}}{R\theta}
+(\frac{|v-u|^{2}}{2R\theta}-\frac{3}{2})\frac{\partial^{\alpha}\bar{\theta}}{\theta}\}]\big)
\nonumber\\
&-\frac{1}{\varepsilon}\big(\Gamma(\partial_{x_i}[\frac{M-\mu}{\sqrt{\mu}}],\partial^{\alpha'}f),\frac{M}{\sqrt{\mu}}\{\frac{\partial^{\alpha}\bar{\rho}}{\rho}+\frac{(v-u)\cdot\partial^{\alpha}\bar{u}}{R\theta}
+(\frac{|v-u|^{2}}{2R\theta}-\frac{3}{2})\frac{\partial^{\alpha}\bar{\theta}}{\theta}\}\big),
\end{align}
which can be further bounded by
\begin{align}
\label{5.33}
&C\frac{1}{\varepsilon}\int_{\mathbb{R}^{3}}|\partial^{\alpha'}f|_{\nu}
\big\{|\partial_{x_i}\partial^{\alpha}(\bar{\rho},\bar{u},\bar{\theta})|
+|\partial^{\alpha}(\bar{\rho},\bar{u},\bar{\theta})||\partial_{x_i}(\rho,u,\theta)|\big\}\,dx
\nonumber\\
&\leq C(\eta_{0}+\varepsilon^{1-a})\frac{1}{\varepsilon}(\frac{1}{\varepsilon^{2}}\|\partial^{\alpha'}f\|^{2}_{\nu}+\varepsilon^2)
\nonumber\\
&\leq C(\eta_{0}+\varepsilon^{1-a})\frac{1}{\varepsilon^{2}}\mathcal{D}_{N}(t)+C(\eta_{0}+\varepsilon^{1-a})\varepsilon,
\end{align}
according to \eqref{4.30}, \eqref{4.25}, \eqref{5.22}, \eqref{3.3}, \eqref{4.13} and \eqref{4.15}.
As for the term $I_3$  in \eqref{5.30}, we write $I_3=I_3^1+I_3^2$ with
\begin{align*}
I^1_{3}=\sum_{1\leq\alpha_{1}\leq \alpha'}C^{\alpha_1}_{\alpha'}&\big(\partial^{\alpha_{1}}(M\frac{1}{\rho})\partial^{\alpha'-\alpha_{1}}\partial_{x_i}\widetilde{\rho}+\partial^{\alpha_{1}}(M\frac{v-u}{R\theta})\cdot\partial^{\alpha'-\alpha_{1}}\partial_{x_i}\widetilde{u}
\\
&+\partial^{\alpha_{1}}(M\frac{|v-u|^{2}}{2R\theta^{2}}-M\frac{3}{2\theta})\partial^{\alpha'-\alpha_{1}}\partial_{x_i}\widetilde{\theta}\big),
\end{align*}
and
\begin{align*}
I_3^2=\sum_{1\leq\alpha_{1}\leq \alpha'}C^{\alpha_1}_{\alpha'}&\big(\partial^{\alpha_{1}}(M\frac{1}{\rho})\partial^{\alpha'-\alpha_{1}}\partial_{x_i}\bar{\rho}+\partial^{\alpha_{1}}(M\frac{v-u}{R\theta})\cdot\partial^{\alpha'-\alpha_{1}}\partial_{x_i}\bar{u}
\\
&+\partial^{\alpha_{1}}(M\frac{|v-u|^{2}}{2R\theta^{2}}-M\frac{3}{2\theta})\partial^{\alpha'-\alpha_{1}}\partial_{x_i}\bar{\theta}\big).
\end{align*}
For $1\leq\alpha_{1}\leq \alpha'$ with $|\alpha'|=N-1$, we get by \eqref{4.30} and \eqref{5.22} that
\begin{align*}
&\frac{1}{\varepsilon}|(\Gamma(\frac{M-\mu}{\sqrt{\mu}},\partial^{\alpha}f),\frac{1}{\sqrt{\mu}}
\partial^{\alpha_{1}}(M\frac{1}{\rho})\partial^{\alpha'-\alpha_{1}}\partial_{x_i}\widetilde{\rho})|
\\
\leq& C\frac{1}{\varepsilon}\int_{\mathbb{R}^{3}}|\frac{M-\mu}{\sqrt{\mu}}|_\nu|\partial^{\alpha}f|_\nu
|\frac{1}{\sqrt{\mu}}\partial^{\alpha_{1}}(M\frac{1}{\rho})\partial^{\alpha'-\alpha_{1}}\partial_{x_i}\widetilde{\rho}|_\nu\,dx
\\
\leq& C(\eta_{0}+\varepsilon^{1-a})\frac{1}{\varepsilon}
\||\partial^{\alpha}f|_{\nu}\|_{L^2}\|\partial^{\alpha'-\alpha_{1}}\partial_{x_i}\widetilde{\rho}\|_{L^6}
\||\frac{1}{\sqrt{\mu}}\partial^{\alpha_{1}}(M\frac{1}{\rho})|_\nu\|_{L^3}
\\
\leq& C(\eta_{0}+\varepsilon^{1-a})\frac{1}{\varepsilon^{2}}\mathcal{D}_{N}(t),
\end{align*}
where in the last inequality we have used the similar argument as \eqref{4.36}, \eqref{4.15} and \eqref{4.22a}. 
The other terms in $I_3^1$ can be handed similarly, it follows that
\begin{equation*}
\frac{1}{\varepsilon}|(\Gamma(\frac{M-\mu}{\sqrt{\mu}},\partial^{\alpha}f),\frac{I^1_{3}}{\sqrt{\mu}})|
\leq C(\eta_{0}+\varepsilon^{1-a})\frac{1}{\varepsilon^{2}}\mathcal{D}_{N}(t).
\end{equation*}
Similar arguments as \eqref{5.33}, one gets 
\begin{equation*}
\frac{1}{\varepsilon}|(\Gamma(\frac{M-\mu}{\sqrt{\mu}},\partial^{\alpha}f),\frac{I^2_{3}}{\sqrt{\mu}})|
\leq C(\eta_{0}+\varepsilon^{1-a})\frac{1}{\varepsilon^{2}}\mathcal{D}_{N}(t)+C(\eta_{0}+\varepsilon^{1-a})\varepsilon.
\end{equation*}
Thanks to $I_3=I_3^1+I_3^2$, we deduce from the aforementioned two estimates  that
\begin{equation}
\label{5.34}
\frac{1}{\varepsilon}|(\Gamma(\frac{M-\mu}{\sqrt{\mu}},\partial^{\alpha}f),\frac{I_{3}}{\sqrt{\mu}})|
\leq C(\eta_{0}+\varepsilon^{1-a})\frac{1}{\varepsilon^{2}}\mathcal{D}_{N}(t)+C(\eta_{0}+\varepsilon^{1-a})\varepsilon.
\end{equation} 
Combining \eqref{5.34}, \eqref{4.44} and \eqref{5.33} yields that
\begin{equation}
\label{5.35}
\frac{1}{\varepsilon}|(\Gamma(\frac{M-\mu}{\sqrt{\mu}},\partial^{\alpha}f),\frac{\partial^{\alpha}M}{\sqrt{\mu}})| 
\leq C(\eta_{0}+\varepsilon^{1-a})\frac{1}{\varepsilon^{2}}\mathcal{D}_{N}(t)+C(\eta_{0}+\varepsilon^{1-a})\varepsilon.
\end{equation}
The second term on the RHS of \eqref{5.29} has the same bound as \eqref{5.35}.
Hence, one gets
\begin{equation}
\label{5.36}
\frac{1}{\varepsilon}|(\Gamma(\frac{M-\mu}{\sqrt{\mu}},\partial^{\alpha}f),\frac{\partial^{\alpha}F}{\sqrt{\mu}})|
\leq C(\eta_{0}+\varepsilon^{1-a})\frac{1}{\varepsilon^{2}}\mathcal{D}_{N}(t)+C(\eta_{0}+\varepsilon^{1-a})\varepsilon^{1-2a}.
\end{equation}
It remains to compute the last term in \eqref{4.40}. Note that the norm $|\cdot|_\nu$ is stronger than $|\cdot|_2$, we have 
from \eqref{4.30} that
\begin{equation*}
\sum_{1\leq\alpha_{1}\leq\alpha}\frac{1}{\varepsilon}
|(\Gamma(\frac{\partial^{\alpha_{1}}(M-\mu)}{\sqrt{\mu}},\partial^{\alpha-\alpha_{1}}f),\frac{\partial^{\alpha}F}{\sqrt{\mu}})|
\leq C\sum_{1\leq\alpha_{1}\leq\alpha}\underbrace{\frac{1}{\varepsilon}\int_{\mathbb{R}^{3}}
|\frac{\partial^{\alpha_{1}}M}{\sqrt{\mu}}|_{\nu}|\partial^{\alpha-\alpha_{1}}f|_{\nu}
|\frac{\partial^{\alpha}F}{\sqrt{\mu}}|_{\nu}\,dx}_{J_2}.
\end{equation*}
To bound $J_2$, we first give the following two estimates for $|\alpha|=N$, 
\begin{align}
\label{4.49}
\|\frac{\partial^{\alpha}F}{\sqrt{\mu}}\|_{\nu}
&\leq \|\partial^{\alpha}f\|_{\nu}+\|\frac{\partial^{\alpha}\overline{G}}{\sqrt{\mu}}\|_{\nu}
+\|\frac{\partial^{\alpha}M}{\sqrt{\mu}}\|_{\nu}
\nonumber\\
&\leq C(\|\partial^{\alpha}f\|_{\nu}+\|\partial^{\alpha}(\widetilde{\rho},\widetilde{u},\widetilde{\theta})\|+\eta_{0}+\varepsilon^{1-a}),
\end{align}
and
\begin{align}
\label{4.47a}
\|\frac{\partial^{\alpha}M}{\sqrt{\mu}}\|_\nu
&\leq C(\|\partial^{\alpha}(\widetilde{\rho},\widetilde{u},\widetilde{\theta})\|+\eta_{0}+\varepsilon^{1-a})
\nonumber\\
&\leq C(\varepsilon^{-a}+\eta_{0}+\varepsilon^{1-a})\leq C\varepsilon^{-a}.
\end{align}
Here we have used \eqref{4.23A}, \eqref{4.20}, \eqref{4.25}, \eqref{3.3}, \eqref{4.13} and \eqref{4.22a}.
If $|\alpha_{1}|=N$, we have from \eqref{4.47a}, \eqref{4.49} and \eqref{4.15} that
\begin{align*}
J_2\leq& C\frac{1}{\varepsilon}\|\frac{\partial^{\alpha_{1}}M}{\sqrt{\mu}}\|_\nu\||\partial^{\alpha-\alpha_{1}}f|_{\nu}\|_{L^{\infty}}
\|\frac{\partial^{\alpha}F}{\sqrt{\mu}}\|_{\nu}
\\
\leq&C\frac{1}{\varepsilon}\varepsilon^{-a}(\varepsilon^{-\frac{3}{2}}\|\nabla_xf\|_{\nu}\|\nabla_x^{2}f\|_{\nu}
+\varepsilon^{\frac{3}{2}}\|\frac{\partial^{\alpha}F}{\sqrt{\mu}}\|^2_{\nu})
\\
\leq& C\varepsilon^{\frac{1}{2}-a}\frac{1}{\varepsilon^{2}}\mathcal{D}_{N}(t)+C(\eta_{0}+\varepsilon^{1-a})\varepsilon^{\frac{1}{2}-a}.
\end{align*}
If $N/2<|\alpha_{1}|\leq N-1$, {then by} $L^{6}-L^{3}-L^{2}$ H\"{o}lder inequality, \eqref{4.36}, \eqref{4.49}, \eqref{4.22a} and \eqref{4.15}, we arrive at
\begin{align*}	
J_2\leq& C\frac{1}{\varepsilon}\||\frac{\partial^{\alpha_{1}}M}{\sqrt{\mu}}|_\nu\|_{L^{3}}\||\partial^{\alpha-\alpha_{1}}f|_{\nu}\|_{L^{6}}
\|\frac{\partial^{\alpha}F}{\sqrt{\mu}}\|_{\nu}
\\
\leq& C(\eta_{0}+\varepsilon^{\frac{1}{2}-a})\frac{1}{\varepsilon}
(\frac{1}{\varepsilon^2}\|\nabla_x\partial^{\alpha-\alpha_{1}}f\|^2_{\nu}+\varepsilon^2\|\frac{\partial^{\alpha}F}{\sqrt{\mu}}\|^2_{\nu})
\\
\leq& C(\eta_{0}+\varepsilon^{\frac{1}{2}-a})\frac{1}{\varepsilon^{2}}\mathcal{D}_{N}(t)+C(\eta_{0}+\varepsilon^{1-a})\varepsilon^{\frac{1}{2}-a}.
\end{align*}
If $1\leq|\alpha_{1}|\leq N/2$, then by the similar calculations as \eqref{4.36}, one has
\begin{equation}
\label{5.38}
\||\frac{\partial^{\alpha_{1}}M}{\sqrt{\mu}}|_{\nu}\|_{L^{\infty}}
\leq C(\eta_0+\varepsilon^{\frac{1}{2}-a}),
\end{equation}
which together with \eqref{4.49}, \eqref{4.22a} and \eqref{4.15}, yields
\begin{align*}	
J_2\leq& C\frac{1}{\varepsilon}\||\frac{\partial^{\alpha_{1}}M}{\sqrt{\mu}}|_{\nu}\|_{L^{\infty}}
\|\partial^{\alpha-\alpha_{1}}f\|_{\nu}\|\frac{\partial^{\alpha}F}{\sqrt{\mu}}\|_{\nu}
\\	
\leq& 
C(\eta_{0}+\varepsilon^{\frac{1}{2}-a})\frac{1}{\varepsilon^{2}}\mathcal{D}_{N}(t)+C(\eta_{0}+\varepsilon^{1-a})\varepsilon^{\frac{1}{2}-a}.
\end{align*}
Collecting the above estimates on $J_2$, we conclude that
\begin{equation*}
\sum_{1\leq\alpha_{1}\leq\alpha}\frac{1}{\varepsilon}|(\Gamma(\frac{\partial^{\alpha_{1}}(M-\mu)}{\sqrt{\mu}},\partial^{\alpha-\alpha_{1}}f),\frac{\partial^{\alpha}F}{\sqrt{\mu}})|
\leq C(\eta_{0}+\varepsilon^{\frac{1}{2}-a})\frac{1}{\varepsilon^{2}}\mathcal{D}_{N}(t)+C(\eta_{0}+\varepsilon^{1-a})\varepsilon^{\frac{1}{2}-a}.
\end{equation*}
Hence, putting this and \eqref{5.36} into \eqref{4.40} and taking $a\in[0,1/2)$, we obtain
\begin{equation}
\label{5.40}
\frac{1}{\varepsilon}|(\partial^{\alpha}\Gamma(\frac{M-\mu}{\sqrt{\mu}},f),\frac{\partial^{\alpha}F}{\sqrt{\mu}})|\leq  C(\eta_{0}+\varepsilon^{\frac{1}{2}-a})\frac{1}{\varepsilon^{2}}\mathcal{D}_{N}(t)+C(\eta_{0}+\varepsilon^{1-a})\varepsilon^{\frac{1}{2}-a}.
\end{equation}
The second term on the LHS of \eqref{5.28}  has same bound as 
\eqref{5.40}. So the desired estimate \eqref{5.28} follows. This completes
the proof of Lemma \ref{lem5.8}.
\end{proof}
\subsubsection{Estimates on non-linear collision terms}\label{seca.5.3.3}
Next we consider the nonlinear collision terms  $\Gamma(\frac{G}{\sqrt{\mu}},\frac{G}{\sqrt{\mu}})$. 
As before, we first derive the mixed derivative estimates with respect to both the space variable $x$
and the velocity variable $v$.
\begin{lemma}
\label{lem5.9}
For  $|\alpha|+|\beta|\leq N$ and $|\beta|\geq1$, it holds that
\begin{equation}
\label{5.41}
\frac{1}{\varepsilon}|(\partial^\alpha_\beta\Gamma(\frac{G}{\sqrt{\mu}},\frac{G}{\sqrt{\mu}}), \partial^\alpha_\beta f)|
\leq  C(\eta_{0}+\varepsilon^{\frac{1}{2}-a})\mathcal{D}_{N}(t)+C\varepsilon\varepsilon^{2-2a}.
\end{equation}
\end{lemma}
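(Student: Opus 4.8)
The plan is to use the decomposition $G=\overline{G}+\sqrt{\mu}\,f$ from \eqref{4.5}, so that $\frac{G}{\sqrt{\mu}}=f+\frac{\overline{G}}{\sqrt{\mu}}$, and to expand by bilinearity of $\Gamma$,
\begin{equation*}
\Gamma\Big(\tfrac{G}{\sqrt{\mu}},\tfrac{G}{\sqrt{\mu}}\Big)
=\Gamma(f,f)+\Gamma\Big(f,\tfrac{\overline{G}}{\sqrt{\mu}}\Big)+\Gamma\Big(\tfrac{\overline{G}}{\sqrt{\mu}},f\Big)+\Gamma\Big(\tfrac{\overline{G}}{\sqrt{\mu}},\tfrac{\overline{G}}{\sqrt{\mu}}\Big).
\end{equation*}
After applying $\partial^{\alpha}_{\beta}$ (Leibniz in $x$, since the kernel of $\Gamma$ does not depend on $x$) and pairing with $\partial^{\alpha}_{\beta}f$, I would estimate each of the four pieces by Lemma \ref{lem5.5}: this reduces everything to a finite sum of trilinear integrals $\tfrac1\varepsilon\int_{\mathbb{R}^{3}}|g_1|_{2}|g_2|_{\nu}|\partial^{\alpha}_{\beta}f|_{\nu}\,dx$ (the placement of $|\cdot|_2$ versus $|\cdot|_\nu$ being at our disposal, and $|\cdot|_2\le C|\cdot|_\nu$ always allowing an upgrade), where $g_1$ is a $\partial^{\alpha_1}_{\beta_1}$-derivative and $g_2$ a $\partial^{\alpha-\alpha_1}_{\beta_2}$-derivative of $f$ or of $\tfrac{\overline{G}}{\sqrt{\mu}}$, of total orders $a_1+a_2\le|\alpha|+|\beta|\le N$. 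The structural fact used throughout is that $|\beta|\ge1$ forces $|\alpha|\le N-1$, so every pure $x$-derivative of $f$ occurring here has order $\le N-1$ and hence, together with every mixed derivative $\partial^{\alpha}_{\beta}f$ with $|\alpha|+|\beta|\le N$ and $|\beta|\ge1$, sits in the $\varepsilon^{-1}$-weighted part of $\mathcal{D}_{N}(t)$ in \eqref{4.15}; likewise $|\alpha|\le N-1$ always permits invoking \eqref{4.19} rather than \eqref{4.20} for the $\overline{G}$-factors.

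For the genuinely trilinear term $\tfrac1\varepsilon|(\partial^{\alpha}_{\beta}\Gamma(f,f),\partial^{\alpha}_{\beta}f)|$ I would use H\"older with exponents $(\infty,2,2)$, placing the factor of lowest total order (which is $\le\lfloor N/2\rfloor$) in $L^{\infty}_{x}$ with its $|\cdot|_2$ norm; Lemma \ref{lem5.1} bounds it by $L^2$-norms of at most two further $x$-derivatives of it, i.e.\ of derivatives of $f$ of order $\le N$, which by \eqref{4.13} and \eqref{4.13A} are $O(\varepsilon^{1-a})$, the only borderline case being a pure $x$-derivative of order $N-2$ (possible only for $N\le 4$), whose Agmon bound couples an $O(\varepsilon^{1-a})$ norm of order $N-1$ with the $O(\varepsilon^{-a})$ norm of order $N$ coming from the $\varepsilon^2$-weighted part of $\mathcal{E}_{N}$, still giving $O(\varepsilon^{\frac12-a})$. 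The remaining two factors are $\nu$-norms in the $\varepsilon^{-1}$-part of $\mathcal{D}_{N}(t)$, so their product is $\le\varepsilon\mathcal{D}_{N}(t)$ and the whole term is $\le C(\eta_{0}+\varepsilon^{\frac12-a})\mathcal{D}_{N}(t)$, the $\varepsilon^{-1}$ being consumed by the $\varepsilon$ gained from the two dissipation factors.

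For the two mixed pieces $\tfrac1\varepsilon|(\partial^{\alpha}_{\beta}\Gamma(f,\tfrac{\overline{G}}{\sqrt{\mu}}),\partial^{\alpha}_{\beta}f)|$ and its symmetric counterpart I would always route an $f$-factor to $L^{\infty}_{x}$, or to $L^{6}_{x}$ (using $\|\cdot\|_{L^{6}}\le C\|\nabla_{x}\cdot\|$) when that factor is a pure low-order $x$-derivative whose Agmon extension would otherwise reach the top order --- this keeps it $O(\varepsilon^{1-a})$ --- while the $\overline{G}$-factor goes to $L^{2}_{x}$ or $L^{3}_{x}$, where Lemma \ref{lem5.3} makes it $O(\eta_{0}\varepsilon)$; the $\varepsilon$ from $\overline{G}$ consumes the $\varepsilon^{-1}$, and what is left is bounded by $C\eta_{0}\varepsilon^{\frac32-a}(\mathcal{D}_{N}(t))^{1/2}$, which Young's inequality turns into a $\mathcal{D}_{N}(t)$-absorbable piece plus $C\eta_{0}^{2}\varepsilon^{3-2a}$. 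The purely quadratic piece $\tfrac1\varepsilon|(\partial^{\alpha}_{\beta}\Gamma(\tfrac{\overline{G}}{\sqrt{\mu}},\tfrac{\overline{G}}{\sqrt{\mu}}),\partial^{\alpha}_{\beta}f)|$ is handled the same way, both $\overline{G}$-factors contributing $O(\eta_{0}\varepsilon)$ by Lemma \ref{lem5.3}, so that after Young's inequality it is a $\mathcal{D}_{N}(t)$-absorbable piece plus $C\eta_{0}^{4}\varepsilon^{3}$. Collecting the four contributions and using $\eta_{0}\le1$ and $\varepsilon^{3}\le\varepsilon^{3-2a}=\varepsilon\,\varepsilon^{2-2a}$ yields \eqref{5.41}.

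I expect the main obstacle to be exactly the case-by-case bookkeeping in the last two paragraphs: one must choose the H\"older split so that the factor placed in $L^{\infty}_{x}$ or $L^{6}_{x}$ needs at most $N$ spatial derivatives while avoiding an uncontrolled appearance of the $N$-th order pure $x$-derivatives of $f$ (which carry only the weak $\varepsilon^{2}$ weight in $\mathcal{E}_{N}$ and $\varepsilon$ weight in $\mathcal{D}_{N}$), and simultaneously so that the other two factors land in the strong $\varepsilon^{-1}$-weighted part of $\mathcal{D}_{N}(t)$; the constraint $|\beta|\ge1\Rightarrow|\alpha|\le N-1$ is precisely what makes all these competing requirements compatible.
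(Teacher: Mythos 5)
Your overall architecture is exactly the paper's: write $G=\overline{G}+\sqrt{\mu}f$, expand $\Gamma$ into the four pieces, apply the trilinear bound \eqref{4.30} after Leibniz, estimate the $\overline{G}$-factors by Lemma \ref{lem5.3}, and exploit that $|\beta|\ge1$ forces $|\alpha|\le N-1$ so all $f$-factors of the relevant orders sit in the $\varepsilon^{-1}$-weighted part of $\mathcal{D}_N$. Your treatments of the $\Gamma(f,f)$ piece and of the $\Gamma(\overline{G}/\sqrt{\mu},\overline{G}/\sqrt{\mu})$ piece reproduce the paper's estimates (the paper's $J_4$ and \eqref{5.42}) up to harmless differences in how the remainder $C\varepsilon\varepsilon^{2-2a}$ is generated.

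There is, however, one step that fails as written: for the mixed pieces you prescribe that \emph{always} an $f$-factor goes to $L^{\infty}_x$ or $L^{6}_x$ while the $\overline{G}$-factor goes to $L^{2}_x$ or $L^{3}_x$. This routing breaks down for the extreme Leibniz terms in which the $f$-argument of $\Gamma$ carries (almost) all the derivatives, e.g.\ $\alpha_1=\alpha$, $\beta_1=\beta$ in $\Gamma(f,\overline{G}/\sqrt{\mu})$, so that the inner $f$-factor is $\partial^{\alpha}_{\beta}f$ of total order up to $N$ with $|\beta|\ge1$ and the $\overline{G}$-factor is underived. Putting that $f$-factor in $L^{6}_x$ costs $\|\nabla_x\partial^{\alpha}_{\beta}f\|$, a mixed derivative of total order $N+1$, and $L^{\infty}_x$ costs total order $N+2$; neither is controlled by $\mathcal{E}_N$ in \eqref{4.14} or $\mathcal{D}_N$ in \eqref{4.15}, which stop at total order $N$ (the outer factor $\partial^{\alpha}_{\beta}f$ is of order up to $N$ as well, so no $f$-factor can take the sup-norm role here). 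The resolution is the paper's case split on the order of the $\overline{G}$-factor: when $|\alpha_1|+|\beta_1|\le N/2$ one puts the \emph{$\overline{G}$-factor} in $L^{\infty}_x$ (Agmon plus Lemma \ref{lem5.3}, using \eqref{4.20} in the worst case, gives $O(\eta_0\varepsilon^{1-a})$) and keeps \emph{both} $f$-factors in the $\varepsilon^{-1}$-weighted dissipation, yielding $C\eta_0\varepsilon^{1-a}\mathcal{D}_N(t)$; only in the complementary case $|\alpha_1|+|\beta_1|>N/2$, where the $f$-factor has order below $N/2$, does your routing ($\overline{G}$ in $L^{2}_x$ by \eqref{4.19}, low-order $f$ in $L^{\infty}_x$) apply. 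With this amendment your argument closes and coincides with the paper's proof.
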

\begin{proof}
Let $|\alpha|+|\beta|\leq N$ and $|\beta|\geq1$, we use $G=\overline{G}+\sqrt{\mu}f$ to write
\begin{equation*}
\partial^\alpha_\beta\Gamma(\frac{G}{\sqrt{\mu}},\frac{G}{\sqrt{\mu}})
=\partial^\alpha_\beta\Gamma(\frac{\overline{G}}{\sqrt{\mu}},\frac{\overline{G}}{\sqrt{\mu}})
+\partial^\alpha_\beta\Gamma(\frac{\overline{G}}{\sqrt{\mu}},f)+\partial^\alpha_\beta\Gamma(f,\frac{\overline{G}}{\sqrt{\mu}})
+\partial^\alpha_\beta\Gamma(f,f).
\end{equation*}
We take the inner product of the above equality with $\frac{1}{\varepsilon}\partial^\alpha_\beta f$
and then compute each term.  By \eqref{4.30}, Lemma \ref{lem5.3}, the embedding inequality, we arrive at
\begin{align}
\label{5.42}
&\frac{1}{\varepsilon}|(\partial^\alpha_\beta \Gamma(\frac{\overline{G}}{\sqrt{\mu}},\frac{\overline{G}}{\sqrt{\mu}}),\partial^\alpha_\beta f)|
\nonumber\\
&\leq C\frac{1}{\varepsilon}\sum_{\alpha_1\leq\alpha}\sum_{\beta_1+\beta_2\leq\beta}\int_{{\mathbb R}^3}|\partial^{\alpha_1}_{\beta_1}(\frac{\overline{G}}{\sqrt{\mu}})|_\nu| \partial^{\alpha-\alpha_1}_{\beta_2}(\frac{\overline{G}}{\sqrt{\mu}})|_{\nu}|\partial^\alpha_\beta f|_{\nu}\,dx
\nonumber\\
&\leq C\eta_{0}\mathcal{D}_{N}(t)+C\varepsilon\varepsilon^{2-2a}.
\end{align}
Using \eqref{4.30} again, it is clear  that
\begin{equation*}
\frac{1}{\varepsilon}|(\partial^\alpha_\beta \Gamma(\frac{\overline{G}}{\sqrt{\mu}},f),\partial^\alpha_\beta f)|
\leq C\sum_{\alpha_1\leq\alpha}\sum_{\beta_1+\beta_2\leq\beta}
\underbrace{\frac{1}{\varepsilon}\int_{{\mathbb R}^3}|\partial^{\alpha_1}_{\beta_1}(\frac{\overline{G}}{\sqrt{\mu}})|_\nu| \partial^{\alpha-\alpha_1}_{\beta_2}f|_{\nu}|\partial^\alpha_\beta f|_{\nu}\,dx}_{J_3}.
\end{equation*}
To compute $J_3$, first note that $|\alpha_1|\leq|\alpha|\leq N-1$ in $J_3$ since we only consider the case $|\alpha|+|\beta|\leq N$ and $|\beta|\geq1$.
If $|\alpha_1|+|\beta_1|\leq N/2$, one gets from Lemma \ref{lem5.3} and \eqref{4.15} that
\begin{equation*}
J_3\leq C\frac{1}{\varepsilon}\||\partial^{\alpha_{1}}_{\beta_1}(\frac{\overline{G}}{\sqrt{\mu}})|_{\nu}\|_{L^\infty}
\|\partial^{\alpha-\alpha_{1}}_{\beta_{2}}f\|_{\nu}\|\partial^{\alpha}_{\beta}f\|_{\nu}
\leq C\eta_{0}\varepsilon^{1-a}\mathcal{D}_{N}(t).
\end{equation*}
If $N/2<|\alpha_1|+|\beta_1|\leq N$, we get by \eqref{4.19}  and \eqref{4.15} that
\begin{align*}
J_3&\leq C\frac{1}{\varepsilon}\|\partial^{\alpha_{1}}_{\beta_1}(\frac{\overline{G}}{\sqrt{\mu}})\|_{\nu}
\||\partial^{\alpha-\alpha_{1}}_{\beta_{2}}f|_{\nu}\|_{L^\infty}\|\partial^{\alpha}_{\beta}f\|_{\nu}
\\
&\leq C\eta_{0}(\varepsilon\|\partial^{\alpha-\alpha_{1}}_{\beta_{2}}\nabla_xf\|_{\nu}
\|\partial^{\alpha-\alpha_{1}}_{\beta_{2}}\nabla^2_xf\|_{\nu}+\frac{1}{\varepsilon}\|\partial^{\alpha}_{\beta}f\|^2_{\nu})
\\
&\leq C\eta_{0}\mathcal{D}_{N}(t).
\end{align*}
Therefore, we conclude from the above estimates on $J_3$ that
\begin{equation}
\label{5.44}
\frac{1}{\varepsilon}|(\partial^\alpha_\beta \Gamma(\frac{\overline{G}}{\sqrt{\mu}},f),\partial^\alpha_\beta f)|
\leq C(\eta_{0}+\varepsilon^{1-a})\mathcal{D}_{N}(t).
\end{equation}
Similar to \eqref{5.44}, one has
\begin{equation}
\label{5.45}
\frac{1}{\varepsilon}|(\partial^\alpha_\beta \Gamma(f,\frac{\overline{G}}{\sqrt{\mu}}),\partial^\alpha_\beta f)|
\leq C(\eta_{0}+\varepsilon^{1-a})\mathcal{D}_{N}(t).
\end{equation}
From \eqref{4.30}, we can easily see that
\begin{equation*}
\frac{1}{\varepsilon}|(\partial^\alpha_\beta \Gamma(f,f),\partial^\alpha_\beta f)|
\leq  C\sum_{\alpha_1\leq\alpha}\sum_{\beta_1+\beta_2\leq\beta}
\underbrace{\frac{1}{\varepsilon}\int_{{\mathbb R}^3}
\{|\partial^{\alpha_1}_{\beta_1}f|_2|\partial^{\alpha-\alpha_1}_{\beta_2}f|_{\nu}
+|\partial^{\alpha_1}_{\beta_1}f|_\nu|\partial^{\alpha-\alpha_1}_{\beta_2}f|_{2}\}|\partial^\alpha_\beta f|_{\nu}\,dx}_{J_{4}}.
\end{equation*}
The term $J_4$ can be treated in the similar way as $J_3$.
We consider two cases  $|\alpha_1|+|\beta_1|\leq N/2$  and $N/2<|\alpha_1|+|\beta_1|\leq N$.
For the first case, one has from \eqref{4.13}  and \eqref{4.15} that
\begin{align*}
J_{4}&\leq C\frac{1}{\varepsilon}\{\||\partial^{\alpha_{1}}_{\beta_1}f|_{2}\|_{L^\infty}
\|\partial^{\alpha-\alpha_{1}}_{\beta_{2}}f\|_{\nu}
+\||\partial^{\alpha_{1}}_{\beta_1}f|_{\nu}\|_{L^\infty}
\|\partial^{\alpha-\alpha_{1}}_{\beta_{2}}f\|\}\|\partial^{\alpha}_{\beta}f\|_{\nu}
\nonumber\\
&\leq C\frac{1}{\varepsilon}\{\varepsilon^{\frac{1}{2}-a}
\|\partial^{\alpha-\alpha_{1}}_{\beta_{2}}f\|_{\nu}
+\varepsilon^{1-a}\||\partial^{\alpha_{1}}_{\beta_1}f|_{\nu}\|_{L^\infty}\}
\|\partial^{\alpha}_{\beta}f\|_{\nu}
\nonumber\\
&\leq C\frac{1}{\varepsilon}\varepsilon^{\frac{1}{2}-a}\{
\|\partial^{\alpha-\alpha_{1}}_{\beta_{2}}f\|^2_{\nu}
+\varepsilon\|\nabla_x\partial^{\alpha_{1}}_{\beta_1}f\|_{\nu}\|\nabla^2_x\partial^{\alpha_{1}}_{\beta_1}f\|_{\nu}
+\|\partial^{\alpha}_{\beta}f\|^2_{\nu}\}
\nonumber\\
&\leq C\varepsilon^{\frac{1}{2}-a}\mathcal{D}_{N}(t).
\end{align*}
The second case has the same bound as above. It follows that
\begin{align*}
\frac{1}{\varepsilon}|(\partial^\alpha_\beta \Gamma(f,f),\partial^\alpha_\beta f)|
\leq C\varepsilon^{\frac{1}{2}-a}\mathcal{D}_{N}(t).
\end{align*}
This, \eqref{5.42}, \eqref{5.44} and \eqref{5.45} together, gives
the desired estimate \eqref{5.41}. We thus complete the proof of Lemma \ref{lem5.9}.
\end{proof}
The rest parts, we only consider the derivative estimates with respect to the space variable $x$.
\begin{lemma}
\label{lem5.10}
For $|\alpha|\leq N-1$, it holds that
\begin{align}
\label{5.46}
\frac{1}{\varepsilon}|(\partial^\alpha\Gamma(\frac{G}{\sqrt{\mu}},\frac{G}{\sqrt{\mu}}),\partial^\alpha f)|
\leq  C(\eta_{0}+\varepsilon^{\frac{1}{2}-a})\mathcal{D}_{N}(t)+C\varepsilon\varepsilon^{2-2a}.
\end{align}
\end{lemma}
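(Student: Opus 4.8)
The plan is to follow exactly the scheme of Lemma \ref{lem5.9}, now for pure spatial derivatives. First I would use the decomposition $G=\overline{G}+\sqrt{\mu}f$ to split
\[
\partial^\alpha\Gamma\Bigl(\frac{G}{\sqrt{\mu}},\frac{G}{\sqrt{\mu}}\Bigr)
=\partial^\alpha\Gamma\Bigl(\frac{\overline{G}}{\sqrt{\mu}},\frac{\overline{G}}{\sqrt{\mu}}\Bigr)
+\partial^\alpha\Gamma\Bigl(\frac{\overline{G}}{\sqrt{\mu}},f\Bigr)
+\partial^\alpha\Gamma\Bigl(f,\frac{\overline{G}}{\sqrt{\mu}}\Bigr)
+\partial^\alpha\Gamma(f,f),
\]
take the inner product with $\frac{1}{\varepsilon}\partial^\alpha f$, and bound each of the four resulting terms after distributing $\partial^\alpha$ by the Leibniz rule. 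The basic tools are Lemma \ref{lem5.5} for the $|\cdot|_2$--$|\cdot|_\nu$ factorization of $\Gamma$, Lemma \ref{lem5.3} for the $\overline{G}$ factors, the Sobolev inequalities of Lemma \ref{lem5.1}, the a priori bounds \eqref{3.3}, \eqref{4.13}, \eqref{4.13A}, and the definition of $\mathcal{D}_N(t)$ in \eqref{4.15}.

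For the purely $\overline{G}$ term I would mirror \eqref{5.42}: after Leibniz, in each summand put the higher-order factor of $\overline{G}/\sqrt{\mu}$ in $L^2_x$ and the lower-order one in $L^\infty_x$ (via Lemma \ref{lem5.1} and Lemma \ref{lem5.3}), with $\partial^\alpha f$ contributing the $\nu$-norm in $L^2_x$. Since $|\alpha|\leq N-1$, Lemma \ref{lem5.3} always applies in the regime \eqref{4.19}, so each $\overline{G}$ factor costs at most $C\eta_0\varepsilon$; Young's inequality then yields $C\eta_0\mathcal{D}_N(t)+C\varepsilon\,\varepsilon^{2-2a}$, the loss of one power of $\varepsilon$ into $\mathcal{D}_N$ absorbing the $\frac{1}{\varepsilon}$ prefactor. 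For the two mixed terms $\Gamma(\overline{G},f)$ and $\Gamma(f,\overline{G})$ I would follow the $J_3$ splitting of Lemma \ref{lem5.9}: when the $\overline{G}$-carrying multi-index $\alpha_1$ satisfies $|\alpha_1|\leq N/2$, place that factor in $L^\infty_x$ using the \eqref{5.38}-type bound of Lemma \ref{lem5.3} and the remaining $f$ in $L^2_x$; when $|\alpha_1|>N/2$, place the $\overline{G}$ factor in $L^2_x$ and the remaining $f$ in $L^\infty_x$ via $\|\cdot\|_{L^\infty}\lesssim\|\nabla_x\cdot\|^{1/2}\|\nabla_x^2\cdot\|^{1/2}$, absorbing the extra $\varepsilon$-factors into $\mathcal{D}_N$. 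Both cases give $C(\eta_0+\varepsilon^{1-a})\mathcal{D}_N(t)$.

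For the genuinely nonlinear term $\Gamma(f,f)$ I would repeat the $J_4$ analysis of Lemma \ref{lem5.9}: split at $|\alpha_1|\leq N/2$ versus $|\alpha_1|>N/2$, always putting the factor of lower order in $L^\infty_x$, estimated by Sobolev embedding and \eqref{4.13} so that it gains a factor of order $\varepsilon^{1/2-a}$ or $\varepsilon^{1-a}$, and the higher-order factor together with $\partial^\alpha f$ in $L^2_x$; using $\frac{1}{\varepsilon}\cdot\varepsilon^{1/2-a}=\varepsilon^{-1/2-a}$ and Young's inequality, with all $\nu$-norms that appear sitting inside $\mathcal{D}_N(t)$, one gets $C\varepsilon^{1/2-a}\mathcal{D}_N(t)$. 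Summing the four contributions gives \eqref{5.46}. The only point needing care — and the place this proof is marginally more delicate than Lemma \ref{lem5.9} — is the bookkeeping of the powers of $\varepsilon$ attached to the various $\nu$-norms in $\mathcal{D}_N(t)$: the $(N-1)$-st order spatial norms of $f$ carry weight $\frac{1}{\varepsilon}$ while the $N$-th order ones carry weight $\varepsilon$, so whenever a derivative must be borrowed (e.g.\ $\|\nabla_x\partial^{\alpha-\alpha_1}f\|_\nu^{1/2}\|\nabla_x^2\partial^{\alpha-\alpha_1}f\|_\nu^{1/2}$ with the top order possibly reaching $N$) one must verify that the product of weights still closes against the $\frac{1}{\varepsilon}$ prefactor with only $\eta_0+\varepsilon^{1/2-a}$ to spare. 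This is exactly the mechanism already exploited in the $J_2,J_3,J_4$ estimates above, so no new idea is required.
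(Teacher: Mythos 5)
Your proposal is correct and is essentially the paper's own proof: the paper simply remarks that \eqref{5.46} follows along the same lines as Lemma \ref{lem5.9} (the splitting $G=\overline{G}+\sqrt{\mu}f$, Lemma \ref{lem5.5}, Lemma \ref{lem5.3}, Sobolev embedding, and the case distinction at $N/2$, with the $\varepsilon$-weights in $\mathcal{D}_N(t)$ absorbing the $\varepsilon^{-1}$ prefactor exactly as in the $J_3$, $J_4$ estimates), and omits the details. Your bookkeeping remark about borrowed derivatives possibly reaching order $N$ and carrying weight $\varepsilon$ instead of $\varepsilon^{-1}$ is precisely the mechanism used there, so nothing further is needed.
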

\begin{proof}
The proof of \eqref{5.46} follows along the same lines as the proof of \eqref{5.41},
and we omit its proof for brevity.
\end{proof}
\begin{lemma}
\label{lem5.11}
For $|\alpha|= N$, it holds that
\begin{align}
\label{5.47}	
\frac{1}{\varepsilon}|\partial^{\alpha}(\Gamma(\frac{G}{\sqrt{\mu}},\frac{G}{\sqrt{\mu}}),\frac{\partial^{\alpha}F}{\sqrt{\mu}})|\leq C(\eta_{0}+\varepsilon^{\frac{1}{2}-a})\frac{1}{\varepsilon^{2}}\mathcal{D}_{N}(t)+C(\eta_{0}+\varepsilon^{1-a})\varepsilon^{\frac{1}{2}-a}.
\end{align}
\end{lemma}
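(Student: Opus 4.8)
The plan is to mimic the structure of the proof of Lemma \ref{lem5.8}, treating the nonlinear term $\Gamma(\frac{G}{\sqrt{\mu}},\frac{G}{\sqrt{\mu}})$ in place of the linear terms $\Gamma(\frac{M-\mu}{\sqrt{\mu}},f)$ and $\Gamma(f,\frac{M-\mu}{\sqrt{\mu}})$. First I would expand $G=\overline{G}+\sqrt{\mu}f$ so that, using bilinearity of $\Gamma$, the term splits into a sum of four contributions $\Gamma(\frac{\overline{G}}{\sqrt{\mu}},\frac{\overline{G}}{\sqrt{\mu}})$, $\Gamma(\frac{\overline{G}}{\sqrt{\mu}},f)$, $\Gamma(f,\frac{\overline{G}}{\sqrt{\mu}})$ and $\Gamma(f,f)$. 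Next I would apply the Leibniz rule to $\partial^\alpha$ with $|\alpha|=N$ so that each piece becomes $\sum_{\alpha_1\le\alpha}C^{\alpha_1}_{\alpha}\Gamma(\partial^{\alpha_1}(\cdot),\partial^{\alpha-\alpha_1}(\cdot))$, and then estimate the inner product with $\frac{\partial^\alpha F}{\sqrt{\mu}}$ via the trilinear bound \eqref{4.30}. The key quantitative inputs are: the bounds $\|\langle v\rangle^b\partial^{\alpha}_\beta(\frac{\overline{G}}{\sqrt{\mu}})\|\le C\eta_0\varepsilon$ for $|\alpha|\le N-1$ and $\le C\eta_0\varepsilon^{1-a}$ for $|\alpha|=N$ from Lemma \ref{lem5.3}; the decomposition $F=M+\overline{G}+\sqrt{\mu}f$ together with the estimates \eqref{4.49} and \eqref{4.47a} for $\|\frac{\partial^\alpha F}{\sqrt{\mu}}\|_\nu$; the a priori assumption \eqref{4.13}, in particular \eqref{4.13A}; and the interpolation/embedding inequalities of Lemma \ref{lem5.1} used to redistribute $L^\infty$, $L^6$, $L^3$, $L^2$ norms according to whether the index $|\alpha_1|$ (or $|\alpha_1|+|\beta_1|$) is $\le N/2$ or $>N/2$.

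The term $\Gamma(f,f)$ is handled exactly as the $J_4$ estimate inside the proof of Lemma \ref{lem5.9}, but now with $\frac{\partial^\alpha F}{\sqrt{\mu}}$ in the last slot; one splits according to $|\alpha_1|\le N/2$ versus $|\alpha_1|>N/2$, puts the low-order factor in $L^\infty$ (losing $\varepsilon^{\frac12-a}$ by \eqref{4.16} and \eqref{4.13}) and uses Young's inequality of the form $\|\frac{\partial^\alpha F}{\sqrt\mu}\|_\nu^2\le \varepsilon^{-2}\mathcal D_N+\cdots$ after absorbing a factor $\varepsilon^{3/2}$. The terms $\Gamma(\frac{\overline{G}}{\sqrt{\mu}},f)$ and $\Gamma(f,\frac{\overline{G}}{\sqrt{\mu}})$ are even better because $\overline{G}$ carries an extra power of $\varepsilon$: one uses $\|\partial^{\alpha_1}_{\beta_1}(\frac{\overline G}{\sqrt\mu})\|\lesssim \eta_0\varepsilon$ (resp. $\eta_0\varepsilon^{1-a}$ when $|\alpha_1|=N$) and bounds the $f$-factor and the $\frac{\partial^\alpha F}{\sqrt\mu}$-factor in $L^2$/$L^6$ via \eqref{4.49} and \eqref{4.13A}, producing terms of size $C\eta_0\varepsilon^{-2}\mathcal D_N(t)+C(\eta_0+\varepsilon^{1-a})\varepsilon^{\frac12-a}$ or smaller. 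The pure $\Gamma(\frac{\overline{G}}{\sqrt{\mu}},\frac{\overline{G}}{\sqrt{\mu}})$ piece is the most harmless: it carries $\varepsilon^2$ (or $\varepsilon^{2-a}$ when a derivative is of top order) times top-order fluid/microscopic norms, contributing at most $C\eta_0\mathcal D_N(t)+C(\eta_0+\varepsilon^{1-a})\varepsilon^{\frac12-a}$ after using \eqref{4.13A} and \eqref{4.15}.

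The main obstacle, as in Lemma \ref{lem5.8}, is the top-order case $|\alpha_1|=N$ (equivalently $\alpha_1=\alpha$), where no derivative is available on the companion factor and one must be careful not to lose too many powers of $\varepsilon$: here $\partial^\alpha(\frac{\overline G}{\sqrt\mu})$ or $\partial^\alpha f$ cannot be placed in $L^\infty$, so one instead keeps it in $L^2$, bounds the low-order factor (with $|\alpha-\alpha_1|=0$) in $L^\infty$ by a constant via \eqref{4.16}, and uses $\|\frac{\partial^\alpha F}{\sqrt\mu}\|_\nu\le C(\|\partial^\alpha f\|_\nu+\|\partial^\alpha(\widetilde\rho,\widetilde u,\widetilde\theta)\|+\eta_0+\varepsilon^{1-a})$ from \eqref{4.49}; the resulting $\frac1\varepsilon\|\partial^\alpha f\|_\nu\|\partial^\alpha f\|_\nu$-type product is then $\varepsilon^{-2}\mathcal D_N(t)$ up to the small constant, and the cross terms with the fluid norm and with $\eta_0+\varepsilon^{1-a}$ give the inhomogeneous remainder $C(\eta_0+\varepsilon^{1-a})\varepsilon^{\frac12-a}$ after Young's inequality and \eqref{4.13A}. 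Summing the finitely many contributions, choosing $a\in[0,\frac12)$ and invoking \eqref{4.22a}, yields \eqref{5.47}. Since all pieces are strictly analogous to computations already carried out in Lemmas \ref{lem5.8} and \ref{lem5.9}, I would present the top-order case in full and merely indicate that the remaining cases follow along the same lines.
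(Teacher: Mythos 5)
Your proposal follows essentially the same route as the paper: the decomposition $G=\overline{G}+\sqrt{\mu}f$ into the four bilinear pieces, the trilinear estimate \eqref{4.30}, the bounds on $\overline{G}$ from Lemma \ref{lem5.3}, the control of $\|\frac{\partial^{\alpha}F}{\sqrt{\mu}}\|_{\nu}$ via \eqref{4.49}, and the case splitting in $|\alpha_{1}|$ with $L^{\infty}$/$L^{6}$--$L^{3}$--$L^{2}$ Hölder and the a priori assumption \eqref{4.13} are exactly the ingredients of the paper's proof of Lemma \ref{lem5.11}. One small slip: in the top-order case $\alpha_{1}=\alpha$ the undifferentiated companion factor must be bounded in $L^{\infty}_{x}$ not merely ``by a constant via \eqref{4.16}'' (which concerns the fluid quantities) but by the small quantities $C\eta_{0}\varepsilon$ (for $\overline{G}/\sqrt{\mu}$, from Lemma \ref{lem5.3}) or $C\varepsilon^{1-a}$/$C\varepsilon^{\frac12-a}$ (for $f$, from \eqref{4.13} and embedding), since otherwise the leading term $\frac{1}{\varepsilon^{2}}\mathcal{D}_{N}(t)$ would lack the small prefactor $(\eta_{0}+\varepsilon^{\frac12-a})$ needed in \eqref{5.47}; your second paragraph already invokes exactly this smallness, so the argument goes through as in the paper.
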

\begin{proof}
For $|\alpha|=N$, since $G=\overline{G}+\sqrt{\mu}f$, the term on the LHS of \eqref{5.47} is equivalent to
\begin{align}
\label{5.48}
\frac{1}{\varepsilon}|(\partial^{\alpha}\Gamma(\frac{\overline{G}}{\sqrt{\mu}},\frac{\overline{G}}{\sqrt{\mu}})+\partial^{\alpha}\Gamma(\frac{\overline{G}}{\sqrt{\mu}},f)+\partial^{\alpha}\Gamma(f,\frac{\overline{G}}{\sqrt{\mu}})+\partial^{\alpha}\Gamma(f,f),\frac{\partial^{\alpha}F}{\sqrt{\mu}})|.
\end{align}
Let's carefully deal with the terms in \eqref{5.48}. Note that the norm $|\cdot|_\nu$ is stronger than $|\cdot|_2$.
By this, \eqref{4.30}, Lemma \ref{lem5.3}, \eqref{4.49} and \eqref{4.22a}, we arrive at
\begin{align*}
\frac{1}{\varepsilon}|(\partial^{\alpha}\Gamma(\frac{\overline{G}}{\sqrt{\mu}},
\frac{\overline{G}}{\sqrt{\mu}}),\frac{\partial^{\alpha}F}{\sqrt{\mu}})|&\leq C\frac{1}{\varepsilon}\sum_{\alpha_{1}\leq\alpha}\int_{\mathbb{R}^{3}}|\partial^{\alpha_{1}}(\frac{\overline{G}}{\sqrt{\mu}})|_{\nu}
|\partial^{\alpha-\alpha_{1}}(\frac{\overline{G}}{\sqrt{\mu}})|_{\nu}|\frac{\partial^{\alpha}F}{\sqrt{\mu}}|_{\nu}\,dx
\\
&\leq C\eta_{0}\frac{1}{\varepsilon}\mathcal{D}_{N}(t)+C(\eta_{0}+\varepsilon^{1-a})\varepsilon^{1-a}.
\end{align*}
For the second in \eqref{5.48}, from \eqref{4.30}, we easily see that
\begin{align*}
\frac{1}{\varepsilon}|(\partial^{\alpha}\Gamma(\frac{\overline{G}}{\sqrt{\mu}},f),\frac{\partial^{\alpha}F}{\sqrt{\mu}})|
\leq C\sum_{\alpha_{1}\leq\alpha}\underbrace{\frac{1}{\varepsilon}\int_{\mathbb{R}^{3}}|\partial^{\alpha_{1}}(\frac{\overline{G}}{\sqrt{\mu}})|_{\nu}
|\partial^{\alpha-\alpha_{1}}f|_{\nu}|\frac{\partial^{\alpha}F}{\sqrt{\mu}}|_{\nu}\,dx}_{J_5}.
\end{align*}
To bound $J_5$, we consider two cases $|\alpha_{1}|\leq N/2$ and $N/2<|\alpha_{1}|\leq N$.
For the first case, we deduce from  Lemma \ref{lem5.3}, \eqref{4.49}, \eqref{4.22a} and \eqref{4.15} that
\begin{align*}
J_5&\leq C\frac{1}{\varepsilon}\||\partial^{\alpha_{1}}(\frac{\overline{G}}{\sqrt{\mu}})|_\nu\|_{L^{\infty}}
\|\partial^{\alpha-\alpha_{1}}f\|_{\nu}\|\frac{\partial^{\alpha}F}{\sqrt{\mu}}\|_{\nu}
\\
&\leq \eta_{0}\frac{1}{\varepsilon}\|\partial^{\alpha-\alpha_{1}}f\|^2_{\nu}
+C\frac{1}{\eta_{0}}\frac{1}{\varepsilon}\||\partial^{\alpha_{1}}(\frac{\overline{G}}{\sqrt{\mu}})|_\nu\|^2_{L^{\infty}}
\|\frac{\partial^{\alpha}F}{\sqrt{\mu}}\|^2_{\nu}
\\
&\leq C(\eta_{0}+\varepsilon^{1-a})\frac{1}{\varepsilon^{2}}\mathcal{D}_{N}(t)+C(\eta_{0}+\varepsilon^{1-a})\varepsilon^{\frac{1}{2}-a}.
\end{align*}
For the second case, $J_5$ has the same bound as above. We thereby obtain
\begin{align}
\label{4.50a}
\frac{1}{\varepsilon}|(\partial^{\alpha}\Gamma(\frac{\overline{G}}{\sqrt{\mu}},f),\frac{\partial^{\alpha}F}{\sqrt{\mu}})|
\leq C(\eta_{0}+\varepsilon^{1-a})\frac{1}{\varepsilon^{2}}\mathcal{D}_{N}(t)+C(\eta_{0}+\varepsilon^{1-a})\varepsilon^{\frac{1}{2}-a}.
\end{align}
For the third term in \eqref{5.48}, we use the similar arguments as \eqref{4.50a} to
get the same estimate. For the last term in \eqref{5.48}, the \eqref{4.30} yields
\begin{align*}
\frac{1}{\varepsilon}|(\partial^{\alpha}\Gamma(f,f),\frac{\partial^{\alpha}F}{\sqrt{\mu}})|\leq C\sum_{\alpha_{1}\leq\alpha}\underbrace{\frac{1}{\varepsilon}\int_{\mathbb{R}^{3}}\{|\partial^{\alpha_{1}}f|_{2}
|\partial^{\alpha-\alpha_{1}}f|_{\nu}+|\partial^{\alpha_{1}}f|_{\nu}
|\partial^{\alpha-\alpha_{1}}f|_{2}\}
|\frac{\partial^{\alpha}F}{\sqrt{\mu}}|_{\nu}\,dx}_{J_6}.
\end{align*}
If $|\alpha_{1}|=0$, then by \eqref{4.49}, \eqref{4.13} and \eqref{4.15}, we get
\begin{align*}
J_6&\leq C\frac{1}{\varepsilon}\{\||\partial^{\alpha_{1}}f|_2\|_{L^{\infty}}\|\partial^{\alpha-\alpha_{1}}f\|_{\nu}
+\||\partial^{\alpha_{1}}f|_\nu\|_{L^{\infty}}\|\partial^{\alpha-\alpha_{1}}f\|
\}\|\frac{\partial^{\alpha}F}{\sqrt{\mu}}\|_{\nu}
\\
&\leq C\frac{1}{\varepsilon}\{\varepsilon^{1-a}\|\partial^{\alpha}f\|_{\nu}
+\varepsilon^{-a}\|\nabla_xf\|^{\frac{1}{2}}_{\nu}\|\nabla^2_xf\|^{\frac{1}{2}}_{\nu}\}
\|\frac{\partial^{\alpha}F}{\sqrt{\mu}}\|_{\nu}
\\
&\leq C\frac{1}{\varepsilon}\varepsilon^{\frac{1}{2}-a}\{\|\partial^{\alpha}f\|^2_{\nu}
+\varepsilon^{-2}\|\nabla_xf\|_{\nu}\|\nabla^2_xf\|_{\nu}+
\varepsilon\|\frac{\partial^{\alpha}F}{\sqrt{\mu}}\|^2_{\nu}\}
\\
&\leq C\varepsilon^{\frac{1}{2}-a}\frac{1}{\varepsilon^{2}}\mathcal{D}_{N}(t)+C(\eta_{0}+\varepsilon^{1-a})\varepsilon^{\frac{1}{2}-a}.
\end{align*}
The case $|\alpha_{1}|=|\alpha|=N$ can be treated in the similar way as  above.
If $1\leq|\alpha_{1}|\leq N-1$, then by the $L^{6}-L^{3}-L^{2}$ H\"{o}lder inequality, \eqref{4.49}, \eqref{4.13} and \eqref{4.15}, one has 
\begin{align*}
J_6&\leq C\frac{1}{\varepsilon}\{\||\partial^{\alpha_{1}}f|_2\|_{L^{6}}\||\partial^{\alpha-\alpha_{1}}f|_\nu\|_{_{L^{3}}}
+\||\partial^{\alpha_{1}}f|_\nu\|_{L^{6}}\||\partial^{\alpha-\alpha_{1}}f|_2\|_{L^3}
\}\|\frac{\partial^{\alpha}F}{\sqrt{\mu}}\|_{\nu}
\\
&\leq C\varepsilon^{\frac{1}{2}-a}\frac{1}{\varepsilon^{2}}\mathcal{D}_{N}(t)+C(\eta_{0}+\varepsilon^{1-a})\varepsilon^{\frac{1}{2}-a}.
\end{align*}	
Collecting the above estimates on $J_6$, we conclude that	
\begin{align*}
\frac{1}{\varepsilon}|(\partial^{\alpha}\Gamma(f,f),\frac{\partial^{\alpha}F}{\sqrt{\mu}})|
\leq C\varepsilon^{\frac{1}{2}-a}\frac{1}{\varepsilon^{2}}\mathcal{D}_{N}(t)+C(\eta_{0}+\varepsilon^{1-a})\varepsilon^{\frac{1}{2}-a}.
\end{align*}
Therefore, we can get the desired estimate \eqref{5.47} by substituting the above estimates into \eqref{5.48}. This completes the proof of Lemma \ref{lem5.11}.
\end{proof}

\subsection{Estimates on electromagnetic field terms}\label{seca.5.4}
To make the energy estimates for the equations \eqref{4.11} and \eqref{4.12}, one has to deal with
the complicated  nonlinear  terms involving with the electromagnetic fields.
\begin{lemma}\label{lem5.12}
Assume \eqref{3.3} and \eqref{4.13} hold. For $|\alpha|=N$, it holds that
\begin{align}
\label{5.49}
&(\frac{\partial^{\alpha}[(E+v\times B)\cdot\nabla_{v}F]}{\sqrt{\mu}},\frac{\partial^{\alpha}F}{\sqrt{\mu}})
\nonumber\\
&\leq -\frac{1}{2}\frac{d}{dt}(\frac{1}{R\theta}\partial^{\alpha}\widetilde{E},\partial^{\alpha}\widetilde{E})
-\frac{1}{2}\frac{d}{dt}(\frac{1}{R\theta}\partial^{\alpha}\widetilde{B},\partial^{\alpha}\widetilde{B})
\nonumber\\
&\quad\quad+C(\eta_{0}+\varepsilon^{\frac{1}{2}-a})\frac{1}{\varepsilon^2}\mathcal{D}_{N}(t)+C[\eta_{0}(1+t)^{-\vartheta}
+\eta_{0}\varepsilon^{2a}+\varepsilon^{\frac{1}{2}-a}]\varepsilon^{-2a}.	
\end{align}
\end{lemma}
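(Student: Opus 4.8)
The plan is to expand $\partial^{\alpha}[(E+v\times B)\cdot\nabla_v F]$ by the Leibniz rule into the top-order term $(E+v\times B)\cdot\nabla_v\partial^{\alpha}F$ plus commutator terms $\partial^{\alpha_1}(E,B)\cdot\nabla_v\partial^{\alpha-\alpha_1}F$ with $1\le|\alpha_1|\le|\alpha|$, and treat these two pieces by completely different mechanisms. For the top-order term, the idea (going back to Guo \cite{Guo-2003}) is that after dividing by $\sqrt{\mu}$ the magnetic part $(v\times B)\cdot\nabla_v(\partial^{\alpha}F/\sqrt{\mu})$ pairs with $\partial^{\alpha}F/\sqrt{\mu}$ to give a perfect $v$-derivative whose integral vanishes because $v\cdot(v\times B)=0$; the electric part produces $-\frac12\int E\cdot\nabla_v(|\partial^{\alpha}F/\sqrt{\mu}|^2)\,dv + \text{(terms from }\nabla_v(1/\sqrt{\mu})\text{)}$, and after integrating by parts in $v$ the surviving piece is $\frac12(E\cdot v\,\partial^{\alpha}F/\sqrt{\mu},\partial^{\alpha}F/\sqrt{\mu})$. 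I would then substitute $F=M+\overline G+\sqrt{\mu}f$: the genuinely dangerous contribution is the one where both copies of $\partial^{\alpha}F/\sqrt{\mu}$ hit $\partial^{\alpha}M/\sqrt{\mu}$, and using \eqref{5.30} this reduces to $\frac12(E\cdot v\,\partial^{\alpha}(\widetilde\rho,\widetilde u,\widetilde\theta)\sqrt{\mu},\,\partial^{\alpha}(\widetilde\rho,\widetilde u,\widetilde\theta)\sqrt{\mu})$ modulo lower-order and $f$-terms. Here $E=\widetilde E+\bar E$; the $\bar E$ part is bounded by $\|\bar E\|_{L^\infty}\|\partial^{\alpha}(\widetilde\rho,\widetilde u,\widetilde\theta)\|^2\le C\eta_0(1+t)^{-\vartheta}\varepsilon^{-2a}$ using \eqref{3.3} and \eqref{4.13A}, while the $\widetilde E$ part is controlled by $\|\widetilde E\|_{L^\infty}\|\partial^{\alpha}(\widetilde\rho,\widetilde u,\widetilde\theta)\|^2$, and $\|\widetilde E\|_{L^\infty}\lesssim \|\nabla\widetilde E\|^{1/2}\|\nabla^2\widetilde E\|^{1/2}\lesssim \varepsilon^{1-a}\cdot\varepsilon^{-a}=\varepsilon^{1-2a}$ — wait, more carefully $\|\widetilde E\|_{L^\infty}\le C(\eta_0+\varepsilon^{\frac12-a})$ by the embedding together with \eqref{4.13A}, giving the bound $C(\eta_0+\varepsilon^{\frac12-a})\varepsilon^{-2a}$, consistent with the last term on the RHS of \eqref{5.49}.

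The derivative-of-$E$ or derivative-of-$B$ terms in the main display — i.e. the asserted time-derivative structure $-\frac12\frac{d}{dt}(\frac{1}{R\theta}\partial^{\alpha}\widetilde E,\partial^{\alpha}\widetilde E)-\frac12\frac{d}{dt}(\frac{1}{R\theta}\partial^{\alpha}\widetilde B,\partial^{\alpha}\widetilde B)$ — are produced by a different grouping. These come from pairing the commutator term with $\alpha_1=\alpha$, namely $\big(\frac{\partial^{\alpha}(E+v\times B)\cdot\nabla_v M}{\sqrt{\mu}},\frac{\partial^{\alpha}M}{\sqrt{\mu}}\big)$ at leading order. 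Using $\nabla_v M=-\frac{v-u}{R\theta}M$ and the fact that $M/\mu$-type integrals of $(v-u)\otimes$(odd)$\cdot$(odd) produce a clean $\rho/(R\theta)$ factor, this term equals (modulo lower order) $-\frac{1}{R\theta}(\rho\,\partial^{\alpha}(\widetilde E+\widetilde u\times\widetilde B+\cdots),\partial^{\alpha}\widetilde u)+\cdots$; one then invokes the Maxwell system \eqref{4.3} for $(\widetilde E,\widetilde B)$, namely $\partial_t\widetilde E-\nabla_x\times\widetilde B=\rho u-\bar\rho\bar u$ and $\partial_t\widetilde B+\nabla_x\times\widetilde E=0$, applies $\partial^{\alpha}$, takes the inner product against $\partial^{\alpha}\widetilde E$ and $\partial^{\alpha}\widetilde B$ respectively, adds, and uses $(\nabla_x\times\partial^{\alpha}\widetilde B,\partial^{\alpha}\widetilde E)-(\nabla_x\times\partial^{\alpha}\widetilde E,\partial^{\alpha}\widetilde B)=0$ to obtain $\frac12\frac{d}{dt}\|\partial^{\alpha}(\widetilde E,\widetilde B)\|^2 = (\partial^{\alpha}(\rho u-\bar\rho\bar u),\partial^{\alpha}\widetilde E)$, which furnishes exactly the coupling needed to absorb the bad cross term; the $\frac{1}{R\theta}$ weight and the resulting $\frac12\frac{d}{dt}(\frac{1}{R\theta}\partial^{\alpha}\widetilde B,\partial^{\alpha}\widetilde B)$ arise from keeping the $\theta$-dependence consistent with the computation above and paying an extra $\partial_t(\frac{1}{R\theta})$ commutator, which is of order $\|\partial_t\theta\|_{L^\infty}\lesssim\|\nabla u,\nabla\theta\|_{L^\infty}\le C(\eta_0+\varepsilon^{\frac12-a})$ times $\|\partial^{\alpha}\widetilde E\|^2$ and hence harmless.

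For the genuine commutator terms with $1\le|\alpha_1|\le|\alpha|-1$ (and the subleading pieces of the $\alpha_1=\alpha$ term, such as when $\nabla_v$ hits $\overline G$ or $\sqrt\mu f$), I would distribute derivatives by a standard $L^6$–$L^3$–$L^2$ or $L^\infty$–$L^2$–$L^2$ Hölder split according to whether $|\alpha_1|\le N/2$ or $|\alpha_1|>N/2$, using Sobolev embedding, \eqref{3.3}, \eqref{4.13}, \eqref{4.13A}, \eqref{4.27}, Lemma \ref{lem5.3} for the $\overline G$ factors, and the Gaussian decay of $M/\sqrt{\mu}$ as in \eqref{4.25}; the terms involving $\nabla_v\overline G/\sqrt\mu$ additionally gain a factor $\varepsilon$ (or $\varepsilon^{1-a}$ at top order) from \eqref{4.19}–\eqref{4.20}. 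Each such term is bounded either by $C(\eta_0+\varepsilon^{\frac12-a})\frac{1}{\varepsilon^2}\mathcal{D}_N(t)$ — via the $\frac{1}{\varepsilon^2}\|\partial^{\alpha}f\|_\nu^2$ and $\frac{1}{\varepsilon^2}\|\partial^{\alpha}(\widetilde\rho,\widetilde u,\widetilde\theta)\|^2$ components of $\mathcal{D}_N$ after using $\varepsilon^2\mathcal{E}_N\le\varepsilon^{4-2a}$ to convert a leftover $\|\frac{\partial^{\alpha}F}{\sqrt\mu}\|_\nu$ into $\varepsilon^{-a}$ as in \eqref{4.47a}–\eqref{4.49} — or by the residual constant $C[\eta_0(1+t)^{-\vartheta}+\eta_0\varepsilon^{2a}+\varepsilon^{\frac12-a}]\varepsilon^{-2a}$. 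The main obstacle, and the step requiring the most care, is the top-order cross term $\frac12(E\cdot v\,\partial^{\alpha}M/\sqrt\mu,\partial^{\alpha}M/\sqrt\mu)$ together with the matching Maxwell cancellation: one must organize the identification of the fluid coefficient in front of $\partial^{\alpha}(\widetilde E,\widetilde B)$ precisely enough that the $\rho(E+u\times B)$-type term in the momentum equation \eqref{4.2} and the $\rho u-\bar\rho\bar u$ source in \eqref{4.3} conspire to produce an exact time derivative with the stated $\frac{1}{R\theta}$ weight, rather than an uncontrolled $\varepsilon^{-2a}$-size remainder; the rest is bookkeeping along the lines already established in Lemmas \ref{lem5.6}–\ref{lem5.11}.
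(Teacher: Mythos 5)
Your overall route is the same as the paper's: split $\partial^{\alpha}[(E+v\times B)\cdot\nabla_vF]$ by Leibniz into the no-derivative-on-fields term plus commutators; kill/control the top-order term by integrating by parts in $v$ using $\nabla_v\cdot(E+v\times B)=0$ and $v\cdot(v\times B)=0$ together with $\|E\|_{L^\infty}$; extract the time-derivative structure from the $\alpha_1=\alpha$ commutator $\big(\tfrac{\partial^{\alpha}\widetilde E\cdot\nabla_vM}{\sqrt\mu},\tfrac{\partial^{\alpha}M}{\sqrt\mu}\big)$ via the Maxwell system \eqref{4.3}, paying only $\partial_t(\tfrac1{R\theta})$ and $\nabla_x(\tfrac1{R\theta})$ commutators; and dispatch the remaining commutators by $L^\infty$--$L^2$--$L^2$ or $L^6$--$L^3$--$L^2$ splits with Lemma \ref{lem5.3}, \eqref{4.47a}, \eqref{4.49}. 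One local correction: the intermediate identity you write, $-\tfrac{1}{R\theta}(\rho\,\partial^{\alpha}(\widetilde E+\widetilde u\times\widetilde B+\cdots),\partial^{\alpha}\widetilde u)$, is not what the moment computation gives; using $\nabla_vM=-\tfrac{v-u}{R\theta}M$ one gets exactly $-(\tfrac{1}{R\theta}\partial^{\alpha}\widetilde E,\partial^{\alpha}(\rho u)-u\,\partial^{\alpha}\rho)$, and it is this quantity (after inserting $\pm\partial^{\alpha}(\bar\rho\bar u)$) that couples to $\partial_t\partial^{\alpha}\widetilde E-\nabla_x\times\partial^{\alpha}\widetilde B=\partial^{\alpha}(\rho u-\bar\rho\bar u)$; no momentum-equation term $\widetilde u\times\widetilde B$ enters here.

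The genuine gap is in the "bookkeeping" you defer: after the Maxwell cancellation you are left with the purely top-order background residues, chiefly $(\tfrac{1}{R\theta}\partial^{\alpha}\widetilde E,\partial^{\alpha}(\bar\rho\bar u))$ and the analogous piece of $\partial^{\alpha}M$ carrying $\partial^{\alpha}(\bar\rho,\bar u,\bar\theta)$ (the $I_2$-type terms in \eqref{5.30}). With the tools you list, the only available bound is Cauchy--Schwarz, $\|\partial^{\alpha}\widetilde E\|\,\|\partial^{\alpha}(\bar\rho\bar u)\|\le C\eta_0\varepsilon^{-a}$ by \eqref{3.3} and \eqref{4.13A}, and for $a>0$ and $t$ of moderate size this is \emph{not} dominated by $C[\eta_0(1+t)^{-\vartheta}+\eta_0\varepsilon^{2a}+\varepsilon^{\frac12-a}]\varepsilon^{-2a}$, so the lemma as stated does not follow (even though such a bound would still be harmless for the final theorem after time integration). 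The missing idea is the one the paper flags in Section \ref{sec.3.3} and uses in the proof of Lemma \ref{lem5.8} (cf.\ \eqref{5.32}): write $\partial^{\alpha}=\partial^{\alpha'}\partial_{x_i}$ and integrate by parts in $x$, moving one derivative off $\partial^{\alpha}\widetilde E$ onto the smooth Euler--Maxwell background (allowed since $N+1\le N_0$), which replaces $\|\partial^{\alpha}\widetilde E\|\sim\varepsilon^{-a}$ by $\|\partial^{\alpha'}\widetilde E\|\lesssim\varepsilon^{1-a}$ and yields the admissible bound $C\eta_0\varepsilon^{1-a}$. Incorporating that step (and its analogue for the $[\tfrac1\mu-\tfrac1M]I_2$ and $I^2_3$ pieces) closes the argument.
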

\begin{proof}
For $|\alpha|=N$, we denote
\begin{align}
\label{5.50}
(\frac{\partial^{\alpha}[(E+v\times B)\cdot\nabla_{v}F]}{\sqrt{\mu}},\frac{\partial^{\alpha}F}{\sqrt{\mu}})
:=I_4+I_5+I_6,
\end{align}
where $I_4$, $I_5$ and $I_6$ are given by
\begin{align}
\label{5.51}
\left\{
\begin{array}{rl}
I_4=&(\frac{(E+v\times B)\cdot\nabla_{v}\partial^{\alpha}F}{\sqrt{\mu}},\frac{\partial^{\alpha}F}{\sqrt{\mu}}),
\\
I_5=&\sum_{1\leq \alpha_{1}\leq \alpha}C^{\alpha_1}_\alpha(\frac{\partial^{\alpha_1}E\cdot\nabla_{v}\partial^{\alpha-\alpha_{1}}F}{\sqrt{\mu}},
\frac{\partial^{\alpha}F}{\sqrt{\mu}}),
\\
I_6=&\sum_{1\leq \alpha_{1}\leq \alpha}C^{\alpha_1}_\alpha(\frac{(v\times \partial^{\alpha_1}B)\cdot\nabla_{v}\partial^{\alpha-\alpha_{1}}F}{\sqrt{\mu}},
\frac{\partial^{\alpha}F}{\sqrt{\mu}}).
\end{array} \right.
\end{align}
We compute each $I_i$ $(i=4,5,6)$ in the following way. 
Since $\nabla_{v}\cdot(E+v\times B)=0$ and $v\cdot(v\times B)=0$, we get by integration by parts that
\begin{align*}
|I_4|
=|\int_{{\mathbb R}^3}\int_{{\mathbb R}^3}(E+v\times B)\cdot v\frac{(\partial^{\alpha}F)^2}{2\mu}\,dv\,dx|\leq C\|E\|_{L^{\infty}}\|\langle v\rangle^{\frac{1}{2}}\frac{\partial^{\alpha}F}{\sqrt{\mu}}\|^2,
\end{align*}
which together with \eqref{4.49}, \eqref{4.22a} and the following simple fact
\begin{align}
\label{5.52}
\|E\|_{L^{\infty}}\leq \|\bar{E}\|_{L^{\infty}}+\|\widetilde{E}\|_{L^{\infty}}
\leq C[\eta_{0}(1+t)^{-\vartheta}+\varepsilon^{1-a}],
\end{align}
yields that
\begin{align*}
|I_4|\leq C(\eta_{0}+\varepsilon^{1-a})\frac{1}{\varepsilon}\mathcal{D}_{N}(t)+C[\eta_{0}(1+t)^{-\vartheta}+\varepsilon^{1-a}].
\end{align*}
In order to estimate $I_5$, we write $I_5=\sum_{1\leq \alpha_{1}\leq \alpha}C^{\alpha_1}_\alpha(I^1_5+I^2_5)$ with
\begin{align}
\label{4.66}
I^1_5=(\frac{\partial^{\alpha_1}\widetilde{E}\cdot\partial^{\alpha-\alpha_{1}}\nabla_{v}F}{\sqrt{\mu}},\frac{\partial^{\alpha}F}{\sqrt{\mu}}),
\quad I^2_5=(\frac{\partial^{\alpha_1}\bar{E}\cdot\partial^{\alpha-\alpha_{1}}\nabla_{v}F}{\sqrt{\mu}},\frac{\partial^{\alpha}F}{\sqrt{\mu}}).
\end{align}
For $1\leq \alpha_{1}\leq \alpha$, we easily see that the term $I^1_5$ is equivalent to
\begin{align}
\label{5.55}
&(\frac{\partial^{\alpha_1}\widetilde{E}\cdot\partial^{\alpha-\alpha_{1}}\nabla_{v}M}{\sqrt{\mu}},\frac{\partial^{\alpha}M}{\sqrt{\mu}})
+(\frac{\partial^{\alpha_1}\widetilde{E}\cdot\partial^{\alpha-\alpha_{1}}\nabla_{v}M}{\sqrt{\mu}},\frac{\partial^{\alpha}(\overline{G}+\sqrt{\mu}f)}{\sqrt{\mu}})
\nonumber\\
+&(\frac{\partial^{\alpha_1}\widetilde{E}\cdot\partial^{\alpha-\alpha_{1}}\nabla_{v}\overline{G}}{\sqrt{\mu}},\frac{\partial^{\alpha}F}{\sqrt{\mu}})
+(\partial^{\alpha_1}\widetilde{E}\cdot\partial^{\alpha-\alpha_{1}}\nabla_{v}f,\frac{\partial^{\alpha}F}{\sqrt{\mu}})
-(\frac{v}{2}\partial^{\alpha_1}\widetilde{E}\cdot\partial^{\alpha-\alpha_{1}}f,\frac{\partial^{\alpha}F}{\sqrt{\mu}}),
\end{align}
where we used  $F=M+\overline{G}+\sqrt{\mu}f$ and
\begin{align}
\label{5.54}
\partial^{\alpha-\alpha_{1}}\nabla_{v}F=\partial^{\alpha-\alpha_{1}}\nabla_{v}M+\partial^{\alpha-\alpha_{1}}\nabla_{v}\overline{G}
+\sqrt{\mu}\partial^{\alpha-\alpha_{1}}\nabla_{v}f-\frac{v}{2}\sqrt{\mu}\partial^{\alpha-\alpha_{1}}f.
\end{align}
To present the calculations for the first term of \eqref{5.55}, we first consider the case $\alpha_{1}=\alpha$ and split it into two parts as follows
\begin{align}
\label{5.56}
(\frac{\partial^{\alpha_1}\widetilde{E}\cdot\partial^{\alpha-\alpha_{1}}\nabla_{v}M}{\sqrt{\mu}},\frac{\partial^{\alpha}M}{\sqrt{\mu}})
=(\partial^{\alpha}\widetilde{E}\cdot\nabla_{v}M,[\frac{1}{\mu}-\frac{1}{M}]\partial^{\alpha}M)
+(\partial^{\alpha}\widetilde{E}\cdot\nabla_{v}M,\frac{1}{M}\partial^{\alpha}M).
\end{align}
We begin to compute the second term on the RHS of \eqref{5.56}, since the first term  is more easier and 
is thus left to the end. In view of \eqref{2.3} and \eqref{2.1}, a direct computation shows that
\begin{align}
\label{5.57}
(\partial^{\alpha}\widetilde{E}\cdot\nabla_{v}M,\frac{1}{M}\partial^{\alpha}M)&=
-\int_{{\mathbb R}^3}\int_{{\mathbb R}^3}\partial^{\alpha}\widetilde{E}\cdot\frac{v-u}{R\theta}\partial^{\alpha}M\,dv\,dx
\nonumber\\
&=-\int_{{\mathbb R}^3}\frac{1}{R\theta}\partial^{\alpha}\widetilde{E}\cdot[\partial^{\alpha}(\rho u)-u\partial^{\alpha}\rho]\,dx
\nonumber\\
&=-(\frac{1}{R\theta}\partial^{\alpha}\widetilde{E},\partial^{\alpha}(\rho u-\bar{\rho}\bar{u}))
-(\frac{1}{R\theta}\partial^{\alpha}\widetilde{E},\partial^{\alpha}(\bar{\rho}\bar{u})-u\partial^{\alpha}\rho).
\end{align}
Since $\partial_{t}\widetilde{E}-\nabla_{x}\times \widetilde{B}=\rho u-\bar{\rho}\bar{u}$ in \eqref{4.3}, the following identities hold true
\begin{align}
\label{5.58}
-&(\frac{1}{R\theta}\partial^{\alpha}\widetilde{E},\partial^{\alpha}(\rho u-\bar{\rho}\bar{u}))
\nonumber\\
=&-(\frac{1}{R\theta}\partial^{\alpha}\widetilde{E},\partial_{t}\partial^{\alpha}\widetilde{E}-\nabla_{x}\times \partial^{\alpha}\widetilde{B})	
\nonumber\\
=&-\frac{1}{2}\frac{d}{dt}(\frac{1}{R\theta}\partial^{\alpha}\widetilde{E},\partial^{\alpha}\widetilde{E})
-\frac{1}{2}\frac{d}{dt}(\frac{1}{R\theta}\partial^{\alpha}\widetilde{B},\partial^{\alpha}\widetilde{B})
\nonumber\\
&+\frac{1}{2}(\partial_{t}[\frac{1}{R\theta}]\partial^{\alpha}\widetilde{E},\partial^{\alpha}\widetilde{E})
+\frac{1}{2}(\partial_{t}[\frac{1}{R\theta}]\partial^{\alpha}\widetilde{B},\partial^{\alpha}\widetilde{B})
+(\nabla_{x}[\frac{1}{R\theta}]\times \partial^{\alpha}\widetilde{E},\partial^{\alpha}\widetilde{B}),
\end{align}
where in the last identity we have used 
\begin{align*}
&(\frac{1}{R\theta}\partial^{\alpha}\widetilde{E},\nabla_{x}\times \partial^{\alpha}\widetilde{B})
=(\nabla_{x}\times [\frac{1}{R\theta}\partial^{\alpha}\widetilde{E}],\partial^{\alpha}\widetilde{B})
\nonumber\\
&=(\frac{1}{R\theta}\nabla_{x}\times \partial^{\alpha}\widetilde{E},\partial^{\alpha}\widetilde{B})
+(\nabla_{x}[\frac{1}{R\theta}]\times \partial^{\alpha}\widetilde{E},\partial^{\alpha}\widetilde{B})
\nonumber\\
&=-(\frac{1}{R\theta}\partial_{t}\partial^{\alpha}\widetilde{B},\partial^{\alpha}\widetilde{B})
+(\nabla_{x}[\frac{1}{R\theta}]\times \partial^{\alpha}\widetilde{E},\partial^{\alpha}\widetilde{B})
\nonumber\\
&=-\frac{1}{2}\frac{d}{dt}(\frac{1}{R\theta}\partial^{\alpha}\widetilde{B},\partial^{\alpha}\widetilde{B})
+\frac{1}{2}(\partial_{t}[\frac{1}{R\theta}]\partial^{\alpha}\widetilde{B},\partial^{\alpha}\widetilde{B})
+(\nabla_{x}[\frac{1}{R\theta}]\times \partial^{\alpha}\widetilde{E},\partial^{\alpha}\widetilde{B}),
\end{align*}
due to the fact $\partial_{t}\widetilde{B}+\nabla_{x}\times \widetilde{E}=0$ in \eqref{4.3}. 
By \eqref{3.3}, \eqref{3.1}, \eqref{4.13A} and Lemma \ref{lem4.18}, we get
\begin{align*}
|(\partial_{t}[\frac{1}{R\theta}]\partial^{\alpha}\widetilde{E},\partial^{\alpha}\widetilde{E})|
&\leq C(\|\partial_{t}\bar{\theta}\|_{L^{\infty}}+\|\partial_{t}\widetilde{\theta}\|_{L^{\infty}})\|\partial^{\alpha}\widetilde{E}\|^2
\nonumber\\
&\leq 
C[\eta_{0}(1+t)^{-\vartheta}+\varepsilon^{\frac{1}{2}-a}]\varepsilon^{-2a}.
\end{align*}
The last two terms of \eqref{5.58} can be treated similarly. It follows from \eqref{5.58} that
\begin{align}
\label{5.59}
&-(\frac{1}{R\theta}\partial^{\alpha}\widetilde{E},\partial^{\alpha}(\rho u-\bar{\rho}\bar{u}))
\nonumber\\
&\leq -\frac{1}{2}\frac{d}{dt}(\frac{1}{R\theta}\partial^{\alpha}\widetilde{E},\partial^{\alpha}\widetilde{E})
-\frac{1}{2}\frac{d}{dt}(\frac{1}{R\theta}\partial^{\alpha}\widetilde{B},\partial^{\alpha}\widetilde{B})
+C[\eta_{0}(1+t)^{-\vartheta}+\varepsilon^{\frac{1}{2}-a}]\varepsilon^{-2a}.
\end{align}
As before, we denote $\partial^{\alpha}=\partial^{\alpha'}\partial_{x_i}$ with $|\alpha'|=N-1$ due to $|\alpha|=N$. By integration by parts,
\eqref{3.3} and \eqref{4.13}, one gets
\begin{align}
\label{5.60}	
|(\frac{1}{R\theta}\partial^{\alpha}\widetilde{E},\partial^{\alpha}(\bar{\rho}\bar{u}))|
&=|(\partial^{\alpha'}\widetilde{E},\partial_{x_i}[\frac{1}{R\theta}\partial^{\alpha}(\bar{\rho}\bar{u})])|
\nonumber\\
&\leq C\|\partial^{\alpha'}\widetilde{E}\|\|\partial_{x_i}[\frac{1}{R\theta}\partial^{\alpha}(\bar{\rho}\bar{u})]\|
\leq C\eta_{0}\varepsilon^{1-a}.
\end{align}
The rest term in \eqref{5.57} can be estimated as
\begin{align*}	
|(\frac{1}{R\theta}\partial^{\alpha}\widetilde{E},u\partial^{\alpha}\rho)|
&\leq C\|u\|_{L^{\infty}}\|\partial^{\alpha}\widetilde{E}\|\|\partial^{\alpha}\rho\|
\\
&\leq  C[\eta_{0}(1+t)^{-\vartheta}+\varepsilon^{1-a}](\varepsilon^{-2a}+\eta_{0}).
\end{align*}
Hence, substituting this, \eqref{5.60} and \eqref{5.59} into \eqref{5.57}, one gets
\begin{align}
\label{5.61}
(\partial^{\alpha}\widetilde{E}\cdot\nabla_{v}M,\frac{1}{M}\partial^{\alpha}M)
\leq& -\frac{1}{2}\frac{d}{dt}(\frac{1}{R\theta}\partial^{\alpha}\widetilde{E},\partial^{\alpha}\widetilde{E})
-\frac{1}{2}\frac{d}{dt}(\frac{1}{R\theta}\partial^{\alpha}\widetilde{B},\partial^{\alpha}\widetilde{B})
\nonumber\\
&+C[\eta_{0}(1+t)^{-\vartheta}+\varepsilon^{\frac{1}{2}-a}]\varepsilon^{-2a}.
\end{align}
Recall $\partial^{\alpha}M=I_1+I_2+I_3$ given in \eqref{5.30}, we easily see that the first term on the RHS of \eqref{5.56} is equivalent to
\begin{align}
\label{5.62}
(\partial^{\alpha}\widetilde{E}\cdot\nabla_{v}M,[\frac{1}{\mu}-\frac{1}{M}]I_1)
+(\partial^{\alpha}\widetilde{E}\cdot\nabla_{v}M,[\frac{1}{\mu}-\frac{1}{M}]I_2)
+(\partial^{\alpha}\widetilde{E}\cdot\nabla_{v}M,[\frac{1}{\mu}-\frac{1}{M}]I_3).
\end{align}
Recall $I_1$ given in \eqref{5.30}, we use \eqref{5.22}, \eqref{4.13}  and \eqref{4.15} to get
\begin{align*}
|(\partial^{\alpha}\widetilde{E}\cdot\nabla_{v}M,[\frac{1}{\mu}-\frac{1}{M}]I_1)|
&\leq C(\eta_{0}+\varepsilon^{1-a})\|\partial^{\alpha}\widetilde{E}\|\|\partial^{\alpha}(\widetilde{\rho},\widetilde{u},\widetilde{\theta})\|
\\
&\leq C(\eta_{0}+\varepsilon^{1-a})(\frac{1}{\varepsilon}\|\partial^{\alpha}(\widetilde{\rho},\widetilde{u},\widetilde{\theta})\|^2
+\varepsilon\|\partial^{\alpha}\widetilde{E}\|^2)
\\
&\leq C(\eta_{0}+\varepsilon^{1-a})(\frac{1}{\varepsilon^2}\mathcal{D}_{N}(t)+\varepsilon^{1-2a}).
\end{align*}
Recall $I_2$ given in \eqref{5.30}, we use the similar arguments as \eqref{5.60} to get
\begin{align*}
|(\partial^{\alpha}\widetilde{E}\cdot\nabla_{v}M,[\frac{1}{\mu}-\frac{1}{M}]I_2)|
\leq |(\partial^{\alpha'}\widetilde{E},\partial_{x_i}\{\nabla_{v}M[\frac{1}{\mu}-\frac{1}{M}]I_2\})|
\leq C(\eta_{0}+\varepsilon^{1-a})\varepsilon^{1-a}.
\end{align*}
The last term of \eqref{5.62} can be handed similarly. We thereby obtain
\begin{align*}
|(\partial^{\alpha}\widetilde{E}\cdot\nabla_{v}M,[\frac{1}{\mu}-\frac{1}{M}]\partial^{\alpha}M)|
\leq C(\eta_{0}+\varepsilon^{1-a})\frac{1}{\varepsilon^2}\mathcal{D}_{N}(t)+C(\eta_{0}+\varepsilon^{1-a})\varepsilon^{1-2a}.
\end{align*}
For the case $\alpha_{1}=\alpha$, we substitute this and \eqref{5.61} into \eqref{5.56} to obtain
\begin{align}
\label{5.63}
(\frac{\partial^{\alpha_1}\widetilde{E}\cdot\partial^{\alpha-\alpha_{1}}\nabla_{v}M}{\sqrt{\mu}},\frac{\partial^{\alpha}M}{\sqrt{\mu}})
\leq& -\frac{1}{2}\frac{d}{dt}(\frac{1}{R\theta}\partial^{\alpha}\widetilde{E},\partial^{\alpha}\widetilde{E})
-\frac{1}{2}\frac{d}{dt}(\frac{1}{R\theta}\partial^{\alpha}\widetilde{B},\partial^{\alpha}\widetilde{B})
\nonumber\\
+C(\eta_{0}&+\varepsilon^{1-a})\frac{1}{\varepsilon^2}\mathcal{D}_{N}(t)+C[\eta_{0}(1+t)^{-\vartheta}+\varepsilon^{\frac{1}{2}-a}]\varepsilon^{-2a},	
\end{align}
where we have used \eqref{4.22a} such that
$$
(\eta_{0}+\varepsilon^{1-a})\varepsilon^{1-2a}\leq\varepsilon^{\frac{1}{2}-a}\varepsilon^{-2a}.
$$
For the case $1\leq\alpha_{1}<\alpha$, we get by \eqref{4.13}, \eqref{4.22a} and \eqref{4.15}
that
\begin{align}
\label{5.64}
|(\frac{\partial^{\alpha_1}\widetilde{E}\cdot\partial^{\alpha-\alpha_{1}}\nabla_{v}M}{\sqrt{\mu}},\frac{\partial^{\alpha}M}{\sqrt{\mu}})|
&\leq C\|\partial^{\alpha_1}\widetilde{E}\|_{L^3}\||\frac{\partial^{\alpha-\alpha_{1}}\nabla_{v}M}{\sqrt{\mu}}|_2\|_{L^6}
\||\frac{\partial^{\alpha}M}{\sqrt{\mu}}|_2\|_{L^2}
\nonumber\\
&\leq C\varepsilon^{\frac{1}{2}-a}(\|\frac{\nabla_x\partial^{\alpha-\alpha_{1}}\nabla_{v}M}{\sqrt{\mu}}\|^2+
\|\frac{\partial^{\alpha}M}{\sqrt{\mu}}\|^2)
\nonumber\\
&\leq C\varepsilon^{\frac{1}{2}-a}\frac{1}{\varepsilon^2}\mathcal{D}_{N}(t)
+ C(\eta_{0}+\varepsilon^{1-a})\varepsilon^{\frac{1}{2}-a},
\end{align}
where in the last inequality we have applied the same argument as in the first line of \eqref{4.47a}. 
Then the above estimate combined with \eqref{5.63} immediately gives
\begin{align}
\label{5.65}
&\sum_{1\leq \alpha_{1}\leq \alpha}C^{\alpha_1}_\alpha(\frac{\partial^{\alpha_1}\widetilde{E}\cdot\partial^{\alpha-\alpha_{1}}\nabla_{v}M}{\sqrt{\mu}},\frac{\partial^{\alpha}M}{\sqrt{\mu}})
\nonumber\\
&\leq -\frac{1}{2}\frac{d}{dt}(\frac{1}{R\theta}\partial^{\alpha}\widetilde{E},\partial^{\alpha}\widetilde{E})
-\frac{1}{2}\frac{d}{dt}(\frac{1}{R\theta}\partial^{\alpha}\widetilde{B},\partial^{\alpha}\widetilde{B})
\nonumber\\
&\quad\quad+C(\eta_{0}+\varepsilon^{\frac{1}{2}-a})\frac{1}{\varepsilon^2}\mathcal{D}_{N}(t)+C[\eta_{0}(1+t)^{-\vartheta}+\varepsilon^{\frac{1}{2}-a}]\varepsilon^{-2a}.	
\end{align}
For the second  term in \eqref{5.55}. If $\alpha/2<\alpha_{1}\leq\alpha$, {then
by \eqref{5.38}, \eqref{4.22a}, \eqref{4.20} and \eqref{4.15}, we get}
\begin{align*}
&|(\frac{\partial^{\alpha_1}\widetilde{E}\cdot\partial^{\alpha-\alpha_{1}}\nabla_{v}M}{\sqrt{\mu}},\frac{\partial^{\alpha}(\overline{G}+\sqrt{\mu}f)}{\sqrt{\mu}})|
\\
&\leq C\varepsilon^{\frac{1}{2}+a}\|\partial^{\alpha_1}\widetilde{E}\|^2+C\varepsilon^{-\frac{1}{2}-a}\|\frac{\partial^{\alpha}(\overline{G}+\sqrt{\mu}f)}{\sqrt{\mu}}\|^2
\\
&\leq C\varepsilon^{\frac{1}{2}-a}\frac{1}{\varepsilon^2}\mathcal{D}_{N}(t)+C\varepsilon^{\frac{1}{2}-a}.
\end{align*}
The case $1\leq\alpha_{1}\leq\alpha/2$ can be treated similarly.
It follows  that
\begin{align}
\label{5.66}
\sum_{1\leq \alpha_{1}\leq \alpha}|(\frac{\partial^{\alpha_1}\widetilde{E}\cdot\partial^{\alpha-\alpha_{1}}\nabla_{v}M}{\sqrt{\mu}},\frac{\partial^{\alpha}(\overline{G}+\sqrt{\mu}f)}{\sqrt{\mu}})|
\leq C\varepsilon^{\frac{1}{2}-a}\frac{1}{\varepsilon^2}\mathcal{D}_{N}(t)+C\varepsilon^{\frac{1}{2}-a}.
\end{align}
The third  term of \eqref{5.55} has the same bound as \eqref{5.66}.
For the fourth  term in \eqref{5.55}. If  $1\leq\alpha_{1}\leq \alpha/2$, then by \eqref{4.13},  \eqref{4.22a}, \eqref{4.49} and \eqref{4.15}, we have
\begin{align*}
|(\partial^{\alpha_1}\widetilde{E}\cdot\partial^{\alpha-\alpha_{1}}\nabla_{v}f,\frac{\partial^{\alpha}F}{\sqrt{\mu}})|
&\leq C\|\partial^{\alpha_1}\widetilde{E}\|_{L^\infty}(\|\partial^{\alpha-\alpha_{1}}\nabla_{v}f\|^2+
\|\frac{\partial^{\alpha}F}{\sqrt{\mu}}\|^2)
\\
&\leq C\varepsilon^{\frac{1}{2}-a}\frac{1}{\varepsilon^2}\mathcal{D}_{N}(t)+C\varepsilon^{\frac{1}{2}-a}.
\end{align*}
If  $|\alpha|/2<\alpha_{1}\leq \alpha-1$, then we take $L^2-L^3-L^6$ H\"{o}lder inequality to get
\begin{align*}
|(\partial^{\alpha_1}\widetilde{E}\cdot\partial^{\alpha-\alpha_{1}}\nabla_{v}f,\frac{\partial^{\alpha}F}{\sqrt{\mu}})|
&\leq C\|\partial^{\alpha_1}\widetilde{E}\|_{L^3}\||\partial^{\alpha-\alpha_{1}}\nabla_{v}f|_2\|_{L^6}
\|\frac{\partial^{\alpha}F}{\sqrt{\mu}}\|
\\
&\leq C\varepsilon^{\frac{1}{2}-a}\frac{1}{\varepsilon^2}\mathcal{D}_{N}(t)+C\varepsilon^{\frac{1}{2}-a}.
\end{align*}
The rest case  $\alpha_{1}=\alpha$ has the same bound as above. So  one gets
\begin{align}
\label{5.67}
\sum_{1\leq \alpha_{1}\leq \alpha}|(\partial^{\alpha_1}\widetilde{E}\cdot\partial^{\alpha-\alpha_{1}}\nabla_{v}f,\frac{\partial^{\alpha}F}{\sqrt{\mu}})|
\leq C\varepsilon^{\frac{1}{2}-a}\frac{1}{\varepsilon^2}\mathcal{D}_{N}(t)+C\varepsilon^{\frac{1}{2}-a}.
\end{align}
The  last term of \eqref{5.55} can be treated in the same way as \eqref{5.67}.
All in all, collecting the estimates above, we can conclude from \eqref{5.55} that
\begin{align}
\label{4.79a}
\sum_{1\leq \alpha_{1}\leq \alpha}C^{\alpha_1}_\alpha I^1_5\leq& -\frac{1}{2}\frac{d}{dt}(\frac{1}{R\theta}\partial^{\alpha}\widetilde{E},\partial^{\alpha}\widetilde{E})
-\frac{1}{2}\frac{d}{dt}(\frac{1}{R\theta}\partial^{\alpha}\widetilde{B},\partial^{\alpha}\widetilde{B})
\nonumber\\
&+C(\eta_{0}+\varepsilon^{\frac{1}{2}-a})\frac{1}{\varepsilon^2}\mathcal{D}_{N}(t)+C[\eta_{0}(1+t)^{-\vartheta}+\varepsilon^{\frac{1}{2}-a}]\varepsilon^{-2a}.		
\end{align}
Next we compute $I^2_5$ given in \eqref{4.66}, we write $I^2_5=I^{21}_5+I^{22}_5$ with
$$
I^{21}_5=(\frac{\partial^{\alpha_1}\bar{E}\cdot\partial^{\alpha-\alpha_{1}}\nabla_{v}M}{\sqrt{\mu}},\frac{\partial^{\alpha}F}{\sqrt{\mu}}),\quad
I^{22}_5=(\frac{\partial^{\alpha_1}\bar{E}\cdot\partial^{\alpha-\alpha_{1}}\nabla_{v}(\overline{G}+\sqrt{\mu}f)}{\sqrt{\mu}},\frac{\partial^{\alpha}F}{\sqrt{\mu}}).
$$
To bound $I^{21}_5$, if $\alpha_{1}=\alpha$, we employ \eqref{4.25}, \eqref{4.49}, \eqref{3.3} and \eqref{4.22a} to get
\begin{align*}
|I^{21}_5|\leq C\|\partial^{\alpha}\bar{E}\|\|\frac{\partial^{\alpha}F}{\sqrt{\mu}}\|
\leq C\eta_{0}\frac{1}{\varepsilon}\mathcal{D}_{N}(t)+C\eta_{0}.
\end{align*}
If $1\leq \alpha_{1}\leq\alpha-1$, making use of  \eqref{4.25},  \eqref{3.3}, \eqref{4.49} and \eqref{4.22a} again, one gets
\begin{align*}
|I^{21}_5|\leq C\|\partial^{\alpha_1}\bar{E}\|_{L^{\infty}}\|\frac{\partial^{\alpha-\alpha_{1}}\nabla_{v}M}{\sqrt{\mu}}\|
\|\frac{\partial^{\alpha}F}{\sqrt{\mu}}\|
\leq C\eta_{0}\frac{1}{\varepsilon}\mathcal{D}_{N}(t)+C\eta_{0}.
\end{align*}
It follows from the aforementioned two estimates that
\begin{align*}
|I^{21}_5|\leq C\eta_{0}\frac{1}{\varepsilon}\mathcal{D}_{N}(t)+C\eta_{0}.
\end{align*}
Similar estimate also holds for $I^{22}_5$. Therefore, we conclude that
\begin{align}
\label{5.68}
\sum_{1\leq \alpha_{1}\leq \alpha}C^{\alpha_1}_\alpha|I^2_5|
\leq C\eta_{0}\frac{1}{\varepsilon}\mathcal{D}_{N}(t)+C\eta_{0}.
\end{align}
Thanks to $I_5=\sum_{1\leq \alpha_{1}\leq \alpha}C^{\alpha_1}_\alpha(I^1_5+I^2_5)$, we deduce from  \eqref{4.79a} and \eqref{5.68} that
\begin{align*}
I_5\leq& -\frac{1}{2}\frac{d}{dt}(\frac{1}{R\theta}\partial^{\alpha}\widetilde{E},\partial^{\alpha}\widetilde{E})
-\frac{1}{2}\frac{d}{dt}(\frac{1}{R\theta}\partial^{\alpha}\widetilde{B},\partial^{\alpha}\widetilde{B})
\nonumber\\
&+C(\eta_{0}+\varepsilon^{\frac{1}{2}-a})\frac{1}{\varepsilon^2}\mathcal{D}_{N}(t)+C[\eta_{0}(1+t)^{-\vartheta}+\eta_{0}\varepsilon^{2a}+\varepsilon^{\frac{1}{2}-a}]\varepsilon^{-2a}.	
\end{align*}
It remains to estimate $I_6$ given in \eqref{5.51}.
By  the similar calculations as \eqref{5.67} and \eqref{5.68}, we can arrive at
\begin{align*}
I_6=\sum_{1\leq \alpha_{1}\leq \alpha}	
C^{\alpha_1}_\alpha&\big\{(\frac{(v\times \partial^{\alpha_1}\widetilde{B})\cdot\partial^{\alpha-\alpha_{1}}\nabla_{v}
			M}{\sqrt{\mu}},\frac{\partial^{\alpha}F}{\sqrt{\mu}})
		\\
&+(\frac{(v\times \partial^{\alpha_1}\widetilde{B})\cdot\partial^{\alpha-\alpha_{1}}
			(\nabla_{v}\overline{G}+\sqrt{\mu}\nabla_{v}f)}{\sqrt{\mu}},\frac{\partial^{\alpha}F}{\sqrt{\mu}})
		\\
	&+(\frac{(v\times \partial^{\alpha_1}\bar{B})\cdot\partial^{\alpha-\alpha_{1}}
			(\nabla_{v}M+\nabla_{v}\overline{G}+\sqrt{\mu}\nabla_{v}f)}{\sqrt{\mu}},
		\frac{\partial^{\alpha}F}{\sqrt{\mu}})\big\}
\\
\leq C(\eta_{0}+\varepsilon^{\frac{1}{2}-a})&\frac{1}{\varepsilon^2}\mathcal{D}_{N}(t)
+C[\eta_{0}(1+t)^{-\vartheta}+\eta_{0}\varepsilon^{2a}+\varepsilon^{\frac{1}{2}-a}]\varepsilon^{-2a}.
\end{align*}

In summary, we can obtain \eqref{5.49} by substituting the estimates on $I_4$, $I_5$ and $I_6$ into \eqref{5.50}. This completes
the proof of Lemma \ref{lem5.12}.
\end{proof}
\begin{lemma}\label{lem5.13}
For  $|\alpha|+|\beta|\leq N$ and $|\beta|\geq 1$, it holds that
\begin{align}
\label{5.69}
|(\partial_{\beta}^{\alpha}[\frac{(E+v\times B)\cdot\nabla_{v}(\sqrt{\mu}f)}{\sqrt{\mu}}],\partial_{\beta}^{\alpha}f)|
\leq C(\eta_{0}+\varepsilon^{\frac{1}{2}-a})\mathcal{D}_{N}(t).
\end{align}	
\end{lemma}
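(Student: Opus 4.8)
The plan is to expand the mixed derivative $\partial_\beta^\alpha$ using the Leibniz rule on the product $(E+v\times B)\cdot\nabla_v(\sqrt\mu f)/\sqrt\mu$, then treat the resulting terms according to the relative sizes of the derivative orders, exactly as in the proof of Lemma \ref{lem5.12} but now exploiting the extra velocity derivative $|\beta|\ge 1$ that keeps everything below top order in space. First I would write
$$
\frac{(E+v\times B)\cdot\nabla_v(\sqrt\mu f)}{\sqrt\mu}=(E+v\times B)\cdot\big(\nabla_v f-\tfrac{v}{2}f\big),
$$
so that the term splits into a transport-type piece $(E+v\times B)\cdot\nabla_v f$ and a zeroth-order piece $-\tfrac12(E+v\times B)\cdot v\,f$, the latter being harmless because of the Gaussian weight absorbed in $|\cdot|_\nu$ (recall $\nu(v)\approx 1+|v|$ from \eqref{4.28}). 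For the diagonal term where all derivatives hit $f$, namely $(E+v\times B)\cdot\nabla_v\partial_\beta^\alpha f$ paired against $\partial_\beta^\alpha f$, I would integrate by parts in $v$; using $\nabla_v\cdot(E+v\times B)=0$ and $v\cdot(v\times B)=0$ this reduces to $\tfrac12\int (E+v\times B)\cdot v\,(\partial_\beta^\alpha f)^2$, bounded by $C\|E\|_{L^\infty}\||\langle v\rangle^{1/2}\partial_\beta^\alpha f|\|^2\le C\|E\|_{L^\infty}\|\partial_\beta^\alpha f\|_\nu^2$, and then \eqref{5.52} together with the fact that $\|\partial_\beta^\alpha f\|_\nu^2\le \varepsilon\mathcal D_N(t)$ (since $|\beta|\ge 1$ puts this into the strongly-dissipated block of \eqref{4.15} with weight $1/\varepsilon$, so $\|\partial_\beta^\alpha f\|_\nu^2\le\varepsilon\mathcal D_N$) gives a bound $C(\eta_0+\varepsilon^{1-a})\mathcal D_N(t)$, which is of the claimed form.

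For the remaining Leibniz terms, $\sum_{1\le\alpha_1+\beta_1}C(\partial_{\beta_1}^{\alpha_1}(E+v\times B))\cdot\nabla_v\partial_{\beta-\beta_1}^{\alpha-\alpha_1}(\sqrt\mu f)/\sqrt\mu$ paired with $\partial_\beta^\alpha f$, I would split $E=\widetilde E+\bar E$, $B=\widetilde B+\bar B$ and treat the background part using the decay estimate \eqref{3.3} (so factors of $\bar E,\bar B$ contribute $C\eta_0$ in $L^\infty$ for $|\alpha_1|\le N+1$), and the perturbation part using \eqref{4.13}/\eqref{4.13A}. The dichotomy is the usual one: if $|\alpha_1|+|\beta_1|\le N/2$ put $\partial_{\beta_1}^{\alpha_1}\widetilde E$ (or $\widetilde B$) into $L^\infty$ via the embedding $\|h\|_{L^\infty}\le C\|\nabla_x h\|^{1/2}\|\nabla_x^2 h\|^{1/2}$ of Lemma \ref{lem5.1}, getting a factor $C(\eta_0+\varepsilon^{1/2-a})$ as in \eqref{4.27}, and bound the other two factors by $\mathcal D_N(t)$; if $|\alpha_1|+|\beta_1|>N/2$ then the complementary index $|\alpha-\alpha_1|+|\beta-\beta_1|<N/2$ so $\nabla_v\partial_{\beta-\beta_1}^{\alpha-\alpha_1}(\sqrt\mu f)$ goes into $L^\infty$ (again by Lemma \ref{lem5.1}, costing $\varepsilon^{-1}$ in the interpolation and hence $\varepsilon\|\cdot\|_\nu\|\cdot\|_\nu$ type bounds controlled by $\mathcal D_N$), while $\partial_{\beta_1}^{\alpha_1}\widetilde E$ stays in $L^2$; in the intermediate range one uses the $L^6$–$L^3$–$L^2$ H\"older inequality with Lemma \ref{lem5.1}. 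In each case the Gaussian from $\sqrt\mu$ and its $v$-derivatives, together with $\nu(v)\approx 1+|v|$, ensures all velocity integrals converge and produce $\|\cdot\|_\nu$ norms, so that after using \eqref{4.22a} every term is dominated by $C(\eta_0+\varepsilon^{1/2-a})\mathcal D_N(t)$.

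I expect the only mild subtlety — not really an obstacle — to be bookkeeping the powers of $\varepsilon$ so that no term exceeds $C(\eta_0+\varepsilon^{1/2-a})\mathcal D_N(t)$: one must check that whenever an $L^\infty$-interpolation of $f$ or its derivatives forces a negative power $\varepsilon^{-1}$ (from $\|\partial^\alpha f\|^2\le\varepsilon^{2-2a}$ versus $\|\partial^\alpha f\|_\nu^2\le\varepsilon\mathcal D_N$), that power is compensated either by the $1/\varepsilon$ weight already sitting in front of the low-order and $|\beta|\ge1$ blocks of $\mathcal D_N(t)$ in \eqref{4.15}, or by the smallness factor $(\eta_0+\varepsilon^{1/2-a})$ extracted from the electromagnetic field. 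Since the statement \eqref{5.69} has no singular prefactor $\varepsilon^{-1}$ or $\varepsilon^{-2}$ on the right-hand side (unlike the top-order Lemma \ref{lem5.12}), this is comfortably arranged, and no new ideas beyond those already used for Lemmas \ref{lem5.6}, \ref{lem5.9} and \ref{lem5.12} are needed; one simply omits the delicate integration-by-parts-in-$x$ trick of \eqref{5.32}/\eqref{5.60} because here $|\alpha|\le N-1$.
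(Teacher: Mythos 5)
Your proposal follows essentially the same route as the paper's proof: split off the zeroth-order piece $-\tfrac{v}{2}\cdot E\,f$ (the $B$-part dying by $v\cdot(v\times B)=0$), observe that the top-order transport term $(\,(E+v\times B)\cdot\nabla_{v}\partial^{\alpha}_{\beta}f,\partial^{\alpha}_{\beta}f\,)$ is killed by integration by parts in $v$ (it is in fact exactly zero since there is no Gaussian weight here — the weighted term $\tfrac12\int E\cdot v\,(\partial^{\alpha}_{\beta}f)^2$ you write down belongs to the $\partial^{\alpha}F/\sqrt{\mu}$ setting of Lemma \ref{lem5.12}, although your bound for it coincides with what is needed for the zeroth-order piece anyway), and estimate the commutator terms with $\partial^{\alpha_1}E$, $\partial^{\alpha_1}B$ via the $L^{\infty}$ versus $L^{3}$--$L^{6}$--$L^{2}$ dichotomy using \eqref{3.3} and \eqref{4.13}, exactly as in \eqref{5.70}--\eqref{5.71}. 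The only adjustment you should make is that in the regime $|\alpha_1|>N/2$ the correct distribution is the field in $L^{3}$ and the $f$-factor in $L^{6}$ (costing one extra $x$-derivative), not the $f$-factor in $L^{\infty}_x$, since for $N=3$ that embedding would require derivatives of total order exceeding $N$; you already list this H\"older option, so this is bookkeeping rather than a gap.
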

\begin{proof}
Let $|\alpha|+|\beta|\leq N$ and $|\beta|\geq 1$, it is straightforward to get that
\begin{align}
\label{5.70}
&(\partial_{\beta}^{\alpha}[\frac{(E+v\times B)\cdot\nabla_{v}(\sqrt{\mu}f)}{\sqrt{\mu}}],\partial_{\beta}^{\alpha}f)
\nonumber\\
&=(\partial^\alpha_\beta[E\cdot\nabla_{v}f],\partial_{\beta}^\alpha f)-(\partial^\alpha_\beta[\frac{v}{2}\cdot E f],\partial_{\beta}^\alpha f)
+(\partial^\alpha_\beta[v\times B\cdot\nabla_{v}f],\partial_{\beta}^\alpha f),
\end{align}
where we have used  $v\cdot(v\times B)=0$. The first term on the RHS of \eqref{5.70} is equivalent to
\begin{align*}
(E\cdot\nabla_{v}\partial^\alpha_\beta f,\partial_{\beta}^\alpha f)
+\sum_{1\leq \alpha_{1}\leq\alpha}C^{\alpha_{1}}_\alpha 
(\partial^{\alpha_{1}}E\cdot\nabla_{v}\partial^{\alpha-\alpha_{1}}_{\beta} f,\partial_{\beta}^\alpha f).
\end{align*}
Here if $|\alpha|=0$, the last term vanishes and if  $|\alpha|\geq 1$, the last term exists. 
Note that $|\alpha|\leq N-1$ since we only consider $|\alpha|+|\beta|\leq N$ and $|\beta|\geq 1$.
If $1\leq|\alpha_1|\leq N/2$, we use \eqref{3.3}, \eqref{4.13} and \eqref{4.15} to get
\begin{align*}
|(\partial^{\alpha_{1}}E\cdot\nabla_{v}\partial^{\alpha-\alpha_{1}}_{\beta} f,\partial_{\beta}^\alpha f)|
&\leq C\|\partial^{\alpha_{1}}E\|_{L^{\infty}}\|\nabla_{v}\partial^{\alpha-\alpha_{1}}_\beta f\|
\|\partial_{\beta}^\alpha f\|
\nonumber\\
&\leq C(\eta_{0}+\varepsilon^{\frac{1}{2}-a})\mathcal{D}_{N}(t).
\end{align*}
If $ N/2<|\alpha_1|\leq N-1$, it holds that
\begin{align*}
|(\partial^{\alpha_{1}}E\cdot\nabla_{v}\partial^{\alpha-\alpha_{1}}_{\beta} f,\partial_{\beta}^\alpha f)|
&\leq C\|\partial^{\alpha_{1}}E\|_{L^{3}}\||\nabla_{v}\partial^{\alpha-\alpha_{1}}_\beta f|_2\|_{L^{6}}
\|\partial_{\beta}^\alpha f\|
\\
&\leq C(\eta_{0}+\varepsilon^{\frac{1}{2}-a})\mathcal{D}_{N}(t).
\end{align*}
We thus deduce from the aforementioned two estimates that
\begin{align*}
\sum_{1\leq \alpha_{1}\leq\alpha}C^{\alpha_{1}}_\alpha|(\partial^{\alpha_{1}}E\cdot\nabla_{v}\partial^{\alpha-\alpha_{1}}_{\beta} f,\partial_{\beta}^\alpha f)|
\leq C(\eta_{0}+\varepsilon^{\frac{1}{2}-a})\mathcal{D}_{N}(t),
\end{align*}
which, together with the identity $(E\cdot\nabla_{v}\partial^\alpha_\beta f,\partial_{\beta}^\alpha f)=0$, yields that
\begin{align}
\label{5.71}
|(\partial^\alpha_\beta[E\cdot\nabla_{v}f],\partial_{\beta}^\alpha f)|\leq C(\eta_{0}+\varepsilon^{\frac{1}{2}-a})\mathcal{D}_{N}(t).
\end{align}
The last two terms on the RHS of \eqref{5.70} has the same bound as \eqref{5.71}.
Therefore, the desired estimate \eqref{5.69} follows. This ends the proof of Lemma \ref{lem5.13}.
\end{proof}
Similar arguments as \eqref{5.69} lead to the following estimate.
\begin{lemma}\label{lem5.14}
For $|\alpha|\leq N-1$, it holds that
\begin{align}
\label{5.72}
|(\frac{\partial^{\alpha}[(E+v\times B)\cdot\nabla_{v}(\sqrt{\mu}f)]}{\sqrt{\mu}},\partial^{\alpha}f)|
\leq C(\eta_{0}+\varepsilon^{\frac{1}{2}-a})\mathcal{D}_{N}(t).
\end{align}
\end{lemma}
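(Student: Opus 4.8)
The plan is to follow the argument of Lemma~\ref{lem5.13} almost verbatim, keeping track of only two changes: here $|\beta|=0$, and the spatial order obeys $|\alpha|\le N-1$, so every $\nabla_x$-derivative produced below stays inside the range controlled by $\mathcal{D}_{N}(t)$ with no singular $N$-th order weight $\varepsilon^{-1}$ entering. First I would use $\nabla_{v}(\sqrt{\mu}f)=\sqrt{\mu}\,(\nabla_{v}f-\tfrac{v}{2}f)$ together with $v\cdot(v\times B)=0$ to write, exactly as in \eqref{5.70},
\begin{equation*}
\Big(\frac{\partial^{\alpha}[(E+v\times B)\cdot\nabla_{v}(\sqrt{\mu}f)]}{\sqrt{\mu}},\partial^{\alpha}f\Big)
=(\partial^{\alpha}[E\cdot\nabla_{v}f],\partial^{\alpha}f)-(\partial^{\alpha}[\tfrac{v}{2}\cdot Ef],\partial^{\alpha}f)+(\partial^{\alpha}[(v\times B)\cdot\nabla_{v}f],\partial^{\alpha}f),
\end{equation*}
and then apply the Leibniz rule to each $\partial^{\alpha}$, separating the top-order term (all derivatives on $f$) from the lower-order terms (at least one derivative on $E$ or $B$).

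For the top-order terms, $(E\cdot\nabla_{v}\partial^{\alpha}f,\partial^{\alpha}f)$ vanishes by integration by parts in $v$ because $\nabla_{v}\cdot E=0$, and likewise $((v\times B)\cdot\nabla_{v}\partial^{\alpha}f,\partial^{\alpha}f)=0$ because $\nabla_{v}\cdot(v\times B)=0$. The only non-trivial top-order piece is $-(\tfrac{v}{2}\cdot E\,\partial^{\alpha}f,\partial^{\alpha}f)$, which I would bound by $C\|E\|_{L^{\infty}}\|\partial^{\alpha}f\|_{\nu}^{2}$ using $|v|\lesssim\nu(v)$; then \eqref{5.52} gives $\|E\|_{L^\infty}\le C(\eta_0+\varepsilon^{1-a})$, and for $|\alpha|\le N-1$ one reads off from \eqref{4.15} that $\|\partial^{\alpha}f\|_{\nu}^{2}\le\varepsilon\mathcal{D}_{N}(t)$, so this term is $\le C(\eta_{0}+\varepsilon^{1-a})\varepsilon\,\mathcal{D}_{N}(t)\le C(\eta_{0}+\varepsilon^{\frac12-a})\mathcal{D}_{N}(t)$ since $\varepsilon\le1$.

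For the lower-order terms one is left with sums $\sum_{1\le\alpha_{1}\le\alpha}C^{\alpha_{1}}_{\alpha}(\partial^{\alpha_{1}}E\cdot\nabla_{v}\partial^{\alpha-\alpha_{1}}f,\partial^{\alpha}f)$, and analogous sums coming from the $\tfrac{v}{2}\cdot Ef$ and $(v\times B)\cdot\nabla_{v}f$ pieces, all of which are treated in the same way as in the proof of \eqref{5.71}. I would split according to whether $1\le|\alpha_{1}|\le N/2$ or $N/2<|\alpha_{1}|\le N-1$. In the first case put $\partial^{\alpha_{1}}E$ (resp.\ $\partial^{\alpha_{1}}B$) in $L^{\infty}$ via Lemma~\ref{lem5.1}, estimating it by $C(\eta_{0}+\varepsilon^{\frac12-a})$ through \eqref{3.3} for the $\bar E,\bar B$ part and an interpolation against the $\varepsilon^{2}$-weighted $N$-th order energy in \eqref{4.13} for the perturbation part (the computation being identical to \eqref{4.27}); the two remaining factors $\nabla_{v}\partial^{\alpha-\alpha_{1}}f$ and $\partial^{\alpha}f$ go in $L^{2}$ and are absorbed by the $f$-terms of $\mathcal{D}_{N}(t)$ (note $|\alpha-\alpha_{1}|+1\le N$ with one velocity derivative, so these sit in the $|\beta|\ge1$ block of \eqref{4.15}). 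In the second case use the $L^{3}$--$L^{6}$--$L^{2}$ H\"older inequality: $\partial^{\alpha_{1}}E\in L^{3}$ controlled by interpolation as in \eqref{4.34}, $|\nabla_{v}\partial^{\alpha-\alpha_{1}}f|_{2}\in L^{6}$ bounded by $C\|\nabla_{x}\nabla_{v}\partial^{\alpha-\alpha_{1}}f\|$ which again lies in the $|\beta|\ge1$ part of $\mathcal{D}_{N}(t)$ (here $|\alpha|\le N-1$ is exactly what keeps this within scope), and $\partial^{\alpha}f\in L^{2}$. Each contribution is $\le C(\eta_{0}+\varepsilon^{\frac12-a})\mathcal{D}_{N}(t)$, with \eqref{4.22a} invoked to line up the powers of $\varepsilon$; summing all pieces yields \eqref{5.72}.

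The main obstacle, modest as it is, is the single non-vanishing top-order term $-(\tfrac{v}{2}\cdot E\,\partial^{\alpha}f,\partial^{\alpha}f)$: unlike in the mixed-derivative Lemma~\ref{lem5.13}, one must notice that the velocity weight $|v|$ is absorbed by $\nu(v)$ and that at order $|\alpha|\le N-1$ the quantity $\|\partial^{\alpha}f\|_{\nu}^{2}$ carries an extra factor $\varepsilon$ relative to $\mathcal{D}_{N}(t)$, and it is this factor that upgrades the crude bound $\|E\|_{L^{\infty}}\lesssim\eta_{0}+\varepsilon^{1-a}$ into the required smallness $\eta_{0}+\varepsilon^{\frac12-a}$. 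Everything else is a routine repetition of the splitting already performed for \eqref{5.69}, with the simplification that no $\varepsilon^{-2}\mathcal{D}_{N}(t)$ or boundary-type remainder appears because we never differentiate to order $N$.
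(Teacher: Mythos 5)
Your proposal is correct and takes essentially the same route the paper intends: the paper gives no separate argument for Lemma \ref{lem5.14} beyond ``similar arguments as \eqref{5.69}'', i.e.\ a repetition of the proof of Lemma \ref{lem5.13} with $\beta=0$, which is exactly what you carry out (Leibniz splitting, vanishing of the top-order $E$ and $v\times B$ terms by integration by parts in $v$, absorption of the weight $|v|$ into $\nu$, and $L^\infty$ or $L^{3}$--$L^{6}$--$L^{2}$ treatment of the commutator terms). One minor remark: the extra factor $\varepsilon$ you extract from $\|\partial^{\alpha}f\|_{\nu}^{2}\le\varepsilon\mathcal{D}_{N}(t)$ is not actually needed to close the top-order term, since $\varepsilon^{1-a}\le\varepsilon^{\frac12-a}$ already yields the stated smallness.
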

\subsection{Estimates on fluid quantities}\label{seca.5.5} 
Now we are devoted to deriving  some estimates on the fluid quantities.
\begin{lemma}\label{lem4.17}
 For $|\alpha|\leq N-1$, one has
\begin{align}	
\label{4.86a}
&\varepsilon \frac{d}{dt}(\partial^{\alpha}\widetilde{u},\nabla_x\partial^{\alpha}\widetilde{\rho})
+c\varepsilon\{\|\nabla_x\partial^{\alpha}\widetilde{\rho}\|^2+\|\nabla_{x}\cdot \partial^{\alpha}\widetilde{E}\|^2\}
\nonumber\\
&\leq C\varepsilon \{\|(\nabla_x\partial^{\alpha}\widetilde{u},\nabla_x\partial^{\alpha}\widetilde{\theta})\|^2+\|\nabla_x\partial^{\alpha}f\|^2\}
+C\varepsilon\varepsilon^{2-2a}.
\end{align}
Moreover, for $|\alpha|\leq N-2$, one has
\begin{align}
\label{4.86b}
\varepsilon\int^{t}_0\{\|\nabla_{x}\partial^{\alpha}\widetilde{\rho}(s)\|^2+\|\nabla_{x}\cdot\partial^{\alpha}\widetilde{E}(s)\|^2\}\,ds
\leq C(\varepsilon+t\varepsilon)\varepsilon^{2-2a}.
\end{align}
\end{lemma}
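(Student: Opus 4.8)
The plan is to carry out, at each order $|\alpha|\le N-1$, the standard dissipation-producing test of the compressible part: apply $\partial^{\alpha}$ to the $i$-th momentum equation in \eqref{4.2}, take the $L^{2}_{x}$ inner product with $\partial_{x_{i}}\partial^{\alpha}\widetilde{\rho}$, and sum over $i=1,2,3$. From the time-derivative term one extracts $\frac{d}{dt}(\partial^{\alpha}\widetilde{u},\nabla_{x}\partial^{\alpha}\widetilde{\rho})$ plus $-(\partial^{\alpha}\widetilde{u},\nabla_{x}\partial_{t}\partial^{\alpha}\widetilde{\rho})=(\nabla_{x}\cdot\partial^{\alpha}\widetilde{u},\partial_{t}\partial^{\alpha}\widetilde{\rho})$, into which I substitute the continuity equation (the first line of \eqref{4.2}); its leading term $-\bar{\rho}\nabla_{x}\cdot\widetilde{u}$ produces $\|\sqrt{\bar{\rho}}\,\nabla_{x}\cdot\partial^{\alpha}\widetilde{u}\|^{2}\le C\|\nabla_{x}\partial^{\alpha}\widetilde{u}\|^{2}$, while the remaining pieces and all commutators carry the smallness of $(\widetilde{\rho},\widetilde{u})$ via \eqref{4.16}, \eqref{4.13} or of $\nabla_{x}\bar{\rho}$ via \eqref{3.3}. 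The dissipation itself comes from the rest of the momentum equation: the pressure gradient gives $(\tfrac{2\bar{\theta}}{3\bar{\rho}}\nabla_{x}\partial^{\alpha}\widetilde{\rho},\nabla_{x}\partial^{\alpha}\widetilde{\rho})\ge c\|\nabla_{x}\partial^{\alpha}\widetilde{\rho}\|^{2}$ because $\bar{\theta}/\bar{\rho}\approx1$ by \eqref{3.3}, and the electric-field term $-(\partial^{\alpha}\widetilde{E},\nabla_{x}\partial^{\alpha}\widetilde{\rho})$, after integrating by parts and using the Gauss law $\nabla_{x}\cdot\widetilde{E}=-\widetilde{\rho}$ from \eqref{4.3}, equals $-\|\partial^{\alpha}\widetilde{\rho}\|^{2}=-\|\nabla_{x}\cdot\partial^{\alpha}\widetilde{E}\|^{2}$; passed to the left it supplies the $\|\nabla_{x}\cdot\partial^{\alpha}\widetilde{E}\|^{2}$ term of \eqref{4.86a}.

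Everything else on the right of the momentum equation is an error term that must be absorbed into $\tfrac{c}{2}\|\nabla_{x}\partial^{\alpha}\widetilde{\rho}\|^{2}+C\{\|\nabla_{x}\partial^{\alpha}(\widetilde{u},\widetilde{\theta})\|^{2}+\|\nabla_{x}\partial^{\alpha}f\|^{2}\}+C\varepsilon^{2-2a}$. The transport terms $u\cdot\nabla_{x}\widetilde{u}_{i}$, $\widetilde{u}\cdot\nabla_{x}\bar{u}_{i}$, $\tfrac{2}{3}(\tfrac{\theta}{\rho}-\tfrac{\bar{\theta}}{\bar{\rho}})\partial_{x_{i}}\rho$, the Lorentz terms $u\times\widetilde{B}$ and $\widetilde{u}\times\bar{B}$, and the commutators generated when $\partial^{\alpha}$ falls on the coefficients $\tfrac{2\bar{\theta}}{3\bar{\rho}}$, $\bar{\rho}$, $u$, $\ldots$ are all controlled by H\"older's inequality in $L^{6}$, $L^{3}$, $L^{2}$ together with Lemma~\ref{lem5.1} and the pointwise bounds \eqref{4.16}, \eqref{4.27}, \eqref{3.3}, \eqref{4.13}, \eqref{4.13A}, always arranging the resulting $\|\nabla_{x}\partial^{\alpha}\widetilde{\rho}\|^{2}$-factors to come with a small coefficient (Young's inequality below $c$). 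A genuine point here is the microscopic momentum flux: the two separate terms $\varepsilon\tfrac{1}{\rho}\partial_{x_{j}}(\mu(\theta)D_{ij})$ and $-\tfrac{1}{\rho}\int_{\mathbb{R}^{3}}v_{i}(v\cdot\nabla_{x}L^{-1}_{M}\Theta)\,dv$ of \eqref{4.2}, after $\partial^{\alpha}$, individually carry $(|\alpha|+2)$-th order $x$-derivatives — of $u$ and of $G$ respectively, i.e.\ order $N+1$ when $|\alpha|=N-1$ — which $\mathcal{E}_{N}$ does not control; but by \eqref{2.11} their sum equals $-\tfrac{1}{\rho}\int_{\mathbb{R}^{3}}v_{i}(v\cdot\nabla_{x}G)\,dv=-\tfrac{1}{\rho}\sum_{j}\partial_{x_{j}}\int_{\mathbb{R}^{3}}v_{i}v_{j}G\,dv$, exactly the microscopic flux of the undecomposed balance \eqref{2.13}, which after $\partial^{\alpha}$ involves only order $|\alpha|+1=N$ derivatives of $G=\overline{G}+\sqrt{\mu}f$; one therefore recombines these two terms before testing. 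Writing $\partial^{\alpha}G=\partial^{\alpha}\overline{G}+\sqrt{\mu}\,\partial^{\alpha}f$, the $\overline{G}$-part is $O(\eta_{0}\varepsilon^{1-a})$ by Lemma~\ref{lem5.3} (and $L^{-1}_{M}$ of a polynomial-times-Maxwellian decays like a Maxwellian by Lemma~\ref{lem5.2}), so it contributes to $C\varepsilon^{2-2a}$, while the $f$-part contributes $C\|\nabla_{x}\partial^{\alpha}f\|\,\|\nabla_{x}\partial^{\alpha}\widetilde{\rho}\|$. Collecting all contributions and multiplying the resulting differential inequality by $\varepsilon$ yields \eqref{4.86a}.

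For \eqref{4.86b} one integrates \eqref{4.86a} over $[0,t]$. The boundary term at $s=0$ vanishes since $\widetilde{\rho}(0,x)\equiv0$ under the choice \eqref{3.6}; the boundary term at $s=t$ is $\le\tfrac{\varepsilon}{2}\big(\|\partial^{\alpha}\widetilde{u}(t)\|^{2}+\|\nabla_{x}\partial^{\alpha}\widetilde{\rho}(t)\|^{2}\big)\le C\varepsilon\,\mathcal{E}_{N}(t)\le C\varepsilon\varepsilon^{2-2a}$ — here it is crucial that $|\alpha|\le N-2$, so $\nabla_{x}\partial^{\alpha}\widetilde{\rho}$ is of order at most $N-1$ and is controlled by $\mathcal{E}_{N}$ in \eqref{4.14} with no $\varepsilon^{-1}$-loss (this is why the index must drop by one); and the time integral of the right side of \eqref{4.86a} is $\le C\varepsilon\int_{0}^{t}\mathcal{E}_{N}(s)\,ds+C\varepsilon\varepsilon^{2-2a}t\le C\varepsilon\varepsilon^{2-2a}t$ by \eqref{4.13}. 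Combining these estimates gives \eqref{4.86b}.

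The hard part is not the structure — the dissipation is immediate from the pressure gradient and the Gauss law, and the time integration is elementary — but the uniform-in-$\varepsilon$ bookkeeping of the error list in the second step: one must make sure that every $\|\nabla_{x}\partial^{\alpha}\widetilde{\rho}\|^{2}$ occurring in the estimate of a transport, Lorentz, commutator, or microscopic term either comes multiplied by $\eta_{0}+\varepsilon^{1/2-a}$ or $\varepsilon$ or can be hidden by a Young split under $c$, while the leftover is no larger than $\varepsilon^{2-2a}$; and one must use the recombination of the viscous and kinetic-remainder terms, since keeping them apart would force spatial derivatives one order above what $\mathcal{E}_{N}$ controls when $|\alpha|=N-1$.
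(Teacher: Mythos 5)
Your proposal is correct and follows essentially the same route as the paper: the paper works directly with the subtracted system \eqref{4.4}, whose momentum equation already carries the single microscopic flux $-\frac{1}{\rho}\int_{\mathbb{R}^3} v\otimes v\cdot\nabla_x G\,dv$, which is exactly the recombination of the viscous and $L^{-1}_M\Theta$ terms of \eqref{4.2} that you perform by hand, and then tests $\partial^\alpha$ of it against $\nabla_x\partial^\alpha\widetilde\rho$, extracting the dissipation from the pressure gradient and from the Gauss law $\nabla_x\cdot\widetilde E=-\widetilde\rho$, moving the time derivative off $\widetilde u$ via the continuity equation, and bounding the remaining terms as you describe. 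Your treatment of \eqref{4.86b} (vanishing initial cross term, boundary term at time $t$ controlled by $\mathcal{E}_N$ thanks to $|\alpha|\le N-2$, and the time integral bounded pointwise by the a priori assumption \eqref{4.13}) also matches the paper's intended argument.
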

\begin{proof}	
We subtract system \eqref{3.1} from system \eqref{2.13} to obtain
\begin{align}
\label{4.4}
\left\{
\begin{array}{rl}
&\partial_{t}\widetilde{\rho}+u\cdot\nabla_{x}\widetilde{\rho}+\bar{\rho}\nabla_{x}\cdot\widetilde{u}
+\widetilde{u}\cdot\nabla_{x}\bar{\rho}+\widetilde{\rho}\nabla_{x}\cdot u=0,
\\
&\partial_{t}\widetilde{u}+u\cdot\nabla_{x}\widetilde{u}+\frac{2\bar{\theta}}{3\bar{\rho}}\nabla_{x}\widetilde{\rho}
+\frac{2}{3}\nabla_{x}\widetilde{\theta}+\widetilde{u}\cdot\nabla_{x}\bar{u}+\frac{2}{3}(\frac{\theta}{\rho}-\frac{\bar{\theta}}{\bar{\rho}})
\nabla_{x}\rho
\\
&\hspace{2cm}+(\widetilde{E}+u\times \widetilde{B}+\widetilde{u}\times\bar{B})=-\frac{1}{\rho}\int_{\mathbb{R}^{3}} v\otimes v\cdot\nabla_{x} G\,dv,
\\
&\partial_{t}\widetilde{\theta}+u\cdot\nabla_{x}\widetilde{\theta}+\frac{2}{3}\bar{\theta}\nabla_{x}\cdot \widetilde{u}
+\widetilde{u}\cdot\nabla_{x}\bar{\theta}+\frac{2}{3}\widetilde{\theta}\nabla_{x}\cdot u
\\
&\hspace{2cm}=-\frac{1}{\rho}\int_{\mathbb{R}^{3}} \frac{1}{2}|v|^{2} v\cdot\nabla_{x} G\,dv+\frac{1}{\rho}u\cdot\int_{\mathbb{R}^{3}} v\otimes v\cdot\nabla_{x}G\,dv,
\end{array} 
\right.
\end{align}	
where the electromagnetic field $(\widetilde{E},\widetilde{B})$ satisfies \eqref{4.3}.
Applying  $\partial^{\alpha}$ with  $|\alpha|\leq N-1$ to the second equation of \eqref{4.4}, then multiplying the resulting identity by $\nabla_x\partial^{\alpha}\widetilde{\rho}$ and integrating {over} $\mathbb{R}^3$ by parts, one has
\begin{align}
\label{4.87a}	
&(\partial^{\alpha}\partial_{t}\widetilde{u},\nabla_x\partial^{\alpha}\widetilde{\rho})+(\frac{2\bar{\theta}}{3\bar{\rho}}\nabla_x\partial^{\alpha}\widetilde{\rho},\nabla_x\partial^{\alpha}\widetilde{\rho})+\sum_{1\leq \alpha_{1}\leq \alpha}C_{\alpha}^{\alpha_{1}}(\partial^{\alpha_1}(\frac{2\bar{\theta}}{3\bar{\rho}})\nabla_x\partial^{\alpha-\alpha_{1}}\widetilde{\rho},\nabla_x\partial^{\alpha}\widetilde{\rho})
\nonumber\\
=&-\big(\partial^{\alpha}(u\cdot\nabla_x\widetilde{u})+\frac{2}{3}\nabla_x\partial^{\alpha}\widetilde{\theta}+\partial^{\alpha}(\widetilde{u}\cdot\nabla_x\bar{u})+\partial^{\alpha}[\frac{2}{3}(\frac{\theta}{\rho}-\frac{\bar{\theta}}{\bar{\rho}})\nabla_x\rho],\nabla_x\partial^{\alpha}\widetilde{\rho}\big)
-(\partial^{\alpha}\widetilde{E},\nabla_{x}\partial^{\alpha}\widetilde{\rho})
\nonumber\\
&-(\partial^{\alpha}(u\times\widetilde{B}+\widetilde{u}\times\bar{B}),\nabla_{x}\partial^{\alpha}\widetilde{\rho})-(\partial^{\alpha}(\frac{1}{\rho}\int_{\mathbb{R}^{3}} v\otimes v\cdot\nabla_x G\,dv),\nabla_x\partial^{\alpha}\widetilde{\rho}).
\end{align}
Here if $|\alpha|=0$, the last term on the LHS of \eqref{4.87a} vanishes and if  $|\alpha|\geq 1$, this term exists.
By integration by parts and the first equation of \eqref{4.4}, one has
\begin{align*}
(\partial^{\alpha}\partial_{t}\widetilde{u},\nabla_{x}\partial^{\alpha}\widetilde{\rho})
=&\frac{d}{dt}(\partial^{\alpha}\widetilde{u},\nabla_{x}\partial^{\alpha}\widetilde{\rho})+(\nabla_{x}\partial^{\alpha}\widetilde{u},\partial^{\alpha}\partial_{t}\widetilde{\rho})
\\
=&\frac{d}{dt}(\partial^{\alpha}\widetilde{u},\nabla_{x}\partial^{\alpha}\widetilde{\rho})
-(\nabla_{x}\partial^{\alpha}\widetilde{u},\partial^{\alpha}[u\cdot\nabla_{x}\widetilde{\rho}+\bar{\rho}\nabla_{x}\cdot\widetilde{u}
+\widetilde{u}\cdot\nabla_{x}\bar{\rho}+\widetilde{\rho}\nabla_{x}\cdot u])
\\
\geq&\frac{d}{dt}(\partial^{\alpha}\widetilde{u},\nabla_{x}\partial^{\alpha}\widetilde{\rho})-C\eta\|\nabla_{x}\partial^{\alpha}\widetilde{\rho}\|^2
-C_\eta\|\nabla_{x}\partial^{\alpha}\widetilde{u}\|^2-C(\eta_{0}+\varepsilon^{\frac{1}{2}-a})\varepsilon^{2-2a},
\end{align*}
where in the last inequality  we used the Cauchy-Schwarz inequality, the embedding inequality, \eqref{3.3}, \eqref{4.13}, \eqref{4.27}
and \eqref{4.22a}.  From $\nabla_{x}\cdot \widetilde{E}=-\widetilde{\rho}$ in \eqref{4.3}, 
one gets by integration by parts that
\begin{align*}
-(\partial^{\alpha}\widetilde{E},\nabla_{x}\partial^{\alpha}\widetilde{\rho})=
(\nabla_{x}\cdot\partial^{\alpha}\widetilde{E},\partial^{\alpha}\widetilde{\rho})=-(\nabla_{x}\cdot\partial^{\alpha}\widetilde{E},\nabla_{x}\cdot \partial^{\alpha}\widetilde{E}).	
\end{align*}
The other terms of \eqref{4.87a} can be controlled by
$$
(\eta+\varepsilon^{1-a})\|\nabla_{x}\partial^{\alpha}\widetilde{\rho}\|^2+
C_\eta\{\|(\nabla_x\partial^{\alpha}\widetilde{u},\nabla_x\partial^{\alpha}\widetilde{\theta})\|^2+\|\nabla_x\partial^{\alpha}f\|^2\}
+C_\eta(\eta_{0}+\varepsilon^{\frac{1}{2}-a})\varepsilon^{2-2a}.
$$
Hence, for $|\alpha|\leq N-1$, substituting the above all estimates into \eqref{4.87a}, one gets
\begin{align*}	
&\frac{d}{dt}(\partial^{\alpha}\widetilde{u},\nabla_x\partial^{\alpha}\widetilde{\rho})
+c(\|\nabla_x\partial^{\alpha}\widetilde{\rho}\|^2+\|\nabla_{x}\cdot \partial^{\alpha}\widetilde{E}\|^2)
\nonumber\\
&\leq C\{\|(\nabla_x\partial^{\alpha}\widetilde{u},\nabla_x\partial^{\alpha}\widetilde{\theta})\|^2+\|\nabla_x\partial^{\alpha}f\|^2\}
+C\varepsilon^{2-2a},
\end{align*}
by choosing  a small $\eta>0$ and using \eqref{4.22a}. This gives the desired estimate \eqref{4.86a}.
Integrating \eqref{4.86a} with respect to $t$ and using $\mathcal{E}_{N}(0)\leq C\eta^2_{0}\varepsilon^{2}$,
we can prove that \eqref{4.86b} holds. This ends the proof of Lemma \ref{lem4.17}.
\end{proof}
Note that the norms in \eqref{4.14} do not include the temporal derivatives, we have to use the pure space
derivative norms to bound those estimates on the fluid quantities involving with the temporal derivatives.
\begin{lemma}\label{lem4.18}
 For $|\alpha|\leq N-1$, it holds that
\begin{align}
\label{4.86}
\|\partial^{\alpha}(\partial_{t}\widetilde{\rho},\partial_{t}\widetilde{u},\partial_{t}\widetilde{\theta})\|^{2}\leq &
C\{\|\partial^{\alpha}(\nabla_{x}\widetilde{\rho},\nabla_{x}\widetilde{u},\nabla_{x}\widetilde{\theta})\|^{2}
+\|\partial^{\alpha}\nabla_{x} f\|^{2}\}+C\varepsilon^{2-2a}.
\end{align}
In particular, for $|\alpha|\leq N-2$, one has
\begin{align}
\label{4.87}
\|\partial^{\alpha}(\partial_{t}\widetilde{\rho},\partial_{t}\widetilde{u},\partial_{t}\widetilde{\theta})\|^{2}
\leq &C\varepsilon^{2-2a}.
\end{align}
For $|\alpha|=N-1$, one has
\begin{align}
\label{5.47b}
\|\partial^{\alpha}(\partial_{t}\widetilde{\rho},\partial_{t}\widetilde{u},\partial_{t}\widetilde{\theta})\|^{2}\leq 
C\varepsilon^{-2a}.
\end{align}
\end{lemma}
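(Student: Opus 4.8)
The plan is to solve the perturbed fluid system \eqref{4.4} for the time derivatives, thereby expressing $\partial_t\widetilde\rho$, $\partial_t\widetilde u$, $\partial_t\widetilde\theta$ purely in terms of $x$-derivatives of $(\widetilde\rho,\widetilde u,\widetilde\theta)$, of $G=\overline G+\sqrt\mu f$, and of the prescribed quantities $(\bar\rho,\bar u,\bar\theta,\widetilde E,\widetilde B)$. For instance the second line of \eqref{4.4} gives
\begin{align*}
\partial_t\widetilde u=&-u\cdot\nabla_x\widetilde u-\tfrac{2\bar\theta}{3\bar\rho}\nabla_x\widetilde\rho-\tfrac{2}{3}\nabla_x\widetilde\theta-\widetilde u\cdot\nabla_x\bar u-\tfrac{2}{3}\big(\tfrac{\theta}{\rho}-\tfrac{\bar\theta}{\bar\rho}\big)\nabla_x\rho\\
&-(\widetilde E+u\times\widetilde B+\widetilde u\times\bar B)-\tfrac{1}{\rho}\int_{\mathbb R^3}v\otimes v\cdot\nabla_x G\,dv,
\end{align*}
and the equations for $\partial_t\widetilde\rho$ and $\partial_t\widetilde\theta$ are entirely analogous. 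I would apply $\partial^\alpha$ with $|\alpha|\leq N-1$ to each identity, take the $L^2_x$ norm, and distribute derivatives by the Leibniz rule.

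The resulting terms fall into three groups. \emph{Principal terms}, where $\partial^\alpha\nabla_x$ survives on a single perturbed fluid variable or on $f$ (e.g. $u\cdot\partial^\alpha\nabla_x\widetilde\rho$, $\tfrac{2}{3}\partial^\alpha\nabla_x\widetilde\theta$, $\big(\tfrac{\theta}{\rho}-\tfrac{\bar\theta}{\bar\rho}\big)\partial^\alpha\nabla_x\widetilde\rho$, or $\tfrac{1}{\rho}\int v\otimes v\sqrt\mu\cdot\partial^\alpha\nabla_x f\,dv$): using $\|u\|_{L^\infty}\leq C$, $\|\tfrac{1}{\rho}\|_{L^\infty}\leq C$, $\|\tfrac{\theta}{\rho}-\tfrac{\bar\theta}{\bar\rho}\|_{L^\infty}\leq C(\eta_0+\varepsilon^{1-a})$ from \eqref{4.16}, and $|\langle v\otimes v\sqrt\mu,g\rangle|\leq C|g|_2$, their $L^2_x$ norms are $\leq C\|\partial^\alpha\nabla_x(\widetilde\rho,\widetilde u,\widetilde\theta)\|+C\|\partial^\alpha\nabla_x f\|$, so after squaring they give exactly the first two terms on the right of \eqref{4.86}. \emph{Genuinely nonlinear terms}, in which at least one $x$-derivative falls on a perturbed quantity while every factor is either a perturbation or a small background coefficient: here I would distribute factors by $L^\infty$--$L^2$ or $L^6$--$L^3$--$L^2$ H\"older inequalities and use Lemma \ref{lem5.1}, the decay bounds \eqref{3.3}, the a priori assumption \eqref{4.13} together with \eqref{4.13A}, and the derived estimates \eqref{4.27} and \eqref{4.34}--\eqref{4.36}, so that each product is $O(\varepsilon^{1-a})$ in $L^2_x$ and hence $O(\varepsilon^{2-2a})$ after squaring. \emph{Linear inhomogeneities} $\partial^\alpha\widetilde E$, $\partial^\alpha(u\times\widetilde B)$, $\partial^\alpha(\widetilde u\times\bar B)$, and the $\overline G$-part of $\tfrac{1}{\rho}\int v\otimes v\cdot\nabla_x G\,dv$: for $|\alpha|\leq N-1$ one has $\|\partial^\alpha(\widetilde E,\widetilde B)\|^2\leq\mathcal E_N(t)\leq\varepsilon^{2-2a}$ by \eqref{4.13}, and the $\overline G$-contribution is bounded by $C\eta_0^2\varepsilon^2$ when $|\alpha|\leq N-2$ and by $C\eta_0^2\varepsilon^{2-2a}$ when $|\alpha|=N-1$, via the weighted bounds \eqref{4.19}--\eqref{4.20} of Lemma \ref{lem5.3}; in all cases this is $\leq C\varepsilon^{2-2a}$. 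Summing the three groups (and using the smallness of $\eta_0,\varepsilon$) yields \eqref{4.86}.

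The remaining two estimates are immediate consequences of \eqref{4.86}. If $|\alpha|\leq N-2$ then $|\alpha|+1\leq N-1$, so $\|\partial^\alpha\nabla_x(\widetilde\rho,\widetilde u,\widetilde\theta)\|^2+\|\partial^\alpha\nabla_x f\|^2\leq C\mathcal E_N(t)\leq C\varepsilon^{2-2a}$ by \eqref{4.13}, giving \eqref{4.87}; if $|\alpha|=N-1$ then $|\alpha|+1=N$ and \eqref{4.13A} gives $\|\partial^\alpha\nabla_x(\widetilde\rho,\widetilde u,\widetilde\theta)\|^2+\|\partial^\alpha\nabla_x f\|^2\leq C\varepsilon^{-2a}$, while $\varepsilon^{2-2a}\leq\varepsilon^{-2a}$, giving \eqref{5.47b}. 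The main technical obstacle is the bookkeeping in the second group: one has to check that whenever the Leibniz rule splits derivatives between a background quantity and a perturbation, the orders of the resulting factors never exceed what $\mathcal E_N$ and the bounds \eqref{3.3}, \eqref{4.13A} actually control — in particular that no term secretly needs an $(N+1)$-st $x$-derivative of $(\widetilde\rho,\widetilde u,\widetilde\theta)$ or of $f$ — and that the H\"older triple is chosen so every perturbative factor is measured in a norm dominated by \eqref{4.13}. A secondary point worth noting is that the undifferentiated electric field $\widetilde E$ enters the equation for $\partial_t\widetilde u$, so it can only be absorbed into $\mathcal E_N$, not into the dissipation; this is precisely why the error term $C\varepsilon^{2-2a}$ on the right-hand side of \eqref{4.86} cannot be replaced by something involving $\mathcal D_N$.
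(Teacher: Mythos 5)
Your proposal is correct and follows essentially the paper's route: the paper likewise applies $\partial^{\alpha}$ to \eqref{4.4} and estimates the resulting right-hand side with \eqref{3.3}, \eqref{4.13}, \eqref{4.13A}, Lemma \ref{lem5.3} and embedding inequalities (it pairs with $\partial^{\alpha}\partial_{t}\widetilde{u}$ and absorbs $\eta\|\partial^{\alpha}\partial_{t}\widetilde{u}\|^{2}$, which is just the dual form of your direct $L^{2}$ bound of the solved-out time derivative), and then deduces \eqref{4.87} and \eqref{5.47b} exactly as you do. The only point to tighten is the coefficient $\tfrac{\theta}{\rho}-\tfrac{\bar{\theta}}{\bar{\rho}}$: when the full derivative lands on the background factor $\nabla_{x}\bar{\rho}$ you should use the difference structure $|\tfrac{\theta}{\rho}-\tfrac{\bar{\theta}}{\bar{\rho}}|\le C(|\widetilde{\rho}|+|\widetilde{\theta}|)\le C\varepsilon^{1-a}$ (via embedding and \eqref{4.13}) rather than the cruder $C(\eta_{0}+\varepsilon^{1-a})$ from \eqref{4.16}, since the latter would leave an $\eta_{0}$-sized contribution not dominated by $C\varepsilon^{2-2a}$.
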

\begin{proof}
Let $|\alpha|\leq N-1$, applying  $\partial^{\alpha}$ to the second equation of \eqref{4.4}, 
then multiplying the resulting identity by $\partial^{\alpha}\partial_{t}\widetilde{u}$ and integrating over $\mathbb{R}^3$ by parts, one has	
\begin{align}
\label{4.88}
&(\partial^{\alpha}\partial_{t}\widetilde{u},\partial^{\alpha}\partial_{t}\widetilde{u})
\nonumber\\
=&-(\partial^{\alpha}(u\cdot\nabla_{x}\widetilde{u}),\partial^{\alpha}\partial_{t}\widetilde{u})
-(\partial^{\alpha}[\frac{2\bar{\theta}}{3\bar{\rho}}\nabla_{x}\widetilde{\rho}+\frac{2}{3}\nabla_{x}\widetilde{\theta}+\widetilde{u}\cdot\nabla_{x}\bar{u}+\frac{2}{3}(\frac{\theta}{\rho}-\frac{\bar{\theta}}{\bar{\rho}})
\nabla_{x}\rho],\partial^{\alpha}\partial_{t}\widetilde{u})
\nonumber\\
&-(\partial^{\alpha}(\widetilde{E}+u\times\widetilde{B}+\widetilde{u}\times\bar{B}),\partial^{\alpha}\partial_{t}\widetilde{u})-(\partial^{\alpha}(\frac{1}{\rho}\int_{\mathbb{R}^{3}} v\otimes v\cdot\nabla_{x} G\,dv),\partial^{\alpha}\partial_{t}\widetilde{u}).
\end{align}
Making use of the Cauchy-Schwarz inequality, the embedding inequality, \eqref{3.3}, \eqref{4.13} and \eqref{4.22a}, one gets
\begin{align*}
|(\partial^{\alpha}(u\cdot\nabla_{x}\widetilde{u}),\partial^{\alpha}\partial_{t}\widetilde{u})|
&\leq  \eta\|\partial^{\alpha}\partial_{t}\widetilde{u}\|^{2}+C_\eta\|\partial^{\alpha}(\bar{u}\cdot\nabla_{x}\widetilde{u})\|^2
+C_\eta\|\partial^{\alpha}(\widetilde{u}\cdot\nabla_{x}\widetilde{u})\|^2
\\
&\leq \eta\|\partial^{\alpha}\partial_{t}\widetilde{u}\|^{2}+C_\eta\|\partial^{\alpha}\nabla_{x}\widetilde{u}\|^{2}
+C_\eta\varepsilon^{2-2a}.
\end{align*}
The other terms of \eqref{4.88} can be treated similarly, we thus arrive at
\begin{align}
\label{4.89}
\|\partial^{\alpha}\partial_{t}\widetilde{u}\|^2
\leq C\eta\|\partial^{\alpha}\partial_{t}\widetilde{u}\|^{2}
+C_\eta\{\|\partial^{\alpha}(\nabla_{x}\widetilde{\rho},\nabla_{x}\widetilde{u},\nabla_{x}\widetilde{\theta})\|^{2}+\|\partial^{\alpha}\nabla_{x} f\|^{2}\}
+C_\eta\varepsilon^{2-2a}.
\end{align}
Similar estimates also hold for $\partial^{\alpha}\partial_{t}\widetilde{\rho}$ and $\partial^{\alpha}\partial_{t}\widetilde{\theta}$. 
Hence, choosing $\eta>0$ small enough, we can prove that \eqref{4.86} holds true. 
From \eqref{4.13}, \eqref{4.13A} and \eqref{4.86}, we can obtain the desired estimates \eqref{4.87} and \eqref{5.47b}.
Thus the proof of Lemma \ref{lem4.18} is complete.
\end{proof}
At the end of this section, we give the characterization of low bound of the $N$-order energy norm
$\|\frac{\partial^{\alpha}F(t)}{\sqrt{\mu}}\|^{2}$ in terms of $\|\partial^{\alpha}(\widetilde{\rho},\widetilde{u},\widetilde{\theta})\|^{2}+\|\partial^{\alpha}f\|^{2}$.
\begin{lemma}\label{lem4.20}
For $|\alpha|=N$, there exists a constant $c_3>0$ such that
\begin{equation}\label{4.85A}
\|\frac{\partial^{\alpha}F(t)}{\sqrt{\mu}}\|^{2}
\geq c_3(\|\partial^{\alpha}(\widetilde{\rho},\widetilde{u},\widetilde{\theta})\|^{2}+\|\partial^{\alpha}f\|^{2})-C(\eta_{0}+\varepsilon^{\frac{1}{2}-a}).
\end{equation}
\end{lemma}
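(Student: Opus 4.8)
The plan is to exploit the decomposition $F = M + \overline{G} + \sqrt{\mu}f$ recorded in \eqref{3.26a}, so that
\[
\frac{\partial^{\alpha}F}{\sqrt{\mu}} = \frac{\partial^{\alpha}M}{\sqrt{\mu}} + \frac{\partial^{\alpha}\overline{G}}{\sqrt{\mu}} + \partial^{\alpha}f.
\]
The heart of the matter is that $\partial^{\alpha}M/\sqrt{\mu}$ splits, via the Leibniz expansion \eqref{5.30}, as $I_1/\sqrt{\mu} + I_2/\sqrt{\mu} + I_3/\sqrt{\mu}$, where $I_1$ carries the \emph{top-order} fluid derivatives $\partial^{\alpha}(\widetilde{\rho},\widetilde{u},\widetilde{\theta})$ multiplied by $M$ times fixed polynomials in $(v-u)/\sqrt{R\theta}$, while $I_2$ carries the top-order background derivatives $\partial^{\alpha}(\bar{\rho},\bar{u},\bar{\theta})$ and $I_3$ collects strictly lower-order products. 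The principal term $I_1/\sqrt{\mu}$ lives (modulo the difference between $M$ and $\mu$) in the five-dimensional macroscopic subspace $\ker\mathcal{L}$, and its $L^2_v$ norm is \emph{comparable} to $|\partial^{\alpha}(\widetilde{\rho},\widetilde{u},\widetilde{\theta})(x)|$ uniformly in $x$, because the constant-coefficient quadratic form in those five quantities is positive definite — this uses \eqref{4.16} to keep $(\rho,u,\theta)$ in a compact set away from degeneracy. At the same time $\partial^{\alpha}f \in (\ker\mathcal{L})^{\perp}$ by Remark \ref{rem3.2}, so in $L^2_v$ the vectors $I_1/\sqrt{\mu}$ and $\partial^{\alpha}f$ are \emph{almost orthogonal}: their cross term is controlled by $|M/\mu^{1/2} - \mu^{1/2}|$-type factors, which by \eqref{5.22} are $O(\eta_0 + \varepsilon^{1-a})$ and hence absorbable.

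Concretely, the steps I would carry out are as follows. First, expand $\|\partial^{\alpha}F/\sqrt{\mu}\|^2$ as the square of the three-term sum and keep the three ``diagonal'' pieces with their signs, estimating all six cross terms. Second, for the $I_1$ piece, show
\[
\big\|\tfrac{I_1}{\sqrt{\mu}}\big\|^2 \geq c\,\|\partial^{\alpha}(\widetilde{\rho},\widetilde{u},\widetilde{\theta})\|^2 - C(\eta_0+\varepsilon^{1-a})\|\partial^{\alpha}(\widetilde{\rho},\widetilde{u},\widetilde{\theta})\|^2,
\]
by writing $I_1/\sqrt{\mu} = \sqrt{\mu}\,P_0^\mu(\cdots) + (\text{error})$ where $P_0^\mu$ projects onto $\ker\mathcal{L}$ and the error has a $|M^{1-\epsilon}\mu^{-1/2} - \mu^{1/2}|$-type weight bounded by \eqref{5.22}, \eqref{4.25}; the leading term's $L^2_v$ norm squared is exactly a positive-definite quadratic form in $\partial^{\alpha}(\widetilde{\rho},\widetilde{u},\widetilde{\theta})$ with coefficients depending smoothly on $(\rho,u,\theta)$, hence bounded below by \eqref{4.16}. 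Third, for the $\partial^{\alpha}f$ piece keep $\|\partial^{\alpha}f\|^2$ in full; for the $\overline{G}$ piece note by Lemma \ref{lem5.3} that $\|\partial^{\alpha}\overline{G}/\sqrt{\mu}\|^2 \leq C\eta_0^2\varepsilon^{2-2a}$, which is of the error type $C(\eta_0+\varepsilon^{\frac12-a})$ once one invokes \eqref{4.22a}; similarly the $I_2$ and $I_3$ contributions: $\|I_2/\sqrt{\mu}\|^2 \leq C\eta_0^2$ by \eqref{3.3}, and $I_3$ is lower order so its $L^2$ norm is controlled by $(\eta_0+\varepsilon^{\frac12-a})$ times lower-order energy, again absorbable into $C(\eta_0+\varepsilon^{\frac12-a})$. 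Fourth, for the cross terms: $\langle I_1/\sqrt{\mu}, \partial^{\alpha}f\rangle_x$ is bounded using the near-orthogonality just described plus Cauchy–Schwarz and Young to distribute it between $\|\partial^{\alpha}(\widetilde{\rho},\widetilde{u},\widetilde{\theta})\|^2$ (with small coefficient $C(\eta_0+\varepsilon^{1-a})$) and $\|\partial^{\alpha}f\|^2$; the cross terms involving $I_2, I_3, \overline{G}$ are handled by Cauchy–Schwarz against the already-small diagonal bounds. Collecting and absorbing the small coefficients into the leading $c$ yields \eqref{4.85A} with $c_3 = c/2$, say.

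The main obstacle is the positive-definiteness and near-orthogonality bookkeeping in the $I_1$ term: one must verify that, after replacing $M$ by $\mu$ up to the controlled error, the $5\times 5$ Gram matrix of $\{\sqrt{\mu}, v_i\sqrt{\mu}, |v|^2\sqrt{\mu}\}$-type functions weighted appropriately is uniformly positive definite, and that the perturbation $M/\mu^{1/2}$ versus $\mu^{1/2}$ both spoils orthogonality with $\partial^{\alpha}f$ and perturbs the quadratic form only by $O(\eta_0+\varepsilon^{1-a})$ — this is where \eqref{4.16}, \eqref{5.22}, and \eqref{4.25} do the real work. Everything else is Cauchy–Schwarz, Young's inequality, and invoking Lemma \ref{lem5.3} and \eqref{3.3}; the key quantitative input is \eqref{4.22a}, which turns the various $\eta_0^2\varepsilon^{2-2a}$ and $\eta_0^2$ error bounds into the uniform $C(\eta_0+\varepsilon^{\frac12-a})$ appearing on the right of \eqref{4.85A}.
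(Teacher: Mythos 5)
Your plan is correct and is essentially the argument the paper has in mind: the paper omits the proof and defers to the analogous Lemma 5.10 of \cite{Duan-Yang-Yu}, which proceeds exactly as you do — decompose $F=M+\overline{G}+\sqrt{\mu}f$, split $\partial^{\alpha}M=I_1+I_2+I_3$ as in \eqref{5.30}, use the uniform positive definiteness of the macroscopic quadratic form (via \eqref{4.16}) for the $I_1$ piece, the microscopic orthogonality of $\partial^{\alpha}f$ up to $(M-\mu)$-errors controlled by \eqref{5.22}, and Lemma \ref{lem5.3}, \eqref{3.3}, \eqref{4.13} to relegate $\overline{G}$, $I_2$, $I_3$ and the remaining cross terms to the $C(\eta_0+\varepsilon^{\frac12-a})$ error. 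Your bookkeeping (absorbing the $O(\eta_0+\varepsilon^{1-a})$ cross term into the two diagonal terms for $\eta_0,\varepsilon$ small, rather than into the constant error) is the right way to close it.
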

\begin{proof}
The proof is similar to the one in \cite[Lemma 5.10]{Duan-Yang-Yu}, we thus omit the details for {brevity}.	
\end{proof}

\section{A priori estimates}\label{seca.5}
In this section, we aim to establish the a priori energy estimates \eqref{3.8}
on the solution step by step in a series of lemmas in order to close a priori assumption \eqref{4.13}.
\subsection{Space derivative estimates up to $(N-1)$-order}\label{seca.6.2}
In this subsection, we are {devoted} to obtaining the space derivative estimates for both the fluid part and the non-fluid part  up to $(N-1)$-order.

\subsubsection{Space derivative estimates on fluid part up to $(N-1)$-order}\label{seca.6.2.1} 
We first make use of the dissipation mechanism for the Navier-Stokes-Maxwell-type equations \eqref{4.2}-\eqref{4.3}
to derive the space derivative estimates on the fluid part
$(\widetilde{\rho},\widetilde{u},\widetilde{\theta})$ and the electromagnetic field
$(\widetilde{E},\widetilde{B})$.
\begin{lemma}
\label{lem6.6}
Let $[F(t,x,v),E(t,x),B(t,x)]$ be the solution to the VMB system \eqref{1.1} and \eqref{3.6}.
Suppose that \eqref{3.3} and \eqref{4.13} hold, one has
\begin{align}
\label{6.34}
&\sum_{|\alpha|\leq N-1}\|\partial^{\alpha}(\widetilde{\rho},\widetilde{u},\widetilde{\theta},\widetilde{E},\widetilde{B})(t)\|^2
+c\varepsilon\sum_{1\leq |\alpha|\leq N}\int^t_0\|\partial^{\alpha}(\widetilde{\rho},\widetilde{u},\widetilde{\theta})(s)\|^{2}\,ds
\nonumber\\
&\hspace{0.5cm}\leq C\varepsilon^2\sum_{|\alpha|=N}
(\|\partial^{\alpha}(\widetilde{\rho},\widetilde{u},\widetilde{\theta})(t)\|^2+
\|\partial^{\alpha}f(t)\|^2)+ C\varepsilon\sum_{1\leq|\alpha|\leq N}\int^t_0\| \partial^{\alpha}f(s)\|^2\,ds
\nonumber\\
&\hspace{1cm}+C(\eta_{0}+\varepsilon^{\frac{1}{2}-a})\int^t_0\mathcal{D}_N(s)\,ds+C[\eta_{0}+\varepsilon+(\eta_{0}\varepsilon^a
+\varepsilon^{\frac{1}{2}-a})t]\varepsilon^{2-2a}.
\end{align}
\end{lemma}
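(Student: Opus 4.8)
The plan is to run the standard energy method on the Navier--Stokes--Maxwell-type system \eqref{4.2}--\eqref{4.3} for the macroscopic perturbation $(\widetilde\rho,\widetilde u,\widetilde\theta,\widetilde E,\widetilde B)$, carefully tracking the powers of $\varepsilon$. First I would apply $\partial^\alpha$ for each $|\alpha|\le N-1$ to the equations in \eqref{4.2}, take the $L^2_x$ inner product of the $\widetilde\rho$-equation with $\frac{2\bar\theta}{3\bar\rho}\partial^\alpha\widetilde\rho$ (weighted so the cross terms with the $\widetilde u$-equation cancel), of the $\widetilde u$-equation with $\bar\rho\,\partial^\alpha\widetilde u$, and of the $\widetilde\theta$-equation with $\frac{\bar\rho}{\bar\theta}\partial^\alpha\widetilde\theta$; simultaneously pair the Maxwell system \eqref{4.3} suitably (e.g. $\partial^\alpha\widetilde E$ with the $\partial_t\widetilde E$ equation and $\partial^\alpha\widetilde B$ with the $\partial_t\widetilde B$ equation) so that the symmetric hyperbolic structure produces a clean $\frac{d}{dt}$ of a positive energy functional equivalent to $\sum_{|\alpha|\le N-1}\|\partial^\alpha(\widetilde\rho,\widetilde u,\widetilde\theta,\widetilde E,\widetilde B)\|^2$, with the pressure/transport coupling terms $\rho(E+u\times B)$ vs.\ the Maxwell current $\rho u-\bar\rho\bar u$ cancelling to leading order (this is precisely the Guo-type cancellation used in \eqref{5.57}--\eqref{5.59}). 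The parabolic terms $\varepsilon\sum_j\partial_{x_j}(\mu(\theta)D_{ij})$ and $\varepsilon\sum_j\partial_{x_j}(\kappa(\theta)\partial_{x_j}\theta)$ yield, after integration by parts, the good dissipation $c\varepsilon\sum_{1\le|\alpha|\le N}\|\partial^\alpha(\widetilde u,\widetilde\theta)\|^2$ on the right side of the desired inequality (the $|\alpha|=N$ terms appear because we differentiate up to order $N-1$ and then the viscous term carries one more derivative).

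The second ingredient is recovering the $\widetilde\rho$-dissipation, which the parabolic terms do not provide directly. Here I would invoke Lemma \ref{lem4.17}: adding a small multiple (of size $O(\varepsilon)$, consistent with the $\varepsilon$-weights already present) of the functional $\varepsilon\frac{d}{dt}(\partial^\alpha\widetilde u,\nabla_x\partial^\alpha\widetilde\rho)$ to the energy produces the missing $c\varepsilon\|\nabla_x\partial^\alpha\widetilde\rho\|^2$ (and $c\varepsilon\|\nabla_x\cdot\partial^\alpha\widetilde E\|^2$) terms, at the cost of controllable errors $C\varepsilon\{\|\nabla_x\partial^\alpha(\widetilde u,\widetilde\theta)\|^2+\|\nabla_x\partial^\alpha f\|^2\}+C\varepsilon\varepsilon^{2-2a}$. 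The cross-term $\varepsilon(\partial^\alpha\widetilde u,\nabla_x\partial^\alpha\widetilde\rho)$ is absorbed into the instant energy up to a small constant, which is fine since \eqref{4.14} already includes $\varepsilon^2$-weighted $|\alpha|=N$ norms and the $|\alpha|\le N-1$ norms at full weight; summing over $1\le|\alpha|\le N-1$ closes the $\widetilde\rho$ part, and the $|\alpha|=N$ contribution is what forces the $C\varepsilon^2\sum_{|\alpha|=N}(\|\partial^\alpha(\widetilde\rho,\widetilde u,\widetilde\theta)\|^2+\|\partial^\alpha f\|^2)$ term on the right.

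Third, I would estimate the nonlinear and source terms on the right-hand side of the differentiated equations. These fall into several groups: (i) the quadratic fluid interactions like $\partial^\alpha(u\cdot\nabla_x\widetilde u)$, $\partial^\alpha(\widetilde u\cdot\nabla_x\bar u)$, $\partial^\alpha[(\tfrac{\theta}{\rho}-\tfrac{\bar\theta}{\bar\rho})\nabla_x\rho]$, handled by Sobolev embedding, \eqref{3.3}, \eqref{4.13}, \eqref{4.16}, \eqref{4.27} and Young's inequality, giving contributions bounded by $C(\eta_0+\varepsilon^{1/2-a})$ times the dissipation plus $C[\eta_0+(\eta_0\varepsilon^a+\varepsilon^{1/2-a})t]\varepsilon^{2-2a}$ (the $t$-dependence coming from time-integrating terms with only the slowly decaying background factor $(1+t)^{-\vartheta}$, or rather from the $\varepsilon$-order source terms that do not decay); (ii) the electromagnetic nonlinearities $\rho(E+u\times B)$, $u\times\widetilde B$, $\widetilde u\times\bar B$, treated by the integration-by-parts trick of \eqref{5.60} and the bounds \eqref{5.52}, \eqref{5.2ish} on $\|E\|_{L^\infty}$; (iii) the kinetic source terms $\frac{1}{\rho}\int v\otimes v\cdot\nabla_x L_M^{-1}\Theta\,dv$ and the analogous energy term, which by the definition \eqref{2.12} of $\Theta$ together with \eqref{2.11} split into a piece of the form $\varepsilon\nabla_x L_M^{-1}[P_1(v\cdot\nabla_x M)]$ (estimated using the Burnett-function representation \eqref{4.21}, Lemma \ref{lem5.2}, Lemma \ref{lem5.3}, yielding $C\varepsilon\{\|\nabla_x\partial^\alpha(\widetilde u,\widetilde\theta)\|^2+\ldots\}$ and $C\varepsilon\varepsilon^{2-2a}$) plus pieces involving $\partial_tG$, $P_1(v\cdot\nabla_xG)$, $(E+v\times B)\cdot\nabla_vG$ and $Q(G,G)$, all of which reduce after $G=\overline G+\sqrt\mu f$ to $f$-dissipation terms controlled by $C\varepsilon\sum_{1\le|\alpha|\le N}\|\partial^\alpha f\|^2$ or by $\frac1\varepsilon\|\cdot\|_\nu^2$-pieces of $\mathcal D_N$ times a small factor, using Lemmas \ref{lem5.5}, \ref{lem5.3} and the a priori bound \eqref{4.13}. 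I expect the main obstacle to be bookkeeping (iii): one must be careful that the conservative source terms involving $\nabla_x L_M^{-1}\Theta$, after moving derivatives around and using that $\Theta$ itself contains $\partial_tG$ and $\varepsilon$-factors, produce only $O(\varepsilon)$-weighted $f$-terms (so that after integrating in time they match $C\varepsilon\sum_{1\le|\alpha|\le N}\int_0^t\|\partial^\alpha f\|^2$ and $C(\eta_0+\varepsilon^{1/2-a})\int_0^t\mathcal D_N$), and in particular that no term of order worse than $\varepsilon^{2-2a}$ with a linear-in-$t$ coefficient slips through — this is exactly where the parameter $a\in[0,\tfrac12)$ and the constraint \eqref{4.22a} are needed. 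Finally, integrating the resulting differential inequality in $t$ from $0$ to $t$, using the initial bound $\mathcal E_N(0)\le C\eta_0^2\varepsilon^2$ from \eqref{4.17}, and absorbing the small-coefficient dissipation terms into the left side yields \eqref{6.34}.
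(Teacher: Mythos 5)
Your overall strategy coincides with the paper's: weighted $L^2$ energy estimates on \eqref{4.2}--\eqref{4.3} with Guo's cancellation for the electromagnetic coupling (as in \eqref{5.47a}--\eqref{5.17A}), the $\varepsilon$-weighted viscous and heat-conduction terms providing the $(\widetilde u,\widetilde\theta)$ dissipation, the cross functional $\varepsilon(\partial^{\alpha}\widetilde u,\nabla_x\partial^{\alpha}\widetilde\rho)$ of Lemma \ref{lem4.17} to recover the $\widetilde\rho$ (and $\nabla_x\cdot\widetilde E$) dissipation, the Burnett-function representation \eqref{6.22} for the source terms $\int v_iv\cdot\nabla_xL_M^{-1}\Theta\,dv$, and a final time integration using $\mathcal E_N(0)\le C\eta_0^2\varepsilon^2$. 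So in outline you are reproducing the paper's proof.

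There is, however, a genuine gap precisely at the point you flag as the main obstacle: the contribution of $\varepsilon\partial_tG=\varepsilon\partial_t\overline G+\varepsilon\sqrt{\mu}\,\partial_tf$ inside $\Theta$. You claim these pieces ``reduce to $f$-dissipation terms controlled by $C\varepsilon\sum\|\partial^{\alpha}f\|^2$ or by $\tfrac1\varepsilon\|\cdot\|_\nu^2$-pieces of $\mathcal D_N$ times a small factor,'' but neither $\mathcal E_N$ nor $\mathcal D_N$ in \eqref{4.14}--\eqref{4.15} contains any time derivative of $f$, so $\partial_tf$ is not directly controllable; and substituting for $\partial_tf$ from \eqref{4.11} brings back $\tfrac1\varepsilon\mathcal Lf$, whose pairing with $\partial^{\alpha}\nabla_x\widetilde u$ costs a full-strength $\tfrac1\varepsilon\|\partial^{\alpha}f\|_\nu^2$, i.e.\ a multiple of $\mathcal D_N$ with no small prefactor -- fatal later, when \eqref{6.34} is multiplied by a large constant and combined with Lemma \ref{lem6.7}. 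The paper's proof instead integrates the $\varepsilon\sqrt{\mu}\,\partial_tf$ term by parts in $t$ (see \eqref{5.45a}), which places the correction functionals $E_\alpha(t)$ of \eqref{6.51} and $\widetilde E(t)$ of \eqref{5.30B} under the time derivative, and then controls the resulting factors $\partial^{\alpha}\partial_t\widetilde u$, $\partial^{\alpha}\partial_t\widetilde\theta$ via Lemma \ref{lem4.18}, which converts temporal derivatives of the fluid variables into spatial ones. This device is also the actual origin of the term $C\varepsilon^2\sum_{|\alpha|=N}\|\partial^{\alpha}f\|^2$ on the right of \eqref{6.34} (it arises from bounding $|E_\alpha(t)|$ at time $t$); your proposal attributes the whole $|\alpha|=N$ block to the cross term $\varepsilon(\partial^{\alpha}\widetilde u,\nabla_x\partial^{\alpha}\widetilde\rho)$, which only produces the fluid part and cannot generate $\|\partial^{\alpha}f\|^2$. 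To make the plan work you must add this time-integration-by-parts step together with Lemma \ref{lem4.18}.
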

\begin{proof}
We split the proof into four steps. 

\medskip
\noindent{\it Step 1. Estimate on $\|\partial^{\alpha}\widetilde{\rho}\|^2$}. Let $|\alpha|\leq N-1$,
we apply $\partial^{\alpha}$ to the first equation  of \eqref{4.2} and take the inner product of the resulting equation with 
$\frac{2\bar{\theta}}{3\bar{\rho}^{2}}\partial^{\alpha}\widetilde{\rho}$ to get
\begin{align}
\label{6.35}
\frac{1}{2}&\frac{d}{dt}(\partial^{\alpha}\widetilde{\rho},\frac{2\bar{\theta}}{3\bar{\rho}^{2}}\partial^{\alpha}\widetilde{\rho})
-\frac{1}{2}(\partial^{\alpha}\widetilde{\rho},\partial_t(\frac{2\bar{\theta}}{3\bar{\rho}^{2}})\partial^{\alpha}\widetilde{\rho})
+(\bar{\rho}\nabla_x\cdot\partial^{\alpha}\widetilde{u},\frac{2\bar{\theta}}{3\bar{\rho}^{2}}\partial^{\alpha}\widetilde{\rho})
\nonumber\\
&+\sum_{1\leq \alpha_{1}\leq \alpha}C^{\alpha_{1}}_\alpha(\partial^{\alpha_1}\bar{\rho}\nabla_x\cdot\partial^{\alpha-\alpha_{1}}\widetilde{u},
\frac{2\bar{\theta}}{3\bar{\rho}^{2}}\partial^{\alpha}\widetilde{\rho})
+(\partial^{\alpha}(u\cdot\nabla_x\widetilde{\rho}),\frac{2\bar{\theta}}{3\bar{\rho}^{2}}\partial^{\alpha}\widetilde{\rho})
\nonumber\\
&=
-(\partial^{\alpha}(\widetilde{u}\cdot\nabla_x\bar{\rho}),\frac{2\bar{\theta}}{3\bar{\rho}^{2}}\partial^{\alpha}\widetilde{\rho})
-(\partial^{\alpha}(\widetilde{\rho}\nabla_x\cdot u),\frac{2\bar{\theta}}{3\bar{\rho}^{2}}\partial^{\alpha}\widetilde{\rho}).
\end{align}	
Here if $|\alpha|=0$, the fourth term on the LHS of \eqref{6.35} vanishes and if  $|\alpha|\geq 1$, this term exists. 

Let's estimate each term in \eqref{6.35}.
In view of \eqref{3.3}, \eqref{3.1} and \eqref{4.13}, we get
\begin{align}
\label{5.3a}
|(\partial^{\alpha}\widetilde{\rho},\partial_t(\frac{2\bar{\theta}}{3\bar{\rho}^{2}})\partial^{\alpha}\widetilde{\rho})|
\leq C\|(\partial_t\bar{\rho},\partial_t\bar{\theta})\|_{L^\infty}\|\partial^{\alpha}\widetilde{\rho}\|^2
\leq C\eta_0(1+t)^{-\vartheta}\varepsilon^{2-2a},
\end{align}
and
\begin{align}
\label{6.36}
\sum_{1\leq \alpha_{1}\leq \alpha}&C^{\alpha_{1}}_\alpha(\partial^{\alpha_1}\bar{\rho}\nabla_x\cdot\partial^{\alpha-\alpha_{1}}\widetilde{u},\frac{2\bar{\theta}}{3\bar{\rho}^{2}}\partial^{\alpha}\widetilde{\rho})
\nonumber\\
&\leq C\sum_{1\leq \alpha_{1}\leq \alpha}\|\partial^{\alpha_1}\bar{\rho}\|_{L^\infty}
\|\nabla_x\cdot\partial^{\alpha-\alpha_{1}}\widetilde{u}\|\|\partial^{\alpha}\widetilde{\rho}\|\leq C\eta_0(1+t)^{-\vartheta}\varepsilon^{2-2a}.
\end{align}
The last term on the LHS of \eqref{6.35} is equivalent to  
\begin{align}
\label{6.37}
&-
(u\cdot\nabla_x\partial^{\alpha}\widetilde{\rho},\frac{2\bar{\theta}}{3\bar{\rho}^{2}}\partial^{\alpha}\widetilde{\rho})
-\sum_{1\leq \alpha_{1}\leq \alpha}C^{\alpha_{1}}_\alpha(\partial^{\alpha_1}\bar{u}\cdot\nabla_x\partial^{\alpha-\alpha_1}\widetilde{\rho},\frac{2\bar{\theta}}{3\bar{\rho}^{2}}\partial^{\alpha}\widetilde{\rho})
\nonumber\\
&-\sum_{1\leq \alpha_{1}\leq \alpha}C^{\alpha_{1}}_\alpha(\partial^{\alpha_1}\widetilde{u}\cdot\nabla_x\partial^{\alpha-\alpha_1}\widetilde{\rho},
\frac{2\bar{\theta}}{3\bar{\rho}^{2}}\partial^{\alpha}\widetilde{\rho}),
\end{align}
where if $|\alpha|=0$, the last two terms of \eqref{6.37}
vanish and if  $|\alpha|\geq 1$, those terms exist. 
For the first term in \eqref{6.37}, we get by integration by parts, \eqref{3.3} and \eqref{4.13} that
\begin{align*}
|(u\cdot\nabla_x\partial^{\alpha}\widetilde{\rho},\frac{2\bar{\theta}}{3\bar{\rho}^{2}}\partial^{\alpha}\widetilde{\rho})|&=	|\frac{1}{2}(\partial^{\alpha}\widetilde{\rho},\nabla_x\cdot(u\frac{2\bar{\theta}}{3\bar{\rho}^{2}})\partial^{\alpha}\widetilde{\rho})|
\\	
&\leq C(\|\nabla_{x}(\bar{\rho},\bar{u},\bar{\theta})\|_{L^{\infty}}+\|\nabla_{x}\widetilde{u}\|_{L^{\infty}})
\|\partial^{\alpha}\widetilde{\rho}\|^2
\\
&\leq C[\eta_0(1+t)^{-\vartheta}+\varepsilon^{\frac{1}{2}-a}]\varepsilon^{2-2a},
\end{align*}
where we have used the fact that
\begin{align}
\label{5.39a}
\|\nabla_{x}\widetilde{u}\|_{L^{\infty}}\leq C\|\nabla^2_{x}\widetilde{u}\|^{\frac{1}{2}}
\|\nabla^3_{x}\widetilde{u}\|^{\frac{1}{2}}\leq C\varepsilon^{\frac{1}{2}-\frac{1}{2}a}\varepsilon^{-\frac{1}{2}a}
=C\varepsilon^{\frac{1}{2}-a}.
\end{align}
For the second term in \eqref{6.37}, we use similar arguments as \eqref{6.36} to get the same bound.
The third term in \eqref{6.37} can be controlled by
\begin{align}
\label{5.40a}
\sum_{1\leq \alpha_{1}\leq \alpha}C^{\alpha_{1}}_\alpha|(\partial^{\alpha_1}\widetilde{u}\cdot\nabla_x\partial^{\alpha-\alpha_1}\widetilde{\rho},
\frac{2\bar{\theta}}{3\bar{\rho}^{2}}\partial^{\alpha}\widetilde{\rho})|\leq C\varepsilon^{\frac{1}{2}-a}\varepsilon^{2-2a}.
\end{align}
Hence, the last term on the LHS of \eqref{6.35} is bounded by
\begin{align}
\label{6.38}
|(\partial^{\alpha}(u\cdot\nabla_x\widetilde{\rho}),\frac{2\bar{\theta}}{3\bar{\rho}^{2}}\partial^{\alpha}\widetilde{\rho})|
\leq C[\eta_0(1+t)^{-\vartheta}+\varepsilon^{\frac{1}{2}-a}]\varepsilon^{2-2a}.
\end{align} 
For the first term on the RHS of \eqref{6.35}, we use \eqref{3.3} and \eqref{4.13} again to get
\begin{align}
\label{6.39}
|(\partial^{\alpha}(\widetilde{u}\cdot\nabla_x\bar{\rho}),\frac{2\bar{\theta}}{3\bar{\rho}^{2}}\partial^{\alpha}\widetilde{\rho})|
\leq C\sum_{\alpha_{1}\leq \alpha}
\|\nabla_x\partial^{\alpha-\alpha_{1}}\bar{\rho}\|_{L^{\infty}}\|\partial^{\alpha_1}\widetilde{u}\|\|\partial^{\alpha}\widetilde{\rho}\|
\leq C\eta_0(1+t)^{-\vartheta}\varepsilon^{2-2a}.
\end{align}
Similar to \eqref{6.37}, the last term on the RHS of \eqref{6.35} is equivalent to
\begin{align}
\label{6.40}
&-(\partial^{\alpha}(\widetilde{\rho}\nabla_x\cdot \bar{u}),\frac{2\bar{\theta}}{3\bar{\rho}^{2}}\partial^{\alpha}\widetilde{\rho})
-(\widetilde{\rho}\nabla_x\cdot \partial^{\alpha}\widetilde{u},\frac{2\bar{\theta}}{3\bar{\rho}^{2}}\partial^{\alpha}\widetilde{\rho})
\nonumber\\
&-\sum_{1\leq \alpha_{1}\leq \alpha}C^{\alpha_{1}}_\alpha
(\partial^{\alpha_1}\widetilde{\rho}\nabla_x\cdot\partial^{\alpha-\alpha_{1}}\widetilde{u},\frac{2\bar{\theta}}{3\bar{\rho}^{2}}\partial^{\alpha}\widetilde{\rho}).
\end{align}
Here if $|\alpha|=0$, the last term of \eqref{6.40} vanishes and if  $|\alpha|\geq 1$, this term exists. 
For this term, we can use the similar arguments as \eqref{5.40a} to get the same bound.
The first term in \eqref{6.40} has the same bound as \eqref{6.39}.
For the second term in \eqref{6.40}, using \eqref{4.13}, it is straightforward to get
\begin{align*}
|(\widetilde{\rho}\nabla_x\cdot \partial^{\alpha}\widetilde{u},\frac{2\bar{\theta}}{3\bar{\rho}^{2}}\partial^{\alpha}\widetilde{\rho})|
&\leq\eta\varepsilon\|\nabla_x\cdot\partial^{\alpha}\widetilde{u}\|^2
+C_\eta\frac{1}{\varepsilon}\|\widetilde{\rho}\|_{L^{\infty}}^2
\|\partial^{\alpha}\widetilde{\rho}\|^2
\\
&\leq \eta\varepsilon\|\nabla_x\cdot\partial^{\alpha}\widetilde{u}\|^2
+C_\eta\varepsilon^{1-2a}\varepsilon^{2-2a}.
\end{align*}
Hence, using $a\in[0,1/2)$, the last term on the RHS of \eqref{6.35} is bounded by
\begin{align}
\label{6.41}
|(\partial^{\alpha}(\widetilde{\rho}\nabla_x\cdot u),\frac{2\bar{\theta}}{3\bar{\rho}^{2}}\partial^{\alpha}\widetilde{\rho})|
\leq C\eta\varepsilon\|\nabla_x\cdot\partial^{\alpha}\widetilde{u}\|^2+C_\eta[\eta_0(1+t)^{-\vartheta}+\varepsilon^{\frac{1}{2}-a}]\varepsilon^{2-2a}.
\end{align}
In summary, for $|\alpha|\leq N-1$ and any small $\eta>0$, we
substitute \eqref{5.3a}, \eqref{6.36}, \eqref{6.38}, \eqref{6.39}, \eqref{6.41} into \eqref{6.35} to get
\begin{align}
\label{6.42}
\frac{1}{2}&\frac{d}{dt}\int_{\mathbb{R}^{3}} \frac{2\bar{\theta}}{3\bar{\rho}^{2}}|\partial^{\alpha}\widetilde{\rho}|^{2}\,dx
+(\nabla_x\cdot\partial^{\alpha}\widetilde{u},\frac{2\bar{\theta}}{3\bar{\rho}}\partial^{\alpha}\widetilde{\rho})
\nonumber\\
&\leq C\eta\varepsilon\|\nabla_x\cdot\partial^{\alpha}\widetilde{u}\|^2
+C_\eta[\eta_0(1+t)^{-\vartheta}+\varepsilon^{\frac{1}{2}-a}]\varepsilon^{2-2a}.
\end{align}

\medskip
\noindent{\it Step 2. Estimate on $\|\partial^{\alpha}\widetilde{u}\|^2$}.
Applying $\partial^{\alpha}$ with $|\alpha|\leq N-1$ to the second equation of \eqref{4.2} and taking the inner product of the resulting equation with 
$\partial^{\alpha}\widetilde{u}_i$, one gets
\begin{align}
\label{6.43}
&\frac{1}{2}\frac{d}{dt}\|\partial^{\alpha}\widetilde{u}_i\|^{2}+
(\frac{2\bar{\theta}}{3\bar{\rho}}\partial^{\alpha}\partial_{x_{i}}\widetilde{\rho},\partial^{\alpha}\widetilde{u}_i)
+(\frac{2}{3}\partial^{\alpha}\partial_{x_{i}}\widetilde{\theta},\partial^{\alpha}\widetilde{u}_i)
\nonumber\\
=&
-\sum_{1\leq \alpha_{1}\leq \alpha}C_{\alpha}^{\alpha_{1}}(\partial^{\alpha_1}(\frac{2\bar{\theta}}{3\bar{\rho}})
\partial^{\alpha-\alpha_1}\partial_{x_{i}}\widetilde{\rho},\partial^{\alpha}\widetilde{u}_i)
-(\partial^{\alpha}(u\cdot\nabla_{x}\widetilde{u}_{i}),\partial^{\alpha}\widetilde{u}_i)
-(\partial^{\alpha}(\widetilde{u}\cdot\nabla_{x}\bar{u}_{i}),\partial^{\alpha}\widetilde{u}_i)
\nonumber\\
&-(\partial^{\alpha}[\frac{2}{3}(\frac{\theta}{\rho}-\frac{\bar{\theta}}{\bar{\rho}})\partial_{x_{i}}\rho],\partial^{\alpha}\widetilde{u}_i)
-(\partial^{\alpha}\widetilde{E}_i,\partial^{\alpha}\widetilde{u}_i)
-(\partial^{\alpha}(u\times \widetilde{B}+\widetilde{u}\times\bar{B})_{i},\partial^{\alpha}\widetilde{u}_i)
\nonumber\\
&+\varepsilon\sum^{3}_{j=1}(\partial^{\alpha}(\frac{1}{\rho}\partial_{x_{j}}[\mu(\theta)D_{ij}]),\partial^{\alpha}\widetilde{u}_i)
-(\partial^{\alpha}(\frac{1}{\rho}\int_{\mathbb{R}^{3}} v_{i}v\cdot\nabla_{x}L^{-1}_{M}\Theta\,dv),\partial^{\alpha}\widetilde{u}_i).
\end{align}
Here if $|\alpha|=0$, the first term on the RHS of \eqref{6.43} vanishes and if  $|\alpha|\geq 1$, this term exists.

Let's estimate \eqref{6.43} term by term. Using the similar manner as \eqref{6.38}, \eqref{6.39}
and \eqref{6.41}, we can obtain that the first four terms on the RHS of \eqref{6.43} are bounded by
\begin{align}
\label{5.15A}
 C\eta\varepsilon\|\partial_{x_i}\partial^{\alpha}\widetilde{u}_i\|^2
+C_\eta[\eta_0(1+t)^{-\vartheta}+\varepsilon^{\frac{1}{2}-a}]\varepsilon^{2-2a}.
\end{align}
To bound the electric field term in \eqref{6.43}, we  note from the first equation of \eqref{4.3} that
$$
\partial_{t}\partial^{\alpha}\widetilde{E}-\nabla_{x}\times \partial^{\alpha}\widetilde{B}=\partial^{\alpha}(\rho\widetilde{u})
+\partial^{\alpha}(\widetilde{\rho}\bar{u}),
$$
which leads us to obtain
\begin{align}
\label{5.47a}
\sum^3_{i=1}(\partial^{\alpha}\widetilde{E}_i,\partial^{\alpha}\widetilde{u}_i)
=&(\frac{1}{\rho}\partial^{\alpha}\widetilde{E},\rho\partial^{\alpha}\widetilde{u})
=(\frac{1}{\rho}\partial^{\alpha}\widetilde{E},\partial_{t}\partial^{\alpha}\widetilde{E}
-\nabla_{x}\times \partial^{\alpha}\widetilde{B})
\nonumber\\
&-(\frac{1}{\rho}\partial^{\alpha}\widetilde{E},\partial^{\alpha}(\widetilde{\rho}\bar{u}))
-(\frac{1}{\rho}\partial^{\alpha}\widetilde{E},\sum_{1\leq \alpha_{1}\leq \alpha}C^{\alpha_{1}}_{\alpha}\partial^{\alpha_1}\rho\partial^{\alpha-\alpha_{1}}\widetilde{u}).
\end{align}
Here if $|\alpha|=0$, the last term of \eqref{5.47a} vanishes and if  $|\alpha|\geq 1$, this term exists. 
For the first term on the RHS of \eqref{5.47a}, we get by the similar arguments as \eqref{5.58} and \eqref{5.59} that
\begin{align*}
&(\frac{1}{\rho}\partial^{\alpha}\widetilde{E},\partial_{t}\partial^{\alpha}\widetilde{E}
-\nabla_{x}\times \partial^{\alpha}\widetilde{B})
\\
=&\frac{1}{2}\frac{d}{dt}(\frac{1}{\rho}\partial^{\alpha}\widetilde{E},\partial^{\alpha}\widetilde{E})
+\frac{1}{2}\frac{d}{dt}(\frac{1}{\rho}\partial^{\alpha}\widetilde{B},\partial^{\alpha}\widetilde{B})
\\
&-(\partial_{t}[\frac{1}{\rho}]\partial^{\alpha}\widetilde{E},\partial^{\alpha}\widetilde{E})
-(\partial_{t}[\frac{1}{\rho}]\partial^{\alpha}\widetilde{B},\partial^{\alpha}\widetilde{B})
-(\nabla_{x}[\frac{1}{\rho}]\times\partial^{\alpha}\widetilde{E}, \partial^{\alpha}\widetilde{B})
\\
\geq& \frac{1}{2}\frac{d}{dt}(\frac{1}{\rho}\partial^{\alpha}\widetilde{E},\partial^{\alpha}\widetilde{E})
+\frac{1}{2}\frac{d}{dt}(\frac{1}{\rho}\partial^{\alpha}\widetilde{B},\partial^{\alpha}\widetilde{B})
-C[\eta_0(1+t)^{-\vartheta}+\varepsilon^{\frac{1}{2}-a}]\varepsilon^{2-2a}.
\end{align*}
The last two terms of \eqref{5.47a} have the same bound as \eqref{6.38} and \eqref{6.39}, it follows that
\begin{align}
\label{5.17A}
&\sum^3_{i=1}(\partial^{\alpha}\widetilde{E}_i,\partial^{\alpha}\widetilde{u}_i)	
\nonumber\\
&\geq \frac{1}{2}\frac{d}{dt}(\frac{1}{\rho}\partial^{\alpha}\widetilde{E},\partial^{\alpha}\widetilde{E})
+\frac{1}{2}\frac{d}{dt}(\frac{1}{\rho}\partial^{\alpha}\widetilde{B},\partial^{\alpha}\widetilde{B})
-C[\eta_0(1+t)^{-\vartheta}+\varepsilon^{\frac{1}{2}-a}]\varepsilon^{2-2a}.
\end{align}
For the magnetic field term in \eqref{6.43}, we easily get by \eqref{3.3} and \eqref{4.13} that
\begin{align}
\label{5.18A}
|(\partial^{\alpha}(u\times \widetilde{B}+\widetilde{u}\times\bar{B})_{i},\partial^{\alpha}\widetilde{u}_i)|
\leq C[\eta_0(1+t)^{-\vartheta}+\varepsilon^{\frac{1}{2}-a}]\varepsilon^{2-2a}.
\end{align}
For the viscosity coefficient term in \eqref{6.43}. Recall the definition $D_{ij}$ in \eqref{2.14}, one has
\begin{align*}
\varepsilon&\sum^{3}_{j=1}(\partial^{\alpha}(\frac{1}{\rho}\partial_{x_{j}}[\mu(\theta)D_{ij}]),\partial^{\alpha}\widetilde{u}_i)
\\
=&\varepsilon\sum^{3}_{j=1}(\partial^{\alpha}(\frac{1}{\rho}\partial_{x_{j}}[\mu(\theta)(\partial_{x_{j}}\widetilde{u}_{i}+\partial_{x_{i}}\widetilde{u}_{j}-\frac{2}{3}\delta_{ij}\nabla_{x}\cdot\widetilde{u})]),\partial^{\alpha}\widetilde{u}_i)
\\
&+\varepsilon\sum^{3}_{j=1}(\partial^{\alpha}(\frac{1}{\rho}\partial_{x_{j}}[\mu(\theta)(\partial_{x_{j}}\bar{u}_{i}+\partial_{x_{i}}\bar{u}_{j}
-\frac{2}{3}\delta_{ij}\nabla_{x}\cdot \bar{u})]),\partial^{\alpha}\widetilde{u}_i)
\\
:=&I_{7}+I_{8}.
\end{align*}
Since both $\mu(\theta)$ and $\kappa(\theta)$ are smooth functions of $\theta$, there exists a constant $C_2>1$ 
such that $\mu(\theta),\kappa(\theta)\in[C^{-1}_2,C_2]$. Moreover, derivatives of  $\mu(\theta)$ and $\kappa(\theta)$ are also bounded.
Making use of this, the integration by parts, \eqref{3.3}, \eqref{4.13}, \eqref{4.13A}, \eqref{4.27} and \eqref{4.22a}, we arrive at
\begin{align*}
I_{7}
\leq&-\varepsilon\sum^{3}_{j=1}(\frac{1}{\rho}\mu(\theta)(\partial^{\alpha}\partial_{x_{j}}\widetilde{u}_{i}+\partial^{\alpha}\partial_{x_{i}}\widetilde{u}_{j}
-\frac{2}{3}\delta_{ij}\nabla_{x}\cdot \partial^{\alpha}\widetilde{u}),\partial^{\alpha}\partial_{x_{j}}\widetilde{u}_{i})
\\
&+C[\eta_0(1+t)^{-\vartheta}+\varepsilon^{\frac{1}{2}-a}]\varepsilon^{2-2a}.
\end{align*}
As for the term $I_{8}$, we have
\begin{align}
\label{5.15a}
I_{8}=&\varepsilon\sum^{3}_{j=1}(\frac{1}{\rho}\mu(\theta)(\partial^{\alpha}\partial^2_{x_{j}}\bar{u}_{i}+\partial^{\alpha}\partial_{x_{j}}\partial_{x_{i}}\bar{u}_{j}-\frac{2}{3}\delta_{ij}\nabla_{x}\cdot \partial^{\alpha}\partial_{x_{j}}\bar{u}),\partial^{\alpha}\widetilde{u}_i)
\nonumber\\
&+\varepsilon\sum^{3}_{j=1}\sum_{1\leq \alpha_{1}\leq\alpha}C_{\alpha}^{\alpha_{1}}(\partial^{\alpha_1}[\frac{1}{\rho}\mu(\theta)]\partial^{\alpha-\alpha_1}(\partial^2_{x_{j}}\bar{u}_{i}
+\partial_{x_{j}}\partial_{x_{i}}\bar{u}_{j}-\frac{2}{3}\delta_{ij}\nabla_{x}\cdot \partial_{x_{j}}\bar{u}),\partial^{\alpha}\widetilde{u}_i)
\nonumber\\
&+\varepsilon\sum^{3}_{j=1}(\partial^{\alpha}[\frac{1}{\rho}\partial_{x_{j}}\mu(\theta)(\partial_{x_{j}}\bar{u}_{i}+\partial_{x_{i}}\bar{u}_{j}
-\frac{2}{3}\delta_{ij}\nabla_{x}\cdot \bar{u})],\partial^{\alpha}\widetilde{u}_i)
\nonumber\\
\leq& C[\eta_0(1+t)^{-\vartheta}+\eta_0\varepsilon^a]\varepsilon^{2-2a}.
\end{align}
We thereupon  conclude from the estimates on  $I_{7}$ and  $I_{8}$ that
\begin{align}
\label{6.44}
&\varepsilon\sum^{3}_{j=1}(\partial^{\alpha}(\frac{1}{\rho}\partial_{x_{j}}[\mu(\theta)D_{ij}]),\partial^{\alpha}\widetilde{u}_i)
\nonumber\\
&\leq -\varepsilon\sum^{3}_{j=1}(\frac{1}{\rho}\mu(\theta)(\partial^{\alpha}\partial_{x_{j}}\widetilde{u}_{i}+\partial^{\alpha}\partial_{x_{i}}\widetilde{u}_{j}
-\frac{2}{3}\delta_{ij}\nabla_{x}\cdot \partial^{\alpha}\widetilde{u}),\partial^{\alpha}\partial_{x_{j}}\widetilde{u}_{i})
\nonumber\\
&\hspace{0.5cm}+C[\eta_0(1+t)^{-\vartheta}+\eta_0\varepsilon^a+\varepsilon^{\frac{1}{2}-a}]\varepsilon^{2-2a}.
\end{align}
It remains to compute the last term of \eqref{6.43}. We shall carefully deal with it. First of all, we have the following identity
\begin{align}
\label{6.22}
\int_{\mathbb{R}^{3}}v_{i}v_{j}L^{-1}_{M}\Theta \,dv=&{\int_{\mathbb{R}^{3}}\frac{v_{i}v_{j}ML^{-1}_{M}\Theta}{M} \,dv}=
\int_{\mathbb{R}^{3}} \frac{L^{-1}_{M}\{P_{1}(v_{i}v_{j}M)\}\Theta}{M}\,dv
\nonumber\\
=&\int_{\mathbb{R}^{3}} L^{-1}_{M}\{R\theta\hat{B}_{ij}(\frac{v-u}{\sqrt{R\theta}})M\}\frac{\Theta}{M}\,dv
=R\theta\int_{\mathbb{R}^{3}}B_{ij}(\frac{v-u}{\sqrt{R\theta}})\frac{\Theta}{M}\,dv,
\end{align}
for $i,j=1,2,3$, where we have used the self-adjoint property of $L^{-1}_{M}$, \eqref{2.5}, \eqref{2.17} and \eqref{2.18}.
We use \eqref{6.22} to write the last term of \eqref{6.43} as
\begin{align}
\label{6.45}
-&(\partial^{\alpha}(\frac{1}{\rho}\int_{\mathbb{R}^{3}} v_{i}v\cdot\nabla_{x}L^{-1}_{M}\Theta\,dv),\partial^{\alpha}\widetilde{u}_i)
\nonumber\\
=&-\sum^{3}_{j=1}(\partial^{\alpha}[\frac{1}{\rho}\partial_{x_j}(\int_{\mathbb{R}^{3}}R\theta B_{ij}(\frac{v-u}{\sqrt{R\theta}})\frac{\Theta}{M}\, dv)],\partial^{\alpha}\widetilde{u}_i)
\nonumber\\
=&\sum^{3}_{j=1}(\partial^{\alpha}[\frac{1}{\rho}\int_{\mathbb{R}^{3}}R\theta B_{ij}(\frac{v-u}{\sqrt{R\theta}})\frac{\Theta}{M}\, dv],\partial^{\alpha}\partial_{x_j}\widetilde{u}_i)
\nonumber\\
&+\sum^{3}_{j=1}(\partial^{\alpha}[\partial_{x_j}(\frac{1}{\rho})\int_{\mathbb{R}^{3}}R\theta B_{ij}(\frac{v-u}{\sqrt{R\theta}})\frac{\Theta}{M}\, dv],\partial^{\alpha}\widetilde{u}_i)
\nonumber\\
:=&I_{9}+I_{10},
\end{align}
where in the second identity we have used the integration by parts. 

Before computing \eqref{6.45}, we shall give the following desired estimate that
\begin{equation}
\label{6.25}
\int_{\mathbb{R}^{3}}\frac{|\langle v\rangle^{b}\sqrt{\mu}
\partial_{\beta}A_{i}(\frac{v-u}{\sqrt{R\theta}})|^{2}}{M^{2}}\,dv
+\int_{\mathbb{R}^{3}}\frac{|\langle v\rangle^{b}
\sqrt{\mu}\partial_{\beta}B_{ij}(\frac{v-u}{\sqrt{R\theta}})|^{2}}{M^{2}}\,dv\leq C,
\end{equation}
for any multi-index $\beta$ and $b\geq 0$, where \eqref{4.16} and \eqref{4.18} have used. Recall $\Theta$ in \eqref{2.12} given by
$$
\Theta=\varepsilon \partial_tG+\varepsilon P_{1}(v\cdot\nabla_{x}G)-\varepsilon(E+v\times B)\cdot\nabla_{v}G-Q(G,G).
$$ 
We easily see that
\begin{align*}
I_{9}=&\sum^{3}_{j=1}\int_{\mathbb{R}^{3}}\big\{\partial^{\alpha}[\int_{\mathbb{R}^{3}}\frac{1}{\rho}R\theta B_{ij}(\frac{v-u}{\sqrt{R\theta}})\frac{\varepsilon \partial_tG}{M}\, dv]\,\partial^{\alpha}\partial_{x_j}\widetilde{u}_i\big\}\,dx
\\
&+\sum^{3}_{j=1}\int_{\mathbb{R}^{3}}\big\{\partial^{\alpha}[\int_{\mathbb{R}^{3}}\frac{1}{\rho}R\theta B_{ij}(\frac{v-u}{\sqrt{R\theta}})\frac{\varepsilon P_{1}(v\cdot\nabla_{x}G)}{M}\, dv]\,\partial^{\alpha}\partial_{x_j}\widetilde{u}_i\big\}\,dx
\\
&-\sum^{3}_{j=1}\int_{\mathbb{R}^{3}}\big\{\partial^{\alpha}[\int_{\mathbb{R}^{3}}\frac{1}{\rho}R\theta B_{ij}(\frac{v-u}{\sqrt{R\theta}})
\frac{\varepsilon(E+v\times B)\cdot\nabla_{v}G}{M}\, dv]\,\partial^{\alpha}\partial_{x_j}\widetilde{u}_i\big\}\,dx
\\
&-\sum^{3}_{j=1}\int_{\mathbb{R}^{3}}\big\{\partial^{\alpha}[\int_{\mathbb{R}^{3}}\frac{1}{\rho}R\theta B_{ij}(\frac{v-u}{\sqrt{R\theta}})\frac{Q(G,G)}{M}\, dv]\,\partial^{\alpha}\partial_{x_j}\widetilde{u}_i\big\}\,dx
\\
:=&I^{1}_{9}+I^{2}_{9}+I^{3}_{9}+I^{4}_{9}.
\end{align*}
We compute each $I^{i}_{9}$ $(i=1,2,3,4)$ in the following way. We use $G=\overline{G}+\sqrt{\mu}f$ to write
\begin{align*}
I^{1}_{9}=&\sum^{3}_{j=1}\int_{\mathbb{R}^{3}}\big\{\partial^{\alpha}[
\int_{\mathbb{R}^{3}}\frac{1}{\rho}R\theta B_{ij}(\frac{v-u}{\sqrt{R\theta}})\frac{\varepsilon\partial_t\overline{G}}{M}\, dv]\,\partial^{\alpha}\partial_{x_j}\widetilde{u}_i\big\}\,dx
\\
&+\sum^{3}_{j=1}\int_{\mathbb{R}^{3}}\big\{\partial^{\alpha}[\int_{\mathbb{R}^{3}}\frac{1}{\rho}R\theta B_{ij}(\frac{v-u}{\sqrt{R\theta}})\frac{\varepsilon\sqrt{\mu}\partial_tf}{M}\, dv]\,\partial^{\alpha}\partial_{x_j}\widetilde{u}_i\big\}\,dx
\\
:=&I^{11}_{9}+I^{12}_{9}.
\end{align*}
To bound $I^{11}_9$, we note that $\partial_t\overline{G}$ has the similar expression as \eqref{4.23}, then for $|\alpha|\leq N-1$ and any $b\geq0$,
it holds by the similar arguments as \eqref{4.20} that
\begin{align}
\label{5.19a}
\|\langle v\rangle^{b}\frac{\partial^{\alpha}\partial_t\overline{G}}{\sqrt{\mu}}\|\leq C\eta_{0}\varepsilon^{1-a}.
\end{align}
Making use of this, \eqref{6.25}, \eqref{3.3}, \eqref{4.13}, \eqref{4.27} and \eqref{4.22a}, one gets
\begin{align*}
|I^{11}_9|&\leq \eta\varepsilon\sum^{3}_{j=1}\int_{\mathbb{R}^{3}}|\partial^{\alpha}\partial_{x_j}\widetilde{u}_i|^2\,dx+C_\eta\varepsilon
\sum^{3}_{j=1}\int_{\mathbb{R}^{3}}|
\int_{\mathbb{R}^{3}}\partial^{\alpha}[\frac{1}{\rho}R\theta B_{ij}(\frac{v-u}{\sqrt{R\theta}})\frac{\partial_t\overline{G}}{M}]\, dv|^2\,dx
\\
&\leq \eta\varepsilon\|\partial^{\alpha}\nabla_{x}\widetilde{u}\|^{2}+C_\eta[\eta_0(1+t)^{-\vartheta}+\varepsilon^{\frac{1}{2}-a}]\varepsilon^{2-2a}.
\end{align*}
For the term $I^{12}_{9}$, we use an integration by parts about $t$ to get
\begin{align}
\label{5.45a}
I^{12}_{9}=&\sum^{3}_{j=1}\frac{d}{dt}\int_{\mathbb{R}^{3}}\int_{\mathbb{R}^{3}}\partial^{\alpha}
\big\{[\frac{1}{\rho}R\theta B_{ij}(\frac{v-u}{\sqrt{R\theta}})\frac{\varepsilon\sqrt{\mu}}{M}]f\big\}\partial^{\alpha}\partial_{x_j}\widetilde{u}_i\,dv\,dx
\nonumber\\
&-\sum^{3}_{j=1}\int_{\mathbb{R}^{3}}\int_{\mathbb{R}^{3}}\partial^{\alpha}	\big\{\partial_t[\frac{1}{\rho}R\theta B_{ij}(\frac{v-u}{\sqrt{R\theta}})\frac{\varepsilon\sqrt{\mu}}{M}]f\big\}\partial^{\alpha}\partial_{x_j}\widetilde{u}_i\,dv\,dx
\nonumber\\
&-\sum^{3}_{j=1}\int_{\mathbb{R}^{3}}\int_{\mathbb{R}^{3}}\partial^{\alpha}
\big\{[\frac{1}{\rho}R\theta B_{ij}(\frac{v-u}{\sqrt{R\theta}})\frac{\varepsilon\sqrt{\mu}}{M}]f\big\}\partial^{\alpha}\partial_{x_j}\partial_t\widetilde{u}_{i}\,dv\,dx.
\end{align}
The last term of \eqref{5.45a} is bounded by
\begin{align*}
&\sum^{3}_{j=1}\int_{\mathbb{R}^{3}}\int_{\mathbb{R}^{3}}\partial^{\alpha}\partial_{x_j}
\big\{[\frac{1}{\rho}R\theta B_{ij}(\frac{v-u}{\sqrt{R\theta}})\frac{\varepsilon\sqrt{\mu}}{M}]f\big\}\partial^{\alpha}\partial_t\widetilde{u}_{i}\,dv\,dx
\\
\leq& \eta\varepsilon\|\partial^{\alpha}\partial_t\widetilde{u}_{i}\|^{2}+C_\eta\varepsilon
\sum^{3}_{j=1}\int_{\mathbb{R}^{3}}|\int_{\mathbb{R}^{3}}\partial^{\alpha}\partial_{x_j}
\big\{[\frac{1}{\rho}R\theta B_{ij}(\frac{v-u}{\sqrt{R\theta}})\frac{\sqrt{\mu}}{M}]f\big\}\,dv|^{2}\,dx
\\
\leq& C\eta\varepsilon\|\partial^{\alpha}
\nabla_x(\widetilde{\rho},\widetilde{u},\widetilde{\theta})\|^{2}+C_\eta\varepsilon\|\partial^{\alpha}\nabla_x f\|^{2}+
C_\eta[\eta_0(1+t)^{-\vartheta}+\varepsilon^{\frac{1}{2}-a}]\varepsilon^{2-2a}.
\end{align*}
where \eqref{6.25}, \eqref{4.86}, \eqref{3.3}, \eqref{4.13}, \eqref{4.13A}, \eqref{4.27} and \eqref{4.22a} have used in the last inequality.

The second term of \eqref{5.45a} can be handed similarly. We thereby obtain
\begin{align*}
I^{12}_{9}\leq&\sum^{3}_{j=1}\frac{d}{dt}\int_{\mathbb{R}^{3}}\int_{\mathbb{R}^{3}}\partial^{\alpha}
\big\{[\frac{1}{\rho}R\theta B_{ij}(\frac{v-u}{\sqrt{R\theta}})\frac{\varepsilon\sqrt{\mu}}{M}]f\big\}\partial^{\alpha}\partial_{x_j}\widetilde{u}_i\,dv\,dx
\\
&+C\eta\varepsilon\|\partial^{\alpha}\nabla_x(\widetilde{\rho},\widetilde{u},\widetilde{\theta})\|^{2}
+C_\eta\varepsilon\|\partial^{\alpha}\nabla_x f\|^{2}
+C_\eta[\eta_0(1+t)^{-\vartheta}+\varepsilon^{\frac{1}{2}-a}]\varepsilon^{2-2a},
\end{align*}
which along  with the estimate $I^{11}_{9}$ shows that
\begin{align*}
I^1_{9}\leq&\sum^{3}_{j=1}\frac{d}{dt}\int_{\mathbb{R}^{3}}\int_{\mathbb{R}^{3}}
\big\{\partial^{\alpha}[\frac{1}{\rho}R\theta B_{ij}(\frac{v-u}{\sqrt{R\theta}})\frac{\varepsilon \sqrt{\mu}}{M}f]
\partial^{\alpha}\partial_{x_j}\widetilde{u}_i\big\}\,dv\,dx
\\
&+C\eta\varepsilon\|\partial^{\alpha}
\nabla_x(\widetilde{\rho},\widetilde{u},\widetilde{\theta})\|^{2}+C_\eta\varepsilon\|\partial^{\alpha}\nabla_x f\|^{2}+C_\eta
[\eta_0(1+t)^{-\vartheta}+\varepsilon^{\frac{1}{2}-a}]\varepsilon^{2-2a}.
\end{align*}
Using \eqref{6.25}, \eqref{4.86}, \eqref{3.3}, \eqref{4.13}, \eqref{4.13A} and \eqref{4.22a} again,
we can bound $I^{2}_{9}$ and $I^{3}_{9}$ by
\begin{align*}
|I^{2}_{9}|+|I^{3}_{9}|&\leq C\eta\varepsilon\|\partial^{\alpha}\nabla_{x}\widetilde{u}\|^2+ C_\eta\varepsilon\|\partial^{\alpha}\nabla_xf\|^2+C_\eta
[\eta_0(1+t)^{-\vartheta}+\varepsilon^{\frac{1}{2}-a}]\varepsilon^{2-2a}.
\end{align*}
As for the term $I^{4}_{9}$, we use \eqref{4.9} to write
\begin{align*}
I^{4}_{9}&=-\sum^{3}_{j=1}\int_{\mathbb{R}^{3}}\int_{\mathbb{R}^{3}}\partial^{\alpha}[\frac{1}{\rho}R\theta B_{ij}(\frac{v-u}{\sqrt{R\theta}})\frac{\sqrt{\mu}}{M}\Gamma(\frac{G}{\sqrt{\mu}},\frac{G}{\sqrt{\mu}})]\, dv\,\partial^{\alpha}\partial_{x_j}\widetilde{u}_i\,dx.
\end{align*}
If $|\alpha|=0$ in $I^{4}_{9}$, we get by \eqref{4.30}, \eqref{6.25}, \eqref{4.19}, \eqref{4.13}, \eqref{4.22a} and \eqref{4.15} that
\begin{align*}
|I^4_{9}|&\leq C\sum^{3}_{j=1}\int_{\mathbb{R}^{3}}|\frac{G}{\sqrt{\mu}}|_{2}|\frac{G}{\sqrt{\mu}}|_{\nu}|\partial_{x_j}\widetilde{u}_i|\,dx
\\
&\leq  \eta\varepsilon\|\nabla_{x}\widetilde{u}_i\|^2+
C_\eta\frac{1}{\varepsilon}\sup_{x\in\mathbb{R}^3}(|\frac{\overline{G}}{\sqrt{\mu}}|^2_{2}
+|f|^2_{2})(\|\frac{\overline{G}}{\sqrt{\mu}}\|^2_{\nu}+\|f\|^2_{\nu})
\\
&\leq  \eta\varepsilon\|\nabla_{x}\widetilde{u}\|^2
+C_\eta(\eta_{0}+\varepsilon^{\frac{1}{2}-a})\mathcal{D}_N(t)+C_\eta[\eta_0(1+t)^{-\vartheta}+\varepsilon^{\frac{1}{2}-a}]\varepsilon^{2-2a}.
\end{align*}
Similar estimate also holds for the cases $1\leq|\alpha|\leq N-1$, it follows that
\begin{align*}
|I^{4}_{9}|
&\leq  C\eta\varepsilon\|\partial^{\alpha}\nabla_{x}\widetilde{u}\|^2
+C_\eta(\eta_{0}+\varepsilon^{\frac{1}{2}-a})\mathcal{D}_N(t)+C_\eta[\eta_0(1+t)^{-\vartheta}+\varepsilon^{\frac{1}{2}-a}]\varepsilon^{2-2a}.
\end{align*}
Therefore, collecting the estimates form $I^{1}_{9}$ to $I^{4}_{9}$, one gets
\begin{align}
\label{6.46}
I_{9}\leq&-\sum^{3}_{j=1}\frac{d}{dt}\int_{\mathbb{R}^{3}}\int_{\mathbb{R}^{3}}
\big\{\partial^{\alpha}\partial_{x_j}[\frac{1}{\rho}R\theta B_{ij}(\frac{v-u}{\sqrt{R\theta}})\frac{\varepsilon \sqrt{\mu}}{M}f]
\partial^{\alpha}\widetilde{u}_i\big\}\,dv\,dx+C\eta\varepsilon\|\partial^{\alpha}\nabla_x(\widetilde{\rho},\widetilde{u},\widetilde{\theta})\|^{2}
\nonumber\\
&+C_\eta\varepsilon\|\partial^{\alpha}\nabla_x f\|^{2}
+C_\eta(\eta_{0}+\varepsilon^{\frac{1}{2}-a})\mathcal{D}_N(t)+C_\eta[\eta_0(1+t)^{-\vartheta}+\varepsilon^{\frac{1}{2}-a}]\varepsilon^{2-2a}.
\end{align}
The term $I_{10}$ in \eqref{6.45} can be treated in the similar way as $I_9$, so it holds that
\begin{align*}
I_{10}\leq& \sum^{3}_{j=1}\frac{d}{dt}\int_{\mathbb{R}^{3}}\int_{\mathbb{R}^{3}}\partial^{\alpha}
\big\{[\partial_{x_j}(\frac{1}{\rho})R\theta B_{ij}(\frac{v-u}{\sqrt{R\theta}})\frac{\varepsilon\sqrt{\mu}}{M}]f\big\}\partial^{\alpha}\widetilde{u}_i\,dv\,dx
+C\eta\varepsilon\|\partial^{\alpha}\nabla_x(\widetilde{\rho},\widetilde{u},\widetilde{\theta})\|^{2}
\nonumber\\
&+C_\eta\varepsilon\|\partial^{\alpha}\nabla_x f\|^{2}
+C(\eta_{0}+\varepsilon^{\frac{1}{2}-a})\mathcal{D}_N(t)+C_\eta[\eta_0(1+t)^{-\vartheta}+\varepsilon^{\frac{1}{2}-a}]\varepsilon^{2-2a}.
\end{align*}
Hence, substituting the estimates $I_{9}$ and $I_{10}$ into \eqref{6.45} yields
\begin{align}
\label{5.27A}	
-&(\partial^{\alpha}[\frac{1}{\rho}\int_{\mathbb{R}^{3}}v_{i}v\cdot\nabla_{x}L^{-1}_{M}\Theta\,dv],\partial^{\alpha}\widetilde{u}_i)
\nonumber\\
\leq&-\sum^{3}_{j=1}\frac{d}{dt}\int_{\mathbb{R}^{3}}\int_{\mathbb{R}^{3}}
\big\{\partial^{\alpha}[\frac{1}{\rho}\partial_{x_j}(R\theta B_{ij}(\frac{v-u}{\sqrt{R\theta}})\frac{\varepsilon \sqrt{\mu}}{M}f)]
\partial^{\alpha}\widetilde{u}_i\big\}\,dv\,dx+C\eta\varepsilon\|\partial^{\alpha}\nabla_x(\widetilde{\rho},\widetilde{u},\widetilde{\theta})\|^{2}
\nonumber\\
&+C_\eta\varepsilon\|\partial^{\alpha}\nabla_x f\|^{2}
+C_\eta(\eta_{0}+\varepsilon^{\frac{1}{2}-a})\mathcal{D}_N(t)+C_\eta[\eta_0(1+t)^{-\vartheta}+\varepsilon^{\frac{1}{2}-a}]\varepsilon^{2-2a}.
\end{align}
In summary, substituting \eqref{5.15A}, \eqref{5.17A}, \eqref{5.18A}, \eqref{6.44} and
\eqref{5.27A} into \eqref{6.43} and summing $i$ from 1 to 3, we have established that for $|\alpha|\leq N-1$ and  any small $\eta>0$,
\begin{align}
\label{6.47}
&\frac{1}{2}\frac{d}{dt}\|\partial^{\alpha}\widetilde{u}\|^{2}+\frac{1}{2}\frac{d}{dt}(\frac{1}{\rho}\partial^{\alpha}\widetilde{E},\partial^{\alpha}\widetilde{E})
+\frac{1}{2}\frac{d}{dt}(\frac{1}{\rho}\partial^{\alpha}\widetilde{B},\partial^{\alpha}\widetilde{B})
\nonumber\\
&+\sum^{3}_{i,j=1}\frac{d}{dt}\int_{\mathbb{R}^{3}}\int_{\mathbb{R}^{3}}
\big\{\partial^{\alpha}[\frac{1}{\rho}\partial_{x_j}(R\theta B_{ij}(\frac{v-u}{\sqrt{R\theta}})\frac{\varepsilon \sqrt{\mu}f}{M})]
\partial^{\alpha}\widetilde{u}_i\big\}\,dv\,dx
\nonumber\\
&+(\frac{2\bar{\theta}}{3\bar{\rho}}\partial^{\alpha}\nabla_{x}\widetilde{\rho},\partial^{\alpha}\widetilde{u})
+(\frac{2}{3}\partial^{\alpha}\nabla_{x}\widetilde{\theta},\partial^{\alpha}\widetilde{u})
+c\varepsilon\|\partial^{\alpha}\nabla_{x}\widetilde{u}\|^{2}
\nonumber\\
&\leq C\eta\varepsilon\|\partial^{\alpha}
\nabla_x(\widetilde{\rho},\widetilde{u},\widetilde{\theta})\|^{2}+C_\eta\varepsilon\|\partial^{\alpha}\nabla_x f\|^{2}
\nonumber\\
&\hspace{0.5cm}
+C_\eta(\eta_{0}+\varepsilon^{\frac{1}{2}-a})\mathcal{D}_N(t)+C_\eta[\eta_0(1+t)^{-\vartheta}+\eta_0\varepsilon^{a}+\varepsilon^{\frac{1}{2}-a}]\varepsilon^{2-2a}.
\end{align}
Here we have used the fact that
\begin{align*}
\varepsilon&\sum^{3}_{i,j=1}(\frac{1}{\rho}\mu(\theta)(\partial^{\alpha}\partial_{x_{j}}\widetilde{u}_{i}+\partial^{\alpha}\partial_{x_{i}}\widetilde{u}_{j}
-\frac{2}{3}\delta_{ij}\nabla_{x}\cdot \partial^{\alpha}\widetilde{u}),\partial^{\alpha}\partial_{x_{j}}\widetilde{u}_{i})
\\
&\geq c\varepsilon\|\partial^{\alpha}\nabla_{x}\widetilde{u}\|^{2}-C[\eta_0(1+t)^{-\vartheta}+\varepsilon^{\frac{1}{2}-a}]\varepsilon^{2-2a}.
\end{align*}

\medskip
\noindent{\it Step 3. Estimate on $\|\partial^{\alpha}\widetilde{\theta}\|^2$}. As before, applying $\partial^{\alpha}$ with $|\alpha|\leq N-1$
to the third equation of \eqref{4.2} and taking the inner product of the resulting equation with 
$\frac{1}{\bar{\theta}}\partial^{\alpha}\widetilde{\theta}$, one gets
\begin{align}
\label{6.48}
(\partial_{t}&\partial^{\alpha}\widetilde{\theta},\frac{1}{\bar{\theta}}\partial^{\alpha}\widetilde{\theta})+(\frac{2}{3}\bar{\theta}\nabla_{x}\cdot \partial^{\alpha}\widetilde{u},\frac{1}{\bar{\theta}}\partial^{\alpha}\widetilde{\theta})+\sum_{1\leq \alpha_{1}\leq \alpha}C_{\alpha}^{\alpha_{1}}(\frac{2}{3}\partial^{\alpha_{1}}\bar{\theta}\nabla_{x}\cdot \partial^{\alpha-\alpha_{1}}\widetilde{u},\frac{1}{\bar{\theta}}\partial^{\alpha}\widetilde{\theta})
\nonumber\\
=&-(\partial^{\alpha}(u\cdot\nabla_{x}\widetilde{\theta}),\frac{1}{\bar{\theta}}\partial^{\alpha}\widetilde{\theta})-(\partial^{\alpha}(\widetilde{u}\cdot\nabla_{x}\bar{\theta}),\frac{1}{\bar{\theta}}\partial^{\alpha}\widetilde{\theta})
-(\frac{2}{3}\partial^{\alpha}(\widetilde{\theta}\nabla_{x}\cdot u),\frac{1}{\bar{\theta}}\partial^{\alpha}\widetilde{\theta})
\nonumber\\
&\quad
+\varepsilon\sum^{3}_{i,j=1}(\partial^{\alpha}[\frac{1}{\rho}\mu(\theta) \partial_{x_{j}}u_{i}D_{ij}],\frac{1}{\bar{\theta}}\partial^{\alpha}\widetilde{\theta})
+\varepsilon\sum^{3}_{j=1}(\partial^{\alpha}[\frac{1}{\rho}\partial_{x_{j}}(\kappa(\theta)\partial_{x_{j}}\theta)],\frac{1}{\bar{\theta}}\partial^{\alpha}\widetilde{\theta})
\nonumber\\
&\quad\quad\quad-(\partial^{\alpha}[\frac{1}{\rho}\int_{\mathbb{R}^{3}} \frac{1}{2}|v|^{2} v\cdot\nabla_{x}L^{-1}_{M}\Theta\,dv],\frac{1}{\bar{\theta}}\partial^{\alpha}\widetilde{\theta})
\nonumber\\
&\quad\quad\quad\quad\quad+(\partial^{\alpha}[\frac{1}{\rho}u\cdot\int_{\mathbb{R}^{3}} v\otimes v\cdot\nabla_{x}L^{-1}_{M}\Theta\,dv]
,\frac{1}{\bar{\theta}}\partial^{\alpha}\widetilde{\theta}).
\end{align}
Here if $|\alpha|=0$, the third term on the LHS of \eqref{6.48} vanishes and if  $|\alpha|\geq 1$, it exists.

We will estimate \eqref{6.48} term by term.
The third term on the LHS of \eqref{6.48} has the same bound as \eqref{6.36}.
The first four terms on the RHS of \eqref{6.48} are bounded by
\begin{align*}
C\eta\varepsilon\|\partial^{\alpha}\nabla_{x}\cdot\widetilde{u}\|^2
+C_\eta[\eta_0(1+t)^{-\vartheta}+\varepsilon^{\frac{1}{2}-a}]\varepsilon^{2-2a}.
\end{align*}
The fifth term on the RHS of \eqref{6.48}  has the same structure as the viscosity coefficient term in \eqref{6.43}.
Therefore, we can perform the similar arguments as \eqref{6.44} to get
\begin{align*}
&\varepsilon\sum^{3}_{j=1}(\partial^{\alpha}[\frac{1}{\rho}\partial_{x_{j}}(\kappa(\theta)\partial_{x_{j}}\theta)],\frac{1}{\bar{\theta}}\partial^{\alpha}\widetilde{\theta})
\\
&\leq-\varepsilon\sum^{3}_{j=1}(\frac{1}{\rho}\kappa(\theta)\partial^{\alpha}\partial_{x_{j}}\widetilde{\theta},
\frac{1}{\bar{\theta}}\partial^{\alpha}\partial_{x_{j}}\widetilde{\theta})
+C[\eta_0(1+t)^{-\vartheta}+\eta_0\varepsilon^a+\varepsilon^{\frac{1}{2}-a}]\varepsilon^{2-2a}.
\end{align*}
It remains to compute the last two terms in \eqref{6.48}. By the self-adjoint property of $L^{-1}_{M}$, \eqref{2.5}, \eqref{2.17} and \eqref{2.18}, one has
\begin{align}
\label{6.21}
\int_{\mathbb{R}^{3}}& (\frac{1}{2}v_{i}|v|^{2}-v_{i}u\cdot v)L^{-1}_{M}\Theta \,dv=
\int_{\mathbb{R}^{3}} L^{-1}_{M}\{P_{1}(\frac{1}{2}v_{i}|v|^{2}-v_{i}u\cdot v)M\}\frac{\Theta}{M}\,dv
\nonumber\\
=&\int_{\mathbb{R}^{3}} L^{-1}_{M}\{(R\theta)^{\frac{3}{2}}\hat{A}_{i}(\frac{v-u}{\sqrt{R\theta}})M\}\frac{\Theta}{M}\,dv
=(R\theta)^{\frac{3}{2}}\int_{\mathbb{R}^{3}}A_{i}(\frac{v-u}{\sqrt{R\theta}})\frac{\Theta}{M}\,dv.
\end{align}
In view of \eqref{6.22} and \eqref{6.21}, the last two terms of \eqref{6.48} are equivalent to
\begin{align*}
&{-\sum^{3}_{i=1}(\partial^{\alpha}[\frac{1}{\rho}\partial_{x_i}(\int_{\mathbb{R}^{3}}
(\frac{1}{2}|v|^{2}v_i-u\cdot vv_{i})L^{-1}_{M}\Theta\,dv)],\frac{1}{\bar{\theta}}\partial^{\alpha}\widetilde{\theta})}
\\
&{-\sum^{3}_{i=1}
(\partial^{\alpha}[\frac{1}{\rho}\int_{\mathbb{R}^{3}}
\partial_{x_i}u\cdot vv_{i}L^{-1}_{M}\Theta\,dv],\frac{1}{\bar{\theta}}\partial^{\alpha}\widetilde{\theta})}
\\
=&-\sum^{3}_{i=1}(\partial^{\alpha}[\frac{1}{\rho}\partial_{x_i}((R\theta)^{\frac{3}{2}}\int_{\mathbb{R}^{3}}A_{i}(\frac{v-u}{\sqrt{R\theta}})\frac{\Theta}{M}\,dv)],\frac{1}{\bar{\theta}}\partial^{\alpha}\widetilde{\theta})
\\
&-\sum^{3}_{i,j=1}	(\partial^{\alpha}[\frac{1}{\rho}
\partial_{x_i}u_j R\theta\int_{\mathbb{R}^{3}}B_{ij}(\frac{v-u}{\sqrt{R\theta}})\frac{\Theta}{M}\,dv],\frac{1}{\bar{\theta}}\partial^{\alpha}\widetilde{\theta})
\\
\leq	
&\frac{d}{dt}\widetilde{E}(t)+C\eta\varepsilon\|\partial^{\alpha}
\nabla_x(\widetilde{\rho},\widetilde{u},\widetilde{\theta})\|^{2}+C_\eta\varepsilon\|\partial^{\alpha}\nabla_x f\|^{2}
\\
&+C_\eta(\eta_{0}+\varepsilon^{\frac{1}{2}-a})\mathcal{D}_N(t)
+C_\eta[\eta_0(1+t)^{-\vartheta}+\varepsilon^{\frac{1}{2}-a}]\varepsilon^{2-2a}.
\end{align*}
where in the last inequality we used the similar calculations as \eqref{5.27A} and 
\begin{align}
	\label{5.30B}
\widetilde{E}(t)=&-\sum^{3}_{i=1}\int_{\mathbb{R}^{3}}\int_{\mathbb{R}^{3}}
\big\{\partial^{\alpha}[\frac{1}{\rho}\partial_{x_i}
((R\theta)^{\frac{3}{2}}A_{i}(\frac{v-u}{\sqrt{R\theta}})\frac{\varepsilon\sqrt{\mu}}{M}f)]\frac{1}{\bar{\theta}}\partial^{\alpha}\widetilde{\theta}\big\}\,dv\,dx
\nonumber\\	
&-\sum^{3}_{i,j=1}\int_{\mathbb{R}^{3}}\int_{\mathbb{R}^{3}}
\big\{\partial^{\alpha}[\frac{1}{\rho}
\partial_{x_i}u_j R\theta B_{ij}(\frac{v-u}{\sqrt{R\theta}})\frac{\varepsilon\sqrt{\mu}}{M}f]\frac{1}{\bar{\theta}}\partial^{\alpha}\widetilde{\theta}\big\}\,dv\,dx.
\end{align}
In summary, putting all the above estimates into \eqref{6.48}, we have established that for $|\alpha|\leq N-1$ and any small $\eta>0$,
\begin{align}
\label{6.49}
&\frac{1}{2}\frac{d}{dt}(\partial^{\alpha}\widetilde{\theta},\frac{1}{\bar{\theta}}\partial^{\alpha}\widetilde{\theta})+(\frac{2}{3}\nabla_{x}\cdot \partial^{\alpha}\widetilde{u},\partial^{\alpha}\widetilde{\theta})
+\varepsilon(\frac{1}{\rho}\kappa(\theta)\partial^{\alpha}\nabla_{x}\widetilde{\theta},\frac{1}{\bar{\theta}}\partial^{\alpha}\nabla_{x}\widetilde{\theta})
\nonumber\\	
&\leq \frac{d}{dt}\widetilde{E}(t)+C\eta\varepsilon\|\partial^{\alpha}
\nabla_x(\widetilde{\rho},\widetilde{u},\widetilde{\theta})\|^{2}+C_\eta\varepsilon\|\partial^{\alpha}\nabla_x f\|^{2}
\nonumber\\
&\hspace{0.5cm}+C_\eta(\eta_{0}+\varepsilon^{\frac{1}{2}-a})\mathcal{D}_N(t)
+C_\eta[\eta_0(1+t)^{-\vartheta}+\eta_0\varepsilon^{a}+\varepsilon^{\frac{1}{2}-a}]\varepsilon^{2-2a}.
\end{align}

\medskip
\noindent{\it Step 4. Estimate on $\|\partial^{\alpha}(\widetilde{\rho},\widetilde{u},\widetilde{\theta})\|^2$}. 
For $|\alpha|\leq N-1$, we get by integration by parts, \eqref{3.3} and \eqref{4.13} that
\begin{align*}
&|(\nabla_x\cdot\partial^{\alpha}\widetilde{u},\frac{2\bar{\theta}}{3\bar{\rho}}\partial^{\alpha}\widetilde{\rho})+	(\frac{2\bar{\theta}}{3\bar{\rho}}\partial^{\alpha}\nabla_{x}\widetilde{\rho},\partial^{\alpha}\widetilde{u})+(\frac{2}{3}\partial^{\alpha}\nabla_{x}\widetilde{\theta},\partial^{\alpha}\widetilde{u})
+(\frac{2}{3}\nabla_{x}\cdot \partial^{\alpha}\widetilde{u},\partial^{\alpha}\widetilde{\theta})|
\\
&=|(\partial^{\alpha}\widetilde{u},\nabla_x(\frac{2\bar{\theta}}{3\bar{\rho}})\partial^{\alpha}\widetilde{\rho})|\leq C\|\nabla_x(\bar{\rho},\bar{\theta})\|_{L^{\infty}}
\|\partial^{\alpha}\widetilde{u}\|\|\partial^{\alpha}\widetilde{\rho}\|\leq C\eta_0(1+t)^{-\vartheta}\varepsilon^{2-2a}.
\end{align*}
Adding \eqref{6.42}, \eqref{6.47} and \eqref{6.49} together, and using the aforementioned estimate,
then the summation of the resulting equation over $|\alpha|$ through a suitable linear combination gives
\begin{align}
\label{6.50}
\frac{1}{2}&\sum_{|\alpha|\leq N-1}\frac{d}{dt}\int_{\mathbb{R}^{3}}( \frac{2\bar{\theta}}{3\bar{\rho}^{2}}|\partial^{\alpha}\widetilde{\rho}|^{2}+|\partial^{\alpha}\widetilde{u}|^{2}
+\frac{1}{\bar{\theta}}|\partial^{\alpha}\widetilde{\theta}|^{2}+\frac{1}{\rho}|\partial^{\alpha}\widetilde{E}|^2
+\frac{1}{\rho}|\partial^{\alpha}\widetilde{B}|^2)\,dx
\nonumber\\
&+\frac{d}{dt}E_\alpha(t)+c\varepsilon\sum_{1\leq |\alpha|\leq N}\|\partial^{\alpha}(\widetilde{u},\widetilde{\theta})\|^{2}
\nonumber\\	
\leq&C\eta\varepsilon\sum_{1\leq |\alpha|\leq N}\|\partial^{\alpha}
(\widetilde{\rho},\widetilde{u},\widetilde{\theta})\|^{2}+C_\eta\varepsilon\sum_{1\leq |\alpha|\leq N}
\|\partial^{\alpha}f\|^{2}
\nonumber\\
&+C_\eta(\eta_{0}+\varepsilon^{\frac{1}{2}-a})\mathcal{D}_N(t)
+C_\eta[\eta_0(1+t)^{-\vartheta}+\eta_0\varepsilon^{a}+\varepsilon^{\frac{1}{2}-a}]\varepsilon^{2-2a}.
\end{align}
Here $E_\alpha(t)$ is given by
\begin{align}
\label{6.51}
E_\alpha(t)
=&\sum_{|\alpha|\leq N-1}(\sum^{3}_{i,j=1}\int_{\mathbb{R}^{3}}\int_{\mathbb{R}^{3}}
\big\{\partial^{\alpha}[\frac{1}{\rho}\partial_{x_j}(R\theta B_{ij}(\frac{v-u}{\sqrt{R\theta}})\frac{\varepsilon \sqrt{\mu}}{M}f)]
\partial^{\alpha}\widetilde{u}_i\big\}\,dv\,dx-\widetilde{E}(t)).
\end{align}
On the other hand, we can easily get by \eqref{4.86a} that 
\begin{align}	
\label{6.53}
&\varepsilon\sum_{|\alpha|\leq N-1} \frac{d}{dt}(\partial^{\alpha}\widetilde{u},\nabla_x\partial^{\alpha}\widetilde{\rho})
+c\varepsilon\sum_{1\leq |\alpha|\leq N}\|\partial^{\alpha}\widetilde{\rho}\|^2
\nonumber\\
&\leq C\varepsilon\sum_{1\leq |\alpha|\leq N}
(\|(\partial^{\alpha}\widetilde{u},\partial^{\alpha}\widetilde{\theta})\|^2+\|\partial^{\alpha}f\|^2)
+C\varepsilon\varepsilon^{2-2a}.
\end{align}
In summary, adding $\eqref{6.50}\times C_3$ to \eqref{6.53} and firstly choosing $C_3>1$ sufficiently large,
then taking $\eta>0$ small enough, one gets
\begin{align}
\label{6.54}
\frac{1}{2}&C_3\sum_{|\alpha|\leq N-1}\frac{d}{dt}\int_{\mathbb{R}^{3}}( \frac{2\bar{\theta}}{3\bar{\rho}^{2}}|\partial^{\alpha}\widetilde{\rho}|^{2}+|\partial^{\alpha}\widetilde{u}|^{2}
+\frac{1}{\bar{\theta}}|\partial^{\alpha}\widetilde{\theta}|^{2}+\frac{1}{\rho}|\partial^{\alpha}\widetilde{E}|^2+\frac{1}{\rho}|\partial^{\alpha}\widetilde{B}|^2)\,dx
\nonumber\\
&+C_3\frac{d}{dt}E_\alpha(t)+\varepsilon\sum_{|\alpha|\leq N-1} \frac{d}{dt}(\partial^{\alpha}\widetilde{u},\nabla_x\partial^{\alpha}\widetilde{\rho})
+c\varepsilon\sum_{1\leq |\alpha|\leq N}\|\partial^{\alpha}(\widetilde{\rho},\widetilde{u},\widetilde{\theta})\|^{2}
\nonumber\\	
\leq&C\varepsilon\sum_{1\leq |\alpha|\leq N}\|\partial^{\alpha}f\|^{2}
+C(\eta_{0}+\varepsilon^{\frac{1}{2}-a})\mathcal{D}_N(t)+
C[\eta_0(1+t)^{-\vartheta}+\eta_0\varepsilon^{a}+\varepsilon^{\frac{1}{2}-a}]\varepsilon^{2-2a}.
\end{align}
By the Cauchy-Schwarz inequality, \eqref{3.3}, \eqref{4.13} and \eqref{4.22a}, we get
\begin{align*}
	|E_\alpha(t)|\leq C\eta 
	\sum_{|\alpha|\leq N-1}\|\partial^{\alpha}(\widetilde{\rho},\widetilde{u},\widetilde{\theta})\|^2
	+C_{\eta}\varepsilon^2\sum_{|\alpha|=N}(\|\partial^{\alpha}(\widetilde{u},\widetilde{\theta})\|^2+\|\partial^{\alpha}f\|^2)+
	C_{\eta}\varepsilon\varepsilon^{2-2a}.
\end{align*}
Integrating \eqref{6.54} with respect to $t$ and using $\mathcal{E}_{N}(0)\leq C\eta^2_{0}\varepsilon^{2}$ and $\vartheta>1$
as well as the aforementioned estimate, we can prove that \eqref{6.34} holds true.
This completes the proof of Lemma \ref{lem6.6}.
\end{proof}

\subsubsection{Space derivative estimates on non-fluid part up to $(N-1)$-order}\label{seca.6.2.2} 
Next we make use of the microscopic equation \eqref{4.11} to derive the 
space derivative estimates  up to $(N-1)$-order for the non-fluid part $f$.
We should emphasize that the fact that $f\in (\ker\mathcal{L})^{\perp}$ is crucial in the estimates.
\begin{lemma}
\label{lem6.7}
Under the conditions listed in Lemma \ref{lem6.6}, it holds that
\begin{align}
\label{6.55}
&\sum_{|\alpha|\leq N-1}\|\partial^{\alpha}f(t)\|^{2}+c\frac{1}{\varepsilon}\sum_{|\alpha|\leq N-1}\int^t_0\|\partial^{\alpha}f(s)\|_{\nu}^{2}\,ds
\nonumber\\
&\hspace{0.5cm}\leq C\varepsilon\sum_{1\leq|\alpha|\leq N}\int^t_0
\{\|\partial^{\alpha}(\widetilde{u},\widetilde{\theta})(s)\|^{2}+\|\partial^{\alpha}f(s)\|_\nu^{2}\}\,ds
\nonumber\\
&\hspace{1cm}+C(\eta_{0}+\varepsilon^{\frac{1}{2}-a})\int^t_0\mathcal{D}_N(s)\,ds
+C(\eta_{0}+\varepsilon^{\frac{1}{2}-a}t)\varepsilon^{2-2a}.
\end{align}
\end{lemma}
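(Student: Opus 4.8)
The plan is to perform an energy estimate directly on the perturbed microscopic equation \eqref{4.11}. First I would apply $\partial^\alpha$ for $|\alpha|\le N-1$ to \eqref{4.11} and take the $L^2_{x,v}$ inner product with $\partial^\alpha f$. The left-hand side produces $\frac12\frac{d}{dt}\|\partial^\alpha f\|^2$ from the time-derivative term, while $v\cdot\nabla_x\partial^\alpha f$ vanishes upon integration over $x$; the crucial gain comes from the linearized collision term: since $f\in(\ker\mathcal{L})^\perp$ (Remark \ref{rem3.2}), the estimate $-\frac1\varepsilon(\mathcal{L}\partial^\alpha f,\partial^\alpha f)\ge \frac{c_1}\varepsilon\|\partial^\alpha f\|_\nu^2$ holds without any loss of macroscopic-direction terms. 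This single clean coercivity term is exactly the dissipation $\frac1\varepsilon\sum_{|\alpha|\le N-1}\|\partial^\alpha f\|_\nu^2$ we want on the left.

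The bulk of the work is to bound the remaining right-hand side terms of \eqref{4.11} against $\partial^\alpha f$. The linear collision terms $\frac1\varepsilon\Gamma(\frac{M-\mu}{\sqrt\mu},f)$ and $\frac1\varepsilon\Gamma(f,\frac{M-\mu}{\sqrt\mu})$ are controlled by Lemma \ref{lem5.7} (the $|\alpha|\le N-1$ case), giving $C(\eta_0+\varepsilon^{1/2-a})\mathcal D_N(t)$. The nonlinear term $\frac1\varepsilon\Gamma(\frac{G}{\sqrt\mu},\frac{G}{\sqrt\mu})$ is handled by Lemma \ref{lem5.10}, producing $C(\eta_0+\varepsilon^{1/2-a})\mathcal D_N(t)+C\varepsilon\cdot\varepsilon^{2-2a}$. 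The electromagnetic term $\frac{(E+v\times B)\cdot\nabla_v(\sqrt\mu f)}{\sqrt\mu}$ is covered by Lemma \ref{lem5.14}. For the correction-term contributions $\frac{P_1(v\cdot\nabla_x\overline G)}{\sqrt\mu}$, $\frac{\partial_t\overline G}{\sqrt\mu}$, and the $(E+v\times B)\cdot\nabla_v\overline G/\sqrt\mu$ piece, I would use Lemma \ref{lem5.3} (together with the analogue \eqref{5.19a} for $\partial_t\overline G$) to bound them in $L^2$ by $C\eta_0\varepsilon$ for $|\alpha|\le N-1$, hence by Cauchy--Schwarz their contribution is $\le \eta\frac1\varepsilon\|\partial^\alpha f\|_\nu^2+C_\eta\varepsilon\cdot\varepsilon^{2-2a}$, absorbing the first piece into the dissipation. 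For the term $\frac{P_0[v\cdot\nabla_x(\sqrt\mu f)]}{\sqrt\mu}$, I would note that $P_0$ projects onto the five fluid modes and each of its coefficients is an integral $\langle v\cdot\nabla_x(\sqrt\mu f),\chi_i/M\rangle$; moving one derivative (an integration by parts in $x$ is not even needed here since $\nabla_x$ already acts on $f$), this yields a product of $\nabla_x$-derivatives of $f$ with smooth velocity weights, bounded by $C\|\nabla_x\partial^\alpha f\|_\nu\|\partial^\alpha f\|_\nu$, which upon Cauchy--Schwarz gives a contribution $\le \eta\frac1\varepsilon\|\partial^\alpha f\|_\nu^2 + C_\eta\varepsilon\|\nabla_x\partial^\alpha f\|_\nu^2$; the second term is exactly of the form $\varepsilon\sum_{1\le|\alpha|\le N}\|\partial^\alpha f\|_\nu^2$ appearing on the right-hand side of \eqref{6.55}. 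Finally the last linear term $-\frac{1}{\sqrt\mu}P_1\{v\cdot(\frac{|v-u|^2\nabla_x\widetilde\theta}{2R\theta^2}+\frac{(v-u)\cdot\nabla_x\widetilde u}{R\theta})M\}$ is a purely microscopic quantity whose $L^2_v$ norm is $\lesssim |\nabla_x(\widetilde u,\widetilde\theta)|$ pointwise (with velocity-decay weights harmless against $\mu^{-1/2}M$ as in \eqref{4.25}), so after taking $\partial^\alpha$ and using Leibniz together with \eqref{3.3}, \eqref{4.13}, its inner product with $\partial^\alpha f$ is bounded by $\eta\frac1\varepsilon\|\partial^\alpha f\|_\nu^2 + C_\eta\varepsilon\sum_{1\le|\alpha|\le N}\|\partial^\alpha(\widetilde u,\widetilde\theta)\|^2$, again of the form allowed on the right side of \eqref{6.55}.

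After collecting all contributions and choosing $\eta>0$ small so that the finitely many $\eta\frac1\varepsilon\|\partial^\alpha f\|_\nu^2$ terms are absorbed by the coercive $\frac{c_1}\varepsilon\|\partial^\alpha f\|_\nu^2$, I obtain a differential inequality
\begin{align*}
\frac12\frac{d}{dt}\sum_{|\alpha|\le N-1}\|\partial^\alpha f\|^2 + c\frac1\varepsilon\sum_{|\alpha|\le N-1}\|\partial^\alpha f\|_\nu^2
&\le C\varepsilon\sum_{1\le|\alpha|\le N}\{\|\partial^\alpha(\widetilde u,\widetilde\theta)\|^2+\|\partial^\alpha f\|_\nu^2\}\\
&\quad + C(\eta_0+\varepsilon^{1/2-a})\mathcal D_N(t) + C\varepsilon\cdot\varepsilon^{2-2a}.
\end{align*}
Integrating in $t$ from $0$ to $t$ and using the initial bound $\mathcal E_N(0)\le C\eta_0^2\varepsilon^2$ from \eqref{4.17} (which controls $\sum_{|\alpha|\le N-1}\|\partial^\alpha f(0)\|^2$), together with the observation that $\int_0^t \varepsilon\cdot\varepsilon^{2-2a}\,ds = t\varepsilon\cdot\varepsilon^{2-2a}$ is dominated by $\varepsilon^{1/2-a}t\cdot\varepsilon^{2-2a}$ via \eqref{4.22a} (and the constant piece by $\eta_0\varepsilon^{2-2a}$), yields exactly \eqref{6.55}.

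The main obstacle I anticipate is the careful treatment of the term $\frac{P_0[v\cdot\nabla_x(\sqrt\mu f)]}{\sqrt\mu}$: because $P_0$ depends on $(\rho,u,\theta)$ through the local Maxwellian $M$, taking $\partial^\alpha$ generates both a "good" term where all derivatives land on $f$ (handled as above) and commutator terms where derivatives hit the coefficients $\chi_i/M$; these commutators must be shown to be either lower order or of the form $\varepsilon\sum\|\partial^\alpha(\widetilde u,\widetilde\theta)\|^2$ plus $\mathcal D_N$-absorbable pieces, using \eqref{4.22A}--\eqref{4.23A}, \eqref{4.27}, \eqref{4.34}--\eqref{4.36}. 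A secondary subtlety is ensuring that whenever a top-order $|\alpha|=N-1$ derivative of $f$ is differentiated once more in $x$ (producing $\|\nabla_x\partial^\alpha f\|_\nu$ with $|\alpha|=N-1$, i.e. an $N$-th order norm), this is precisely the term $\varepsilon\sum_{|\alpha|=N}\|\partial^\alpha f\|_\nu^2$ permitted on the right-hand side of \eqref{6.55} and is never illegitimately absorbed into the $(N-1)$-order dissipation on the left.
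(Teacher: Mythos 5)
Your proposal follows essentially the same route as the paper's proof: the energy estimate on \eqref{4.11} with $\partial^{\alpha}f$, coercivity of $\mathcal{L}$ from the pure microscopy of $f$, Lemmas \ref{lem5.7}, \ref{lem5.10}, \ref{lem5.14} for the collision and electromagnetic terms, Lemma \ref{lem5.3} and \eqref{5.19a} for the $\overline{G}$ contributions, and the absorption/Cauchy--Schwarz treatment of the $P_{0}$ and macroscopic-forcing terms before integrating in time with $\mathcal{E}_{N}(0)\leq C\eta_{0}^{2}\varepsilon^{2}$. The details you flag (commutators from the $(\rho,u,\theta)$-dependence of $P_{0}$, and routing the $N$-th order norm $\|\nabla_{x}\partial^{\alpha}f\|_{\nu}$ with $|\alpha|=N-1$ to the right-hand side rather than absorbing it) are exactly how the paper handles them, so the argument is correct.
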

\begin{proof}
Applying $\partial^{\alpha}$ with $|\alpha|\leq N-1$ to \eqref{4.11} 
and then taking the inner product of the resulting identity with $\partial^{\alpha}f$, one gets
\begin{align}
\label{6.56}
\frac{1}{2}&\frac{d}{dt}\|\partial^{\alpha}f\|^{2}+(v\cdot\nabla_{x}\partial^{\alpha}f,\partial^{\alpha}f)
-\frac{1}{\varepsilon}(\mathcal{L}\partial^{\alpha}f,\partial^{\alpha}f)
\nonumber\\
=&(\frac{\partial^{\alpha}[(E+v\times B)\cdot\nabla_{v}\overline{G}]}{\sqrt{\mu}},\partial^{\alpha}f)
+(\frac{\partial^{\alpha}[(E+v\times B)\cdot\nabla_{v}(\sqrt{\mu}f)]}{\sqrt{\mu}},\partial^{\alpha}f)
\nonumber\\
&+\frac{1}{\varepsilon}(\partial^{\alpha}\Gamma(\frac{M-\mu}{\sqrt{\mu}},f)
+\partial^{\alpha}\Gamma(f,\frac{M-\mu}{\sqrt{\mu}}),\partial^{\alpha}f)
+\frac{1}{\varepsilon}(\partial^{\alpha}\Gamma(\frac{G}{\sqrt{\mu}},\frac{G}{\sqrt{\mu}}),\partial^{\alpha}f)
\nonumber\\
&+(\frac{\partial^{\alpha}P_{0}(v\sqrt{\mu}\cdot\nabla_{x}f)}{\sqrt{\mu}},\partial^{\alpha}f)-(\frac{\partial^{\alpha}P_{1}(v\cdot\nabla_{x}\overline{G})}{\sqrt{\mu}},\partial^{\alpha}f)
-(\frac{\partial^{\alpha}\partial_{t}\overline{G}}{\sqrt{\mu}},\partial^{\alpha}f)
\nonumber\\
&-(\frac{1}{\sqrt{\mu}}\partial^{\alpha}P_{1}\big\{v\cdot(\frac{|v-u|^{2}\nabla_{x}\widetilde{\theta}}{2R\theta^{2}}
+\frac{(v-u)\cdot\nabla_{x}\widetilde{u}}{R\theta})M\big\},\partial^{\alpha}f).
\end{align}
Let's now compute \eqref{6.56} term by term.
The second term on the LHS of \eqref{6.56} vanishes by integration by parts.
For the third term on the LHS of \eqref{6.56}, we have from \eqref{4.28} that
\begin{equation*}
-\frac{1}{\varepsilon}(\mathcal{L}\partial^{\alpha}f,\partial^{\alpha}f)\geq c_1\frac{1}{\varepsilon}\|\partial^{\alpha}f\|^{2}_{\nu}.
\end{equation*}
For the first term on the RHS of \eqref{6.56}, we get by \eqref{4.19}, \eqref{1.4}, \eqref{3.3} and \eqref{4.13} that
\begin{align}
\label{5.38A}
|(\frac{\partial^{\alpha}[(E+v\times B)\cdot\nabla_{v}\overline{G}]}{\sqrt{\mu}},\partial^{\alpha}f)|
&\leq \eta\frac{1}{\varepsilon}\|\partial^{\alpha}f\|^{2}+C_\eta\varepsilon\|\frac{\partial^{\alpha}[(E+v\times B)\cdot\nabla_{v}\overline{G}]}{\sqrt{\mu}}\|^2
\nonumber\\
&\leq C\eta\frac{1}{\varepsilon}\|\partial^{\alpha}f\|_{\nu}^{2}+C_{\eta}[\eta_0(1+t)^{-\vartheta}+\varepsilon^{\frac{1}{2}-a}]\varepsilon^{2-2a}.
\end{align}
By \eqref{5.72}, \eqref{5.27} and \eqref{5.46}, it is clear that
the second, third, fourth terms on the RHS of \eqref{6.56} are bounded by
\begin{equation*}
C(\eta_{0}+\varepsilon^{\frac{1}{2}-a})\mathcal{D}_{N}(t)+C\varepsilon\varepsilon^{2-2a}.
\end{equation*}
For the fifth term on the RHS of \eqref{6.56}, making use of \eqref{2.5}, \eqref{4.25},
the similar arguments as \eqref{4.36}, \eqref{1.4}, \eqref{3.3}, \eqref{4.13} and \eqref{4.22a}, one gets
\begin{align*}
|(\frac{\partial^{\alpha}P_{0}(v\sqrt{\mu}\cdot\nabla_{x}f)}{\sqrt{\mu}},\partial^{\alpha}f)|
&=|(\frac{1}{\sqrt{\mu}}\sum_{i=0}^{4}\partial^{\alpha}[\langle v\sqrt{\mu}\cdot\nabla_{x}f,\frac{\chi_{i}}{M}\rangle\chi_{i}],\partial^{\alpha}f)|
\\	
&\leq C\|\partial^{\alpha}f\|\|\frac{1}{\sqrt{\mu}}\sum_{i=0}^{4}\partial^{\alpha}[\langle v\sqrt{\mu}\cdot\nabla_{x}f,\frac{\chi_{i}}{M}\rangle\chi_{i}]\|
\\
&\leq  \eta\frac{1}{\varepsilon}\|\partial^{\alpha}f\|_{\nu}^{2}
+C_{\eta}\varepsilon\|\partial^{\alpha}\nabla_{x}f\|^{2}+C_{\eta}[\eta_0(1+t)^{-\vartheta}+\varepsilon^{\frac{1}{2}-a}]\varepsilon^{2-2a}.
\end{align*}
For the sixth and seventh terms on the RHS of \eqref{6.56}, in view of \eqref{2.5}, Lemma \ref{lem5.3}, \eqref{5.19a}, \eqref{1.4} and \eqref{4.22a}, one gets
\begin{align*}
&|(\frac{\partial^{\alpha}P_{1}(v\cdot\nabla_{x}\overline{G})}{\sqrt{\mu}},\partial^{\alpha}f)|+
|(\frac{\partial^{\alpha}\partial_{t}\overline{G}}{\sqrt{\mu}},\partial^{\alpha}f)|
\\
&\leq \eta\frac{1}{\varepsilon}\|\partial^{\alpha}f\|^{2}
+C_{\eta}\varepsilon(\|\frac{\partial^{\alpha}P_{1}(v\cdot\nabla_{x}\overline{G})}{\sqrt{\mu}}\|^{2}
+\|\frac{\partial^{\alpha}\partial_{t}\overline{G}}{\sqrt{\mu}}\|^{2})
\\
&\leq C\eta\frac{1}{\varepsilon}\|\partial^{\alpha}f\|_{\nu}^{2}+C_{\eta}[\eta_0(1+t)^{-\vartheta}+\varepsilon^{\frac{1}{2}-a}]\varepsilon^{2-2a}.
\end{align*}
For the last term on the RHS of \eqref{6.56}, we can bound it by
\begin{align*}	
&\eta\frac{1}{\varepsilon}\|\partial^{\alpha}f\|^{2}
+C_{\eta}\varepsilon\|\frac{1}{\sqrt{\mu}}\partial^{\alpha}P_{1}\big\{v\cdot(\frac{|v-u|^{2}
\nabla_{x}\widetilde{\theta}}{2R\theta^{2}}+\frac{(v-u)\cdot\nabla_{x}\widetilde{u}}{R\theta})M\big\}\|^{2}	
\\
&\leq C\eta\frac{1}{\varepsilon}\|\partial^{\alpha}f\|_{\nu}^{2}
+ C_{\eta}\varepsilon\|\partial^{\alpha}(\nabla_{x}\widetilde{u},\nabla_{x}\widetilde{\theta})\|^{2}
+C_{\eta}[\eta_0(1+t)^{-\vartheta}+\varepsilon^{\frac{1}{2}-a}]\varepsilon^{2-2a}.
\end{align*}
Consequently, substituting all the above estimates into \eqref{6.56} and choosing $\eta>0$ small enough, then
the summation of the resulting equation over $|\alpha|$ with $|\alpha|\leq N-1$, we have
\begin{align}
\label{6.57}
&\sum_{|\alpha|\leq N-1}(\frac{d}{dt}\|\partial^{\alpha}f\|^{2}+\frac{c_1}{2}\frac{1}{\varepsilon}\|\partial^{\alpha}f\|_{\nu}^{2})
\nonumber\\
\leq& C\varepsilon\sum_{|\alpha|\leq N-1}(\|\partial^{\alpha}(\nabla_{x}\widetilde{u},\nabla_{x}\widetilde{\theta})\|^{2}+\|\partial^{\alpha}\nabla_{x}f\|^{2})
\nonumber\\
&+C(\eta_{0}+\varepsilon^{\frac{1}{2}-a})\mathcal{D}_N(t)+C[\eta_0(1+t)^{-\vartheta}+\varepsilon^{\frac{1}{2}-a}]\varepsilon^{2-2a}.
\end{align}
Integrating \eqref{6.57} with respect to $t$ and using $\mathcal{E}_{N}(0)\leq C\eta^2_{0}\varepsilon^{2}$ as well as $\vartheta>1$,
we can obtain the desired estimate \eqref{6.55}. This completes the proof of Lemma \ref{lem6.7}.
\end{proof}
With the help of Lemma \ref{lem6.6} and  Lemma \ref{lem6.7}, we immediately obtain the space derivative estimates
for both the fluid part and the non-fluid part  up to $(N-1)$-order.
\begin{lemma}\label{lem6.9}
It holds that
\begin{align}
\label{6.61}
&\sum_{|\alpha|\leq N-1}\{\|\partial^{\alpha}(\widetilde{\rho},\widetilde{u},\widetilde{\theta},\widetilde{E},\widetilde{B})(t)\|^2+\|\partial^{\alpha}f(t)\|^{2}\}
\nonumber\\
&\hspace{0.5cm}+c\varepsilon\sum_{1\leq |\alpha|\leq N}\int^t_0\|\partial^{\alpha}(\widetilde{\rho},\widetilde{u},\widetilde{\theta})(s)\|^2\,ds
+c\frac{1}{\varepsilon}\sum_{|\alpha|\leq N-1}\int^t_0\|\partial^{\alpha}f(s)\|_{\nu}^{2}\,ds
\nonumber\\
\leq& 
C\varepsilon^2\sum_{|\alpha|=N}
(\|\partial^{\alpha}(\widetilde{\rho},\widetilde{u},\widetilde{\theta})(t)\|^2+
\|\partial^{\alpha}f(t)\|^2)+ C\varepsilon\sum_{|\alpha|=N}\int^t_0\| \partial^{\alpha}f(s)\|^2_\nu\,ds
\nonumber\\
&+C(\eta_{0}+\varepsilon^{\frac{1}{2}-a})\int^t_0\mathcal{D}_N(s)\,ds+C[\eta_{0}+\varepsilon+(\eta_{0}\varepsilon^a+\varepsilon^{\frac{1}{2}-a})t]\varepsilon^{2-2a}.
\end{align}
\end{lemma}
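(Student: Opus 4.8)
The plan is to deduce \eqref{6.61} from Lemma \ref{lem6.6} and Lemma \ref{lem6.7} by a linear combination followed by absorption, exploiting the smallness of $\varepsilon$ and $\eta_0$. Concretely, I would fix a small constant $\lambda>0$, to be specified below, and add \eqref{6.34} to $\lambda$ times \eqref{6.55}. On the left-hand side this produces, up to renaming the generic constants, exactly the quantity on the left of \eqref{6.61}, namely
\begin{align*}
&\sum_{|\alpha|\leq N-1}\{\|\partial^{\alpha}(\widetilde{\rho},\widetilde{u},\widetilde{\theta},\widetilde{E},\widetilde{B})(t)\|^2+\lambda\|\partial^{\alpha}f(t)\|^{2}\}
+c\varepsilon\sum_{1\leq|\alpha|\leq N}\int^t_0\|\partial^{\alpha}(\widetilde{\rho},\widetilde{u},\widetilde{\theta})(s)\|^2\,ds
\\
&\quad+\lambda\frac{c}{\varepsilon}\sum_{|\alpha|\leq N-1}\int^t_0\|\partial^{\alpha}f(s)\|_{\nu}^{2}\,ds .
\end{align*}

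The right-hand side of \eqref{6.34}$+\lambda\times$\eqref{6.55} carries, besides the already acceptable terms $C\varepsilon^2\sum_{|\alpha|=N}(\|\partial^{\alpha}(\widetilde{\rho},\widetilde{u},\widetilde{\theta})(t)\|^2+\|\partial^{\alpha}f(t)\|^2)$, $C(\eta_{0}+\varepsilon^{\frac{1}{2}-a})\int^t_0\mathcal{D}_N(s)\,ds$, and the error $C[\eta_{0}+\varepsilon+(\eta_{0}\varepsilon^a+\varepsilon^{\frac{1}{2}-a})t]\varepsilon^{2-2a}+\lambda C(\eta_{0}+\varepsilon^{\frac{1}{2}-a}t)\varepsilon^{2-2a}$, the three cross terms $\lambda C\varepsilon\sum_{1\leq|\alpha|\leq N}\int^t_0\|\partial^{\alpha}(\widetilde{u},\widetilde{\theta})\|^2\,ds$, $C\varepsilon\sum_{1\leq|\alpha|\leq N}\int^t_0\|\partial^{\alpha}f\|^2\,ds$ and $\lambda C\varepsilon\sum_{1\leq|\alpha|\leq N}\int^t_0\|\partial^{\alpha}f\|_{\nu}^{2}\,ds$. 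First I would choose $\lambda$ small enough, depending only on the structural constants $c$ and $C$, so that $\lambda C\varepsilon\sum_{1\leq|\alpha|\leq N}\int^t_0\|\partial^{\alpha}(\widetilde{u},\widetilde{\theta})\|^2\,ds$ is absorbed by $\tfrac12 c\varepsilon\sum_{1\leq|\alpha|\leq N}\int^t_0\|\partial^{\alpha}(\widetilde{\rho},\widetilde{u},\widetilde{\theta})\|^2\,ds$ on the left. With $\lambda$ now \emph{fixed}, I would shrink $\varepsilon$ so that $C\varepsilon^2\leq\tfrac14\lambda c$; since $|\cdot|_2\leq|\cdot|_\nu$, this lets the portions with $|\alpha|\leq N-1$ of both $C\varepsilon\int^t_0\|\partial^{\alpha}f\|^2$ and $\lambda C\varepsilon\int^t_0\|\partial^{\alpha}f\|_{\nu}^{2}$ (here using $\lambda\le1$) be absorbed by $\tfrac{\lambda c}{2\varepsilon}\sum_{|\alpha|\leq N-1}\int^t_0\|\partial^{\alpha}f\|_{\nu}^{2}$ on the left. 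The portions with $|\alpha|=N$ of these two sums, each bounded by $C\varepsilon\sum_{|\alpha|=N}\int^t_0\|\partial^{\alpha}f\|_{\nu}^{2}\,ds$ via $|\cdot|_2\le|\cdot|_\nu$ and $\lambda\le1$, cannot be absorbed and are simply retained on the right; this is exactly the term $C\varepsilon\sum_{|\alpha|=N}\int^t_0\|\partial^{\alpha}f(s)\|^2_\nu\,ds$ in \eqref{6.61}.

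After these absorptions, collecting the remaining right-hand side contributions and merging the two error expressions into $C[\eta_{0}+\varepsilon+(\eta_{0}\varepsilon^a+\varepsilon^{\frac{1}{2}-a})t]\varepsilon^{2-2a}$ (using $\lambda\le1$), and finally absorbing the factor $\lambda$ in front of $\|\partial^{\alpha}f\|^2$ and $\varepsilon^{-1}\|\partial^\alpha f\|_\nu^2$ into the generic constants, gives \eqref{6.61}. I do not expect a genuine obstacle here: the argument is purely a bookkeeping step built on the two preceding lemmas. The one point requiring care is the order of the choices — the combination constant $\lambda$ must be fixed first, using only the structural constants, and only afterwards may $\varepsilon$ be further diminished so that comparisons of the form $\varepsilon\le C\varepsilon^{-1}$ become available; reversing this order would render the absorption of the $f$-dissipation circular.
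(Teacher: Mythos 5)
Your proposal is correct and is essentially the paper's own argument: the paper multiplies \eqref{6.34} by a large constant and adds \eqref{6.55}, then takes $\varepsilon$ small, which is the same linear-combination-plus-absorption step as your weighting of \eqref{6.55} by a small $\lambda$ (with $\lambda=1/C$), including the crucial order of choices (fix the combination constant first, then shrink $\varepsilon$).
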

\begin{proof}
By multiplying \eqref{6.34} with a large positive constant $C$ and then adding the resultant inequality
to \eqref{6.55}, the estimate \eqref{6.61}  follows by letting $\varepsilon$ be small enough.	
We thus finish the proof of Lemma \ref{lem6.9}.
\end{proof}
\subsection{$N$-order space derivative estimates}\label{seca.6.3} 
To complete the estimates on space derivatives of all orders, we still deal with the highest $N$-order space derivatives. 
\begin{lemma}
\label{lem6.10}
Under the conditions listed in Lemma \ref{lem6.6}, one has
\begin{align}
\label{6.62}
\varepsilon^{2}&\sum_{|\alpha|=N}\{\|\partial^{\alpha}(\widetilde{\rho},\widetilde{u},\widetilde{\theta},\widetilde{E},\widetilde{B})(t)\|^{2}+\|\partial^{\alpha}f(t)\|^{2}\}
+\varepsilon\sum_{|\alpha|=N}\int^{t}_{0}\|\partial^{\alpha}f(s)\|_{\nu}^{2}\,ds
\nonumber\\
\leq&C(\eta_{0}+\varepsilon^{\frac{1}{2}-a})\int^{t}_{0}\mathcal{D}_N(s)\,ds+C[\eta_{0}+\varepsilon^{\frac{1}{2}}+(\eta_{0}\varepsilon^{2a}+\varepsilon^{\frac{1}{2}-a})t]\varepsilon^{2-2a}.
\end{align}
\end{lemma}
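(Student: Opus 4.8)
\textbf{Proof plan for Lemma \ref{lem6.10}.}
The strategy is to run the natural high-order energy estimate for the equation \eqref{4.12} at the top order $|\alpha|=N$, taking the inner product of $\partial^\alpha$ applied to \eqref{4.12} with $\partial^\alpha F/\sqrt\mu$, and to pay for the extra singularities in $\varepsilon$ and in the top-order fluid norms by carrying the weight $\varepsilon^2$ in front of the $N$-th order terms of $\mathcal E_N$, exactly as anticipated in the discussion after \eqref{4.44}. First I would apply $\partial^\alpha$ with $|\alpha|=N$ to \eqref{4.12}, pair with $\partial^\alpha F/\sqrt\mu$, and organize the identity so that the main dissipative contribution comes from the linearized operator: by Remark \ref{rem3.2}, $f\in(\ker\mathcal L)^\perp$, hence the term $-\tfrac1\varepsilon(\mathcal L\partial^\alpha f,\partial^\alpha F/\sqrt\mu)$ splits, via $F=M+\overline G+\sqrt\mu f$, into $-\tfrac1\varepsilon(\mathcal L\partial^\alpha f,\partial^\alpha f)\ge \tfrac{c_1}\varepsilon\|\partial^\alpha f\|_\nu^2$ plus cross terms with $\partial^\alpha M/\sqrt\mu$ and $\partial^\alpha\overline G/\sqrt\mu$ that are handled by \eqref{4.23A}, \eqref{4.20}, \eqref{4.25} and absorbed (after multiplying by $\varepsilon^2$) into $C(\eta_0+\varepsilon^{1/2-a})\varepsilon^{-2}\mathcal D_N(t)\cdot\varepsilon^2$ and lower order remainders.

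Next I would dispatch the remaining terms group by group using the lemmas already assembled in Section \ref{seca.4}. The transport term $(v\cdot\nabla_x\partial^\alpha F/\sqrt\mu,\partial^\alpha F/\sqrt\mu)$ is controlled after integration by parts together with \eqref{4.22A}, producing at worst $C(\eta_0+\varepsilon^{1/2-a})\|\partial^\alpha F/\sqrt\mu\|_\nu^2$ which, by \eqref{4.49}, is $C(\eta_0+\varepsilon^{1/2-a})(\|\partial^\alpha f\|_\nu^2+\|\partial^\alpha(\widetilde\rho,\widetilde u,\widetilde\theta)\|^2+(\eta_0+\varepsilon^{1-a})^2)$; multiplying by $\varepsilon^2$ puts the first two pieces into $C(\eta_0+\varepsilon^{1/2-a})\mathcal D_N(t)$ and leaves an $O(\varepsilon^{2})$ remainder. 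The collision terms $\tfrac1\varepsilon\partial^\alpha\Gamma(\tfrac{M-\mu}{\sqrt\mu},f)$, $\tfrac1\varepsilon\partial^\alpha\Gamma(f,\tfrac{M-\mu}{\sqrt\mu})$ are exactly Lemma \ref{lem5.8}, and $\tfrac1\varepsilon\partial^\alpha\Gamma(\tfrac G{\sqrt\mu},\tfrac G{\sqrt\mu})$ is Lemma \ref{lem5.11}; the electromagnetic term $(\partial^\alpha[(E+v\times B)\cdot\nabla_v F]/\sqrt\mu,\partial^\alpha F/\sqrt\mu)$ is Lemma \ref{lem5.12}, which crucially supplies the negative time-derivative terms $-\tfrac12\tfrac{d}{dt}(\tfrac1{R\theta}\partial^\alpha\widetilde E,\partial^\alpha\widetilde E)-\tfrac12\tfrac{d}{dt}(\tfrac1{R\theta}\partial^\alpha\widetilde B,\partial^\alpha\widetilde B)$ that, after multiplication by $\varepsilon^2$ and using $\tfrac1{R\theta}\approx1$ from \eqref{4.16}, reconstruct (up to $\tfrac{d}{dt}$-free lower order errors) the quantities $\varepsilon^2\|\partial^\alpha\widetilde E\|^2$ and $\varepsilon^2\|\partial^\alpha\widetilde B\|^2$ in $\mathcal E_N$. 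The term $\tfrac1\varepsilon(L_M\overline G/\sqrt\mu,\partial^\alpha F/\sqrt\mu)$ (from the last term of \eqref{4.12}) is rewritten with \eqref{4.8} and \eqref{2.11}, so that $L_M\overline G=\varepsilon L_M\{\cdots M\}-\varepsilon^2\,(\text{something})$ type identities reduce it to $P_1$-projections of $v\cdot\nabla_x M$ with $\widetilde\theta,\widetilde u$-gradients, bounded by $C\varepsilon\|\partial^\alpha(\nabla_x\widetilde u,\nabla_x\widetilde\theta)\|\cdot\|\partial^\alpha F/\sqrt\mu\|_\nu$, again absorbable.

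After collecting everything, integrating in $t$ from $0$ to $t$, and using the initial bound $\mathcal E_N(0)\le C\eta_0^2\varepsilon^2$ from \eqref{4.17} together with $\vartheta>1$ to integrate the $(1+s)^{-\vartheta}$ factors and $\int_0^t\varepsilon^{-2a}\,ds\le t\varepsilon^{-2a}$ for the time-growing ones, I would arrive at an inequality of the form
\begin{align*}
\varepsilon^2\sum_{|\alpha|=N}&\big(\|\partial^\alpha(\widetilde\rho,\widetilde u,\widetilde\theta,\widetilde E,\widetilde B)(t)\|^2+\|\partial^\alpha f(t)\|^2\big)+\varepsilon\sum_{|\alpha|=N}\int_0^t\|\partial^\alpha f(s)\|_\nu^2\,ds\\
&\le \varepsilon^2\sum_{|\alpha|=N}(\text{boundary terms at }t)+C(\eta_0+\varepsilon^{1/2-a})\int_0^t\mathcal D_N(s)\,ds+C[\cdots]\varepsilon^{2-2a},
\end{align*}
and the leftover $\tfrac{d}{dt}$-exact ``boundary terms'' produced by the integration by parts in time (for instance the $B_{ij}$-weighted $\partial^\alpha f\cdot\partial^\alpha\nabla_x\widetilde u$ brackets analogous to \eqref{5.30B}, and the $\tfrac1{R\theta}$-weighted $\widetilde E,\widetilde B$ brackets) are, by Cauchy--Schwarz, \eqref{4.13A}, \eqref{3.3} and \eqref{4.22a}, bounded by $\tfrac12\varepsilon^2\sum_{|\alpha|=N}(\|\partial^\alpha(\widetilde\rho,\widetilde u,\widetilde\theta,\widetilde E,\widetilde B)(t)\|^2+\|\partial^\alpha f(t)\|^2)+C(\eta_0+\varepsilon^{1/2-a})\varepsilon^{2-2a}$, so that half of the instant energy is absorbed to the left. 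This yields \eqref{6.62}. The main obstacle I expect is the careful bookkeeping in the electromagnetic estimate of Lemma \ref{lem5.12}: one must make sure that after multiplying by $\varepsilon^2$ the term $C(\eta_0+\varepsilon^{1/2-a})\varepsilon^{-2}\mathcal D_N(t)$ becomes genuinely small relative to $\mathcal D_N$ (so its time integral stays on the right), that the $\varepsilon^{-2a}$-weighted source $C[\eta_0(1+t)^{-\vartheta}+\eta_0\varepsilon^{2a}+\varepsilon^{1/2-a}]\varepsilon^{-2a}$ — once multiplied by $\varepsilon^2$ — is exactly $C[\eta_0(1+t)^{-\vartheta}+\eta_0\varepsilon^{2a}+\varepsilon^{1/2-a}]\varepsilon^{2-2a}$, matching the stated right-hand side, and that all the $\varepsilon$-powers from the $L^2$--$L^3$--$L^6$ Hölder splittings in the commutator terms close under the constraint $a\in[0,1/2)$ via \eqref{4.22a}; the temporal-derivative controls \eqref{5.47b} for $\widetilde\rho,\widetilde u,\widetilde\theta$ at order $N-1$ are what make the $\partial_t$-integration-by-parts brackets admissible at the top order.
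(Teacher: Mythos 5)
Your overall architecture matches the paper's: pair $\partial^\alpha$ of \eqref{4.12} with $\partial^{\alpha}F/\sqrt{\mu}$, use Lemmas \ref{lem5.8}, \ref{lem5.11}, \ref{lem5.12} for the collision and electromagnetic terms, multiply by $\varepsilon^{2}$, and integrate in time. But there is a genuine gap at the single most delicate point, which the paper flags explicitly: the cross term $\frac{1}{\varepsilon}(\mathcal{L}\partial^{\alpha}f,\frac{\partial^{\alpha}M}{\sqrt{\mu}})$ arising from the split $F=M+\overline{G}+\sqrt{\mu}f$. You assert that this term is "handled by \eqref{4.23A}, \eqref{4.20}, \eqref{4.25} and absorbed into $C(\eta_{0}+\varepsilon^{\frac12-a})\varepsilon^{-2}\mathcal{D}_{N}(t)$", but none of those estimates produces a smallness factor for the leading part of $\partial^{\alpha}M$, namely $M\{\frac{\partial^{\alpha}\widetilde{\rho}}{\rho}+\frac{(v-u)\cdot\partial^{\alpha}\widetilde{u}}{R\theta}+(\frac{|v-u|^{2}}{2R\theta}-\frac32)\frac{\partial^{\alpha}\widetilde{\theta}}{\theta}\}$, whose coefficient is $O(1)$. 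A direct bound via \eqref{4.30} only gives $\frac{1}{\varepsilon}\|\partial^{\alpha}f\|_{\nu}\|\partial^{\alpha}(\widetilde{\rho},\widetilde{u},\widetilde{\theta})\|\leq C\frac{1}{\varepsilon^{2}}\mathcal{D}_{N}(t)$ with a \emph{non-small} constant; after multiplying by $\varepsilon^{2}$ this puts $C\int_0^t\mathcal{D}_{N}\,ds$ on the right with $C=O(1)$, which cannot be absorbed (the lemma's left side has no top-order fluid dissipation, and the bootstrap in \eqref{6.1a}--\eqref{6.2a} requires the prefactor of $\int\mathcal{D}_{N}$ to be $\eta_0+\varepsilon^{\frac12-a}$). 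The missing idea is the kernel cancellation used in the paper: rewrite $I_1+I_2=\widetilde{I}^{1}+\widetilde{I}^{2}+\widetilde{I}^{3}$ with $\widetilde{I}^{1}$ proportional to $\mu$ times collision invariants, so $\widetilde{I}^{1}/\sqrt{\mu}\in\ker\mathcal{L}$ and $(\mathcal{L}\partial^{\alpha}f,\widetilde{I}^{1}/\sqrt{\mu})=0$; the remainders carry the factor $M-\mu$, hence the smallness $\eta_0+\varepsilon^{1-a}$ via \eqref{5.22}, while the pure background pieces ($\partial^{\alpha}\bar{\rho},\partial^{\alpha}\bar{u},\partial^{\alpha}\bar{\theta}$) are treated by moving one derivative onto the smooth Euler--Maxwell solution as in \eqref{5.32}. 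Without this step the estimate \eqref{6.62} does not close.

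Two secondary inaccuracies are worth correcting, though they are not fatal. First, the transport term $(v\cdot\nabla_{x}\partial^{\alpha}F/\sqrt{\mu},\partial^{\alpha}F/\sqrt{\mu})$ vanishes identically upon integration by parts in $x$ (since $\mu$ is $x$-independent); it needs no bound via \eqref{4.22A}, and your claimed estimate for it, while an overestimate of zero, signals a misreading of \eqref{4.12}. Second, by \eqref{4.6} and \eqref{4.8} the term $\frac{1}{\varepsilon}L_{M}\overline{G}/\sqrt{\mu}$ involves the \emph{background} gradients $\nabla_{x}\bar{u},\nabla_{x}\bar{\theta}$ (bounded by $\eta_0$ through \eqref{3.3}), not $\nabla_{x}\widetilde{u},\nabla_{x}\widetilde{\theta}$ as you wrote; at order $|\alpha|=N$ the latter would involve $N+1$ derivatives of the perturbation, which are not controlled. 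Finally, the passage from $\varepsilon^{2}\|\partial^{\alpha}F/\sqrt{\mu}\|^{2}$ back to $\varepsilon^{2}(\|\partial^{\alpha}(\widetilde{\rho},\widetilde{u},\widetilde{\theta})\|^{2}+\|\partial^{\alpha}f\|^{2})$ is done in the paper with the lower bound \eqref{4.85A} (Lemma \ref{lem4.20}) together with $\varepsilon^{2}\|\partial^{\alpha}F(0)/\sqrt{\mu}\|^{2}\leq C(\eta_0+\varepsilon^{\frac12-a})\varepsilon^{2}$; the time-integration-by-parts brackets and \eqref{5.47b} you invoke belong to the fluid estimate of Lemma \ref{lem6.6}, not to this lemma.
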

\begin{proof}
Applying $\partial^{\alpha}$ to \eqref{4.12} with $|\alpha|=N$ and then taking the inner product of the resulting equation with $\frac{\partial^{\alpha}F}{\sqrt{\mu}}$,
one gets
\begin{align}
\label{6.63}
\frac{1}{2}\frac{d}{dt}&\|\frac{\partial^{\alpha}F}{\sqrt{\mu}}\|^{2}
-(\frac{\partial^{\alpha}[(E+v\times B)\cdot\nabla_{v}F]}{\sqrt{\mu}},\frac{\partial^{\alpha}F}{\sqrt{\mu}})
-\frac{1}{\varepsilon}(\mathcal{L}\partial^{\alpha}f,\frac{\partial^{\alpha}F}{\sqrt{\mu}})
\nonumber\\
=&\frac{1}{\varepsilon}(\partial^{\alpha}\Gamma(\frac{M-\mu}{\sqrt{\mu}},f),\frac{\partial^{\alpha}F}{\sqrt{\mu}})
+\frac{1}{\varepsilon}(\partial^{\alpha}\Gamma(f,\frac{M-\mu}{\sqrt{\mu}}),\frac{\partial^{\alpha}F}{\sqrt{\mu}})
\nonumber\\
&+\frac{1}{\varepsilon}(\partial^{\alpha}\Gamma(\frac{G}{\sqrt{\mu}},\frac{G}{\sqrt{\mu}}),\frac{\partial^{\alpha}F}{\sqrt{\mu}})
+\frac{1}{\varepsilon}(\frac{\partial^{\alpha}L_{M}\overline{G}}{\sqrt{\mu}},\frac{\partial^{\alpha}F}{\sqrt{\mu}}).
\end{align}
We will compute each term for \eqref{6.63}. From \eqref{5.49}, it obviously holds that
\begin{align*}
-&(\frac{\partial^{\alpha}[(E+v\times B)\cdot\nabla_{v}F]}{\sqrt{\mu}},\frac{\partial^{\alpha}F}{\sqrt{\mu}})
\nonumber\\
\geq& \frac{1}{2}\frac{d}{dt}(\frac{1}{R\theta}\partial^{\alpha}\widetilde{E},\partial^{\alpha}\widetilde{E})
+\frac{1}{2}\frac{d}{dt}(\frac{1}{R\theta}\partial^{\alpha}\widetilde{B},\partial^{\alpha}\widetilde{B})
\nonumber\\
&-C(\eta_{0}+\varepsilon^{\frac{1}{2}-a})\frac{1}{\varepsilon^2}\mathcal{D}_{N}(t)
-C[\eta_{0}(1+t)^{-\vartheta}+\eta_{0}\varepsilon^{2a}+\varepsilon^{\frac{1}{2}-a}]\varepsilon^{-2a}.
\end{align*}	
In view of the decomposition $F=M+\overline{G}+\sqrt{\mu}f$, it is easy to see
\begin{align}
\label{6.64}
-\frac{1}{\varepsilon}(\mathcal{L}\partial^{\alpha}f,\frac{\partial^{\alpha}F}{\sqrt{\mu}})=-
\frac{1}{\varepsilon}(\mathcal{L}\partial^{\alpha}f,\frac{\partial^{\alpha}M}{\sqrt{\mu}})
-\frac{1}{\varepsilon}(\mathcal{L}\partial^{\alpha}f,\partial^{\alpha}f)
-\frac{1}{\varepsilon}(\mathcal{L}\partial^{\alpha}f,\frac{\partial^{\alpha}\overline{G}}{\sqrt{\mu}}).
\end{align}
Let's carefully treat \eqref{6.64}. For the first term  on the RHS of \eqref{6.64}, we first note that
$\partial^{\alpha}M=I_{1}+I_{2}+I_{3}$ in terms of \eqref{5.30}. The linear term $\frac{1}{\varepsilon}(\mathcal{L}\partial^{\alpha}f,\frac{I_{1}+I_{2}}{\sqrt{\mu}})$ presents significant difficulty so it  cannot be estimated directly. The key technique to handle this term is to use the properties  of the linearized operator  $\mathcal{L}$ and the smallness of 
$M-\mu$. For this, we denote
\begin{align*}
I_{1}+I_{2}=&\mu\big(\frac{\partial^{\alpha}\rho}{\rho}+\frac{(v-u)\cdot\partial^{\alpha}u}{R\theta}+(\frac{|v-u|^{2}}{2R\theta}-\frac{3}{2})\frac{\partial^{\alpha}\theta}{\theta}\big)
\\
&+(M-\mu)\big(\frac{\partial^{\alpha}\widetilde{\rho}}{\rho}
+\frac{(v-u)\cdot\partial^{\alpha}\widetilde{u}}{R\theta}+(\frac{|v-u|^{2}}{2R\theta}
-\frac{3}{2})\frac{\partial^{\alpha}\widetilde{\theta}}{\theta}\big)
\\
&+(M-\mu)\big(\frac{\partial^{\alpha}\bar{\rho}}{\rho}
+\frac{(v-u)\cdot\partial^{\alpha}\bar{u}}{R\theta}+(\frac{|v-u|^{2}}{2R\theta}
-\frac{3}{2})\frac{\partial^{\alpha}\bar{\theta}}{\theta}\big)
\\
:=&\widetilde{I}^{1}+\widetilde{I}^{2}+\widetilde{I}^{3}.
\end{align*}
Since $\frac{\widetilde{I}^{1}}{\sqrt{\mu}}\in\ker{\mathcal{L}}$, it follows that
$(\mathcal{L}f,\frac{\widetilde{I}^{1}}{\sqrt{\mu}})=0$. 
Recall $\mathcal{L}f=\Gamma(\sqrt{\mu},f)+\Gamma(f,\sqrt{\mu})$ given in \eqref{4.9}, we use the similar arguments of \eqref{4.44} to get
\begin{align}
\label{5.61b}
\frac{1}{\varepsilon}|(\mathcal{L}\partial^{\alpha}f,\frac{\widetilde{I}^{2}}{\sqrt{\mu}})|&\leq C(\eta_{0}+\varepsilon^{1-a})\frac{1}{\varepsilon}(\|\partial^{\alpha}f\|^{2}_{\nu}+\|\partial^{\alpha}(\widetilde{\rho},\widetilde{u},\widetilde{\theta})\|^{2})
\nonumber\\
&\leq C(\eta_{0}+\varepsilon^{1-a})\frac{1}{\varepsilon^{2}}\mathcal{D}_{N}(t).
\end{align}
The same arguments as \eqref{5.32} yield that
\begin{align*}
\frac{1}{\varepsilon}|(\mathcal{L}\partial^{\alpha}f,\frac{\widetilde{I}^{3}}{\sqrt{\mu}})|\leq C(\eta_{0}+\varepsilon^{1-a})\frac{1}{\varepsilon^{2}}\mathcal{D}_{N}(t)+C(\eta_{0}+\varepsilon^{1-a})\varepsilon.
\end{align*}
It follows from the  estimates $\widetilde{I}^{1}$, $\widetilde{I}^2$ and $\widetilde{I}^3$ that
\begin{align}
\label{5.64a}
\frac{1}{\varepsilon}|(\mathcal{L}\partial^{\alpha}f,\frac{I_{1}+I_{2}}{\sqrt{\mu}})|
\leq C(\eta_{0}+\varepsilon^{1-a})\frac{1}{\varepsilon^{2}}\mathcal{D}_{N}(t)+C(\eta_{0}+\varepsilon^{1-a})\varepsilon.
\end{align}
Recall $I_3$ given in \eqref{5.30}, we use the similar arguments as \eqref{5.34} to get
\begin{align*}
\frac{1}{\varepsilon}|(\mathcal{L}\partial^{\alpha}f,\frac{I_{3}}{\sqrt{\mu}})|\leq
C(\eta_{0}+\varepsilon^{\frac{1}{2}-a})\frac{1}{\varepsilon^{2}}\mathcal{D}_{N}(t)+C(\eta_{0}+\varepsilon^{1-a})\varepsilon,
\end{align*}
which along with \eqref{5.64a} and the fact $\partial^{\alpha}M=I_{1}+I_{2}+I_{3}$ shows that
\begin{align*}
\frac{1}{\varepsilon}|(\mathcal{L}\partial^{\alpha}f,\frac{\partial^{\alpha}M}{\sqrt{\mu}})|
\leq C(\eta_{0}+\varepsilon^{\frac{1}{2}-a})\frac{1}{\varepsilon^{2}}\mathcal{D}_N(t)+C(\eta_{0}+\varepsilon^{1-a})\varepsilon.
\end{align*}
For the second term on the RHS of \eqref{6.64}, since $f\in (\ker\mathcal{L})^{\perp}$, one gets by \eqref{4.28} that
$$
-\frac{1}{\varepsilon}(\mathcal{L}\partial^{\alpha}f,\partial^{\alpha}f)\geq c_1\frac{1}{\varepsilon}\|\partial^{\alpha}f\|_{\nu}^{2}.
$$
For the third term on the RHS of \eqref{6.64}, using $\mathcal{L}f=\Gamma(\sqrt{\mu},f)+\Gamma(f,\sqrt{\mu})$, \eqref{4.30} and \eqref{4.20},
one gets
\begin{align*}
\frac{1}{\varepsilon}|(\mathcal{L}\partial^{\alpha}f,\frac{\partial^{\alpha}\overline{G}}{\sqrt{\mu}})|
&\leq C\frac{1}{\varepsilon}\|\partial^{\alpha}f\|_{\nu}\|\frac{\partial^{\alpha}\overline{G}}{\sqrt{\mu}}\|_{\nu}\leq \eta_0{\frac{c_1}{2}}\frac{1}{\varepsilon}\|\partial^{\alpha}f\|^{2}_{\nu}+C\eta_0\varepsilon^{1-2a}.
\end{align*}
Hence, putting the aforementioned  three estimates into \eqref{6.64}, we obtain 
\begin{align}
\label{6.66}
-\frac{1}{\varepsilon}(\mathcal{L}\partial^{\alpha}f,\frac{\partial^{\alpha}F}{\sqrt{\mu}})
\geq& \frac{c_1}{2}\frac{1}{\varepsilon}\|\partial^{\alpha}f\|_{\nu}^{2}-C(\eta_{0}+\varepsilon^{\frac{1}{2}-a})\frac{1}{\varepsilon^{2}}\mathcal{D}_N(t)
-C(\eta_{0}+\varepsilon^{1-a})\varepsilon^{1-2a}.
\end{align}
In view of \eqref{5.28} and \eqref{5.47}, we can bound the first three terms on the RHS of \eqref{6.63}  by
$$
C(\eta_{0}+\varepsilon^{\frac{1}{2}-a})\frac{1}{\varepsilon^{2}}\mathcal{D}_{N}(t)+C(\eta_{0}+\varepsilon^{1-a})\varepsilon^{\frac{1}{2}-a}.
$$	
For the last term on the RHS of \eqref{6.63}, by \eqref{4.6}, \eqref{2.5}, \eqref{4.49} and \eqref{4.22a}, it holds that
\begin{align*}
\frac{1}{\varepsilon}(\frac{\partial^{\alpha}L_{M}\overline{G}}{\sqrt{\mu}},\frac{\partial^{\alpha}F}{\sqrt{\mu}})
&\leq C\|\frac{1}{\sqrt{\mu}}\partial^{\alpha}P_{1}\big\{v\cdot(\frac{|v-u|^{2}\nabla_x\overline{\theta}}{2R\theta^{2}}+\frac{(v-u)\cdot\nabla_x\bar{u}}{R\theta}) M\big\}\|\|\frac{\partial^{\alpha}F}{\sqrt{\mu}}\|
\\
&\leq C\frac{1}{\varepsilon}\mathcal{D}_N(t)+C(\eta_{0}+\varepsilon^{1-a}).
\end{align*}
In summary, substituting all the above estimates into \eqref{6.63} and using \eqref{4.22a}, we get
\begin{align}
\label{6.67}
\frac{1}{2}&\frac{d}{dt}\sum_{|\alpha|=N}\{\|\frac{\partial^{\alpha}F}{\sqrt{\mu}}\|^{2}
+(\frac{1}{R\theta}\partial^{\alpha}\widetilde{E},\partial^{\alpha}\widetilde{E})
+(\frac{1}{R\theta}\partial^{\alpha}\widetilde{B},\partial^{\alpha}\widetilde{B})\}
+c\frac{1}{\varepsilon}\sum_{|\alpha|=N}\|\partial^{\alpha}f\|_{\nu}^{2}
\nonumber\\
&\leq C(\eta_{0}+\varepsilon^{\frac{1}{2}-a})\frac{1}{\varepsilon^2}\mathcal{D}_{N}(t)
+C[\eta_{0}(1+t)^{-\vartheta}+\eta_{0}\varepsilon^{2a}+\varepsilon^{\frac{1}{2}-a}]\varepsilon^{-2a}.	
\end{align}
Multiplying \eqref{6.67} by $\varepsilon^{2}$ and integrating the resulting equation 
with respect to $t$, we can obtain the desired estimate \eqref{6.62} by using
\eqref{4.85A} and $\mathcal{E}_{N}(0)\leq C\eta^2_{0}\varepsilon^{2}$ as well as the following the fact
\begin{align*}
\varepsilon^{2}\sum_{|\alpha|=N}\|\frac{\partial^{\alpha}F(0)}{\sqrt{\mu}}\|^{2}
\leq C(\eta_{0}+\varepsilon^{\frac{1}{2}-a})\varepsilon^{2}.
\end{align*}
This completes the proof of Lemma \ref{lem6.10}.
\end{proof}
Combining those estimates in  Lemma \ref{lem6.9} and Lemma \ref{lem6.10}, we are able to 
conclude the energy estimate on solutions without velocity derivatives.
\begin{lemma}\label{lem6.12}
It holds that
\begin{align}
\label{6.70}
&\sum_{|\alpha|\leq N-1}\{\|\partial^{\alpha}(\widetilde{\rho},\widetilde{u},\widetilde{\theta},\widetilde{E},\widetilde{B})(t)\|^2+\|\partial^{\alpha}f(t)\|^{2}\}
+\varepsilon^{2}\sum_{|\alpha|=N}\{\|\partial^{\alpha}(\widetilde{\rho},\widetilde{u},\widetilde{\theta},\widetilde{E},\widetilde{B})(t)\|^2+\|\partial^{\alpha}f(t)\|^{2}\}
\nonumber\\
&+c\int^{t}_{0}\{\varepsilon\sum_{1\leq|\alpha|\leq N}\|\partial^{\alpha}(\widetilde{\rho},\widetilde{u},\widetilde{\theta})(s)\|^{2}
+\frac{1}{\varepsilon}\sum_{|\alpha|\leq N-1}\|\partial^{\alpha}f(s)\|_{\nu}^{2}
+\varepsilon\sum_{|\alpha|=N}\|\partial^{\alpha}f(s)\|_{\nu}^{2}\}\,ds
\nonumber\\
&\leq C(\eta_{0}+\varepsilon^{\frac{1}{2}-a})\int_0^t\mathcal{D}_{N}(s)\,ds
+C[\eta_{0}+\varepsilon^{\frac{1}{2}}+(\eta_{0}\varepsilon^a+\varepsilon^{\frac{1}{2}-a})t]\varepsilon^{2-2a}.
\end{align}
\end{lemma}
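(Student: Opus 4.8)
\textbf{Proof strategy for Lemma \ref{lem6.12}.}
The plan is to combine the two previously established energy inequalities, namely the low-order estimate \eqref{6.61} in Lemma \ref{lem6.9} and the top-order estimate \eqref{6.62} in Lemma \ref{lem6.10}, by forming a suitable linear combination in which the $\varepsilon$-weighted $N$-th order terms produced on the right-hand side of \eqref{6.61} are absorbed by the corresponding terms generated on the left-hand side of \eqref{6.62}. Concretely, I would multiply \eqref{6.61} by a large constant, say $C_4\gg 1$, and add \eqref{6.62} to it. On the right-hand side of $C_4\times\eqref{6.61}$ there appear the terms
$C_4 C\varepsilon^2\sum_{|\alpha|=N}(\|\partial^{\alpha}(\widetilde{\rho},\widetilde{u},\widetilde{\theta})(t)\|^2+\|\partial^{\alpha}f(t)\|^2)$ and $C_4 C\varepsilon\sum_{|\alpha|=N}\int_0^t\|\partial^{\alpha}f(s)\|_\nu^2\,ds$, which are exactly of the same form as the dissipation/energy terms carried on the left of \eqref{6.62}; hence they can be absorbed provided $\varepsilon$ is chosen small enough (so that the absolute constant $C$ in front of these terms, after being divided by the coefficients in \eqref{6.62}, is dominated). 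This is the same device already used at the end of the proof of Lemma \ref{lem6.9}, where the low-order estimate was shown to be closed by smallness of $\varepsilon$.

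Next I would address the remaining cross terms. After the absorption above, the right-hand side still contains $C(\eta_0+\varepsilon^{1/2-a})\int_0^t\mathcal{D}_N(s)\,ds$ (summing the corresponding contributions from both \eqref{6.61} and \eqref{6.62}, with a possibly larger but still absolute constant $C$), together with the inhomogeneous source terms. For the latter, I note that \eqref{6.61} contributes $C[\eta_0+\varepsilon+(\eta_0\varepsilon^a+\varepsilon^{1/2-a})t]\varepsilon^{2-2a}$ and \eqref{6.62} contributes $C[\eta_0+\varepsilon^{1/2}+(\eta_0\varepsilon^{2a}+\varepsilon^{1/2-a})t]\varepsilon^{2-2a}$; adding them and using $\varepsilon\leq\varepsilon^{1/2}$ and $\varepsilon^{2a}\leq 1$ (for $a\in[0,1/2)$ and $\varepsilon$ small, recalling \eqref{4.22a}), these combine into a single bound of the form $C[\eta_0+\varepsilon^{1/2}+(\eta_0\varepsilon^a+\varepsilon^{1/2-a})t]\varepsilon^{2-2a}$, which is precisely the right-hand side claimed in \eqref{6.70}. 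I would also need the elementary equivalence $\|\partial^{\alpha}\widetilde{E}\|^2+\|\partial^{\alpha}\widetilde{B}\|^2\approx(\frac{1}{R\theta}\partial^{\alpha}\widetilde{E},\partial^{\alpha}\widetilde{E})+(\frac{1}{R\theta}\partial^{\alpha}\widetilde{B},\partial^{\alpha}\widetilde{B})$, valid because of \eqref{4.16} which gives $R\theta\approx 1$, in order to pass from the weighted quantities appearing naturally in \eqref{6.62} (coming through Lemma \ref{lem5.12}) back to the plain Sobolev norms in \eqref{6.70}; similarly \eqref{4.85A} in Lemma \ref{lem4.20} is used to convert $\|\partial^{\alpha}F/\sqrt{\mu}\|^2$ into $\|\partial^{\alpha}(\widetilde{\rho},\widetilde{u},\widetilde{\theta})\|^2+\|\partial^{\alpha}f\|^2$ up to the harmless error $C(\eta_0+\varepsilon^{1/2-a})$, which after multiplication by $\varepsilon^2$ is absorbed into the source term.

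The only genuine point requiring care is the absorption step: one must check that the constants are organized so that $C_4$ is fixed first (large, to dominate the non-$N$-th-order dissipation coming out of \eqref{6.62}) and only afterwards $\varepsilon$ is taken small, so there is no circularity — the same ordering of quantifiers as in Lemma \ref{lem6.9}. I expect this bookkeeping, rather than any new analytic input, to be the main (and essentially only) obstacle; all the substantive work has already been carried out in Lemmas \ref{lem6.6}--\ref{lem6.10}. Once the linear combination is formed and $\varepsilon_0$ is shrunk accordingly, \eqref{6.70} follows directly. This completes the proof of Lemma \ref{lem6.12}.
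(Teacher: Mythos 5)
Your overall plan — obtain \eqref{6.70} by a linear combination of \eqref{6.61} and \eqref{6.62}, then merge the $\int_0^t\mathcal{D}_N(s)\,ds$ contributions and the source terms using $\varepsilon\leq\varepsilon^{\frac12}$ and $\varepsilon^{2a}\leq\varepsilon^{a}$ — is exactly what the paper does (it states Lemma \ref{lem6.12} as a direct combination of Lemmas \ref{lem6.9} and \ref{lem6.10}). However, the specific combination you propose is taken in the wrong direction, and the absorption step as you describe it would fail. If you multiply \eqref{6.61} by a large $C_4$ and add \eqref{6.62} with coefficient $1$, the right-hand side acquires $C_4C\varepsilon^2\sum_{|\alpha|=N}(\|\partial^{\alpha}(\widetilde{\rho},\widetilde{u},\widetilde{\theta})(t)\|^2+\|\partial^{\alpha}f(t)\|^2)$ and $C_4C\varepsilon\sum_{|\alpha|=N}\int_0^t\|\partial^{\alpha}f\|_\nu^2\,ds$, while the left-hand side of \eqref{6.62} carries these quantities only with coefficients $\varepsilon^2$ and $\varepsilon$. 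Since the $\varepsilon$-weights on the two sides are \emph{identical}, the ratio of constants is $C_4C$, independent of $\varepsilon$; taking $\varepsilon$ small does nothing here. Smallness of $\varepsilon$ was the right device at the end of Lemma \ref{lem6.9}, because there the competing terms carried weights $\varepsilon$ versus $\varepsilon^{-1}$; that mechanism is not available in the present step. Your parenthetical remark that $C_4$ is needed "to dominate the non-$N$-th-order dissipation coming out of \eqref{6.62}" also rests on a misreading: \eqref{6.62} produces no such terms on its right-hand side beyond $C(\eta_0+\varepsilon^{\frac12-a})\int_0^t\mathcal{D}_N\,ds$ and the source.

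The fix is immediate and restores the paper's argument: put the large constant on \eqref{6.62}, i.e.\ form $\eqref{6.61}+C_4\times\eqref{6.62}$ (equivalently, add a small multiple of \eqref{6.61} to \eqref{6.62}), with $C_4$ chosen once and for all larger than the absolute constant $C$ appearing in front of the $N$-th order terms on the right of \eqref{6.61}. Then $C\varepsilon^2\sum_{|\alpha|=N}(\cdots)(t)$ and $C\varepsilon\sum_{|\alpha|=N}\int_0^t\|\partial^{\alpha}f\|_\nu^2\,ds$ are absorbed by the corresponding left-hand terms $C_4\varepsilon^2\sum_{|\alpha|=N}(\cdots)(t)$ and $C_4\varepsilon\sum_{|\alpha|=N}\int_0^t\|\partial^{\alpha}f\|_\nu^2\,ds$, at the price of multiplying the $\int_0^t\mathcal{D}_N$ term and the sources by the fixed factor $1+C_4$, which is harmless. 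After this correction your bookkeeping of the remaining terms does yield \eqref{6.70}. One further remark: the conversions you mention at the end (replacing $(\tfrac1{R\theta}\partial^{\alpha}\widetilde E,\partial^{\alpha}\widetilde E)$-type quantities by plain norms via \eqref{4.16}, and using \eqref{4.85A} to pass from $\|\partial^{\alpha}F/\sqrt{\mu}\|^2$ to $\|\partial^{\alpha}(\widetilde{\rho},\widetilde{u},\widetilde{\theta})\|^2+\|\partial^{\alpha}f\|^2$) have already been carried out inside the proofs of Lemmas \ref{lem6.6} and \ref{lem6.10}; the statements \eqref{6.61} and \eqref{6.62} are already in terms of the unweighted norms, so no additional work of this kind is needed at this stage.
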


\subsection{Mixed derivative estimates}\label{sec6.4}
Notice that Lemma \ref{lem6.12} does not include the velocity derivative, we shall derive the mixed derivative estimates on solution
in this subsection. 
\begin{lemma}\label{lem6.13}
Under the conditions listed in Lemma \ref{lem6.6}, one has
\begin{align}
\label{6.71}
&\sum_{|\alpha|+|\beta|\leq N,|\beta|\geq1}\big\{\|\partial_{\beta}^{\alpha}f(t)\|^{2}
+c\frac{1}{\varepsilon}\int^{t}_{0}\|\partial_{\beta}^{\alpha}f(s)\|_{\nu}^{2}\,ds\big\}
\nonumber\\
\leq& C\int^{t}_{0}\{\varepsilon\sum_{1\leq|\alpha|\leq N}\|\partial^{\alpha}(\widetilde{u},\widetilde{\theta})(s)\|^{2}
+\frac{1}{\varepsilon}\sum_{|\alpha|\leq N-1}\|\partial^{\alpha}f(s)\|_{\nu}^{2}
+\varepsilon\sum_{|\alpha|=N}\|\partial^{\alpha}f(s)\|_{\nu}^{2}\}\,ds
\nonumber\\
&+C(\eta_{0}+\varepsilon^{\frac{1}{2}-a})\int^t_0\mathcal{D}_N(s)\,ds+C(\eta_{0}+\varepsilon^{1-a}t)\varepsilon^{2-2a}.
\end{align}
\end{lemma}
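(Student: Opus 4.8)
The strategy is to differentiate the microscopic equation \eqref{4.11} in both $x$ and $v$, test against $\frac{1}{\varepsilon}\partial^\alpha_\beta f$, and exploit the coercivity of $\mathcal{L}$ on velocity derivatives from Lemma \ref{lem5.4}. Concretely, for fixed $|\alpha|+|\beta|\le N$ with $|\beta|\ge 1$, I would apply $\partial^\alpha_\beta$ to \eqref{4.11} and take the $L^2_{x,v}$ inner product with $\partial^\alpha_\beta f$. The transport term $v\cdot\nabla_x f$ produces, besides the vanishing term $(v\cdot\nabla_x\partial^\alpha_\beta f,\partial^\alpha_\beta f)=0$, commutator terms of the form $(\partial_{\beta'}v\cdot\nabla_x\partial^{\alpha}_{\beta-\beta'}f,\partial^\alpha_\beta f)$ with $|\beta'|=1$; these are lower-order in $\beta$ and are absorbed into $\sum_{|\alpha'|\le N-1}\frac{1}{\varepsilon}\|\partial^{\alpha'}f\|_\nu^2$ plus $\varepsilon\sum_{|\alpha|=N}\|\partial^\alpha f\|_\nu^2$ after an $\varepsilon$-weighted Cauchy-Schwarz (splitting according to whether the surviving $x$-derivative count is $\le N-1$ or $=N$). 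The key structural point is the third term on the left: by Lemma \ref{lem5.4}, $-\frac{1}{\varepsilon}(\partial^\alpha_\beta\mathcal{L}\partial^\alpha f,\partial^\alpha_\beta f)\ge \frac{c_2}{\varepsilon}\|\partial^\alpha_\beta f\|_\nu^2 - \frac{C}{\varepsilon}\|\partial^\alpha f\|_\nu^2$, where the bad term has $|\beta|=0$ and hence is controlled by the already-estimated quantity $\frac{1}{\varepsilon}\sum_{|\alpha|\le N-1}\|\partial^\alpha f\|_\nu^2$ when $|\alpha|\le N-1$, or by $\varepsilon\sum_{|\alpha|=N}\|\partial^\alpha f\|_\nu^2$ plus $\mathcal{D}_N$ when $|\alpha|=N$ (using that $|\beta|\ge1$ forces $|\alpha|\le N-1$, actually, so only the first alternative occurs — this simplifies matters).

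The right-hand side of \eqref{4.11} is handled termwise using the lemmas already established. The collision terms $\frac{1}{\varepsilon}\partial^\alpha_\beta\Gamma(\frac{M-\mu}{\sqrt\mu},f)$, $\frac{1}{\varepsilon}\partial^\alpha_\beta\Gamma(f,\frac{M-\mu}{\sqrt\mu})$ are bounded by $C(\eta_0+\varepsilon^{1/2-a})\mathcal{D}_N(t)$ via Lemma \ref{lem5.6}, and $\frac{1}{\varepsilon}\partial^\alpha_\beta\Gamma(\frac{G}{\sqrt\mu},\frac{G}{\sqrt\mu})$ by $C(\eta_0+\varepsilon^{1/2-a})\mathcal{D}_N(t)+C\varepsilon^{3-2a}$ via Lemma \ref{lem5.9}. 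The electromagnetic term $(\partial^\alpha_\beta[\frac{(E+v\times B)\cdot\nabla_v(\sqrt\mu f)}{\sqrt\mu}],\partial^\alpha_\beta f)$ is bounded by $C(\eta_0+\varepsilon^{1/2-a})\mathcal{D}_N(t)$ via Lemma \ref{lem5.13}. For the remaining source terms — $\frac{(E+v\times B)\cdot\nabla_v\overline G}{\sqrt\mu}$, $\frac{P_0[v\cdot\nabla_x(\sqrt\mu f)]}{\sqrt\mu}$, $\frac{P_1(v\cdot\nabla_x\overline G)}{\sqrt\mu}$, $\frac{\partial_t\overline G}{\sqrt\mu}$, and the linear fluid forcing $\frac{1}{\sqrt\mu}P_1\{v\cdot(\frac{|v-u|^2\nabla_x\widetilde\theta}{2R\theta^2}+\frac{(v-u)\cdot\nabla_x\widetilde u}{R\theta})M\}$ — I would use the $\varepsilon$-weighted split $|(\cdot,\partial^\alpha_\beta f)|\le \eta\frac{1}{\varepsilon}\|\partial^\alpha_\beta f\|^2_\nu + C_\eta\varepsilon\|\cdot\|^2$, then invoke Lemma \ref{lem5.3} and its $\partial_t$-analogue \eqref{5.19a} for the $\overline G$ contributions (giving $\varepsilon\cdot\eta_0^2\varepsilon^2$ or, at top order, $\varepsilon\cdot\eta_0^2\varepsilon^{2-2a}$, both $\lesssim \varepsilon\varepsilon^{2-2a}$), the orthogonality structure of $P_0$ together with \eqref{4.25} for the $P_0$-term (producing $\varepsilon\|\partial^\alpha\nabla_x f\|^2$, absorbed as above), and \eqref{4.25} again for the fluid forcing (producing $\varepsilon\|\partial^\alpha(\nabla_x\widetilde u,\nabla_x\widetilde\theta)\|^2$, absorbed into $\varepsilon\sum_{1\le|\alpha|\le N}\|\partial^\alpha(\widetilde u,\widetilde\theta)\|^2$).

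After collecting all estimates, choosing $\eta>0$ small, and summing over $|\alpha|+|\beta|\le N$, $|\beta|\ge 1$ through a standard induction on $|\beta|$ (so that the coercivity loss terms with lower $|\beta|$ are always already controlled), I obtain a differential inequality of the form
\begin{align*}
\frac{d}{dt}\!\!\sum_{\substack{|\alpha|+|\beta|\le N\\ |\beta|\ge1}}\!\!\|\partial^\alpha_\beta f\|^2 + \frac{c}{\varepsilon}\!\!\sum_{\substack{|\alpha|+|\beta|\le N\\ |\beta|\ge1}}\!\!\|\partial^\alpha_\beta f\|_\nu^2
\le C\varepsilon\!\!\sum_{1\le|\alpha|\le N}\!\!\|\partial^\alpha(\widetilde u,\widetilde\theta)\|^2 + \frac{C}{\varepsilon}\!\!\sum_{|\alpha|\le N-1}\!\!\|\partial^\alpha f\|_\nu^2 + C\varepsilon\!\!\sum_{|\alpha|=N}\!\!\|\partial^\alpha f\|_\nu^2 + C(\eta_0+\varepsilon^{\frac12-a})\mathcal{D}_N(t) + C\varepsilon\varepsilon^{2-2a}.
\end{align*}
Integrating in time and using $\mathcal{E}_N(0)\le C\eta_0^2\varepsilon^2$ (so the initial mixed-derivative norm contributes $\le C\eta_0^2\varepsilon^2\le C\eta_0\varepsilon^{2-2a}$) yields \eqref{6.71}. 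The main obstacle, and the step deserving the most care, is the commutator handling of $v\cdot\nabla_x f$ together with the $\mathcal{L}$-coercivity loss: one must verify that every term spawned at mixed order $(\alpha,\beta)$ with a velocity derivative either falls strictly below in $|\beta|$ (handled by induction) or carries a genuine $\nu$-weight that matches the dissipation, and that the $\varepsilon$-bookkeeping never produces a term worse than the right-hand side of \eqref{6.71} — in particular that all $\overline G$-generated contributions land below $O(\varepsilon\varepsilon^{2-2a})$, which relies crucially on $|\beta|\ge1\Rightarrow|\alpha|\le N-1$ so that only the non-singular bound \eqref{4.19} of Lemma \ref{lem5.3} is ever needed.
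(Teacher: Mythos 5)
Your plan follows essentially the same route as the paper's proof: test the $\partial^\alpha_\beta$-differentiated equation \eqref{4.11} against $\partial^\alpha_\beta f$, use Lemma \ref{lem5.4} for coercivity with the $|\beta|=0$ loss absorbed by $\frac{1}{\varepsilon}\sum_{|\alpha|\le N-1}\|\partial^\alpha f\|_\nu^2$ (valid since $|\beta|\ge1$ forces $|\alpha|\le N-1$), treat the transport commutator $\partial^{\alpha+e_i}_{\beta-e_i}f$ by an $\varepsilon$-weighted Cauchy--Schwarz plus a suitable linear combination over $(\alpha,\beta)$, bound the right-hand side via Lemmas \ref{lem5.6}, \ref{lem5.9}, \ref{lem5.13} and the $\overline G$-estimates of Lemma \ref{lem5.3}, and integrate in time using $\mathcal{E}_N(0)\le C\eta_0^2\varepsilon^2$. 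The only blemishes are harmless slips (the inner product should be taken with $\partial^\alpha_\beta f$, not $\frac{1}{\varepsilon}\partial^\alpha_\beta f$, as your final differential inequality in fact reflects, and the terms involving $\nabla_x\overline G$ at top space order do invoke \eqref{4.20} rather than only \eqref{4.19}, which still yields an admissible error), so the argument is correct and matches the paper.
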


\begin{proof}
Let $|\alpha|+|\beta|\leq N$ and $|\beta|\geq1$, we apply $\partial_{\beta}^{\alpha}$ to \eqref{4.11} and take the inner product of the resulting equation with $\partial^{\alpha}_{\beta}f$ to obtain
\begin{align}
\label{6.72}
&\frac{1}{2}\frac{d}{dt}\|\partial_{\beta}^{\alpha}f\|^{2}+(v\cdot\nabla_{x}\partial^{\alpha}_{\beta}f,\partial^{\alpha}_{\beta}f)
+(C^{\beta-e_{i}}_{\beta}\delta^{e_{i}}_{\beta}\partial^{\alpha+e_{i}}_{\beta-e_{i}}f,\partial^{\alpha}_{\beta}f)-\frac{1}{\varepsilon}(\partial^{\alpha}_{\beta}\mathcal{L}f,\partial^{\alpha}_{\beta}f)
\nonumber\\
=&\frac{1}{\varepsilon}(\partial^{\alpha}_{\beta}\Gamma(\frac{M-\mu}{\sqrt{\mu}},f),\partial^{\alpha}_{\beta}f)
+\frac{1}{\varepsilon}(\partial^{\alpha}_{\beta}\Gamma(f,\frac{M-\mu}{\sqrt{\mu}}),\partial^{\alpha}_{\beta}f)
\nonumber\\
&+\frac{1}{\varepsilon}(\partial^{\alpha}_{\beta}\Gamma(\frac{G}{\sqrt{\mu}},\frac{G}{\sqrt{\mu}}),
\partial^{\alpha}_{\beta}f)+(\partial^{\alpha}_{\beta}[\frac{(E+v\times B)\cdot\nabla_{v}(\sqrt{\mu}f+\overline{G})}{\sqrt{\mu}}],\partial^{\alpha}_{\beta}f)
\nonumber\\
&-(\partial^{\alpha}_{\beta}[\frac{P_{1}(v\cdot\nabla_{x}\overline{G})}{\sqrt{\mu}}],\partial^{\alpha}_{\beta}f)
-(\partial^{\alpha}_{\beta}(\frac{\partial_{t}\overline{G}}{\sqrt{\mu}}),\partial^{\alpha}_{\beta}f)
+(\partial^{\alpha}_{\beta}[\frac{P_{0}(v\sqrt{\mu}\cdot\nabla_{x}f)}{\sqrt{\mu}}],\partial^{\alpha}_{\beta}f)
\nonumber\\
&-(\partial^{\alpha}_{\beta}\{\frac{1}{\sqrt{\mu}}P_{1}[v\cdot(\frac{|v-u|^{2}\nabla_{x}\widetilde{\theta}}{2R\theta^{2}}+\frac{(v-u)\cdot\nabla_{x}\widetilde{u}}{R\theta})M]\},\partial^{\alpha}_{\beta}f).
\end{align}
Here $\delta^{e_{i}}_{\beta}=1$ if $e_{i}\leq\beta$ or $\delta^{e_{i}}_{\beta}=0$ otherwise.
	
We will compute each term for \eqref{6.72}. The second term on the LHS of \eqref{6.72}
vanishes by integration by parts, while the third term on the LHS of \eqref{6.72} can be estimated as
\begin{align*}
|(C^{\beta-e_{i}}_{\beta}\delta^{e_{i}}_{\beta}\partial^{\alpha+e_{i}}_{\beta-e_{i}}f,\partial_{\beta}^{\alpha}f)|
\leq \eta\frac{1}{\varepsilon}\|\partial^{\alpha}_{\beta}f\|^{2}+C_{\eta}\varepsilon\|\partial^{\alpha+e_{i}}_{\beta-e_{i}}f\|^{2}.
\end{align*}
For the fourth term on the LHS of \eqref{6.72}, one has from \eqref{4.29} that
\begin{align*}
-\frac{1}{\varepsilon}(\partial_{\beta}^{\alpha}\mathcal{L}f,\partial_{\beta}^{\alpha}f)
\geq c_2\frac{1}{\varepsilon}\|\partial^{\alpha}_{\beta}f\|^{2}_{\nu}-C\frac{1}{\varepsilon}\|\partial^{\alpha}f\|_{\nu}^{2}.
\end{align*}
The first three terms on the RHS of \eqref{6.72} can be
controlled by $C(\eta_{0}+\varepsilon^{\frac{1}{2}-a})\mathcal{D}_{N}(t)+C\varepsilon\varepsilon^{2-2a}$
in terms of \eqref{5.20} and \eqref{5.41}.
The fourth, fifth and sixth terms on the RHS of \eqref{6.72} are bounded by
\begin{align*}
 C\eta\frac{1}{\varepsilon}\|\partial_{\beta}^{\alpha}f\|_{\nu}^{2}+C(\eta_{0}+\varepsilon^{\frac{1}{2}-a})\mathcal{D}_{N}(t)
+C_{\eta}[\eta_0(1+t)^{-\vartheta}+\varepsilon^{1-a}]\varepsilon^{2-2a}.
\end{align*}
For the last two terms of \eqref{6.72}, we can bound them by
\begin{align*}
C\eta\frac{1}{\varepsilon}\|\partial^{\alpha}_{\beta}f\|^{2}_{\nu}+C_{\eta}\varepsilon
\|(\nabla_{x}\partial^{\alpha}\widetilde{u},\nabla_{x}\partial^{\alpha}\widetilde{\theta})\|^{2}
+C_{\eta}\varepsilon
\|\nabla_x\partial^{\alpha}f\|^{2}_{\nu}
+C_{\eta}\varepsilon\varepsilon^{2-2a}.
\end{align*}
In summary, for all $|\alpha|+|\beta|\leq N$ and $|\beta|\geq1$,
substituting all the above estimates  into \eqref{6.72} and choosing a small $\eta>0$, then
the suitable linear combination of resulting equations, one gets
\begin{align}
\label{6.74}
&\sum_{|\alpha|+|\beta|\leq N,|\beta|\geq1}\{\frac{d}{dt}\|\partial_{\beta}^{\alpha}f\|^{2}
+c\frac{1}{\varepsilon}\|\partial_{\beta}^{\alpha}f\|_{\nu}^{2}\}
\nonumber\\
\leq&C\varepsilon\sum_{1\leq|\alpha|\leq N}(\|\partial^{\alpha}f\|_{\nu}^{2}+\|\partial^{\alpha}(\widetilde{u},\widetilde{\theta})\|^{2})+C\frac{1}{\varepsilon}\sum_{|\alpha|\leq N-1}\|\partial^{\alpha}f\|_{\nu}^{2}
\nonumber\\
&+C(\eta_{0}+\varepsilon^{\frac{1}{2}-a})\mathcal{D}_N(t)+C[\eta_0(1+t)^{-\vartheta}+\varepsilon^{1-a}]\varepsilon^{2-2a}.
\end{align}
Integrating \eqref{6.74} with respect to $t$ and using $\mathcal{E}_{N}(0)\leq C\eta^2_{0}\varepsilon^{2}$ as well as $\vartheta>1$,
 we can obtain the desired estimate \eqref{6.71}. This completes the proof of Lemma \ref{lem6.13}.
\end{proof}

\section{Proofs of the main theorem }\label{seca.6}
Based on those estimates in Lemma \ref{lem6.12} and Lemma \ref{lem6.13}, in this section, we are now in a position to
complete the

\medskip
\noindent{\it Proof of Theorem \ref{thm3.1}:}
By a suitable linear combination of \eqref{6.70} and \eqref{6.71}, there exists $C_1>0$ as stated in Theorem \ref{thm3.1} 
such that
\begin{align}
\label{6.1a}
\mathcal{E}_{N}(t)+\int^{t}_{0}\mathcal{D}_{N}(s)\,ds\leq& C_1(\eta_{0}+\varepsilon^{\frac{1}{2}-a})\int_0^t\mathcal{D}_{N}(s)\,ds
\nonumber\\
&+C_1[\eta_{0}+\varepsilon^{\frac{1}{2}}+(\eta_{0}\varepsilon^a+\varepsilon^{\frac{1}{2}-a})t]\varepsilon^{2-2a}.
\end{align}
Here $\mathcal{E}_N(t)$ and  $\mathcal{D}_N(t)$ are given by \eqref{4.14} and \eqref{4.15}, respectively. 
In order to close the a priori assumption \eqref{4.13}, we need to require that
\begin{align}
\label{6.2a}
C_1(\eta_{0}+\varepsilon^{\frac{1}{2}-a})\leq \frac{1}{2}, \quad C_1[\eta_{0}+\varepsilon^{\frac{1}{2}}+(\eta_{0}\varepsilon^a+\varepsilon^{\frac{1}{2}-a})t]
\leq \frac{1}{2}.
\end{align}
In fact, the desired estimate \eqref{6.2a} holds true by choosing both $\eta_{0}$ and $\varepsilon$ small enough and taking
\begin{align}
\label{6.3a}
a\in[0,\frac{1}{2}),\quad \mbox{and}\quad t\leq T_{max}=\frac{1}{4C_1}\frac{1}{\eta_{0}\varepsilon^a+\varepsilon^{\frac{1}{2}-a}}.
\end{align}
That is why one has to choose the range of the parameter $a$ and time $T_{max}$ in \eqref{4.13} which we start with.
Thanks to \eqref{6.1a} and \eqref{6.2a}, we immediately obtain
\begin{align}
\label{6.4a}
\mathcal{E}_{N}(t)+\frac{1}{2}\int^{t}_{0}\mathcal{D}_{N}(s)\,ds\leq \frac{1}{2}\varepsilon^{2-2a}.
\end{align}
Then \eqref{6.4a} implies that for $a\in[0,\frac{1}{2})$ and $T\in(0,T_{max}]$,  one has
\begin{align}
\label{6.5a}
\sup_{0\leq t\leq T}\mathcal{E}_{N}(t)\leq  \frac{1}{2}\varepsilon^{2-2a},
\end{align}
which is strictly stronger than \eqref{4.13}. Hence we have closed the a priori assumption \eqref{4.13}.

Since the local existence of the solutions to the VMB system  near a global Maxwellian is well known in \cite{Guo-2003}
for the torus and in \cite{Strain-2006} for the whole space,
by a straightforward modification of the arguments there one can construct a unique short time solution to the 
VMB system \eqref{1.1} and \eqref{3.6} under the assumptions in Theorem \ref{thm3.1}. The
details are omitted for simplicity of presentation. 
Therefore, by the uniform a priori estimates and the local existence of
the solution, the standard continuity argument, we can immediately derive the existence
and uniqueness of almost global-in-time smooth solutions to the  VMB system \eqref{1.1} and \eqref{3.6}.
Moreover, the desired estimate \eqref{3.8} holds true in terms of \eqref{6.4a}.

To finish the proof of Theorem \ref{thm3.1}, we are going to justify the a uniform convergence rate as in \eqref{3.7}. 
Making use of the Taylor expansion, \eqref{4.25}, \eqref{6.5a} and the embedding inequality, we obtain
\begin{align*}
\sup_{t\in[0,T_{max}]}\|\frac{(M_{[\rho,u,\theta]}-M_{[\bar{\rho},\bar{u},\bar{\theta}]})(t)}{\sqrt{\mu}}\|_{L_{x}^{\infty}L_{v}^{2}}
\leq C\sup_{t\in[0,T_{max}]}\|(\widetilde{\rho},\widetilde{u},\widetilde{\theta})(t)\|_{L_{x}^{\infty}}\leq 
C\varepsilon^{1-a}.
\end{align*}
Similarly, we get by Lemma \ref{lem5.3} and \eqref{6.5a}  that
\begin{equation*}
\sup_{t\in[0,T_{max}]}(\|\frac{\overline{G}(t)}{\sqrt{\mu}}\|_{L_{x}^{\infty}L_{v}^{2}}+\|f(t)\|_{L^{\infty}_{x}L^{2}_{v}})\leq C\varepsilon^{1-a}.
\end{equation*}
Combining the aforementioned two estimates and using $F=M+\overline{G}+\sqrt{\mu}f$, we get
\begin{align}
\label{6.6a}
\sup_{t\in[0,T_{max}]}\|\frac{(F-M_{[\bar{\rho},\bar{u},\bar{\theta}]})(t)}{\sqrt{\mu}}\|_{L_{x}^{\infty}L_{v}^{2}}
\leq C\varepsilon^{1-a}.
\end{align}
Similar estimate also holds for $\sup_{t\in[0,T_{max}]}\|\frac{(F-M_{[\bar{\rho},\bar{u},\bar{\theta}]})(t)}{\sqrt{\mu}}\|$.
Moreover, one has
\begin{align*}
\sup_{t\in[0,T_{max}]}\{\|(E-\bar{E},B-\bar{B})(t,x)\|+\|(E-\bar{E},B-\bar{B})(t,x)\|_{L_{x}^{\infty}}\}\leq C\varepsilon^{1-a},
\end{align*}
which together with \eqref{6.6a} and the fact that $(F,E,B)=(F^{\varepsilon},E^{\varepsilon},B^{\varepsilon})$,
immediately gives the desired estimate \eqref{3.7}.
And the proof of Theorem \ref{thm3.1} is complete. \qed

\medskip
\noindent {\bf Acknowledgment:}\,
The research of Renjun Duan was partially supported by the General Research Fund (Project No.~14301719) from RGC of Hong Kong and a Direct Grant (4053567) from CUHK. The research of Hongjun Yu was supported by the GDUPS 2017 and the NNSFC Grant 11371151. Dongcheng Yang would like to thank the Department of Mathematics, CUHK  for hosting his visit in the period 2020-23. The authors would like to thank the referee for all valuable and helpful comments on the manuscript.

\medskip

\noindent{\bf Conflict of Interest:} The authors declare that they have no conflict of interest.

\vskip 1cm \small

\normalsize
\end{document}